\documentclass[reqno]{amsart}

\usepackage[a4paper,
            left=2cm,
            right=2cm,
            top=2cm,
            bottom=2cm]{geometry}
\usepackage{hyperref}
\usepackage{cleveref}
\usepackage{newtxtext}
\usepackage{newtxmath}
\usepackage{tikz}
\usetikzlibrary{cd}
\usepackage[mathscr]{euscript}
\usepackage{mathtools}
\usepackage{subcaption}

\usepackage{adjustbox}

\newcommand{\graf}{\mathsf{\Gamma}}
\newcommand{\grafd}{\mathsf{\Delta}}
\newcommand{\grafl}{\mathsf{\Lambda}}

\newcommand{\sfC}{\mathsf{C}}
\newcommand{\sfD}{\mathsf{D}}

\newcommand{\sfK}{\mathsf{K}}
\newcommand{\sfP}{\mathsf{P}}

\newcommand{\sfS}{\mathsf{St}}
\newcommand{\sfT}{\mathsf{T}}

\newcommand{\sfa}{\mathsf{a}}
\newcommand{\sfb}{\mathsf{b}}
\newcommand{\sfc}{\mathsf{c}}
\newcommand{\sfe}{\mathsf{e}}
\newcommand{\sff}{\mathsf{f}}
\newcommand{\sft}{\mathsf{t}}

\newcommand{\sfv}{\mathsf{v}}
\newcommand{\sfw}{\mathsf{w}}
\newcommand{\sfx}{\mathsf{x}}

\newcommand{\sfz}{\mathsf{z}}

\newcommand{\sfzero}{\mathsf{0}}

\newcommand{\bbA}{\mathbb{A}}
\newcommand{\bbB}{\mathbb{B}}
\newcommand{\bbF}{\mathbb{F}}
\newcommand{\bbG}{\mathbb{G}}
\newcommand{\bbR}{\mathbb{R}}
\newcommand{\bbS}{\mathbb{S}}
\newcommand{\bbZ}{\mathbb{Z}}

\newcommand{\cC}{\mathcal{C}}

\newcommand{\cF}{\mathcal{F}}
\newcommand{\cG}{\mathcal{G}}
\newcommand{\cI}{\mathcal{I}}
\newcommand{\cM}{\mathcal{M}}
\newcommand{\cP}{\mathcal{P}}
\newcommand{\cQ}{\mathcal{Q}}
\newcommand{\cO}{\mathcal{O}}
\newcommand{\cRI}{\mathcal{RI}}
\newcommand{\cS}{\mathcal{S}}

\newcommand{\cY}{\mathcal{Y}}

\newcommand{\bfzero}{\mathbf{0}}

\newcommand{\interior}[1]{\mathring{#1}}

\newcommand{\grapegraph}{\mathscr{G}\mathsf{rape}}

\newcommand{\ess}{\mathsf{ess}}

\newcommand{\m}[1]{{#1}^\mathsf{max}}
\newcommand{\noleaf}{\mathsf{noleaf}}

\newcommand{\larg}{\mathsf{large}}
\newcommand{\norm}{\mathsf{normal}}

\newcommand{\Ast}{\mathop{\scalebox{1.5}{\raisebox{-0.2ex}{$\ast$}}}}

\newcommand{\graphs}{\cG}

\newcommand{\grape}[2][0]{
\draw[thick, fill] (#2.center) circle(2pt) -- +({30+#1}:0.5) circle(2pt) +(0,0) -- +({-30+#1}:0.5) circle(2pt);
\draw[thick] (#2.center) +({30+#1}:0.5) arc ({120+#1}:{-120+#1}:{0.5 / sqrt(3)});
}

\DeclareMathOperator{\CAT}{CAT}

\DeclareMathOperator{\Lk}{Link}
\DeclareMathOperator{\lk}{\mathsf{Lk}}
\DeclareMathOperator{\st}{\mathsf{St}}
\DeclareMathOperator{\val}{val}
\DeclareMathOperator{\image}{Im}

\DeclareMathOperator{\diam}{diam}
\DeclareMathOperator{\len}{Length}
\DeclareMathOperator{\tripod}{Tripod}
\DeclareMathOperator{\colim}{hocolim}

\newcommand{\Lb}[1]{\operatorname{Label}(#1)}

\newcommand\loops\ell
\newcommand\itimes{\mathbin{\interior{\times}}}
\renewcommand\bar\overline

\newcommand{\isom}{\cong}
\newcommand{\hty}{\simeq}
\newcommand{\qisom}{\approx}

\numberwithin{equation}{section}

\theoremstyle{plain}
\newtheorem{theorem}[equation]{Theorem}
\newtheorem{lemma}[equation]{Lemma}
\newtheorem{corollary}[equation]{Corollary}
\newtheorem{proposition}[equation]{Proposition}

\newtheorem{conjecture}[equation]{Conjecture}

\theoremstyle{definition}
\newtheorem{definition}[equation]{Definition}
\newtheorem{assumption}[equation]{Assumption}
\newtheorem{convention}[equation]{Convention}

\newtheorem{example}[equation]{Example}

\theoremstyle{remark}
\newtheorem{remark}[equation]{Remark}
\newtheorem{question}{Question}

\AddToHook{env/theorem/begin}{\crefalias{equation}{theorem}}
\AddToHook{env/lemma/begin}{\crefalias{equation}{lemma}}
\AddToHook{env/corollary/begin}{\crefalias{equation}{corollary}}
\AddToHook{env/proposition/begin}{\crefalias{equation}{proposition}}
\AddToHook{env/claim/begin}{\crefalias{equation}{claim}}
\AddToHook{env/definition/begin}{\crefalias{equation}{definition}}
\AddToHook{env/assumption/begin}{\crefalias{equation}{assumption}}
\AddToHook{env/convention/begin}{\crefalias{equation}{convention}}
\AddToHook{env/observation/begin}{\crefalias{equation}{observation}}
\AddToHook{env/example/begin}{\crefalias{equation}{example}}
\AddToHook{env/remark/begin}{\crefalias{equation}{remark}}
\AddToHook{env/question/begin}{\crefalias{equation}{question}}
\AddToHook{env/conjecture/begin}{\crefalias{equation}{conjecture}}

\crefname{theorem}{Theorem}{Theorems}
\crefname{lemma}{Lemma}{Lemmas}
\crefname{proposition}{Proposition}{Propositions}
\crefname{definition}{Definition}{Definitions}
\crefname{corollary}{Corollary}{Corollaries}
\crefname{assumption}{Assumption}{Assumptions}
\crefname{remark}{Remark}{Remarks}
\crefname{conjecture}{Conjecture}{Conjectures}

\usepackage{marginnote}

\begin{document}
\title{On the large-scale geometry of graph braid groups via cubical structures}
\author{Byung Hee An}
\address{\parbox{\linewidth}{Department of Mathematics Education, Kyungpook National University, Daegu, Korea\\
Department of STEM Education, North Carolina State University, NC, United States, Visiting Scholar}}
\email{anbyhee@knu.ac.kr, ban2@ncsu.edu}
\author{Sangrok Oh}
\address{\parbox{\linewidth}{Department of Mathematics and Institute of Mathematical Science, Pusan National University, Busan, Korea\\
Department of Mathematics, University of the Basque Country, Spain}}
\email{SangrokOh.math@gmail.com}
\begin{abstract}
We study the large-scale geometry of graph braid groups \(\mathbb{B}_n(\mathsf{\Gamma})\), viewed as the fundamental groups of discrete configuration spaces \(UD_n(\mathsf{\Gamma})\), which are special cube complexes in the sense of Haglund--Wise. Exploiting this cubical structure, we relate hyperbolicity, undistorted surface subgroups, and group-theoretic decompositions. As a consequence, we obtain a complete classification of when \(\mathbb{B}_n(\mathsf{\Gamma})\) is quasi-isometric to a free group via a purely geometric argument independent of discrete Morse theory.

We then focus on graph $2$-braid groups. Using maximal product subcomplexes of \(UD_2(\mathsf{\Gamma})\) and the intersection complex introduced in \cite{Oh22}, we show that, under natural assumptions, their union captures essential quasi-isometry information about \(\mathbb{B}_2(\mathsf{\Gamma})\). As applications, we construct infinitely many graph $2$-braid groups that are quasi-isometric to right-angled Artin groups and infinitely many that are not, extending \cite{Oh22}, and we exhibit new phenomena in relative hyperbolicity.
\end{abstract}
\subjclass[2020]{20F65, 20F36, 20F67, 57M60}
\keywords{Graph braid group, quasi-isometry, intersection complex}

\maketitle
\tableofcontents

\section{Introduction}
For a finite connected graph \(\graf\), the \emph{ordered} and \emph{unordered \(n\)-configuration spaces} are defined by
\[C_n(\graf) = \{(x_1 ,x_2, \dots, x_n)\in \graf^n: x_i \neq x_j\text{ for }i \neq j \}\ \quad \text{ and }\quad UC_n(\graf) \isom C_n(\graf)/\bbS_n,\]
and the \emph{graph \(n\)-braid group} is \(\bbB_n(\graf) = \pi_1(UC_n(\graf))\). These groups depend only on \(n\) and the homeomorphism type of the \emph{underlying graph} \(\graf\), and are trivial if and only if \(\graf\) is homeomorphic to a line segment.
A key feature distinguishing graph braid groups from their manifold counterparts is that they admit natural cubical models. After suitably subdividing the underlying graph \(\graf\), the group \(\bbB_n(\graf)\) is realized as the fundamental group of the discrete configuration space \(UD_n(\graf)\), which is a \emph{special} cube complex in the sense of Haglund--Wise 
(see \Cref{Abrams}). This cubical structure provides a powerful framework for studying their large-scale geometry.

Our approach is guided by the principle that the large-scale geometry of \(\bbB_n(\graf)\) is reflected in the cubical structure of \(UD_n(\graf)\). Rather than relying on combinatorial methods such as discrete Morse theory to compute presentations, we analyze geometric features of these cube complexes, including hyperbolicity, the presence of undistorted surface subgroups, and structural decompositions arising from graph operations. This viewpoint allows us to detect obstructions and structural properties that are not visible from purely combinatorial approaches. In particular, this perspective reveals geometric obstructions and decomposition phenomena that do not appear at the level of presentations.

A central motivation comes from the study of quasi-isometry classes of graph braid groups. It was conjectured in \cite{Ghrist01} that graph braid groups might always be isomorphic to right-angled Artin groups (RAAGs), but this is known to fail in general \cite{KKP12,FS08,CD14,KLP16}. Since both graph braid groups and RAAGs arise as fundamental groups of special cube complexes, it is natural to ask:

\begin{question}\label{Question:QItoRAAG}
Is every graph braid group quasi-isometric to a RAAG? Or can we classify graph braid groups quasi-isometric to RAAGs?
\end{question}

A natural first step toward this problem is to understand when \(\bbB_n(\graf)\) is (Gromov-)hyperbolic since hyperbolicity is a quasi-isometry invariant and a RAAG is hyperbolic if and only if it is a free group. Genevois gave a complete combinatorial characterization of hyperbolicity for graph braid groups \cite[Theorem~1.1]{Gen21GBG}.
However, this does not determine when \(\bbB_n(\graf)\) is quasi-isometric to a free group.

As a first application of our geometric approach, we obtain a complete classification of when \(\bbB_n(\graf)\) is quasi-isometric to a free group. Our proof is entirely geometric, relying only on the cubical structure of \(UD_n(\graf)\), and is independent of discrete Morse theory. More precisely, we prove the following.

\begin{theorem}[\Cref{theorem:freeness classification}]
Let \(n\ge 2\) be an integer and let \(\graf\) be a finite connected graph. Then the graph \(n\)-braid group \(\bbB_n(\graf)\) is (quasi-isometric to) a free group if and only if one of the following holds:
\begin{enumerate}
\item \(n=2\) and \(\graf\) is planar and contains no pair of disjoint cycles;
\item \(n=3\) and \(\graf\) is either a tree, 
a graph with a unique essential vertex,
a graph with a single cycle passing through all essential vertices, or a subdivision of a graph obtained from the complete bipartite graph \(\sfK_{2,3}\) by attaching edges to non-bivalent vertices;
\item \(n\ge4\) and \(\graf\) is a graph with at most one essential vertex.
\end{enumerate}
\end{theorem}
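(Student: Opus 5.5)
The argument splits into an ``if'' and an ``only if'' part, each carried out geometrically in the special cube complex \(UD_n(\graf)\).

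\textbf{Reduction.} Graph braid groups are torsion-free. A finitely generated torsion-free group quasi-isometric to a free group is virtually free (Stallings--Dunwoody), hence free. So it suffices to decide when \(\bbB_n(\graf)\) is free. Since freeness is a hyperbolicity-type condition, I will combine two ingredients: Genevois' characterization of hyperbolicity \cite[Theorem~1.1]{Gen21GBG}, and an obstruction from undistorted closed surface subgroups.

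\textbf{Only if.} Suppose \(\bbB_n(\graf)\) is free. Then it is hyperbolic, and it contains no subgroup isomorphic to \(\bbZ^2\) or to a closed surface group of genus at least \(2\). A surface subgroup is excluded because subgroups of free groups are free.

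The \(\bbZ^2\) obstruction forces the following. For \(n=2\), \(\graf\) has no two disjoint cycles. For \(n=3\), \(\graf\) has no cycle disjoint from a vertex of valence at least \(3\). For \(n\ge 4\), \(\graf\) has no two disjoint star subgraphs. These are exactly Genevois' hyperbolicity conditions.

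To cut these families down to the list in the statement, I will exhibit explicit closed surfaces embedded as locally convex subcomplexes of \(UD_n(\graf)\). Such a subcomplex is \(\pi_1\)-injective because \(UD_n\) is special, hence nonpositively curved.
\begin{itemize}
\item For \(n=2\): \(UD_2(\sfK_5)\) and \(UD_2(\sfK_{3,3})\) are closed nonorientable surfaces (Abrams). So any graph containing a subdivision of one of them, i.e.\ any nonplanar graph, has a non-free braid group.
\item For \(n\ge4\): two essential vertices, each with three branches, support a product of two tripod configuration spaces. Together with the path joining the two vertices, this should give a surface such as a genus-\(\geq2\) surface inside \(UD_4\).
\item For \(n=3\): I must show that every hyperbolic graph outside the four listed families contains one of a finite set of minimal ``bad'' graphs. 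Examples are a theta graph with extra leaves placed wrongly, or two essential vertices not lying on a common cycle. Each bad graph needs its own surface subcomplex of \(UD_3\).
\end{itemize}
In every case, passing to a subgraph gives a locally isometric embedding of the corresponding configuration complexes, so these obstructions propagate to larger graphs.

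\textbf{If.} For each listed family I will show freeness directly by decomposing \(UD_n(\graf)\).
\begin{itemize}
\item Trees and graphs with one essential vertex: \(UD_n\) deformation retracts onto a graph. One way is to push particles toward the essential vertex; another is to observe that its universal cover has no \(2\)-dimensional flats and then use \(1\)-dimensional cohomological dimension.
\item In general: use a graph-of-spaces decomposition obtained by cutting \(\graf\) along edges (the decomposition results of the paper). Each vertex space is a product of lower configuration spaces. Freeness then follows if every vertex group is free and every edge group is trivial or the edge map is a free factor inclusion.
\item Planar graphs without disjoint cycles for \(n=2\): use the structure theory saying such graphs are, after removing leaves, a wheel, \(\sfK_{3,p}\)-like, or have a vertex meeting all cycles. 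Then do a similar decomposition.
\item \(\sfK_{2,3}\)-type graphs with \(n=3\): an explicit Euler-characteristic check combined with a collapse argument.
\end{itemize}

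\textbf{Main obstacle.} I expect the hardest part to be the \(n=3\) classification. This means producing, for every hyperbolic graph outside the list, an explicit undistorted surface subcomplex, and conversely proving that each of the four listed families yields a \(2\)-complex that collapses to a graph. I would first try to isolate a small finite list of forbidden minors and verify each one by a direct link-condition and Euler-characteristic computation.
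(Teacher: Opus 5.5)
Your skeleton matches the paper's: reduce to freeness, rule out $\bbZ^2$ via hyperbolicity, rule out surfaces, and decompose for the converse. Your $n=2$ ``only if'' argument via $UD_2(\sfK_5)$ and $UD_2(\sfK_{3,3})$ is also the paper's. For $n\ge 4$ no surface is needed. Two disjoint stars already give a torus $UD_2(\sfS_3)\times UD_2(\sfS_3)$, so hyperbolicity alone yields the list, and the converse is the Ko--Park rank formula.

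\textbf{Gap in the $n=3$ ``only if'' direction.} You leave this open and aim at the wrong targets. A theta graph with a leaf at a bivalent vertex, or an essential vertex off some cycle, is already non-hyperbolic: that cycle times $UD_2$ of a tripod at the vertex is a torus. So no surface is needed there. What is missing is the combinatorial reduction.
\begin{enumerate}
\item Assume every cycle meets every essential vertex, and suppose there are $\ge 3$ essential vertices and two cycles $C_1\ne C_2$.
\item Take an arc $P\subset C_2$ with endpoints $x\ne y$ on $C_1$ and interior off $C_1$.
\item A third essential vertex lies inside one of the two arcs of $C_1$ from $x$ to $y$. The other arc together with $P$ is a cycle missing it, a contradiction.
\end{enumerate}
Hence $\graf$ is one of the following: a tree; a graph with one essential vertex; a graph with a unique cycle; or $k\ge3$ internally disjoint paths between two essential vertices, with pendant edges attached at those two vertices. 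Only $k\ge 4$ falls outside the list. So the one obstruction you need is $UD_3(\sfK_{2,4})$, which is a closed hyperbolic surface: each edge lies in exactly two squares, vertex links are circles, and $\chi=20-48+24=-4$.

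\textbf{Gaps in the converse.} For $n=2$, ``Lov\'asz structure plus a similar decomposition'' is not an argument. The available decomposition, attaching a pendant path and getting $\bbB_2(\graf')\cong\bbB_2(\graf)\ast\bbF_N$, only handles pendant edges. It says nothing about wheels, or a vertex joined to a forest, which is where the content lies. (The $\sfK_{3,p}$ family is nonplanar for $p\ge 3$ and so irrelevant.) The paper does not do this step geometrically either: it invokes \cite[Theorem~4.8]{KP12}, a presentation of $\bbB_2$ of a planar graph whose relators are commutators of disjoint boundary cycles.

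Two of your auxiliary claims are false:
\begin{itemize}
\item Having no $2$-flats does not force cohomological dimension one; closed surface groups are counterexamples.
\item HNN extensions along free-factor edge maps need not be free: $\langle a,b,t\mid tat^{-1}=a\rangle\cong\bbZ^2\ast\bbZ$.
\end{itemize}

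What actually works for $n=3$ is the following. Attaching a pendant path at $\sfv$ produces iterated HNN extensions over $\prod_i\bbB_{m_i}(\graf_i)$ with $\sum_i m_i\le 1$, where the $\graf_i$ are the components of $\graf\setminus\sfv$. These components are trees in every listed family, so the edge groups are trivial. Everything then reduces to three base cases:
\begin{itemize}
\item $\bbB_3(\sfC_m)\cong\bbZ$;
\item $\bbB_3(\sfK_{2,3})\cong\bbF_3$, via free faces and $\chi=-2$;
\item the one-essential-vertex case.
\end{itemize}
This also covers trees with several essential vertices, where ``pushing particles toward the essential vertex'' does not apply.
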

Here, an \emph{essential} (\emph{bivalent}, resp.) vertex is a vertex of valency at least \(3\) (equal to \(2\), resp.).

This classification shows that freeness of \(\bbB_n(\graf)\) is governed by large-scale geometric features, including hyperbolicity, planarity-type conditions, and the absence of surface subgroup obstructions. The proof combines hyperbolicity criteria, detection of undistorted surface subgroups, and structural decompositions arising from the cubical geometry of configuration spaces.

Partial results in this direction were obtained in \cite{FS05,Gen21GBG,LSUTeam25}, primarily via combinatorial and discrete Morse theoretic methods. In contrast, our approach is entirely geometric and independent of discrete Morse theory.

\subsection{Graph \texorpdfstring{\(2\)}{2}-braid groups and special square complexes}
While the freeness classification captures the hyperbolic case, understanding the large-scale geometry of graph braid groups in general requires analyzing non-hyperbolic phenomena arising from higher-dimensional cubical structures.

One approach in this direction is due to Genevois, who gave a complete classification of when \(\bbB_n(\graf)\) is \emph{toral relatively hyperbolic}, that is, hyperbolic relative to free abelian subgroups \cite[Theorem~1.3]{Gen21GBG}. His analysis relies on the cubical structure of \(UD_n(\graf)\) together with his work on relatively hyperbolic groups arising from special cube complexes \cite{Gen21}, and is driven by the presence of subgroups such as \(\bbZ^2\) and the absence of subgroups of the form \(\bbZ\times \bbF_2\).


From a different viewpoint, the second author studied special square complexes and introduced a quasi-isometry invariant---the \emph{intersection complex}---which encodes coarse intersection patterns of product subcomplexes \cite{Oh22,OhCorri}. This invariant captures geometric features that are invisible from the perspective of relative hyperbolicity.


In this paper, we investigate graph $2$-braid groups from this latter viewpoint. By exploiting the structure of the special square complex \(UD_2(\graf)\), we develop a systematic framework based on maximal product subcomplexes and their intersection patterns, leading to new quasi-isometric invariants and structural results.

\subsubsection{Hierarchy of special square complexes}
For a special square complex \(Y\), a \emph{standard product subcomplex} is the image of the direct product of two graphs without leaves under a local isometry, and a \emph{maximal product subcomplex} is a standard product subcomplex which is maximal with respect to inclusion (\Cref{SPS}).
The associated \emph{intersection complex} \(\cI(Y)\) is the triangular complex whose vertices correspond to maximal product subcomplexes and whose edges correspond to components of their intersections containing standard product subcomplexes. A key feature of this construction is that any quasi-isometry between the universal covers of two special square complexes induces an isomorphism between their intersection complexes. We refer to \Cref{section:WSSCandIC} for the relevant definitions and precise statements.

\begin{theorem}\cite{Oh22}
For a connected special square complex \(Y\), the intersection complex \(\cI(\bar Y)\) of its universal cover \(\bar Y\) is a quasi-isometry invariant. 
\end{theorem}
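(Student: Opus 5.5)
The plan is to show that a quasi-isometry \(f\colon \bar Y\to \bar Y'\) between universal covers induces a bijection on maximal product subcomplexes that respects the coarse-intersection relation defining edges and triangles of \(\cI\). I would proceed in three steps.

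\textbf{Step 1: quasi-flat rigidity.} Each standard product subcomplex of \(\bar Y\) is a product \(T_1\times T_2\) of two leafless trees, and in the square (two-dimensional CAT(0)) setting it is a top-rank quasiflat. I would invoke the quasiflat rigidity theorem for CAT(0) cube complexes of dimension two (Huang; Bestvina--Kleiner--Sageev). It says that the image of a \(2\)-quasiflat lies at finite Hausdorff distance from a finite union of orthants in flats. Using this, I would show that \(f\) sends each maximal product subcomplex \(P\) to within uniformly bounded Hausdorff distance of a unique maximal product subcomplex \(P'\). Concretely:
\begin{itemize}
\item Write \(P\) as a union of flats \(\gamma_1\times\gamma_2\), where \(\gamma_1,\gamma_2\) are bi-infinite geodesics in the tree factors.
\item Each such flat goes to a quasiflat near a union of flats of \(\bar Y'\). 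Specialness of \(Y'\) keeps the hyperplane combinatorics tame.
\item Flats sharing a half-flat go to quasiflats sharing a coarse half-flat.
\item Connectivity of the flat-adjacency pattern inside \(P\) then forces all the images to lie coarsely in a single maximal product subcomplex.
\end{itemize}

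\textbf{Step 2: maximality and bijectivity.} Applying Step 1 to a quasi-inverse \(g\) gives \(P''\) near \(g(P')\). Then \(P\) is coarsely contained in \(P''\), and two maximal product subcomplexes at finite Hausdorff distance must coincide. To prove this last fact I would use the hyperplane structure of a special complex: distinct maximal products differ in some factor hyperplane, and a leafless tree factor is unbounded in that direction. So \(P\mapsto P'\) is a bijection on vertices of \(\cI\).

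\textbf{Step 3: edges and triangles.} An edge of \(\cI\) is a component of \(P_1\cap P_2\) containing a standard product subcomplex, i.e.\ an unbounded two-dimensional coarse intersection. Quasi-isometries preserve coarse intersections up to finite Hausdorff distance, so \(f\) carries such a component to a two-dimensional coarse intersection of \(P_1'\) and \(P_2'\). By the Step 1 argument this is coarsely a standard product subcomplex in an actual component of \(P_1'\cap P_2'\). Distinct components must map to distinct components; I would check this using convexity and the fact that intersections of convex subcomplexes in CAT(0) spaces are convex. Triangles, which come from triple intersections, are handled in the same way. This yields a simplicial isomorphism \(\cI(\bar Y)\isom\cI(\bar Y')\).

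The main obstacle is Step 1. Quasiflat rigidity only gives control up to finite unions of orthants, and one must rule out that the image of one maximal product is spread over several maximal products glued along branching half-flats. I would attack this first by propagating along the branching: each branch point in \(T_1\) produces three half-flats, and their images must branch at a common coarse wall. I would then use specialness (no self-osculation) to show that the branching can only occur inside one product.
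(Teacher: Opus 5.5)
Your outline follows the same route as the result the paper imports from \cite{Oh22}: first prove \Cref{MaxtoMax} (maximal product subcomplexes go uniformly close to unique maximal ones, and intersections containing flats go to intersections containing flats), then deduce \Cref{theorem:IsobetInt}. Almost all of the content is in your Step~1, and the mechanism you propose there does not work.

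\textbf{The gap in Step~1.} Neither connectivity of the ``share a half-flat'' pattern nor branch-by-branch propagation can force the images into a single product subcomplex. Such configurations occur in special complexes without lying in a common product. Work in the universal cover of the Salvetti complex of the path with consecutive vertices \(p,q,r,s\). Let \(a'\) be the \(r\)-line and \(b\) the \(q\)-line through the base vertex. Let \(a\) follow \(a'\) on its positive ray and \(p\) on the other side, and let \(b'\) follow \(b\) on its positive ray and \(s\) on the other side. Then:
\begin{enumerate}
\item \(a\times b\), \(a'\times b\), \(a'\times b'\) are flats;
\item consecutive ones share half-flats whose boundary lines are orthogonal in the middle flat, and the outer two share a quadrant;
\item each branching happens inside one product (the star of \(q\), respectively of \(r\));
\item yet no product subcomplex contains all three, because \(p\) and \(s\) do not commute.
\end{enumerate}
This is exactly the intersection pattern of \(\alpha\times\beta,\ \alpha'\times\beta,\ \alpha'\times\beta'\subset P=T_1\times T_2\), where \(\alpha,\alpha'\) share a ray and \(\beta,\beta'\) share a ray. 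Specialness does not exclude it.

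What excludes it for \(f(P)\) is the fourth flat \(\alpha\times\beta'\subset P\). Its image must be a flat coarsely containing both \(f(\alpha\times(\beta\cap\beta'))\) and \(f((\alpha\cap\alpha')\times\beta')\). That forces the hyperplanes met along \(f(\alpha\setminus\alpha')\) to cross those met along \(f(\beta'\setminus\beta)\), which is precisely what fails for \(p\) and \(s\) above. So the missing idea is to use the full product (``rectangle'') structure of \(P\), or parallel sets of its factor lines. With it you must show that every hyperplane in one coarse factor family of the image crosses every hyperplane in the other, with a globally consistent assignment of families.

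You also need a stronger input than the orthant theorem: Huang's result that quasi-isometries between universal covers of \emph{compact} weakly special complexes send top-dimensional flats uniformly close to top-dimensional flats. With only ``near a finite union of orthants'', your half-flat bookkeeping is not well defined. Compactness is also what makes the constants uniform.

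\textbf{Smaller repairs in Steps~2 and~3.} In Step~2 the stated reason is wrong. Convex subcomplexes at finite Hausdorff distance are crossed by exactly the same hyperplanes, so \(P\) and \(P''\) cannot ``differ in a factor hyperplane''. Argue by dimension instead:
\begin{itemize}
\item if \(P\neq P''\), they are disjoint, since distinct convex subcomplexes with the same crossing hyperplanes cannot meet;
\item so some hyperplane separates them, and it must cross every hyperplane meeting both;
\item in particular it crosses two crossing hyperplanes of a flat in \(P\), giving a \(3\)-cube in a square complex.
\end{itemize}
Step~3 needs the same argument to pass from ``\(f(P_1\cap P_2)\) is a two-dimensional coarse intersection of \(P_1'\) and \(P_2'\)'' to ``\(P_1'\cap P_2'\) is nonempty and contains a flat''. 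This is clause~(2) of \Cref{MaxtoMax}; in general a coarse intersection of convex subcomplexes can be far from the actual intersection. In \(\bar{Y'}\) that intersection is convex, hence connected, so your concern about components does not arise.

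Finally, an isomorphism in the sense of \Cref{Def:Morphism} also requires compatibility of labels and quasi-isometries between the universal covers of the labels. These come from restricting \(f\) to each standard product subcomplex \(\bar K\) and projecting onto the \(\bar K'\) of clause~(3), and should be part of Step~3.
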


Let \(\m Y\subset Y\) denote the union of maximal product subcomplexes.
By \Cref{lemma:Y max facts}, \(\m Y\) is again a special square complex, and maximal product subcomplexes of \(Y\) are in natural bijection with those of \(\m Y\).
When these subcomplexes are suitably embedded, the intersection complex of \(\m Y\) admits rich algebraic and geometric structure.

\begin{proposition}[\Cref{lem:Connected}]
\label{Prop:Intro_Connected}
Let \(Y\) be a special square complex. Suppose that \(\m Y\) is connected and assume that every nonempty intersection of maximal product subcomplexes of \(\m Y\) is a disjoint union of standard product subcomplexes. Then the intersection complex \(\cI(\m Y)\), which is naturally isomorphic to \(\cI(Y)\), is connected.

Moreover, if every standard product subcomplex of \(Y\) is embedded, then \(\cI(\bar{\m Y})\) is connected (hence one-ended by \Cref{lem:One-endedness}), and \(\cI(\m Y)\) carries the structure of a developable complex of groups whose development is \(\cI(\bar{\m Y})\).
\end{proposition}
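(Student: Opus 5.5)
We use the definition of the intersection complex: vertices are maximal product subcomplexes, and edges are components of pairwise intersections that contain standard product subcomplexes. To show \(\cI(\m Y)\) is connected, it suffices to join any two vertices \(P,Q\) by an edge path.

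\textbf{Step 1: connectivity of \(\cI(\m Y)\).} Since \(\m Y\) is connected and is the union of the maximal product subcomplexes, there is a chain \(P=P_0,P_1,\dots,P_k=Q\) of maximal product subcomplexes with \(P_{i}\cap P_{i+1}\neq\emptyset\). This follows from a standard argument: the union of all maximal product subcomplexes reachable from \(P\) by such chains is a subcomplex. It is closed and open in \(\m Y\), because \(\m Y\) has finitely many such subcomplexes and any point of the closure lies in a subcomplex meeting the union.

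The hypothesis now upgrades each nonempty intersection \(P_i\cap P_{i+1}\) to a disjoint union of standard product subcomplexes. Each component of the intersection is then a standard product subcomplex and therefore gives an edge between \(P_i\) and \(P_{i+1}\). The chain is thus an edge path, and \(\cI(\m Y)\) is connected. The natural isomorphism \(\cI(\m Y)\isom\cI(Y)\) comes from the bijection of \Cref{lemma:Y max facts}. Intersections containing standard product subcomplexes are the same in \(Y\) and \(\m Y\), since every standard product subcomplex lies in some maximal one and hence in \(\m Y\).

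\textbf{Step 2: the universal cover.} Assume now that standard product subcomplexes are embedded. Then each maximal product subcomplex \(P\subset \m Y\) is locally convex and embedded. Its preimage in \(\bar{\m Y}\) is a disjoint union of convex lifts \(\bar P\), each the universal cover of \(P\). The lifts are therefore exactly the maximal product subcomplexes of \(\bar{\m Y}\).

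We lift the chain argument. \(\bar{\m Y}\) is connected and is covered by these lifts, so any two lifts are joined by a chain of lifts with consecutive nonempty intersections. An intersection \(\bar P\cap\bar Q\) of convex subcomplexes is convex and connected. It maps into a component of \(P\cap Q\), which by hypothesis is a standard product subcomplex. Convexity together with embeddedness should then force \(\bar P\cap\bar Q\) to be a lift of that standard product subcomplex; in particular it contains a standard product subcomplex and gives an edge. Hence \(\cI(\bar{\m Y})\) is connected, and \Cref{lem:One-endedness} gives one-endedness.

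\textbf{Step 3: complex of groups.} The deck group \(\pi_1(\m Y)\) acts on \(\cI(\bar{\m Y})\) with quotient \(\cI(\m Y)\). This uses Step 2's identification of edges downstairs with orbits of edges upstairs, which relies on embeddedness: distinct components of \(P\cap Q\) correspond to distinct orbits.

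Vertex stabilizers are the conjugates of \(\pi_1(P)\), and edge stabilizers are the conjugates of \(\pi_1(C)\) for the components \(C\) of the intersections. Triangles are handled the same way using triple intersections. By Bridson--Haefliger, the complex of groups induced by this action is developable, and its development is the acting complex \(\cI(\bar{\m Y})\). Here \(\cI(\bar{\m Y})\) is simply connected or at least the right covering, since the action has strict fundamental domain data given by the above stabilizers.

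\textbf{Main obstacle.} The key point I expect to be delicate is Step 2's claim that \(\bar P\cap\bar Q\) is a full lift of a standard product subcomplex rather than a lower-dimensional piece. I would first try to prove it using the fact that a locally convex embedded subcomplex of a special complex has convex lifts, and that a convex subcomplex covering a product subcomplex is itself a product of trees.
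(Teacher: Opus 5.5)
Your Steps~1 and~2 follow the paper's proof of \Cref{lem:Connected}. The point you flag in Step~2 is easier than you expect. Write \(p=p_{\m Y}\), take \(x\in\bar P\cap\bar Q\), and let \(K\) be the component of \(P\cap Q\) containing \(p(x)\). Let \(\bar K\) be the component of \(p^{-1}(K)\) through \(x\). Since \(K\subset P\), and by embeddedness the components of \(p^{-1}(P)\) are exactly the lifts of \(P\), connectedness forces \(\bar K\subset\bar P\); likewise \(\bar K\subset\bar Q\). So \(\bar P\cap\bar Q\) contains the standard product subcomplex \(\bar K\). Conversely, \(\bar P\cap\bar Q\) is convex, hence connected, so it maps into the single component \(K\) and lies in \(\bar K\). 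No product-of-trees argument is needed.

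The genuine gap is in Step~3.
\begin{itemize}
\item \emph{What must be proved.} In this paper, ``\(\cI(\bar{\m Y})\) is the development'' means the simply connected development of \Cref{Thm:Development}. That simple connectivity is used later, in \Cref{theorem:structureofI} and \Cref{proposition:edge-simplex type}. Developability of the complex of groups of an action, with the acting complex as development relative to the natural morphism, is automatic and carries no content. What has to be shown is that the natural map from the fundamental group of the complex of groups on \(\cI(\m Y)\) to \(\pi_1(\m Y)\) is an isomorphism; equivalently, that \(\cI(\bar{\m Y})\) is simply connected. You leave this open (``simply connected or at least the right covering'').
\item \emph{Why your justification fails.} A strict fundamental domain \(F\) would make \(F\hookrightarrow|\cI(\bar{\m Y})|\to|\cI(\m Y)|\) an isomorphism. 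Then \(\pi_1(|\cI(\m Y)|)\) would inject into \(\pi_1(|\cI(\bar{\m Y})|)\), which is trivial by the very conclusion you want. But \(|\cI(UP_2(\graf))|\) is not simply connected for a normal bunch of grapes whose stem is not a path (\Cref{SM-types}), so no such \(F\) exists in general.
\item \emph{How the paper closes it.} The embedded product and standard intersection properties let one view \(\m Y\) as the geometric realization of the complex of groups on \(\cI(\m Y)\): aspherical product pieces glued along aspherical product pieces. Hence its fundamental group is \(\pi_1(\m Y)\). \Cref{Thm:Developable} then gives developability, and uniqueness of the simply connected development identifies it with \(\cI(\bar{\m Y})\).
\item \emph{A fix within your own framework.} Run the Step~2 argument on any finite family of lifts: each nonempty finite intersection is a single convex lift of a standard product subcomplex. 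So \(\cI(\bar{\m Y})\) is exactly the nerve of a locally finite cover of the \(\CAT(0)\) complex \(\bar{\m Y}\) by convex subcomplexes, all of whose nonempty finite intersections are contractible. By the nerve theorem it is contractible, which supplies the missing simple connectivity.
\end{itemize}
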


When \(\m Y\) is nonempty and connected, it is natural to expect that \(\pi_1(\m Y)\) captures a significant portion of the large-scale geometry of \(\pi_1(Y)\).
However, several possibilities may arise.
On the one hand, \(Y=\m Y\), meaning that \(Y\) is entirely composed of maximal product subcomplexes.
On the other hand, the inclusion \(\m Y\hookrightarrow Y\) need not induce an injective homomorphism on fundamental groups.
These possibilities motivate the study of the \(\m Y\)-hierarchy (see \Cref{definition:Ymax hierarchy}), which is organized according to the extent to which \(\m Y\) behaves well inside \(Y\) as reflected in the induced maps on their fundamental groups.

\subsubsection{Maximal product subcomplexes of \(UD_2(\graf)\)}
To investigate a graph \(2\)-braid group \(\bbB_2(\graf)\) via the special square complex \(UD_2(\graf)\) and its intersection complex, a first step is to understand how standard product subcomplexes admit a combinatorial description in terms of the underlying graph \(\graf\).

\begin{proposition}[\Cref{Lem:SPSinGBG,corollary:maximal}]\label{Prop:maximal}
Let \(\graf\) be a connected graph. A subcomplex \(K\subset UD_2(\graf)\) is a standard product subcomplex if and only if it corresponds to a pair of disjoint connected subgraphs \(\graf_1\) and \(\graf_2\) of \(\graf\) without leaves.
In particular, every standard product subcomplex of \(UD_2(\graf)\) is embedded.


Consequently, \(K\) is a maximal product subcomplex if and only if it is standard and there exists no pair of disjoint subgraphs \(\graf_1'\) and \(\graf_2'\) without leaves such that \(\graf_i\subset\graf_i'\) for \(i=1,2\), with at least one inclusion being proper.
\end{proposition}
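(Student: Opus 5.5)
The plan is to work directly with the cubical structure of \(UD_2(\graf)\). Cells are unordered pairs \(\{\sigma_1,\sigma_2\}\) of closed cells of \(\graf\) (vertices or edges) with disjoint closures. A square is a pair of disjoint edges. The link of a vertex \(\{v,w\}\) is the join-type graph whose vertices are the edges of \(\graf\) at \(v\) avoiding \(w\) and the edges at \(w\) avoiding \(v\). Two such link vertices span an edge exactly when the corresponding edges of \(\graf\) are disjoint.

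\textbf{Step 1: pairs of subgraphs give standard product subcomplexes.} Let \(\graf_1,\graf_2\subset\graf\) be disjoint connected subgraphs without leaves. Then \(\graf_1\times\graf_2\) embeds in \(UD_2(\graf)\) as the subcomplex of cells \(\{\sigma_1,\sigma_2\}\) with \(\sigma_i\subset\graf_i\).
\begin{itemize}
\item The map is injective because \(\graf_1\cap\graf_2=\emptyset\), so the order of the two factors is recovered.
\item It is a local isometry because links in the image are full subcomplexes. If two edges \(e\subset\graf_1\) at \(v\) and \(f\subset\graf_2\) at \(w\) occur in the link of \(\{v,w\}\), they are disjoint, so the square \(\{e,f\}\) exists and lies in the image.
\item For two edges on the same side, say \(e,e'\subset\graf_1\) both at \(v\), there is no square in \(UD_2(\graf)\) joining them, since they meet at \(v\). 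Hence no missing diagonal occurs.
\end{itemize}
So the image is a standard product subcomplex, and it is embedded.

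\textbf{Step 2: every standard product subcomplex arises this way.} Let \(\phi\colon A\times B\to UD_2(\graf)\) be a local isometry, with \(A,B\) graphs without leaves. Fix a vertex \(b\) of \(B\). The edges of \(A\times\{b\}\) map to edges of \(UD_2(\graf)\), and each such edge moves exactly one token along an edge of \(\graf\).

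The key claim is that all horizontal edges move the same token, and all vertical edges move the other. To see this, take an edge \(a\) of \(A\) and an edge \(\beta\) of \(B\). The square \(a\times\beta\) maps to a square \(\{e,f\}\), so \(a\) and \(\beta\) move different tokens along disjoint edges. Propagate this along \(A\) and \(B\) using connectedness and valence at least \(2\). At a vertex, two consecutive horizontal edges each form a square with a fixed vertical edge \(\beta\), which moves, say, token \(2\) along \(f\). Therefore both horizontal edges move token \(1\), the one not on \(f\).

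From this, the horizontal projection defines a combinatorial map \(A\to\graf\) recording token \(1\), and the vertical projection defines \(B\to\graf\) recording token \(2\). Local injectivity of \(\phi\) (part of being a local isometry) makes these maps locally injective immersions of graphs. Their images \(\graf_1\) and \(\graf_2\) are connected and have no leaves, because a leaf in an image would force a leaf in the leafless domain via local injectivity.

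Disjointness should follow from the product structure. Every position of token \(1\) along \(A\) coexists with every position of token \(2\) along \(B\), and configurations never collide, so \(\graf_1\cap\graf_2=\emptyset\).

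The main obstacle is showing that \(\phi\) factors through the embedded subcomplex \(\graf_1\times\graf_2\) and is, up to the convention of the definition in \Cref{SPS}, that subcomplex itself, rather than a nontrivial cover of it. If \(\emph{standard product subcomplex}\) means the image, this is immediate from Step 2. Otherwise, I would argue that the image of \(\phi\) is a locally convex subcomplex equal to the image of \(\graf_1\times\graf_2\). Since Step 1 shows this image is already embedded, the image carries the asserted structure, which gives the ``in particular'' statement.

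\textbf{Step 3: maximality.} By Steps 1 and 2, inclusion of standard product subcomplexes \(K\subset K'\) corresponds to inclusions \(\graf_i\subset\graf_i'\), after possibly swapping the indices on one side. Swapping is harmless, since the statement is symmetric. A factor inclusion induces a proper inclusion of subcomplexes exactly when it is proper on at least one factor. So \(K\) is maximal precisely under the stated condition.
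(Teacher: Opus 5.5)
Your proposal is correct, and its key step takes a different route from the paper's.

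\textbf{How the paper argues.} The paper applies \Cref{proposition:local isometric embedding into UD} to the slices \(\iota(\grafl_1\times\sfw_2)\) and \(\iota(\sfw_1\times\grafl_2)\). That proposition says a locally isometric graph in \(UD_2(\graf)\) moves only one point, since two consecutive edges moving different points would span a square. The paper then uses the slice condition \(\grafl_1\isom\iota(\grafl_1\times\sfw_2)\), which is built into \Cref{Def:ProductSubcomplex}, to identify each \(\grafl_i\) with a subgraph \(\graf_i\). Finally it propagates across squares to conclude that \(\iota\) is the canonical embedding, and disjointness follows.

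\textbf{How you argue.} You get the token separation from the transverse direction instead. At each vertex of \(A\times B\), every horizontal edge spans an image square with every vertical edge, so the two move different points. Vertical edges exist at every vertex, so all horizontal edges there move the same point. This labelling is intrinsic at each vertex, and a horizontal edge fixes the vertical point, so the projections are well defined. Your route needs only that \(\phi\) is a combinatorial immersion; it uses neither fullness of links nor the slice condition. It yields \(\phi(\sigma\times\tau)=\{h_A(\sigma),h_B(\tau)\}\) with \(h_A,h_B\) locally injective and with disjoint images. Hence the image is \(h_A(A)\itimes h_B(B)\) whether or not \(h_A,h_B\) are injective. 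Your Step~1 also verifies the ``if'' direction, which the paper leaves implicit.

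\textbf{The ``main obstacle''.} It is not an obstacle for the equivalence, because a standard product subcomplex is by definition the image of a standard product structure. Injectivity matters only for the ``embedded'' assertion, i.e.\ the embedded product property, which asks that the product structure itself be injective. There you should invoke the slice condition in \Cref{Def:ProductSubcomplex}. It forces \(h_A\) and \(h_B\) to be isomorphisms onto their images. Since the images are disjoint, \(\phi\) is then injective, and this is exactly how the paper identifies \(\iota\) with the canonical embedding.

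\textbf{Step 3.} The factorwise description of inclusions is \Cref{InclusionBetweenSPSes} in the paper, but it also follows directly. Every cell of \(\graf_1\) occurs in a cell of \(K'\), so the connected graph \(\graf_1\) lies in \(\grafl_1\sqcup\grafl_2\) and hence in one factor. Any vertex \(\{x,y\}\) of \(K\) then forces \(\graf_1\) and \(\graf_2\) into different factors.
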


By \Cref{Lem:UP_2Special}, the subcomplex \(UP_2(\graf)=\m{UD_2(\graf)}\) is a connected special square complex whenever it is nonempty. 
Moreover, unlike \(UD_2(\graf)\), its homotopy type is independent of the subdivision of \(\graf\), making it a more intrinsic object in the study of graph \(2\)-braid groups (\Cref{theorem:subdivision and UP2}).
It is therefore natural to ask how \(UD_2(\graf)\) fits into the \(\m Y\)-hierarchy.
In fact, there exist graphs realizing each region of the Venn diagram associated with this hierarchy; see \Cref{figure:Venn diagram} and the examples illustrated there.

Via \Cref{Prop:maximal}, the intersection complex \(\cI(UP_2(\graf))\) can be described directly in terms of the underlying graph \(\graf\). If \(UP_2(\graf)\subset UD_2(\graf)\) is locally convex, and hence \(\pi_1(UP_2(\graf))\) embeds as a subgroup of \(\bbB_2(\graf)\), then \(\pi_1(UP_2(\graf))\) captures essential information about the large-scale geometry of \(\bbB_2(\graf)\).

\begin{theorem}[\Cref{Thm:SubclassdefiningQII}]\label{Thm:Intro_SubclassdefiningQII}
For each \(i=1,2\), let \(\graf_i\) be a finite connected graph such that the subcomplex \(UP_2(\graf_i)\) is nonempty, locally convex in \(UD_2(\graf_i)\), and satisfies the first assumption of \Cref{Prop:Intro_Connected}. 

If \(\bbB_2(\graf_1)\) and \(\bbB_2(\graf_2)\) are quasi-isometric, then \(\pi_1(UP_2(\graf_1))\) and \(\pi_1(UP_2(\graf_2))\) are quasi-isometric.
\end{theorem}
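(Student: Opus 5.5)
The plan is to pass through universal covers and intersection complexes. Fix a quasi-isometry \(f\colon \bar{UD_2(\graf_1)}\to\bar{UD_2(\graf_2)}\) between the universal covers, which exists because \(\bbB_2(\graf_i)\) acts geometrically on \(\bar{UD_2(\graf_i)}\). By the theorem of \cite{Oh22} quoted above, \(f\) induces an isomorphism \(f_*\colon\cI(\bar{UD_2(\graf_1)})\to\cI(\bar{UD_2(\graf_2)})\). The essential extra feature of Oh's argument is that this isomorphism is realized coarsely: each maximal product subcomplex \(P\) of the universal cover is sent by \(f\) within uniformly bounded Hausdorff distance of the maximal product subcomplex \(f_*(P)\). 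I will use this coarse version.

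\textbf{Step 1.} Identify \(\bar{UP_2(\graf_i)}\) inside \(\bar{UD_2(\graf_i)}\). Since \(UP_2(\graf_i)\) is locally convex, each component of its preimage in the universal cover is a convex subcomplex isomorphic to \(\bar{UP_2(\graf_i)}\). By \Cref{Prop:maximal}, every standard product subcomplex is embedded. Hence the maximal product subcomplexes of \(\bar{UD_2(\graf_i)}\) are exactly the lifts of those of \(UD_2(\graf_i)\), and each lies in exactly one such component. By \Cref{Prop:Intro_Connected}, \(\cI(\bar{UP_2(\graf_i)})\) is connected. Therefore the components of the preimage of \(UP_2(\graf_i)\) correspond bijectively to the connected components of \(\cI(\bar{UD_2(\graf_i)})\). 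Each such component \(Z\) is the union of the maximal product subcomplexes in its \(\cI\)-component.

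\textbf{Step 2.} Since \(f_*\) is a simplicial isomorphism, it maps connected components to connected components. So a component \(Z_1\cong\bar{UP_2(\graf_1)}\) is matched with a component \(Z_2\cong\bar{UP_2(\graf_2)}\), and \(f\) sends each maximal product subcomplex of \(Z_1\) uniformly close to one in \(Z_2\). Because \(Z_1\) is the union of its maximal product subcomplexes, \(f(Z_1)\) lies in a uniform neighborhood of \(Z_2\). Symmetrically, a coarse inverse sends \(Z_2\) near \(Z_1\). Composing \(f\) with a closest-point projection onto \(Z_2\) then gives a map \(Z_1\to Z_2\) with a coarse inverse.

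\textbf{Step 3.} Show that this map is a quasi-isometry with respect to the intrinsic metrics. Convexity makes \(Z_i\) isometrically embedded in \(\bar{UD_2(\graf_i)}\) for the CAT(0), and hence the \(\ell^1\), metric. So the restriction of a quasi-isometry remains a quasi-isometric embedding, and coarse surjectivity comes from Step 2. Finally, \(\pi_1(UP_2(\graf_i))\) acts geometrically on \(Z_i\), because \(UP_2(\graf_i)\) is a finite connected complex (\Cref{Lem:UP_2Special}). The Švarc–Milnor lemma then finishes the proof.

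The main obstacle is the coarse statement used in Step 2: that \(f\) sends each maximal product subcomplex within uniform Hausdorff distance of its \(f_*\)-image, with a constant independent of \(P\). I would extract this from the proof in \cite{Oh22}, where the isomorphism \(f_*\) is built precisely from such coarse images via the quasi-flat and product rigidity of special square complexes. Finiteness of the graphs makes only finitely many shapes of product subcomplexes appear, and this is what should supply the uniform constants.
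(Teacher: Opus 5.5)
Your proposal is correct and follows the same route as the paper's proof of \Cref{Thm:SubclassdefiningQII}. In both, local convexity identifies copies of \(\bar{UP_2(\graf_i)}\) inside \(\bar{UD_2(\graf_i)}\), the standard intersection property (via \Cref{lem:Connected}) makes each copy correspond to a connected component of the intersection complex, and \Cref{MaxtoMax} with \Cref{theorem:IsobetInt} forces the quasi-isometry to carry a copy coarsely onto a copy. You spell out details the paper leaves implicit. The uniform coarse control you flag as the main obstacle needs no extraction from the proof in \cite{Oh22}: it is exactly item~(2) of \Cref{MaxtoMax} as quoted, with \(D=D(\lambda,\varepsilon)\) independent of the maximal product subcomplex.
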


\subsubsection{Graphs of circumference one}
The \emph{circumference} of a graph is the length of its longest cycle. 
Since graphs of circumference zero are trees, a natural tree-like class that still allows cycles, motivated by the search for non-toral relatively hyperbolic graph \(2\)-braid groups, is the class of graphs of circumference at most one.

\begin{definition}[\Cref{Def:BunchesofGrapes,Remark:BunchesofGrapes}]\label{Def:BunchesofGrapes_Intro}
For any graph, its \emph{minimal simplicial representative} is the graph homeomorphic to it with the minimal number of vertices; it is unique up to graph isomorphism.

A \emph{bunch of grapes} is the minimal simplicial representative of a graph of circumference at most one; it is said to be \emph{normal} if it contains a pair of disjoint cycles and has no leaves; see \Cref{figure:grapes stem and twigs} for an example.
We denote by \(\grapegraph\) the set of all bunches of grapes and by \(\grapegraph_{\norm}\) the set of all normal bunches of grapes.
\end{definition}





The discrete \(2\)-configuration spaces of bunches of grapes not only satisfy
the assumptions in \Cref{Prop:Intro_Connected,Thm:Intro_SubclassdefiningQII} but also enjoy the following property, which was the primary motivation for introducing the \(\m Y\)-hierarchy for special square complexes.

\begin{proposition}[\Cref{Prop:free factor}]\label{Prop:free factor_Intro}
Let \(\graf\in\grapegraph_\norm\). Then the subcomplex \(UP_2(\graf)\) is locally convex in \(UD_2(\graf)\) such that \(\bbB_2(\graf)\isom\pi_1(UP_2(\graf))\ast \bbF_N\) for some \(N\ge 2\).
\end{proposition}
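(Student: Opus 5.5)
We will use the structure of a normal bunch of grapes $\graf$: a tree $\sfT$ (the stem and twigs) together with finitely many cycles (grapes), each cycle meeting the rest of $\graf$ in exactly one vertex. Circumference at most one means every cycle is a loop or bigon-like short cycle attached at one vertex, and cycles are pairwise vertex-disjoint except possibly sharing an attaching vertex. The plan has three steps: identify $UP_2(\graf)$ explicitly, prove it is locally convex, and then compute $UD_2(\graf)$ relative to it by a graph-of-spaces decomposition.

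First, by \Cref{Prop:maximal}, the maximal product subcomplexes correspond to maximal pairs of disjoint leafless connected subgraphs $(\graf_1,\graf_2)$. In a bunch of grapes, a leafless connected subgraph is either a single cycle or a union of cycles joined by the paths of $\sfT$ connecting their attaching vertices. Hence a maximal pair is obtained as follows: choose an edge or vertex separating $\sfT$ and take $\graf_1,\graf_2$ to be the maximal leafless subgraphs on the two sides. Thus $UP_2(\graf)$ is the union of the products $\graf_1\times\graf_2$ over such splittings.

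For local convexity, we check links. At a vertex $\{x,y\}$ of $UP_2(\graf)$, the link in $UD_2(\graf)$ is the join-like graph formed by edges at $x$ avoiding $y$ and edges at $y$ avoiding $x$, with squares given by disjoint pairs. Local convexity requires that whenever two edges of $UP_2$ at $\{x,y\}$ span a square in $UD_2$, that square lies in $UP_2$ (no missing diagonals). One edge moves $x$ and the other moves $y$. We argue that if both moves stay within leafless subgraphs of some maximal pairs, then a single maximal pair contains both, using that the tree separation is determined by the tree path from $x$ to $y$. I expect this compatibility argument to be the main obstacle, particularly at attaching vertices of several grapes, where a case analysis on whether $x$ and $y$ lie on cycles or on the stem is needed.

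Finally, for the free factor, we build $UD_2(\graf)$ from $UP_2(\graf)$ by adding the remaining cells, namely configurations in which one point lies on a twig or stem segment outside any leafless part, or both points are confined to the same side. We show that $UD_2(\graf)$ deformation retracts onto $UP_2(\graf)$ with finitely many arcs and circles wedged on, as follows. Configurations with both points in one grape or one branch of $\sfT$ contribute pieces that are trees or graphs glued to $UP_2$ along contractible or connected sets, since $UD_2$ of a cycle is a circle and $UD_2$ of a subdivided tree is a graph. Collapsing the contractible pieces by discrete-flow-free elementary collapses, that is, pushing a point along a leaf edge toward the stem, leaves a graph $G$ attached to $UP_2(\graf)$ along a tree. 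By van Kampen, $\bbB_2(\graf)\isom\pi_1(UP_2(\graf))\ast\pi_1(G)$ with $\pi_1(G)=\bbF_N$. To get $N\ge2$, we exhibit two independent loops. Since $\graf$ is normal there are two disjoint cycles $c_1,c_2$, and two points moving together in $c_1$ (with $UD_2(c_1)$ a circle) give one loop. Such a loop is not in $UP_2$ because both points lie in the same cycle. Doing the same in $c_2$ gives a second loop. Both survive in the abelianization, by comparing with $H_1(UD_2(\graf))$.
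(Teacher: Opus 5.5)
\textbf{Main gap: the free-factor step.} This step does not work as written, because it misidentifies the cells of \(UD_2(\graf)\) that lie outside \(UP_2(\graf)\).

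\emph{The stem is entirely inside \(UP_2(\graf)\).} Any two disjoint cells of the stem are separable: cut along a twig on the path between them, and note that both \(\sft\)-components are leafless since \(\graf\) is normal. Hence \(UD_2(\sfT)\subset UP_2(\graf)\), and configurations on ``one branch of \(\sfT\)'' contribute nothing.

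\emph{What actually lies outside.} The cells outside \(UP_2(\graf)\) are exactly the cells of \(UD_2(\grafl_\sfx)\), where \(\grafl_\sfx=\graf_\sfx\cup\st_\sfT(\sfx)\), that are not in \(UD_2(\st_\sfT(\sfx))\cup(\graf_\sfx\itimes\lk_\sfT(\sfx))\). These include genuine squares. Examples are \([\sfx,\sfw]\itimes[\sfa,\sfb]\), a twig at \(\sfx\) times the edge of a grape \(\sfx\sfa\sfb\) opposite \(\sfx\), and, when \(\loops(\sfx)\ge2\), squares between two grapes at \(\sfx\).

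\emph{Your collapsing mechanism does not apply.} A normal bunch of grapes has no leaves, so ``pushing a point along a leaf edge'' defines no collapse.

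\emph{Gluing along a connected set is not enough.} Gluing along a merely connected set yields an amalgam, not a free product. Here \(UP_2(\graf)\cap UD_2(\grafl_\sfx)\) is a connected graph whose free fundamental group has rank \(\binom{\val_\sfT(\sfx)-1}{2}+\val_\sfT(\sfx)\loops(\sfx)\ge1\). The real content is that this subgroup is a free factor of \(\bbB_2(\grafl_\sfx)\). The paper proves exactly this. It writes \(UD_2(\graf)=UP_2(\graf)\cup\bigcup_\sfx UD_2(\grafl_\sfx)\), checks that distinct pieces meet only inside \(UP_2(\graf)\), and splits \(UD_2(\grafl_\sfx)=A_1\cup A_2\cup A_3\).

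\emph{How to repair your collapsing route.} It can be made to work with the correct free faces:
\begin{enumerate}
\item \([\sfx,\sfw]\itimes\sfa\) lies only in the square \([\sfx,\sfw]\itimes[\sfa,\sfb]\).
\item For a second grape \(\sfx\sfc\sfc'\), \([\sfx,\sfc]\itimes\sfa\) lies only in \([\sfx,\sfc]\itimes[\sfa,\sfb]\). Symmetrically, \([\sfx,\sfa]\itimes\sfc\) lies only in \([\sfx,\sfa]\itimes[\sfc,\sfc']\).
\item After these squares are removed, \(\sfa\itimes[\sfc,\sfc']\) becomes a free face of \([\sfa,\sfb]\itimes[\sfc,\sfc']\).
\end{enumerate}
None of these free edges lies in \(UP_2(\graf)\). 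So after collapsing, every remaining square belongs to \(UP_2(\graf)\), and connectedness of \(UP_2(\graf)\) gives \(\pi_1(UP_2(\graf))\ast\bbF_N\), as in \Cref{example:bog2}.

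\textbf{The other two steps are incomplete rather than wrong.}

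\emph{Local convexity.} Your link reformulation is Condition~(C) of \Cref{Lem:CombinatorialConditions}, which is the paper's route. However, the case analysis you defer is the whole proof. You must show that two disjoint non-separable edges both lie in a single \(\grafl_\sfx\): either they lie in two different grapes at \(\sfx\), or one is a twig \([\sfx,\sfw]\) and the other is the grape edge opposite \(\sfx\). In the first case, no endpoint of either edge is separable from the other edge. In the second case, the only separable vertex--edge pair is \((\sfw,[\sfa,\sfb])\). Hence the hypothesis of (C) never holds.

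\emph{The bound \(N\ge2\).} Knowing that the two grape loops are nonzero in \(H_1\), or that they are not contained in \(UP_2(\graf)\), does not show they are independent modulo the image of \(\pi_1(UP_2(\graf))\). A clean fix uses a cocycle:
\begin{enumerate}
\item The edge \(\sfa\itimes[\sfx,\sfb]\) of a grape \(\sfx\sfa\sfb\) lies in no square of \(UD_2(\graf)\) and not in \(UP_2(\graf)\).
\item Hence its indicator cochain is a \(1\)-cocycle that vanishes on \(UP_2(\graf)\).
\item It pairs to \(\pm1\) with the loop \(UD_2(\sfx\sfa\sfb)\) and to \(0\) with the loops of other grapes.
\end{enumerate}
With grapes at two distinct vertices, this gives rank at least \(2\) in \(H_1(\bbB_2(\graf))/H_1(UP_2(\graf))\cong\bbZ^N\). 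The paper instead reads off \(N\ge\loops(\sfT)\) from the explicit ranks.
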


Combining this proposition with \Cref{Thm:Intro_SubclassdefiningQII} and the result of Papazoglu--Whyte~\cite{PW02} (\Cref{PW}), we deduce that, for a bunch of grapes \(\graf\), the complex \(UP_2(\graf)\) captures all large-scale geometric information of \(\bbB_2(\graf)\). In particular, the converse of \Cref{Thm:Intro_SubclassdefiningQII} holds for normal bunches of grapes.

\begin{theorem}[\Cref{theorem:QIbetweenGBGs}]\label{theorem:QIbetweenGBGs_Intro}
For \(\graf_1, \graf_2\in\grapegraph_\norm\), the graph \(2\)-braid groups \(\bbB_2(\graf_1)\) and \(\bbB_2(\graf_2)\) are quasi-isometric if and only if \(\pi_1(UP_2(\graf_1))\) and \(\pi_1(UP_2(\graf_2))\) are quasi-isometric. 
\end{theorem}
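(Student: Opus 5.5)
One direction is the forward implication. For each normal bunch of grapes \(\graf\in\grapegraph_\norm\), \Cref{Prop:free factor_Intro} shows that \(UP_2(\graf)\) is nonempty and locally convex in \(UD_2(\graf)\). We also need to check that \(UP_2(\graf)\) satisfies the first assumption of \Cref{Prop:Intro_Connected}. That is, every nonempty intersection of maximal product subcomplexes must be a disjoint union of standard product subcomplexes. The paper asserts that bunches of grapes satisfy the hypotheses of \Cref{Thm:Intro_SubclassdefiningQII}, so I take this as given. If it needed proof, I would argue as follows. By \Cref{Prop:maximal}, maximal product subcomplexes correspond to maximal pairs of disjoint leafless connected subgraphs. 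In a graph of circumference one these are unions of disjoint cycles joined by stems. Intersections of two such pairs are then products of intersections of subgraphs, which are again leafless or empty. With the hypotheses in place, the forward direction is exactly \Cref{Thm:Intro_SubclassdefiningQII}.

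The converse direction goes through free products. Suppose \(\pi_1(UP_2(\graf_1))\) and \(\pi_1(UP_2(\graf_2))\) are quasi-isometric. By \Cref{Prop:free factor_Intro} we have \(\bbB_2(\graf_i)\isom \pi_1(UP_2(\graf_i))\ast\bbF_{N_i}\) with \(N_i\ge 2\). The plan is to invoke Papazoglu--Whyte (\Cref{PW}). Their theorem says that free products of finitely generated groups with infinitely many ends are quasi-isometric whenever they have the same set of quasi-isometry types of one-ended factors. Here I take the case where \(G_i:=\pi_1(UP_2(\graf_i))\) is one-ended.

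To see that \(G_i\) is one-ended, note that \(UP_2(\graf_i)\) is a connected special square complex that is a union of maximal product subcomplexes. Each maximal product subcomplex is a product of two leafless connected graphs, each containing a cycle, so each has one-ended fundamental group (a product of two infinite groups). The intersection complex \(\cI(\bar{\m Y})\) is connected by \Cref{Prop:Intro_Connected}, and embeddedness holds by \Cref{Prop:maximal}. Adjacent product pieces meet in subcomplexes containing standard product subcomplexes, which are one-ended. A union of one-ended pieces glued along infinite connected overlaps, arranged in a connected pattern, is then one-ended; this is essentially \Cref{lem:One-endedness}. Hence \(G_i\) is one-ended.

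Assume \(G_1\) and \(G_2\) are quasi-isometric and one-ended. Then the one-ended free factors of \(\bbB_2(\graf_1)\) and \(\bbB_2(\graf_2)\) consist of a single quasi-isometry type, the same for both. Since \(N_i\ge 2\), the remaining free factors are free groups of rank at least \(2\). Papazoglu--Whyte therefore gives \(\bbB_2(\graf_1)\qisom\bbB_2(\graf_2)\). I expect the main obstacle to be the one-endedness of \(G_i\), together with ensuring the hypotheses of \Cref{PW} match precisely, including that both groups have infinitely many ends. If one-endedness fails, I would instead apply the Stallings–Dunwoody decomposition of \(G_i\) itself. The Papazoglu--Whyte criterion compares the sets of one-ended factors, and quasi-isometric groups have matching such sets by the same theorem. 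So the conclusion still follows after adding the extra free factors \(\bbF_{N_i}\).
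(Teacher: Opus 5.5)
Your proposal is correct and follows the paper's route. The forward direction is \Cref{Thm:SubclassdefiningQII}, using the local convexity and free-factor decomposition from \Cref{Prop:free factor} and the standard intersection property from \Cref{Lem:RIconnected}; the converse is \Cref{PW} together with the one-endedness of \(\pi_1(UP_2(\graf_i))\) from \Cref{theorem:structureofI}. Two small remarks, neither of which affects the argument: your optional sketch of the standard intersection property is loose, since an intersection such as \(\graf_{\sft,2}\cap\graf_{\sft',1}\) can have leaves and what actually works is the twig description in \Cref{Lem:RIconnected}; and the first part of \Cref{PW} already gives the converse without needing one-endedness.
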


\subsection{Applications}
The first application of the preceding results provides a partial answer to \Cref{Question:QItoRAAG}. 
In \cite{Oh22}, the second author showed that there exist infinitely many graph \(2\)-braid groups that are quasi-isometric to RAAGs, and infinitely many that are not. 
Using our results, we enlarge these classes as follows.

\begin{theorem}[\Cref{theorem:isomorphictoRAAG}]
Let \(\graf\in\grapegraph\) be a bunch of grapes.
If there is a path graph \(\sfP\subset\sfT\) containing all vertices \(\sfv\) of \(\sfT\) with at least one attached cycle, then \(\bbB_2(\graf)\) is isomorphic to a RAAG.
\end{theorem}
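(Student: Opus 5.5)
The plan is to combine \Cref{Prop:free factor_Intro} with an explicit description of \(UP_2(\graf)\) as a union of product subcomplexes glued along a chain, and then to recognise its fundamental group as an iterated amalgam of RAAGs along special subgroups. Such an amalgam is again a RAAG.

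First, if \(\graf\) is not normal (it has leaves, or no pair of disjoint cycles), I reduce to easier cases. In the degenerate case \(UP_2(\graf)\) is empty or \(\bbB_2(\graf)\) is free by the freeness classification, and free groups are RAAGs. Leaves can be pruned, since attached trees only contribute free factors; this too should come out of the decomposition behind \Cref{Prop:free factor_Intro}. So assume \(\graf\in\grapegraph_\norm\). Then \(\bbB_2(\graf)\isom\pi_1(UP_2(\graf))\ast\bbF_N\), and a free product of a RAAG with a free group is a RAAG (take the disjoint union of defining graphs). It therefore suffices to show that \(\pi_1(UP_2(\graf))\) is a RAAG.

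Second, I describe the maximal product subcomplexes via \Cref{Prop:maximal}. Let \(\sfv_1,\dots,\sfv_m\) be the vertices of \(\sfP\) carrying cycles, in order along \(\sfP\), and let \(C_i\) be the bouquet of cycles at \(\sfv_i\).
\begin{itemize}
\item A leafless connected subgraph of a bunch of grapes is a union of cycles together with the tree paths joining them.
\item Two disjoint such subgraphs must be separated by a point of the stem.
\item Because all cycle-bearing vertices lie on \(\sfP\), a maximal pair is obtained by cutting \(\sfP\) between \(\sfv_k\) and \(\sfv_{k+1}\). It is \(\graf_1^{(k)}=\) the leafless hull of \(C_1\cup\dots\cup C_k\), and \(\graf_2^{(k)}=\) the leafless hull of \(C_{k+1}\cup\dots\cup C_m\).
\end{itemize}
This yields maximal product subcomplexes \(M_1,\dots,M_{m-1}\), each homotopy equivalent to \(\bbF_{a_k}\times\bbF_{b_k}\). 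They intersect only consecutively, and \(M_k\cap M_{k+1}\) is the product \(\graf_1^{(k)}\times\graf_2^{(k+1)}\), a connected standard product subcomplex. So \(UP_2(\graf)\) is a linear chain of spaces. Since \(UP_2(\graf)\) is locally convex by \Cref{Prop:free factor_Intro}, the subcomplexes are locally convex as well, and van Kampen gives
\[\pi_1(UP_2(\graf))\isom \pi_1(M_1)\ast_{\pi_1(M_1\cap M_2)}\pi_1(M_2)\ast\cdots\ast\pi_1(M_{m-1}),\]
with injective edge maps.

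Third, I choose generators coherently. Fix a basepoint configuration with both particles on \(\sfP\), and for each cycle \(c\) attached at \(\sfv_i\) take two kinds of loops:
\begin{itemize}
\item a loop \(x_c\), in which the particle traverses \(c\) while the other particle waits to the right on \(\sfP\);
\item a loop \(y_c\), in which the particle traverses \(c\) while the other particle waits to the left.
\end{itemize}
With these choices each \(\pi_1(M_k)\) is the RAAG whose defining graph is the complete bipartite join of the \(x\)-generators from \(C_1,\dots,C_k\) with the \(y\)-generators from \(C_{k+1},\dots,C_m\). The intersection \(\pi_1(M_k\cap M_{k+1})\) is then the special subgroup on an induced subgraph common to both. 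An amalgam of \(A_{\Lambda_1}\) and \(A_{\Lambda_2}\) along \(A_{\Lambda_0}\), with \(\Lambda_0=\Lambda_1\cap\Lambda_2\) induced in both, is \(A_{\Lambda_1\cup\Lambda_2}\). Iterating along the chain shows \(\pi_1(UP_2(\graf))\) is a RAAG.

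The main obstacle is this third step. I must check that the basepoint paths along \(\sfP\) can be chosen simultaneously for all \(k\), so that the generators of \(\pi_1(M_k)\) and \(\pi_1(M_{k+1})\) restricted to the overlap literally agree, rather than agreeing only up to conjugation. This is precisely where the hypothesis that one path \(\sfP\) contains all cycle-bearing vertices is used. Moving a particle monotonically along \(\sfP\) never passes the other particle, and the \(x/y\) loops at a given cycle are then the same element in every \(M_k\) that contains it. I would verify this by an explicit homotopy inside \(M_k\cap M_{k+1}\).
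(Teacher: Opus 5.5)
Your route is the paper's: reduce to a normal bunch of grapes with path stem, write \(UP_2(\graf)\) as the union of the maximal product subcomplexes \(M_k\), note that each \(\pi_1(M_k)\) is the RAAG on a complete bipartite graph, and build \(\pi_1(UP_2(\graf))\) inductively as the RAAG on the union of these graphs.

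However, one claim in your second step is false: the \(M_k\) do not intersect only consecutively. For \(j<l\), a cell \(\{\sigma,\tau\}\) of \(M_j\cap M_l\) must (after labeling) have \(\sigma\in\graf_1^{(j)}\) and \(\tau\in\graf_2^{(l)}\). Hence \(M_j\cap M_l=\graf_1^{(j)}\itimes\graf_2^{(l)}\neq\varnothing\). In fact every \(M_k\) contains the vertex \(\{\sfv_1,\sfv_m\}\), and \(\cI(UP_2(\graf))\) is a full simplex (\Cref{Lem:RIconnected,SM-types}).

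Your van Kampen computation depends exactly on \(U_k\cap M_{k+1}\), where \(U_k=M_1\cup\dots\cup M_k\), so this must be checked rather than assumed. What saves the argument is nesting. For \(j\le k<l\) we have \(\graf_1^{(j)}\subset\graf_1^{(k)}\) and \(\graf_2^{(l)}\subset\graf_2^{(k+1)}\). Hence \(M_j\cap M_l\subset M_k\cap M_{k+1}\), so \(U_k\cap M_{k+1}=M_k\cap M_{k+1}\), which is connected.

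The RAAG amalgamation step is then legitimate. The vertex sets of \(\Lambda_1\cup\dots\cup\Lambda_k\) and \(\Lambda_{k+1}\) meet in \(\{x_c:c\in C_1\cup\dots\cup C_k\}\cup\{y_c:c\in C_{k+2}\cup\dots\cup C_m\}\). This set spans the same complete bipartite induced subgraph in both, namely the defining graph of \(\pi_1(M_k\cap M_{k+1})\).

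The correct intersection pattern also removes the obstacle you flag. Take the basepoint \(\{\sfv_1,\sfv_m\}\), which lies in every \(M_k\). For a grape \(c\) at \(\sfv_i\), let \(x_c\) move the left particle along \(\sfP\) from \(\sfv_1\) to \(\sfv_i\), around \(c\), and back, with the right particle parked at \(\sfv_m\); define \(y_c\) symmetrically. Then \(x_c\) is a based loop in \(M_k\) for every \(k\ge i\), and \(y_c\) is one for every \(k<i\). In \(\pi_1(M_k,\{\sfv_1,\sfv_m\})\isom\pi_1(\graf_1^{(k)},\sfv_1)\times\pi_1(\graf_2^{(k)},\sfv_m)\) these loops are the standard free bases of the two factors. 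So the generators agree on the nose in every \(M_k\) and every \(M_k\cap M_{k+1}\), and no homotopy argument is needed. The paper's proof simply takes these presentations for granted, so your corrected version supplies a detail it omits.

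Two smaller points. Pruning leaves is \Cref{corollary:free factor}, not the decomposition behind \Cref{Prop:free factor_Intro}. Also, injectivity of the edge maps comes from \(M_k\cap M_{k+1}\hookrightarrow M_k\) being a product of subgraph inclusions. It does not come from local convexity of \(UP_2(\graf)\) in \(UD_2(\graf)\), which plays no role in computing \(\pi_1(UP_2(\graf))\).
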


\begin{theorem}[\Cref{theorem:NotQItoRAAG,theorem:necessary condition 2}]
Let \(\graf\in\grapegraph_\norm\). 
If either an affine Dynkin diagram \(\tilde\sfD_n\) for some \(n\ge 5\) or a tripod \(\sfT_{a,b,c}\) of type \((a,b,c)\) for some \(1\le a<b<c\) embeds into \(\sfT\) with leaves mapped to leaves, then \(\bbB_2(\graf)\) is not quasi-isometric to any RAAG.
\end{theorem}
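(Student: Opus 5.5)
The plan is to argue through the intersection complex. For normal bunches of grapes, \Cref{Prop:free factor_Intro} gives \(\bbB_2(\graf)\isom\pi_1(UP_2(\graf))\ast\bbF_N\). By Papazoglu--Whyte (\Cref{PW}), \(\bbB_2(\graf)\) is quasi-isometric to \(\pi_1(UP_2(\graf))\ast\bbZ\). So for \(\bbB_2(\graf)\) to be quasi-isometric to a RAAG \(A_\Lambda\), the group \(A_\Lambda\) must have the matching free-product decomposition: a free factor quasi-isometric to \(\pi_1(UP_2(\graf))\) (one-ended by \Cref{Prop:Intro_Connected}) together with free factors. One-ended factors are preserved under quasi-isometry. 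Combining this with \Cref{Thm:Intro_SubclassdefiningQII}, it suffices to show that \(\pi_1(UP_2(\graf))\) is not quasi-isometric to any one-ended RAAG \(A_{\Lambda_0}\), equivalently any RAAG whose defining graph \(\Lambda_0\) is connected and not a single vertex.

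We would compare intersection complexes. For a one-ended 2-dimensional RAAG (the dimension must match, since \(UP_2\) is a square complex; so \(\Lambda_0\) is triangle-free), the Salvetti complex is a special square complex. Its maximal product subcomplexes are the joins \(\mathrm{St}(v)\) for non-leaf vertices, more precisely the maximal bipartite complete subgraphs. Its intersection complex, in the universal cover, has a known structure from \cite{Oh22}: it is a flag-like complex in which every edge lies in a triangle, or, roughly, links of vertices in \(\cI\) are connected. This is the obstruction used there.

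Next we compute \(\cI(\bar{UP_2(\graf)})\) locally from the graph. By \Cref{Prop:maximal}, maximal product subcomplexes correspond to maximal pairs of disjoint leafless connected subgraphs of \(\graf\). In a bunch of grapes these are pairs of the form (a cycle or union of cycles on one side of the stem tree \(\sfT\), a union on the other side), determined by cutting \(\sfT\) at a vertex or edge. An embedded \(\tilde\sfD_n\) (\(n\ge5\)) or tripod \(\sfT_{a,b,c}\) with \(a<b<c\), leaves to leaves (that is, to vertices carrying cycles), produces a configuration of three or four maximal product subcomplexes. These pairwise intersect in standard product subcomplexes, but the resulting pattern in \(\cI\) violates the RAAG property. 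For the tripod, the asymmetric lengths give three maximal subcomplexes whose intersection pattern forms a cycle of length at least 4 in the link of a vertex of \(\cI\), or a vertex of \(\cI\) with disconnected link. For \(\tilde\sfD_n\), the two branch points that are far apart (\(n\ge5\)) play the same role. We would pass this to the universal cover using the developability in \Cref{Prop:Intro_Connected} together with local convexity.

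The main obstacle is pinning down a quasi-isometry-invariant combinatorial feature of \(\cI\) that holds for every 2-dimensional one-ended RAAG yet fails here. We would first try the feature from \cite{Oh22}, namely a condition on vertex links or on the existence of "separating" standard product subcomplexes, and check that the leaf-to-leaf embedding hypothesis, via the inequalities \(a<b<c\), is exactly what forces its failure.
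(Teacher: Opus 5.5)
Your reduction to the one-ended factor is fine in spirit. Note that \Cref{Thm:Intro_SubclassdefiningQII} compares two configuration spaces and cannot be applied to a RAAG; Papazoglu--Whyte, or the paper's \Cref{Prop:StartingPoint} (local convexity plus \Cref{MaxtoMax}), does that job. The genuine gap is everything after: you never produce an obstruction, and the local features you propose to test are false for RAAGs. Take \(\grafl\) to be the path \(a\!-\!b\!-\!c\!-\!d\). The maximal product subcomplexes of \(X_\grafl\) are the translates \(gX_{\{a,b,c\}}\) and \(gX_{\{b,c,d\}}\) of the universal covers of the Salvetti complexes of the two stars. Distinct translates of the same one have disjoint vertex sets, so \(\cI(X_\grafl)\) is bipartite; it is the Bass--Serre tree of \(\bbA_\grafl=\langle a,b,c\rangle\ast_{\langle b,c\rangle}\langle b,c,d\rangle\). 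Hence no edge lies in a triangle. Moreover \(X_{\{a,b,c\}}\) is adjacent to every \(a^kX_{\{b,c,d\}}\), \(k\in\bbZ\), and these are pairwise non-adjacent, so vertex links are disconnected. On the grapes side, maximal product subcomplexes are the \(M(\sft)\) for twigs \(\sft\) (edges of \(\sfT\), not vertices). Since any two twigs are colinear, any two \(M(\sft)\) meet in a standard product subcomplex. So the \(1\)-skeleton of \(\cI(UP_2(\graf))\) is complete, and no pairwise-intersection pattern among three or four maximal product subcomplexes can detect \(\tilde\sfD_n\) or \(\sfT_{a,b,c}\).

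The missing idea is global: it concerns whether sequences of maximal simplices can close up. The leaf sequence \((\sfw_0,\sfw_1,\sfw_2,\sfw_3,\sfw_0)\) of the embedded \(\tilde\sfD_n\) (resp.\ \((\sfw_0,\sfw_1,\sfw_2,\sfw_0)\) of \(\sfT_{a,b,c}\)) gives maximal simplices \(\triangle_{\sfP(\sfw_i,\sfw_{i+1})}\) with empty triple intersections. By \Cref{proposition:leaf sequence}, the resulting loop is nontrivial in \(\cI(UP_2(\graf))\), since it retracts to a non-backtracking loop in the graph \(\cI_{2}(UP_2(\graf))\).

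Because \(\cI(\bar{UP_2(\graf)})\) is the simply connected development, this loop lifts to a bi-infinite sequence of edge-simplex type (resp.\ type \((a,b,c)\)). \Cref{proposition:edge-simplex type} and its tripod analogue (with \((a,b,c)\) minimal in \(\tripod(\sfT)\)) show that no such sequence in \(\cI(\bar{UP_2(\graf)})\) whose image under \(\rho\) closes up can itself close up.

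On the RAAG side, \(\cI(S_\grafl)\) is finite, and since \(S_\grafl\) has one vertex, each local intersection complex \(\cI_{\bar x}(X_\grafl)\) maps isomorphically onto it (\Cref{lem:CopiesofR}). So the image of the bi-infinite sequence under \(\cI(\phi)\) must repeat after projecting to \(\cI(S_\grafl)\). The repeated segment lifts to a closed sequence of the same type in \(\cI(X_\grafl)\), and pulling it back by \(\cI(\phi)^{-1}\) contradicts the non-closing lemma.

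The hypotheses \(n\ge 5\) and \(a<b<c\) are exactly what make these types (alternating dimensions \(1\) and \(\ge 2\); three distinct dimensions with controlled overlaps) survive \(\cI(\phi)\) and \(\rho\). A nontrivial loop alone is no obstruction: for a star stem with one grape at each leaf, \(\cI(UP_2(\graf))\) is a complete graph full of nontrivial loops, yet \(\bbB_2(\graf)\) is quasi-isometric to a RAAG.
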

For the definitions of affine Dynkin diagrams and tripods of type \((a,b,c)\), see \Cref{Definition:Dynkin and tripod}.

Both results rely on \Cref{Prop:free factor_Intro}, which shows that for \(\graf\in\grapegraph_{\norm}\), \(\bbB_2(\graf)\) is quasi-isometric to a RAAG if and only if \(\pi_1(UP_2(\graf))\) is quasi-isometric to a one-ended RAAG (\Cref{Prop:StartingPoint}). 
The first theorem is proved by showing that \(\pi_1(UP_2(\graf))\) is in fact isomorphic to a RAAG, while the second follows from identifying a specific geometric feature of the intersection complex of \(\bar{UP_2(\graf)}\), studied in \Cref{subsection:sequences of leaves}, that does not occur in the intersection complexes of RAAGs.

A second application concerns the relative hyperbolicity of graph braid groups, again based on \Cref{Prop:free factor_Intro}. 
In \cite[Question~5.7]{Gen21GBG}, Genevois asked for a characterization of relatively hyperbolic graph braid groups. 
As a partial answer, Berlyne showed in \cite[Theorem~F]{Ber23} that there exists a graph braid group \(\bbB_n(\graf)\) that is hyperbolic relative to a thick proper subgroup not contained in any graph braid group of the form \(\bbB_k(\grafl)\) with \(k\le n\) and \(\grafl\subset\graf\). 
In this paper, we extend Berlyne’s construction by producing infinitely many such examples, including his example as a special case.

\begin{theorem}[\Cref{Corollary:Thickness}]
There exist infinitely many graph braid groups \(\bbB_n(\graf)\) hyperbolic relative to a thick proper subgroup \(H\) that is not isomorphic to any graph braid group of the form \(\bbB_k(\grafl)\), where \(k\le n\) and \(\grafl\subset\graf\).
\end{theorem}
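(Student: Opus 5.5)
The plan is to generalize Berlyne's example along a one-parameter family. For each \(m\ge 1\) I will pick a graph \(\graf_m\) in the bunch-of-grapes class and work with \(n=2\). Concretely, \(\graf_m\) will be a normal bunch of grapes whose underlying tree \(\sfT\) is chosen so that \(UP_2(\graf_m)\) is connected and thick. For instance, take a path \(\sfP\) with \(m+2\) vertices, each carrying one attached cycle (grape). Then add extra leaves and cycles, arranged so that the structure is not captured by smaller braid groups but Berlyne's example is the case \(m=1\).

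By \Cref{Prop:free factor_Intro} we have \(\bbB_2(\graf_m)\isom \pi_1(UP_2(\graf_m))\ast\bbF_N\) with \(N\ge2\). A free product is hyperbolic relative to its non-free factor. So \(\bbB_2(\graf_m)\) is hyperbolic relative to \(H_m:=\pi_1(UP_2(\graf_m))\). It is a proper subgroup because \(N\ge 2\).

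Thickness of \(H_m\) will come from the intersection complex. By \Cref{Prop:Intro_Connected} and \Cref{Prop:maximal}, all maximal product subcomplexes of \(UP_2(\graf_m)\) are embedded, and \(\cI(\bar{UP_2(\graf_m)})\) is connected. The argument has three steps:
\begin{enumerate}
\item Each maximal product subcomplex \(\graf_1\times\graf_2\) has fundamental group \(\pi_1(\graf_1)\times\pi_1(\graf_2)\). This is a product of infinite groups, hence unconstricted, i.e. thick of order \(0\).
\item Adjacent vertices of the intersection complex intersect in standard product subcomplexes. These have infinite diameter, so the coarse intersections are infinite.
\item Connectivity of \(\cI(\bar{UP_2})\) then chains these pieces together coarsely. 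This gives thickness of order at most \(1\), in the sense of Behrstock--Druţu--Mosher.
\end{enumerate}
I must also check that the union of the lifts of maximal product subcomplexes is coarsely all of \(\bar{UP_2}\). This holds by definition, since \(UP_2\) is the union of the maximal product subcomplexes.

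The remaining step, which I expect to be the main obstacle, is to show that \(H_m\) is not isomorphic to any \(\bbB_k(\grafl)\) with \(k\le 2\) and \(\grafl\subset\graf_m\). I will split into cases by \(k\).
\begin{itemize}
\item For \(k=1\), \(\bbB_1(\grafl)\) is free, while \(H_m\) contains \(\bbZ^2\).
\item For \(k=2\) and \(\grafl\) a proper subgraph, I will use \(\grafl\in\grapegraph\). If \(\grafl\) is not normal, then \(\bbB_2(\grafl)\) is free, or has no \(\bbZ^2\), by the freeness classification. If \(\grafl\) is normal, then \Cref{Prop:free factor_Intro} makes \(\bbB_2(\grafl)\) a nontrivial free product with a free factor \(\bbF_{N'}\), so it is not one-ended.
\item For \(k=2\) and \(\grafl=\graf_m\), the group \(\bbB_2(\graf_m)\) is itself a free product with a free factor \(\bbF_N\), so it is not one-ended.
\end{itemize}

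In contrast, \(H_m\) is one-ended, by \Cref{lem:One-endedness} applied to the connected intersection complex. One-endedness is preserved by isomorphism, which excludes all cases. The delicate point is verifying that the grape configuration \(\graf_m\) indeed satisfies the hypotheses of \Cref{Prop:Intro_Connected}, namely connectedness of \(UP_2\) and the intersection condition. Checking this means enumerating pairs of disjoint leafless connected subgraphs and confirming they overlap along leafless product pieces.

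Finally, infinitely many examples arise because the \(H_m\) are pairwise non-isomorphic. I will separate them by an invariant computable from \(\graf_m\), such as the number of orbits of vertices in the intersection complex, or simply the rank of \(H_1(UP_2(\graf_m))\).
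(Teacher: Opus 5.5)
\textbf{Overall.} Your route is the paper's own. You take \(n=2\) and a normal bunch of grapes, and write \(\bbB_2(\graf)\cong\pi_1(UP_2(\graf))\ast\bbF_N\) to get relative hyperbolicity with respect to \(H=\pi_1(UP_2(\graf))\). You obtain thickness and one-endedness of \(H\) from connectedness of \(\cI(\bar{UP_2(\graf)})\), and rule out \(\bbB_k(\grafl)\) by counting ends. The hypotheses you flag as delicate need no enumeration: \Cref{Lem:UP_2Special}, \Cref{corollary:maximal} and \Cref{Lem:RIconnected} supply them for every normal bunch of grapes.

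\textbf{The gap.} It lies in the case \(k=2\), \(\grafl\subsetneq\graf_m\). You split into normal versus non-normal \(\grafl\) and assert that a non-normal bunch of grapes has free (or \(\bbZ^2\)-free) \(2\)-braid group. This is false. Normality means ``large'' plus the requirement that every leaf of the stem carries a grape. Take your path family and delete the grape at the end vertex \(\sfv_{m+1}\). The resulting subgraph \(\grafl\) still has grapes at \(m+1\ge 2\) distinct stem vertices, so it is large but not normal. It contains two disjoint cycles, so \(\bbB_2(\grafl)\) contains \(\bbZ^2\) by \Cref{Prop:FreeAbelianSubgroup}, and the freeness classification says it is \emph{not} free. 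Your argument therefore leaves exactly these subgraphs unexcluded.

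\textbf{The fix.} The correct dichotomy is large versus not large, as in the paper's proof.
\begin{itemize}
\item If \(\grafl\) is not large, then \(\bbB_2(\grafl)\) is free by \Cref{Lem:WhyNormal}, hence not one-ended.
\item If \(\grafl\) is large, prune its grapeless leaves. By \Cref{corollary:free factor} (this is \eqref{Eq:isomorphisms}) you get \(\bbB_2(\grafl)\cong\bbB_2(\grafl')\ast\bbF\) with \(\grafl'\) normal. \Cref{Prop:free factor} then gives \(\bbB_2(\grafl)\cong\pi_1(UP_2(\grafl'))\ast\bbF'\), where \(\bbF'\) has rank at least \(2\) and \(\pi_1(UP_2(\grafl'))\) contains \(\bbZ^2\).
\end{itemize}
So \(\bbB_2(\grafl)\) is a nontrivial free product with infinitely many ends and cannot be isomorphic to the one-ended \(H\). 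With this repair your proof goes through for every large \(\graf\).

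\textbf{Minor points.}
\begin{itemize}
\item The ``extra leaves and cycles'' are unnecessary; the path with one grape per vertex already works.
\item Berlyne's graph is not itself a bunch of grapes, since its central triangle meets the rest of the graph at two essential vertices. So it is not the case \(m=1\) of your family.
\item Separating the examples by ranks of \(H_1\), together with Grushko uniqueness to pass from \(H_m\) to \(\bbB_2(\graf_m)\), is a sensible addition. The paper leaves the ``infinitely many distinct groups'' point implicit.
\end{itemize}
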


In light of our results, particularly \Cref{theorem:QIbetweenGBGs_Intro}, it is natural to ask the following question, which will be addressed in a forthcoming paper.

\begin{question}
Can one classify the \(2\)-braid groups over bunches of grapes up to quasi-isometry?
\end{question}

\subsection{Organization of the paper}
In Section~\ref{section:prelim}, we introduce the notation and definitions used throughout the paper. We review (weakly) special square complexes and their intersection complexes, and then define the subcomplex \(\m Y\) of a special square complex \(Y\), together with its basic properties and the associated \(\m Y\)-hierarchy. 


In Section~\ref{sec:graph configuration spaces}, we examine the cubical structure of discrete configuration spaces and give an elementary proof---avoiding discrete Morse theory---of a complete classification of when a graph \(n\)-braid group is quasi-isometric to a free group.

The remaining sections focus on graph \(2\)-braid groups and the special square complex structure of \(UD_2(\graf)\). In Section~\ref{section:properties}, we investigate graph \(2\)-braid groups and the subcomplex \(UP_2(\graf)=\m{UD_2(\graf)}\), establishing several structural properties and examples related to the \(UP_2\)-hierarchy.

In Section~\ref{section:grapes}, after introducing the class of bunches of grapes, we analyze the structure of the intersection complexes of (the universal covers of) the discrete \(2\)-configuration spaces of bunches of grapes. 

Finally, in Section~\ref{Section:Application to bunches of grapes}, we apply the results of the previous sections to construct infinitely many examples of graph \(2\)-braid groups that are quasi-isometric to RAAGs, as well as infinitely many that are not, and we discuss consequences for relative hyperbolicity.

\subsection*{Acknowledgement}
The first author was supported by Samsung Science and Technology Foundation under Project Number SSTF-BA2022-03.
The second author was supported by the Basque Government grant IT1483-22.

\section{Preliminaries}\label{section:prelim}

\subsection{Terminology and conventions}
Throughout this paper, we only consider two types of finite-dimensional polyhedral complexes \(X\) equipped with their natural length metrics \(d_X\): \emph{cube complexes} and \emph{triangular complexes}\footnote{A \emph{triangular complex}, introduced in \cite{DK18}, is a regular cell complex in which every cell is an embedded simplex.}. These are built from closed unit \(n\)-cubes \([0,1]^n\) and standard \(n\)-simplices \(\triangle_n\), respectively. A cube complex of dimension at most \(2\) is a \emph{square complex}, and a triangular complex without multi-simplices is a \emph{simplicial complex}.

For a polyhedral complex \(X\), each \(0\)-cell (\(1\)-cell, resp.) is called a \emph{vertex} (\emph{edge}, resp.), and the vertex set (edge set, resp.) of \(X\) is denoted by \(V(X)\) (\(E(X)\), resp.).
For a subset \(V_0\subset V(X)\), a subcomplex \(X_0\) of \(X\) is said to be \emph{induced by \(V_0\)} if a cell in \(X\) is contained in \(X_0\) if and only if its vertices are contained in \(V_0\).
For a vertex \(x\in X\), the \emph{link} \(\Lk(X,x)\) of \(x\) in \(X\) is the triangular complex whose vertices correspond to half-edges incident to \(x\), and whose \(k\)-simplices correspond to \((k+1)\)-cells of \(X\) containing the associated half-edges.

Let \(X, X'\) be polyhedral complexes.
A map \(\phi:X\to X'\) is \emph{combinatorial} if its restriction to each cell of \(X\) is a homeomorphism onto a cell of \(X'\).
For each vertex \(x\in V(X)\), such a map induces a link map 
\[\Lk(\phi,x):\Lk(X,x)\longrightarrow \Lk(X',\phi(x)).\]
We call \(\phi\) an \emph{immersion} if \(\Lk(\phi,x)\) is injective for every \(x\in V(X)\); an \emph{embedding} if \(\phi\) is an injective immersion; and an \emph{isometry} if \(\phi\) is a surjective embedding. In this case, we say that \(X\) and \(X'\) are \emph{isometric} and write \(X\isom X'\).


A (not necessarily continuous) map \(f:X\to X'\) is called a \emph{\((\lambda,\epsilon)\)-quasi-isometric embedding} if
\[
\exists\,\lambda\ge 1,\,\epsilon\ge 0\,:\, \forall\, x_1,x_2\in X,\ \frac 1 \lambda d_{X}(x_1,x_2) - \epsilon \le d_{X'}(f(x_1),f(x_2)) \le \lambda d_{X}(x_1,x_2) + \epsilon.
\]
It is called a \emph{quasi-isometry} if, in addition, every point of \(X'\) lies within distance \(\epsilon\) of the image. In this case, we say that \(X\) and \(X'\) are \emph{quasi-isometric} and write \(X\qisom X'\).

Suppose that \(X, X'\) are compact and connected, and set \(\bbG=\pi_1(X), \bbG'=\pi_1(X')\). We say that \(\bbG\) is 
\begin{enumerate}
\item \emph{undistorted} in \(\bbG'\) if there exists an injective homomorphism \(\bbG \hookrightarrow \bbG'\) which is a quasi-isometric embedding with respect to some (equivalently, any) choice of finite generating sets;
\item \emph{quasi-isometric} to \(\bbG'\), written \(\bbG\qisom\bbG'\), if the universal covers \(\bar{X}, \bar{X'}\) of \(X,X'\) are quasi-isometric;
\item \emph{\(e\)-ended} if for any basepoint \(\bar x\in \bar X\) and all sufficiently large \(M\gg1\), the complement of the ball of radius \(M\) about \(\bar x\) in \(\bar X\) has exactly \(e\) connected components (it is known that \(e\in\{0,1,2,\infty\}\));
\item \emph{hyperbolic} if \(X\) is a hyperbolic space in the sense of \cite{Grom}.
\end{enumerate}

The following result shows that taking a free product with a free group does not change the quasi-isometry type.

\begin{theorem}[\cite{PW02}]\label{PW}
Let $\mathbb{G}_1$ and $\mathbb{G}_2$ be two finitely generated groups.
If $\mathbb{G}_1$ and $\mathbb{G}_2$ are quasi-isometric, then $\mathbb{G}_1*\mathbb{F}_m$ and $\mathbb{G}_2*\mathbb{F}_n$ are quasi-isometric for $m,n\geq 1$.  
Moreover, the converse also holds if both $\mathbb{G}_1$ and $\mathbb{G}_2$ are one-ended.
\end{theorem}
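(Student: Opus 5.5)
The statement is the Papazoglu--Whyte theorem, so the plan is to treat the two directions separately. The forward direction uses a geometric model of the free product. The converse uses the Stallings--Dunwoody decomposition together with the quasi-isometry invariance of the set of one-ended free factors.

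\emph{Forward direction.} Fix finite presentation complexes, or more generally compact spaces \(X_1,X_2\) with \(\pi_1(X_i)=\mathbb{G}_i\). Model \(\mathbb{G}_1*\mathbb{F}_m\) by the space \(Z_1\) obtained from \(X_1\) by wedging on \(m\) circles. Its universal cover \(\bar Z_1\) is a tree of spaces: the vertex spaces are copies of \(\bar X_1\) and lines, glued along single points, and the Bass--Serre tree has infinite valence. Since \(\bbF_m\qisom\bbF_n\) for \(m,n\ge 2\), and \(\mathbb{G}*\mathbb{F}_1\qisom\mathbb{G}*\mathbb{F}_2\), we may assume \(m=n\). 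The key steps are then as follows.
\begin{enumerate}
\item Replace \(\bar Z_i\) by a quasi-isometric normal form: a tree of spaces in which each copy of \(\bar X_i\) has a line (a copy of \(\bbR\)) attached at every point of a fixed coarsely dense net \(N_i\subset\bar X_i\), with infinitely many branches arranged so that the pattern is homogeneous.
\item Take a quasi-isometry \(f:\bar X_1\to\bar X_2\). It sends \(N_1\) to a coarsely dense set, but not bijectively onto \(N_2\).
\item Correct this with a bounded-displacement bijection \(N_1\to N_2\). Since both nets are uniformly discrete and coarsely dense in bounded-geometry spaces that are nonamenable-or-not alike, its existence is obtained by a Hall marriage argument after allowing the infinite trees hanging off each point to absorb the multiplicities. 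This is Whyte's ``bounded displacement'' idea: attached free trees make any coarse map bijective at the level of attachment points.
\item Extend inductively, tree level by tree level, to a quasi-isometry \(\bar Z_1\to\bar Z_2\) with uniform constants.
\end{enumerate}
This matching step is the main obstacle. I would handle it by exploiting the infinite-valence trees, which make the counting on both sides infinite, so that no amenability obstruction arises.

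\emph{Converse.} If \(\mathbb{G}_1\) and \(\mathbb{G}_2\) are one-ended, then \(\mathbb{G}_i*\mathbb{F}_m\) is infinitely ended, with a Dunwoody decomposition whose only one-ended vertex groups are the conjugates of \(\mathbb{G}_i\). A quasi-isometry \(\bar Z_1\to\bar Z_2\) must coarsely send each one-ended vertex space into a single one-ended vertex space. The reason is that a one-ended space cannot be coarsely separated by a bounded set, while distinct vertex spaces of the tree of spaces are separated by single points. Applying the same argument to a quasi-inverse, this coarse inclusion becomes a coarse equality, so it restricts to a quasi-isometry \(\bar X_1\to\bar X_2\). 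Hence \(\mathbb{G}_1\qisom\mathbb{G}_2\).
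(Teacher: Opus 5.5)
The paper gives no argument of its own for this statement; it cites \cite[Theorems~0.3 and~0.4]{PW02}. Your converse is the standard argument and is fine in outline. Vertex spaces of $\overline{Z}_i$ are isometrically embedded and separated from each other by single points. Uniform one-endedness, which comes from the cocompact action, forces the image of each copy of $\overline{X}_1$ into a bounded neighbourhood of a single copy of $\overline{X}_2$. A quasi-inverse then upgrades this to finite Hausdorff distance, and Dunwoody's theorem plays no role.

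The forward direction, however, has a genuine gap at exactly the step you flag as the main obstacle.
\begin{itemize}
\item In $\overline{Z}_i$ each attaching point carries only $2m$ (resp.\ $2n$) edges, so no infinite multiplicity is available within bounded distance of a point.
\item You also cannot pass to a ``normal form'' with infinitely many branches at a point. A ball of radius $r$ about such a point contains infinitely many pairwise $r$-separated points. So that space violates coarse bounded geometry, which is a quasi-isometry invariant, and it is not quasi-isometric to $\overline{Z}_i$.
\item With finite multiplicities, the bounded-displacement bijection of your step~(3) need not exist when $\mathbb{G}_i$ is amenable. A F{\o}lner count rules it out for $f=\mathrm{id}$ with multiplicities $2m\neq 2n$. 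For an arbitrary $f$ as in your step~(2), it already fails for $m=n$ and $f(x)=2x$ on $\mathbb{Z}^2$; this is Whyte's uniformly finite homology obstruction. The phrase ``nonamenable-or-not alike'' does not address this.
\end{itemize}
Turning ``the hanging trees absorb the multiplicities'' into a proof needs a genuinely new construction. Roughly, you would have to show, with uniform constants, that a branch of $\overline{Z}_1$ hanging off one edge is quasi-isometric, relative to its root, to a wedge of several branches of $\overline{Z}_2$, and vice versa. This means vertex spaces must be allowed to change depth in the Bass--Serre tree. That construction is the actual content of what the paper imports from \cite{PW02}, and your sketch assumes it rather than proving it.

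Your reduction to $m=n$ is also circular. The equivalence $\mathbb{G}*\mathbb{F}_1\qisom\mathbb{G}*\mathbb{F}_2$ is the case $\mathbb{G}_1=\mathbb{G}_2=\mathbb{G}$, $(m,n)=(1,2)$ of the statement itself. Likewise, $\mathbb{F}_m\qisom\mathbb{F}_n$ does not pass to free products without the theorem.

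The reduction is moreover false for $\mathbb{G}=\{1\}$, since $\mathbb{Z}\not\qisom\mathbb{F}_2$. In fact the forward statement as written fails in this degenerate case: $\{1\}\qisom\mathbb{Z}/2$, but $\{1\}*\mathbb{F}_1=\mathbb{Z}$ is two-ended while $\mathbb{Z}/2*\mathbb{F}_1$ is virtually $\mathbb{F}_2$. So a correct proof must use that the $\mathbb{G}_i$ are nontrivial. This is harmless for the paper, which only applies the result to one-ended groups. But an argument that never uses nontriviality, as yours does not, cannot be complete.
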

\begin{proof}
The first statement follows from \cite[Theorem~0.3]{PW02}, and the second from \cite[Theorem~0.4]{PW02}.
\end{proof}

Now, we turn our attention to cube complexes.
A cube complex \(Y\) is \emph{non-positively curved (NPC)} if for every vertex \(y\), the link \(\Lk(Y,y)\) is a flag simplicial complex.
If, in addition, \(Y\) is simply connected, then \(Y\) is \(\CAT(0)\).

Let \(Y\) and \(Y'\) be connected NPC cube complexes. A combinatorial map \(\phi:Y\to Y'\) is called a \emph{local isometry} if, for every \(y\in V(Y)\), the induced map on links \(\Lk(\phi,y)\) is injective and its image is an induced subcomplex; a \emph{locally isometric embedding} if \(\phi\) is an injective local isometry; an \emph{isometric embedding} if both \(Y\) and \(Y'\) are \(\CAT(0)\) and \(\phi\) is a locally isometric embedding.
For a (locally) isometric embedding \(\phi\), the image \(\phi(Y)\) is said to be (\emph{locally}) \emph{convex} in \(Y'\).

\begin{remark}
For a cube complex, being NPC in the sense above is equivalent to being \emph{locally \(\CAT(0)\)} as a length space; see \cite{Grom,Lea} for the finite and infinite dimensional cases, respectively.
In particular, a local isometry between NPC cube complexes is locally an isometric embedding in the usual metric sense; see \cite{CW,HW08,Wis12}.
\end{remark}

A local isometry \(\phi:Y\to Y'\) induces
\begin{enumerate}
\item an injective homomorphism on fundamental groups \(\phi_*:\pi_1(Y,y)\rightarrow\pi_1(Y',\phi(y))\), and 
\item an isometric embedding on universal covers \(\bar\phi:\bar Y\to \bar{Y'}\), called an \emph{elevation} of \(\phi\), satisfying \(\phi\circ p_Y=p_{Y'}\circ\bar\phi\). 
\end{enumerate}
The image of \(\bar\phi\) will be called a \emph{copy} of \(\bar{Y}\) or a \emph{lift} of \(Y\) in \(\bar{Y'}\).
It is well known that \(\phi_*(\pi_1(Y,y))\) is undistorted in \(\pi_1(Y',\phi(y))\).
For more details, see \cite[Chapter II.4]{BH}.

\begin{example}[Flats in \(\CAT(0)\) cube complexes]\label{Ex:flat}
Viewing \(\bbR^n\) as the standard \(\CAT(0)\) cube complex tiled by unit \(n\)-cubes, the image of an isometric embedding \(\bbR^n\to X\) into a \(\CAT(0)\) cube complex \(X\) is called an \emph{\(n\)-dimensional flat}. If \(\dim X=n\), then such a flat is said to be \emph{top-dimensional}.

Let \(Y\) be an NPC cube complex. If there exists a local isometry \(\iota\colon \sfC_{i_1}\times\cdots\times\sfC_{i_n}\to Y\), where \(\sfC_i\) denotes the cycle graph of length \(i\), then any elevation of \(\iota\) is an isometric embedding \(\bbR^n\to\bar Y\) whose image is an
\(n\)-dimensional flat.
\end{example}

As a special case, when mapping a product of graphs into a cube complex of the same dimension, any immersion is automatically a local isometry.

\begin{lemma}\label{Lem:ImmersionImpliesLocalisometry}
Let \(Y\) be an \(n\)-dimensional NPC cube complex, and let \(\graf_1,\dots,\graf_n\) be connected graphs with at least one edge.
Then any immersion \(\iota:\graf_1\times\dots\times\graf_n\to Y\) is a local isometry.
\end{lemma}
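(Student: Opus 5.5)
Since \(\iota\) is an immersion, the link maps \(\Lk(\iota,x)\) are already injective. It therefore remains to show that, at every vertex \(x=(x_1,\dots,x_n)\) of \(P=\graf_1\times\dots\times\graf_n\), the image of \(\Lk(P,x)\) is a full subcomplex of \(\Lk(Y,\iota(x))\). We would first record the structure of the source link. \(\Lk(P,x)\) is the simplicial join \(L_1\ast\dots\ast L_n\), where \(L_i\) is the discrete set of half-edges of \(\graf_i\) at \(x_i\). Each \(L_i\) is nonempty; we assume, as is implicit in the statement, that each \(\graf_i\) has no isolated vertices, which holds since \(\graf_i\) is connected with an edge. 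In particular, \(\Lk(P,x)\) contains \((n-1)\)-simplices, namely one vertex chosen from each \(L_i\), and it is flag.

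Next we would check fullness. Let \(\sigma\) be a simplex of \(\Lk(Y,\iota(x))\) all of whose vertices lie in the image \(\iota(L_1\ast\dots\ast L_n)\). Since \(Y\) is NPC, its link is flag, so it suffices to treat edges: it is enough to show that if two image vertices \(\iota(a),\iota(b)\) span an edge of \(\Lk(Y,\iota(x))\), then \(a,b\) span an edge of the source link. Once this is known, the preimage vertices of \(\sigma\) are pairwise adjacent, hence span a simplex of the flag complex \(\Lk(P,x)\), whose image is \(\sigma\).

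The key step, and the only place where dimension enters, is to rule out \(a,b\in L_i\) for the same \(i\) with \(\iota(a)\), \(\iota(b)\) adjacent. Here \(a\ne b\) by injectivity. Choose, for each \(j\ne i\), some \(c_j\in L_j\). Then \(\{a\}\cup\{c_j\}_{j\ne i}\) and \(\{b\}\cup\{c_j\}_{j\ne i}\) are \((n-1)\)-simplices of \(\Lk(P,x)\), and their images are simplices in \(\Lk(Y,\iota(x))\). The vertex set \(\{\iota(a),\iota(b)\}\cup\{\iota(c_j)\}\) consists of \(n+1\) distinct vertices, by injectivity of \(\Lk(\iota,x)\) on vertices, and they are pairwise adjacent. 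Pairs among the \(\iota(c_j)\), or between \(\iota(c_j)\) and \(\iota(a)\) or \(\iota(b)\), are adjacent via those two simplices, and \(\iota(a)\iota(b)\) is adjacent by hypothesis. By flagness they span an \(n\)-simplex, which corresponds to an \((n+1)\)-cube of \(Y\) at \(\iota(x)\). This contradicts \(\dim Y=n\). If instead \(a\in L_i\) and \(b\in L_j\) with \(i\ne j\), then \(a,b\) are already adjacent in the join. So every edge between image vertices comes from an edge of the source link.

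The main subtlety I expect is bookkeeping about links in a cube complex that is not necessarily simplicial. Links are triangular complexes, so there could be multi-edges, for example two squares sharing two consecutive half-edges. The flag condition from NPC rules this out, since the link is assumed to be a flag simplicial complex. Injectivity of \(\Lk(\iota,x)\) on simplices then identifies each image simplex with a unique source simplex, which completes the argument.
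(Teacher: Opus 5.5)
Your proof is correct and rests on the same observation as the paper's one-line argument, namely that the link of a vertex of \(\graf_1\times\dots\times\graf_n\) is a join of \(n\) nonempty discrete sets. You go further than the paper. Its proof mentions only injectivity on links, which the immersion hypothesis already provides. You actually verify the induced-subcomplex condition, and your use of flagness of \(\Lk(Y,\iota(x))\) together with \(\dim Y=n\) to rule out an edge between the images of two half-edges from the same factor is exactly the step the paper leaves implicit.
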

\begin{proof}
The link of any vertex in \(\graf_1\times\cdots\times\graf_n\) is the join of \(n\) discrete sets, which implies injectivity on links.
\end{proof}

\begin{convention}\label{assumption:cube}
In this paper, we adopt the following standing conventions and assumptions unless stated otherwise:
\begin{enumerate}
\item A \(1\)-dimensional cube complex, which is always NPC, is called a \emph{graph}, and is typically denoted by \(\graf, \grafd, \grafl, \sfT\), etc. We also regard the real line $\bbR$ as a graph whose vertex set is the set of integers.
\item All cube complexes (including graphs) are assumed to be connected and NPC; triangular complexes need not be.
\item Every (sub)graph is assumed to be \emph{nontrivial}, meaning that it contains at least one edge. A graph is called \emph{simple} if it is a simplicial complex, \emph{leafless} if it contains no \emph{leaves} (i.e., vertices of valency \(1\)), and a \emph{tree} if its \(\pi_1\) is trivial.
\item All groups are assumed to be finitely generated, and denoted by \(\bbB, \bbG, \bbS\), etc. In particular, a free group is denoted by \(\bbF\), or by \(\bbF_n\) when its rank \(n\) is specified; \(\bbF_1\) is usually denoted by \(\bbZ\).
\end{enumerate}
Let \(X,X'\) be polyhedral complexes.
\begin{enumerate}
\item[(6)] When \(X'\) is a subcomplex of \(X\), it is equipped with its intrinsic metric \(d_{X'}\), rather than the restriction of \(d_X\).
\item[(7)] Any map \(\phi:X\to X'\) is assumed to be combinatorial.
\item[(8)] The universal cover of \(X\) and the covering map are denoted by \(\bar X\) and \(p_X:\bar X\to X\), respectively.
\end{enumerate}
\end{convention}

\subsection{Weakly special cube complexes}\label{section:WSSCandIC}
Let \(Y\) be a cube complex.
Two edges \(e_1\) and \(e_2\) of \(Y\) are said to be \emph{parallel} if there is an immersion \(e\times [0,n]\to Y\) for some positive integer \(n\) such that \(e_1\) and \(e_2\) are the images of \(e\times \{0\}\) and \(e\times\{n\}\), respectively. The \emph{hyperplane} \(H\) dual to an edge \(e\) of \(Y\) is the set of all edges parallel to \(e\). 

Haglund--Wise \cite{HW08} defined a special cube complex as an (NPC) cube complex which avoids four types of pathological hyperplanes.
In the study of the large-scale geometry of \(\CAT(0)\) cube complexes, Huang \cite{Hua(b)} observed that ruling out only two of these four pathologies suffices, leading to the following notion.

\begin{definition}[\cite{HW08, Hua(b)} (Weakly) special cube complex]\label{Def:SpecialCubeComplex}
An (NPC) cube complex \(Y\) is called \emph{weakly special} if it has no
\emph{self-osculating} or \emph{self-intersecting} hyperplanes. It is called \emph{special} if, in addition, it has no \emph{one-sided} hyperplanes and no pairs of \emph{inter-osculating} hyperplanes.
\end{definition}

The most important example of a special cube complex is the Salvetti complex. Indeed, a compact cube complex \(Y\) is special if and only if there exists a local isometry from \(Y\) to a Salvetti complex \cite{HW08}.

\begin{example}[Salvetti complex]\label{Ex:Salvetti complex}
Let \(\grafl\) be a (possibly disconnected or trivial) finite simple graph. The \emph{right-angled Artin group} (RAAG) \(\bbA_{\grafl}\) associated to \(\grafl\) is defined by the presentation
\[\bbA_{\grafl}=\langle\, V(\grafl)\,:\, \sfv_i\sfv_j=\sfv_j\sfv_i\quad\forall\{\sfv_i,\sfv_j\}\in E(\grafl)\,\rangle.\]
The \emph{Salvetti complex} \(S_{\grafl}\) is a compact special cube complex with a single vertex, whose \(1\)-cubes correspond to the vertices of \(\grafl\) and whose higher-dimensional cubes correspond to cliques in \(\grafl\). The dimension of \(S_{\grafl}\) is the largest dimension of a cube it contains, equivalently, the largest integer \((n-1)\) such that \(\grafl\) contains an \(n\)-clique. 

The number of ends of \(\bbA_{\grafl}\) is determined by the structure of \(\grafl\) as follows:
\begin{enumerate}
\item \(\bbA_{\grafl}\) is one-ended if and only if \(\grafl\) is connected and has at least two vertices;
\item \(\bbA_{\grafl}\) is two-ended if and only if \(\grafl\) consists of a single vertex;
\item \(\bbA_{\grafl}\) has infinitely many ends if and only if \(\grafl\) has at least two vertices and is disconnected.
In this case, \(\bbA_{\grafl}\cong \bbA_{\grafl_1}\ast\cdots\ast\bbA_{\grafl_k}\),
where \(\grafl_1,\dots,\grafl_k\) are the connected components of \(\grafl\).
\end{enumerate}

For further background on RAAGs, we refer to \cite{CH}.
\end{example}

\begin{remark}\label{Rmk:torsion-free}
Since any RAAG is torsion-free, the fundamental group of a compact special cube complex is also torsion-free.
\end{remark}

Another class of special cube complexes---central to this paper---is given by the configuration spaces of graphs \(D_n(\graf)\) and \(UD_n(\graf)\), both of which admit natural cube complex structures.

\begin{definition}[Discrete configuration spaces of graphs]\label{definition:discrete configuration spaces}
Let \(\graf\) be a (possibly disconnected) graph and let \(n\ge 0\) be an integer. 
The \emph{ordered} and \emph{unordered discrete \(n\)-configuration spaces} of \(\graf\) are defined by
\begin{align*}
D_n(\graf) &= \{ (\sigma_1, \dots, \sigma_n)\in \graf^n: {\sigma_i} \cap {\sigma_j} = \varnothing\text{ whenever }i \neq j\}\quad\mathrm{and}\\
UD_n(\graf)&=\{ \{\sigma_1, \dots, \sigma_n\} \subset \graf: \sigma_i \cap \sigma_j = \varnothing\text{ whenever }i \neq j \}\isom D_n(\graf)/\bbS_n,
\end{align*}
where each \(\sigma_i\) is either a vertex or an edge of \(\graf\).
\end{definition}

Note that if \(\graf\) is a connected graph with at least \(n\) vertices, then \(UD_n(\graf)\) is nonempty and connected. If, in addition, \(\graf\) is not homeomorphic to a line segment, then \(D_n(\graf)\) is also nonempty and connected.



\begin{theorem}[\cite{Abrams00, CW, Gen21GBG}]\label{thm:GBGSpecial}
Each component of \(D_n(\graf)\) and of \(UD_n(\graf)\) is a compact special cube complex. 
\end{theorem}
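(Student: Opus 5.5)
The plan is to verify the two ingredients of specialness in turn. First, each component is a nonpositively curved cube complex, with NPC checked through flagness of vertex links. Second, each component has none of the four hyperplane pathologies. Both components are finite unions of cubes, so compactness is immediate: a cell of \(D_n(\graf)\) is an \(n\)-tuple of pairwise disjoint closed cells of \(\graf\), and a cell with \(k\) edge-coordinates is a product of \(k\) unit intervals, hence a \(k\)-cube. Since \(UD_n(\graf)=D_n(\graf)/\bbS_n\) with \(\bbS_n\) acting freely and combinatorially, it is enough to handle \(D_n(\graf)\). Pulling back along the finite-sheeted covering \(D_n(\graf)\to UD_n(\graf)\), if necessary, the argument for \(D_n\) then transfers directly to \(UD_n\). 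Indeed, all properties are checked directly in \(UD_n\) as well.

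For NPC, fix a vertex \(x=(v_1,\dots,v_n)\) with the \(v_i\) distinct vertices of \(\graf\). The half-edges at \(x\) correspond to choices of an index \(i\) and an edge \(e\ni v_i\) of \(\graf\) with \(e\cap v_j=\varnothing\) for all \(j\neq i\). A set of such half-edges spans a cube exactly when the indices are distinct and the chosen edges are pairwise disjoint. Both conditions are pairwise, so the link is flag.

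For specialness, I would use the standard labelling argument. Each edge of \(UD_n(\graf)\) moves a single token along an edge \(e\) of \(\graf\) while the other tokens stay at a fixed set of vertices. Parallel edges in a square keep the same moving edge \(e\), so every hyperplane carries a well-defined label \(e\in E(\graf)\). Moreover, each edge has a canonical orientation coming from a fixed orientation of \(e\), and parallelism preserves it. This rules out one-sided hyperplanes, and self-osculation directly: two edges with the same label \(e\) at a common vertex would both move the token sitting at an endpoint of \(e\) along \(e\), so they coincide or the orientations differ.

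Self-intersection would require a square whose two directions carry the same label \(e\). This is impossible, because the two moving edges of a square are disjoint.

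Inter-osculation is the main obstacle. Two hyperplanes labelled \(e\) and \(f\) can cross only if \(e\cap f=\varnothing\). They osculate at a vertex \(x\) only if the edges labelled \(e\) and \(f\) at \(x\) share a moving token location, or if \(e\cap f\neq\varnothing\) (e.g. \(e,f\) share a vertex with the tokens arranged accordingly). The obstruction is that labels alone do not distinguish these cases when \(e\cap f=\varnothing\) but the configurations block the square. For example, the token of one move may sit on an endpoint of the other edge. At this step I would subdivide \(\graf\) sufficiently (the paper's standing convention that \(\graf\) is suitably subdivided, as in Abrams' theorem) so that the required structure is visible in the labels. Alternatively, following Crisp--Wiest, I would instead construct an explicit local isometry \(UD_n(\graf)\to S_{\grafl}\), where \(\grafl\) is the graph on \(E(\graf)\) with an edge between every pair of disjoint edges. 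Each edge labelled \(e\) maps to the loop \(e\), and each square maps to the corresponding commuting square. Injectivity on links follows from the no-self-osculation argument above, and the image is induced because a pair of disjoint edge labels available at \(x\) always spans a square at \(x\) once both moves are simultaneously legal. By Haglund--Wise this local isometry into a Salvetti complex yields specialness, and that is the route I would commit to.
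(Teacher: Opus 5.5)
Your NPC argument (Abrams' flag-link check), the edge-labelling of hyperplanes, and your exclusion of one-sided and self-intersecting hyperplanes are fine. The Crisp--Wiest/Haglund--Wise route through a local isometry \(UD_n(\graf)\to S_\grafl\) is also the standard one behind the cited references.

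The gap is at the crux: inter-osculation, or equivalently fullness of the image of the link map.
\begin{itemize}
\item You assert that for disjoint labels \(e,f\) ``the configurations block the square''.
\item You propose to cure this by subdividing \(\graf\).
\item For the route you commit to, you then assert that the image is induced ``once both moves are simultaneously legal'', without showing that this always happens.
\end{itemize}
As written, the argument is circular: the fact you need is exactly the one you said could fail. The subdivision cure is not available in any case. The theorem concerns the cube complex \(UD_n(\graf)\) for the given \(\graf\), and the paper uses sufficient subdivision only for the homotopy equivalence with \(UC_n(\graf)\). Subdividing \(\graf\) produces a different cube complex, and its specialness says nothing about \(UD_n(\graf)\).

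The missing observation is short and needs no subdivision.
\begin{itemize}
\item An edge of \(UD_n(\graf)\) with label \(e=[a,b]\) issues from a vertex \(x\) if and only if exactly one endpoint of \(e\), say \(a\), lies in \(x\). That edge is then \(\{e\}\cup(x\setminus\{a\})\), so it is unique. This rules out indirect as well as direct self-osculation.
\item Suppose \(f=[c,d]\) is disjoint from \(e\) and also labels an edge at \(x\), with \(c\in x\) and \(d\notin x\). Then \(\{e,f\}\cup(x\setminus\{a,c\})\) is a square. Indeed \(e\cap f=\varnothing\), and neither \(e\) nor \(f\) meets \(x\setminus\{a,c\}\) because \(b,d\notin x\).
\item Hence two edges at \(x\) span a square if and only if their labels are disjoint.
\end{itemize}
This gives fullness of the link map into \(S_\grafl\). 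It also excludes inter-osculation directly: crossing hyperplanes carry disjoint labels, and dual edges with disjoint labels at a common vertex always span a square.

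Finally, your reduction ``it is enough to handle \(D_n(\graf)\)'' goes the wrong way. Specialness passes from \(UD_n(\graf)\) up to its cover \(D_n(\graf)\), by composing the local isometry with the covering map. It does not pass down along a free quotient; that is exactly the difference between special and virtually special. Since you actually argue in \(UD_n(\graf)\), simply reverse that sentence.
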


In the $2$-dimensional setting---namely, for weakly special \emph{square} complexes---subcomplexes admitting \emph{product structures} play a central role in understanding the large-scale geometry of universal covers; see \cite[Section~2.2]{Oh22}. 
For the remainder of this section, let \(Y\) denote a compact weakly special square complex. 

A distinctive feature of square complexes is that the properties of being NPC or (weakly) special pass to subcomplexes, a phenomenon that does not hold for cube complexes in general.

\begin{lemma}\label{lem:SpecialSubcomplexes}
Let \(Y\) be a square complex.
If \(Y\) is NPC, special, or weakly special, then any subcomplex \(Y_0\subset Y\) has the same property.
\end{lemma}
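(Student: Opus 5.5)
We handle the three properties separately. Throughout, \(Y_0\subset Y\) is a subcomplex. Links of \(Y_0\) are subcomplexes of links of \(Y\), and every hyperplane of \(Y_0\) is contained in a hyperplane of \(Y\). The key point is that in a square complex links are graphs, i.e.\ \(1\)-dimensional triangular complexes.

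\emph{NPC.} For a vertex \(y\in Y_0\), the link \(\Lk(Y_0,y)\) is a subgraph of \(\Lk(Y,y)\). Since \(Y\) is \(2\)-dimensional, flagness of a link amounts to the link being a simplicial graph: no loops (a square glued to itself at consecutive edges) and no multi-edges (two squares sharing a corner with the same two half-edges). No \(3\)-cliques can span \(2\)-simplices, because there are no \(3\)-cubes. Hence flagness for a \(1\)-dimensional complex means exactly that it is simplicial and every \(3\)-clique is filled; but a simplicial graph with a \(3\)-clique is not flag in a \(2\)-dimensional complex. So we must say this carefully. In \(\Lk(Y,y)\), flagness forces the graph to have no \(3\)-cycles, since a \(3\)-cycle would have to bound a \(2\)-simplex, coming from a \(3\)-cube, which does not exist. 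Thus \(\Lk(Y,y)\) is a simplicial graph without triangles. Both properties, no loops or multi-edges and no \(3\)-cycles, pass to subgraphs, so \(\Lk(Y_0,y)\) is flag.

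\emph{Weakly special and special.} First compare hyperplanes. Edges parallel in \(Y_0\), via an immersion \(e\times[0,n]\to Y_0\), remain parallel in \(Y\). So each hyperplane \(H_0\) of \(Y_0\) is contained in a hyperplane \(H\) of \(Y\), and the map \(H_0\mapsto H\) may be non-injective. Each pathology is witnessed by local configurations in \(Y_0\):
\begin{itemize}
\item a self-intersection is a square of \(Y_0\) whose two dual edges lie in \(H_0\);
\item a self-osculation is a vertex with two non-adjacent-in-link half-edges in \(H_0\);
\item one-sidedness is an orientation-reversing parallelism chain;
\item inter-osculation involves two hyperplanes that both cross in a square and osculate at a vertex.
\end{itemize}
Mapping each witness into \(Y\), a self-intersection of \(H_0\) gives a self-intersection of \(H\), and one-sidedness is inherited directly. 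For osculation, non-adjacency in \(\Lk(Y_0,y)\) need not imply non-adjacency in \(\Lk(Y,y)\). However, if the two half-edges are adjacent in \(\Lk(Y,y)\), then \(H\) self-intersects in \(Y\). So a self-osculation in \(Y_0\) yields either a self-osculation or a self-intersection in \(Y\), both excluded by weak specialness.

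\emph{Inter-osculation.} Suppose \(H_0,H_0'\) cross in \(Y_0\) and osculate in \(Y_0\). Then the images \(H,H'\) cross in \(Y\). At the osculation vertex, the two half-edges either osculate in \(Y\) or span a square in \(Y\). In the first case \(H,H'\) inter-osculate. The second case, where they span a square in \(Y\) but not in \(Y_0\), is the main obstacle. Here the specialness conditions are about the hyperplanes \(H,H'\) of \(Y\) being tangent, and adding squares only removes tangencies. I expect to resolve it by noting that Haglund--Wise's definition of inter-osculation requires direct osculation, meaning a pair of half-edges at a vertex not spanning a square. If instead the two half-edges span a square in \(Y\), the hyperplanes of \(Y_0\) are distinct hyperplanes of \(Y\) that merely cross in two places, which is allowed. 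So in this case \(H_0,H_0'\) do not inter-osculate as hyperplanes of \(Y\), and the condition in \(Y_0\) must be checked relative to the hyperplanes of \(Y\). This case needs care, possibly exploiting that \(H_0\) and \(H_0'\) may be proper pieces of \(H\) and \(H'\).
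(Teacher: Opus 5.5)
Your NPC and weakly special arguments are correct and match the paper's. Links of \(Y_0\) are subgraphs of the \(3\)-cycle-free simplicial links of \(Y\). Witnesses for self-intersection, one-sidedness or self-osculation in \(Y_0\) map to witnesses in \(Y\), where a self-osculation can at worst become a self-intersection.

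The gap is the inter-osculation case, which you leave open. It cannot be closed: the ``special'' part of the statement is false in general. Your suggested escape, checking the condition ``relative to the hyperplanes of \(Y\)'', is not legitimate, since specialness of \(Y_0\) concerns the hyperplanes of \(Y_0\). The bad case you isolated really occurs: two edges that span a square of \(Y\) missing from \(Y_0\), while their hyperplanes still cross elsewhere in \(Y_0\).

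\textbf{Counterexample.} Let \(Y\) be the quotient of the standard square tiling of \(\bbR^2\) by
\[\Lambda=\langle(2,2),(2,-2)\rangle=\{(x,y)\in(2\bbZ)^2 : x+y\equiv 0 \bmod 4\}.\]
\begin{itemize}
\item \(Y\) is an \(8\)-square torus covering the one-square torus (the Salvetti complex of \(\bbZ^2\)), hence special. Since both coordinate projections of \(\Lambda\) equal \(2\bbZ\), no hyperplane even indirectly self-osculates.
\item Let \(V\) and \(H\) be the hyperplanes dual to \(h=[(0,0),(1,0)]\) and \(v=[(0,0),(0,1)]\). Each has four squares in its carrier, cyclically arranged.
\item \(V\) and \(H\) cross exactly in \(A=[0,1]^2\) and \(B=[0,1]\times[2,3]\equiv[2,3]\times[0,1]\).
\item Delete the open square \(A\) to get \(Y_0\). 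The remaining three squares of each carrier still connect all of its dual edges. So \(h\) and \(v\) are dual to hyperplanes of \(Y_0\) that cross at \(B\).
\item However, \(h\) and \(v\) share only the vertex \((0,0)\), since \((1,-1)\notin\Lambda\). The only squares of \(Y\) containing \(h\) are \(A\) and \([0,1]\times[3,4]\); those containing \(v\) are \(A\) and \([3,4]\times[0,1]\). So no square of \(Y_0\) contains both \(h\) and \(v\).
\end{itemize}
Thus these two hyperplanes inter-osculate in \(Y_0\), and \(Y_0\) is not special. The paper's one-line argument (``every hyperplane of \(Y_0\) lies in a hyperplane of \(Y\), so no forbidden configuration occurs'') has exactly the blind spot you found.

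\textbf{What survives.} The special case does go through if any two hyperplanes of \(Y\) cross in at most one square. Suppose \(H_0\subset H\) and \(H_0'\subset H'\) cross at a square \(S\subset Y_0\), and have dual edges that are consecutive in a square \(T\) of \(Y\). Then \(H\neq H'\), since otherwise \(S\) witnesses a self-intersection of \(H\). So \(T\) and \(S\) both cross \(H\) and \(H'\), forcing \(T=S\subset Y_0\), which contradicts osculation in \(Y_0\).

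This hypothesis holds in \(UD_2(\graf)\). Every hyperplane there is dual to edges \(\{\sfe,\sfv\}\) with a fixed edge \(\sfe\) of \(\graf\). Hence hyperplanes labelled \(\sfe\) and \(\sff\) can only cross in the single square \(\{\sfe,\sff\}\). That is the only setting where the paper uses the special part (\(UP_2(\graf)\subset UD_2(\graf)\) in \Cref{Lem:UP_2Special}), so a correct proof should restrict the statement accordingly.
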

\begin{proof}
Suppose first that \(Y\) is NPC. For any vertex \(y\in Y_0\), the link \(\Lk(Y_0,y)\) is a subgraph of
\(\Lk(Y,y)\). Since \(\Lk(Y,y)\) is \(3\)-cycle-free, so is \(\Lk(Y_0,y)\), and hence \(Y_0\) is NPC.

If \(Y\) is special (or weakly special, resp.), then every hyperplane of \(Y_0\) is contained in a hyperplane of \(Y\). Thus no forbidden hyperplane configuration can occur in \(Y_0\).
Combining this with the previous paragraph shows that \(Y_0\) is special (or weakly special, resp.).
\end{proof}

We now recall the relevant definitions and basic properties, including clarifying remarks where appropriate.

\subsubsection{Standard and maximal product subcomplexes}

\begin{definition}[Product subcomplex]\label{Def:ProductSubcomplex}
Let \(X\) be a square complex, and let \(\graf_1,\graf_2\) be graphs. 
A local isometry \(\iota:{\graf}_1 \times {\graf}_2\to X\) is a \emph{product structure} if there exist vertices \(\sfv_i\in\graf_i\) such that
\(\graf_1 \cong \iota(\graf_1\times\{\sfv_2\})\) and \(\graf_2 \cong \iota(\{\sfv_1\}\times\graf_2)\).
The image \(\image(\iota)\subset X\) is called a \emph{product subcomplex}.
\end{definition}

We next formalize the relationship between product subcomplexes in \(Y\) and those in its universal cover.

\begin{definition}[Pull-back and push-forward]
\label{Def:Pullback and Pushforward}
For \(i=1,2\), let \(\grafd_i\) and \(\graf_i\) be graphs.
Assume that there exist local isometries \(\iota:\grafd_1\times\grafd_2\to Y\) and \(\bar\iota:\graf_1\times\graf_2\to \bar Y\) such that \(\image(p_Y\circ\bar\iota)=\image(\iota)\).
We say that:
\begin{enumerate}
\item \(\iota\) is a \emph{push-forward} of \(\bar\iota\) if there exist vertices \(\sfv_i\in \graf_i\) such that
\[
\grafd_1\isom p_Y(\bar\iota(\graf_1\times\{\sfv_2\}))\quad\text{ and }\quad
\grafd_2\isom p_Y(\bar\iota(\{\sfv_1\}\times\graf_2)).
\]
\item \(\bar\iota\) is a \emph{pull-back} of \(\iota\) if there exist vertices \(\sfw_i\in \grafd_i\) such that
\[
\graf_1\isom \bar{\iota(\grafd_1\times\{\sfw_2\})}\quad\text{ and }\quad
\graf_2\isom \bar{\iota(\{\sfw_1\}\times\grafd_2)}.
\]
In this case, \(\image(\bar\iota)\subset\bar Y\) is called a \emph{product-lift} (\emph{\(p\)-lift}) of \(\image(\iota)\subset Y\).
\end{enumerate}
\end{definition}


\begin{lemma}[\cite{Oh22}, Lemmas~2.8 and~2.9]\label{ProjofPS}
The following hold:
\begin{enumerate}
\item\label{push-forward} For any local isometry \(\bar\iota:\graf_1\times\graf_2\to\bar Y\), there exists a push-forward \(\iota:\grafd_1\times\grafd_2\to Y\) of \(\bar\iota\) with finite graphs \(\grafd_i\), which is a product structure.
\item\label{pull-back} For any local isometry \(\iota:\grafd_1\times\grafd_2\to Y\), there exists a pull-back \(\bar\iota:\graf_1\times\graf_2\to \bar Y\) of \(\iota\), which is a product structure.
Moreover, \(\image(\bar\iota)\) is maximal among product subcomplexes of \(\bar Y\) whose image under \(p_Y\) equals \(\image(\iota)\).
\item \Cref{Def:ProductSubcomplex,Def:Pullback and Pushforward} do not depend on the choice of vertices \(\sfv_i\in\graf_i\) or \(\sfw_i\in\grafd_i\).
\end{enumerate}
\end{lemma}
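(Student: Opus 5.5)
Since this lemma is quoted from \cite{Oh22}, the plan is to rebuild the three parts from covering space theory and the fact that local isometries of NPC square complexes lift to isometric embeddings of universal covers. Throughout, $Y$ is compact and weakly special, so $\bar Y$ is a $\CAT(0)$ square complex.

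For \eqref{push-forward}, let $\bar\iota:\graf_1\times\graf_2\to\bar Y$ be a local isometry. Fix vertices $\sfv_i\in\graf_i$ and set $\grafd_1=p_Y(\bar\iota(\graf_1\times\{\sfv_2\}))$ and $\grafd_2=p_Y(\bar\iota(\{\sfv_1\}\times\graf_2))$. These are subgraphs of the compact complex $Y$, so they are finite. The task is to define $\iota:\grafd_1\times\grafd_2\to Y$ and show it is a local isometry with image $p_Y(\image\bar\iota)$.
\begin{enumerate}
\item Let $e_1\times e_2$ be a square of $\graf_1\times\graf_2$. Its image is a square in $\bar Y$, and it projects to a square of $Y$ whose sides are parallel, in the hyperplane sense, to $p_Y\bar\iota(e_1\times\{\sfv_2\})$ and $p_Y\bar\iota(\{\sfv_1\}\times e_2)$.
\item Weak specialness (no self-intersecting or self-osculating hyperplanes) should make the square at a pair of edges $(f_1,f_2)\in\grafd_1\times\grafd_2$ well defined. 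Such a square is determined by a vertex of $Y$ and two half-edges that meet in a square. I would use the absence of self-osculation to show that two lifts giving the same pair of projected edges give the same square in $Y$.
\item Local injectivity and the induced-subcomplex condition on links then follow from those of $\bar\iota$, read through $p_Y$, which is a local isomorphism.
\end{enumerate}
This well-definedness is the main obstacle. The concern is that different lifts of an edge of $\grafd_1$ in $\graf_1$ might pair with edges of $\graf_2$ inconsistently. I would handle it by following hyperplanes: parallel edges in the product project to edges dual to a single hyperplane of $Y$. Without self-intersection, no hyperplane crosses itself, so the $\grafd_1$-direction and the $\grafd_2$-direction stay separated. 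The product-structure condition $\grafd_i\cong\iota(\cdot)$ then holds by construction.

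For \eqref{pull-back}, start with a local isometry $\iota:\grafd_1\times\grafd_2\to Y$ and take universal covers. The cover $\overline{\grafd_1\times\grafd_2}=\bar\grafd_1\times\bar\grafd_2$ is a product of trees. The elevation $\bar\iota$ is an isometric embedding with $p_Y\circ\bar\iota=\iota\circ p$, so its image projects onto $\image(\iota)$. Next I would identify $\bar\grafd_1$ with the universal cover of the image graph $\iota(\grafd_1\times\{\sfw_2\})$. This uses that the image is isomorphic to $\grafd_1$, which I would first reduce to by passing to the product structure obtained in \eqref{push-forward}. That identification gives the pull-back property.

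For maximality, suppose a product subcomplex $P\supseteq\image\bar\iota$ of $\bar Y$ also projects onto $\image\iota$. Each factor line of $P$ projects into $\image\iota(\grafd_1\times\{\sfw\})$. Convexity together with the uniqueness of lifts starting from a basepoint then forces the factors of $P$ to lie inside the lifted trees, so $P=\image\bar\iota$.

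For the third part, independence from the choice of $\sfv_i$ follows from the product structure. Moving $\sfv_2$ along an edge $e$ of $\graf_2$ gives the strip $\graf_1\times e$, whose image is an immersed strip between the two horizontal copies. Its projection is the parallelism of hyperplanes, so the two projected graphs are isomorphic via the strip. Connectivity of $\graf_2$ then propagates this to any choice of $\sfv_2$, and the same argument applies to the $\sfw_i$.
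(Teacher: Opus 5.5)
The paper gives no argument for this lemma; it defers to \cite{Oh22}. Your sketch therefore has to stand on its own, and it has gaps in parts~(1) and~(2).

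\textbf{Part (1).} The decisive step is only asserted. You never say where \(\iota\) sends a vertex \((u_1,u_2)\in\grafd_1\times\grafd_2\). The natural choice is \(\iota\bigl(p_Y\bar\iota(a,\sfv_2),p_Y\bar\iota(\sfv_1,b)\bigr)=p_Y\bar\iota(a,b)\). Independence of the choice of \(a,b\) is a global statement, because different preimages of \(u_1\) (or \(u_2\)) can be far apart in \(\graf_1\) (or \(\graf_2\)).
\begin{itemize}
\item Non-self-osculation used at a single vertex does not give this; it has to be propagated along paths.
\item Non-self-intersection only gives \(\mathcal H_1\cap\mathcal H_2=\varnothing\), where \(\mathcal H_i\) is the set of hyperplanes of \(Y\) dual to edges of \(\grafd_i\).
\end{itemize}
What is missing is a transport lemma. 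In a weakly special complex a vertex carries at most one edge dual to a given hyperplane, since two such edges would give self-intersection or self-osculation. Hence an edge path is determined by its initial vertex and its sequence of dual hyperplanes. The ladders \(\bar\iota(\{a\}\times[\sfv_2,b])\) and \(\bar\iota(\{a'\}\times[\sfv_2,b])\) cross the same hyperplanes of \(\bar Y\) in the same order. So if \(p_Y\bar\iota(a,\sfv_2)=p_Y\bar\iota(a',\sfv_2)\), their projections coincide and \(p_Y\bar\iota(a,b)=p_Y\bar\iota(a',b)\). The same argument in the other factor gives \(p_Y\bar\iota(a',b)=p_Y\bar\iota(a',b')\). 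This lemma is also what makes your strip argument in part~(3) well defined and injective.

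Your step~(c) is wrong as justified. \(\Lk(\grafd_1,u_1)\) collects half-edges from all preimages of \(u_1\), so the link of \(\grafd_1\times\grafd_2\) is generally larger than the link of \(\graf_1\times\graf_2\) at any single preimage. Nothing can therefore be read off from \(\bar\iota\) through \(p_Y\).
\begin{itemize}
\item Injectivity on links follows from the transport lemma together with \(\mathcal H_1\cap\mathcal H_2=\varnothing\).
\item Inducedness needs the dimension. Suppose two \(\grafd_1\)-half-edges at an image vertex spanned a square. Take any \(\grafd_2\)-half-edge there; it spans squares with both. The three would then form a \(3\)-cycle in a link of the non-positively curved square complex \(Y\), which is impossible.
\end{itemize}

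\textbf{Maximality in part (2).} This is the more serious problem. Your claim that each factor slice of \(P\) projects into \(\iota(\grafd_1\times\{\sfw\})\) is exactly what needs proof; a priori it only projects into \(\image(\iota)\). For an arbitrary local isometry \(\iota\) the claim fails, and so does the conclusion. Let \(Y=\bbR^2/\Lambda\) with the unit square structure and \(\Lambda=\langle(3,3),(3,-3)\rangle\).
\begin{itemize}
\item The vertical hyperplanes \(x=k+\tfrac12\) and \(x=k'+\tfrac12\) become equal in \(Y\) exactly when \(k\equiv k'\pmod 3\), and likewise for horizontal ones. So the four edges at each vertex are dual to distinct hyperplanes, and \(Y\) is special.
\item The map \(\iota\colon\sfP_3\times\sfC_6\to Y\), \((s,t)\mapsto[(s,t)]\), is a local isometry, indeed a product structure.
\item Every pull-back of \(\iota\) has image a strip of width \(3\) in \(\bar Y=\bbR^2\). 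Every \(\Lambda\)-orbit meets such a strip, so the strip projects onto all of \(Y=\image(\iota)\).
\item But \(\bbR^2\) is a strictly larger product subcomplex with the same projection.
\end{itemize}

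What your ``uniqueness of lifts'' remark does prove is the statement for an \(\iota\) that is itself maximal, meaning it is not the restriction of a product structure with the same image and strictly larger factors. The canonical structures given by the universal property after \Cref{SPS} are of this kind. The argument runs as follows. The slice \(\bar\iota(\graf_1\times\{\bar\sfw_2\})\) is a whole component of \(p_Y^{-1}(\iota(\grafd_1\times\{\sfw_2\}))\). So if \(P\supsetneq\image(\bar\iota)\) and \(p_Y(P)=\image(\iota)\), applying part~(1) to \(P\) yields a product structure with the same image and strictly larger factors.

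The existence half of~(2), obtained from part~(1) followed by elevation, is fine.
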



\begin{definition}[Standard and maximal product subcomplexes]\label{SPS}
Let \(\iota:\grafd_1\times\grafd_2 \to Y\) and \(\bar\iota:\graf_1\times\graf_2 \to \bar Y\) be product structures, with \(\iota\) a push-forward of \(\bar\iota\) and \(\bar\iota\) a pull-back
of \(\iota\).
We say that \(\iota\) (\(\bar\iota\), resp.) is \emph{standard} if \(\grafd_1\) and \(\grafd_2\) (\(\graf_1\) and \(\graf_2\), resp.) are leafless. 
A \emph{standard product subcomplex} is the image of a standard product structure, and it is \emph{maximal} if it is maximal with respect to inclusion among standard product subcomplexes.
\end{definition}

\begin{remark}\label{Remark:Standard}
Standard product subcomplexes generalize those appearing in \(2\)-dimensional Salvetti complexes \cite{BKS(a)} and in ordered discrete \(2\)-configuration spaces of graphs \cite{Fer12}.
\end{remark}

Together with basic properties of local isometries, \Cref{ProjofPS} implies that (standard) product subcomplexes are well defined up to canonical identifications.

\begin{lemma}
Let \(K\subset Y\) be a product subcomplex and \(\bar K\subset \bar Y\) a \(p\)-lift of \(K\).
Then their associated product structures \(\iota:\grafd_1\times\grafd_2\to Y\) and \(\bar\iota:\bar\grafd_1\times\bar\grafd_2\to\bar Y\) are uniquely determined, up to permuting factors, by the following universal properties:
\begin{enumerate}
\item For any local isometry \(\iota':\grafd'_1\times\grafd'_2\to Y\) with image contained in \(K\), there exists a unique pair of local isometries \((q_1:\grafd'_1\to\grafd_1,q_2:\grafd'_2\to\grafd_2)\) such that \(\iota\circ(q_1\times q_2)=\iota'\).
\item For any local isometry \(\bar\iota':\graf'_1\times\graf'_2\to \bar Y\) with image contained in \(\bar K\), there exists a unique pair of local isometries \((\bar q_1:\graf'_1\to\bar\grafd_1,\bar q_2:\graf'_2\to\bar\grafd_2)\) such that \(\bar\iota\circ(\bar q_1\times \bar q_2)=\bar\iota'\).
\end{enumerate}
\end{lemma}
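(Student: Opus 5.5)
The plan is to prove the universal property in the universal cover first, and then push it down to \(Y\). Throughout I will use three standard facts:
\begin{enumerate}
\item Local isometries into \(\CAT(0)\) spaces are isometric embeddings.
\item A \(\CAT(0)\) product \(\bar\grafd_1\times\bar\grafd_2\) of trees has the de Rham-type splitting in which hyperplanes are of two types: those dual to edges of \(\bar\grafd_1\) and those dual to edges of \(\bar\grafd_2\).
\item Since \(Y\) is weakly special, hyperplanes of \(\bar Y\) do not self-intersect or self-osculate.
\end{enumerate}

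For (2), let \(\bar\iota':\graf'_1\times\graf'_2\to\bar Y\) be a local isometry with image in \(\bar K=\image(\bar\iota)\). Since \(\bar\iota\) is an isometric embedding, \(\bar\iota^{-1}\circ\bar\iota'\) is a well-defined combinatorial local isometry \(\graf'_1\times\graf'_2\to\bar\grafd_1\times\bar\grafd_2\). The link of a vertex of \(\graf'_1\times\graf'_2\) is the join \(A_1\ast A_2\) of two discrete sets, and it maps injectively onto an induced subcomplex of the join \(B_1\ast B_2\). Two half-edges from the same \(A_i\) span no square, so their images cannot lie in different \(B_j\): otherwise the image would span an edge of the join, contradicting inducedness. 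Hence each \(A_i\) goes into a single \(B_{\sigma(i)}\), and \(\sigma\) is a bijection because \(A_1\) and \(A_2\) are joined and a square must use both factors.

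By connectedness, \(\sigma\) is constant across vertices, since adjacent vertices share a square structure. After permuting factors we may take \(\sigma=\mathrm{id}\). Then the map sends \(\graf'_1\)-edges to \(\bar\grafd_1\)-edges, and \(\graf'_1\times\{w\}\) lands in a single slice \(\bar\grafd_1\times\{*\}\): moving along a \(\graf'_1\)-edge does not change the \(\bar\grafd_2\)-coordinate. Projecting gives combinatorial immersions \(\bar q_i\). These are local isometries of graphs and are well defined independently of the slice, because along any \(\graf'_2\)-edge the squares show that the \(\bar\grafd_1\)-coordinate of the image of \(\graf'_1\times\{w\}\) is transported identically. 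Uniqueness is clear, since \(\bar\iota\) is injective and the coordinates are determined.

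For (1), take \(\iota':\grafd'_1\times\grafd'_2\to Y\) with image in \(K\). I would lift \(\iota'\) to the universal covers, obtaining \(\bar\iota':\bar\grafd'_1\times\bar\grafd'_2\to\bar Y\), chosen with image inside a \(p\)-lift \(\bar K\). This is possible because the image is connected, and by \Cref{ProjofPS}(\ref{pull-back}) \(\bar K\) is maximal among product subcomplexes projecting into \(K\), so a lift meeting \(\bar K\) stays in it. Step (2) then gives \(\bar q_i\). I would check equivariance of \(\bar q_i\) with respect to the deck groups \(\pi_1(\grafd'_i)\to\pi_1(\grafd_i)\), using uniqueness in (2) applied to translates. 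The \(\bar q_i\) then descend to \(q_i:\grafd'_i\to\grafd_i\), with \(\iota\circ(q_1\times q_2)=\iota'\); uniqueness follows from uniqueness upstairs together with uniqueness of lifts.

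Finally, uniqueness of the product structures themselves up to permuting factors follows formally: two structures both satisfying the universal property yield mutually inverse factorizations. The main obstacle I expect is the descent in (1). It requires that \(\bar K\) be stabilized by exactly the image of \(\pi_1(\grafd_1\times\grafd_2)\), and that this stabilizer act factor-wise. Here I would invoke the push-forward/pull-back description of \Cref{ProjofPS}, where \(\grafd_i\cong p_Y(\bar\iota(\text{slice}))\), rather than redoing it.
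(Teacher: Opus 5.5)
Your proof of (2) is fine. It is a link-level version of the paper's argument, which uses two facts: local isometries preserve hyperplanes and their crossings, and the hyperplanes of \(\bar\grafd_1\times\bar\grafd_2\) fall into two mutually crossing families. The paper obtains (1) exactly as you do, in one line, from (2), \Cref{ProjofPS} and the existence of elevations. So the issue below affects its sketch as well.

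The gap is the step in (1) where you choose the elevation \(\bar\iota'\) inside a \(p\)-lift \(\bar K\). There are two problems with your justification.
\begin{enumerate}
\item \Cref{ProjofPS}(2) only makes \(\bar K\) maximal among product subcomplexes whose projection \emph{equals} \(\image(\iota)\), not among those projecting \emph{into} \(K\).
\item No maximality statement gives ``meets \(\bar K\) \(\Rightarrow\) contained in \(\bar K\)''. Connectedness only places \(\image(\bar\iota')\) in a component of \(p_Y^{-1}(K)\). When \(\iota\) is not injective, distinct \(p\)-lifts of \(K\) can intersect, so such a component can be a union of several \(p\)-lifts, and \(\bar\iota'\) can pass from one to another.
\end{enumerate}

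This step genuinely fails, and with it (1) as stated. Here is a counterexample.
\begin{enumerate}
\item Let \(Y\) be the torus \(\sfC_m\times\sfC_n\) with \(m,n\ge 6\), with a horizontal edge \(\sfe=[a,b]\) identified with its translate \(\sfe'=[a',b']\) by \((3,3)\), via \(a\sim a'\) and \(b\sim b'\).
\item One checks that \(Y\) is a compact weakly special square complex: the hyperplanes dual to \(\sfe\) and \(\sfe'\) merge, but nothing self-osculates or self-intersects.
\item The quotient map \(\iota\colon\sfC_m\times\sfC_n\to Y\) is a product structure with \(K=Y\). The universal cover \(\bar Y\) is a tree of planes glued along lifts of \(\sfe\), and these planes are the \(p\)-lifts of \(K\).
\item The link \(\Lk(Y,a)\) is two \(4\)-cycles wedged at the vertex given by \(\sfe\). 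Let \(\sff\) be a vertical edge at \(a\) and \(\sff'\) a vertical edge at \(a'\).
\item Sending the two squares of \(\sfP_1\times\sfP_2\) to \(\sfe\times\sff\) and \(\sfe'\times\sff'\), glued along \(\sfe=\sfe'\), defines a local isometry (even an embedding) \(\iota'\) into \(K\). At the two middle vertices the link is a path of length two, sent to an induced path through the wedge point.
\item Now \(\iota'\) does not factor through \(\iota\). The middle vertex of \(\sfP_1\times\sfP_2\) lying over \(a=a'\) would have to go to \(a\) in \(\sfC_m\times\sfC_n\), because of its edge to \(\sff\). It would also have to go to \(a'\), because of its edge to \(\sff'\).
\item Correspondingly, the elevation of \(\iota'\) straddles two adjacent planes of \(\bar Y\) and lies in no single \(p\)-lift.
\end{enumerate}

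So your route needs an extra hypothesis making distinct \(p\)-lifts disjoint. For example, \(K\) embedded suffices, since then each \(p\)-lift is a whole component of \(p_Y^{-1}(K)\); this is the situation for \(UD_2(\graf)\), as in \Cref{Lem:SPSinGBG}. Under that hypothesis you need neither the universal cover nor the equivariance/descent step you flagged. The composite \(\iota^{-1}\circ\iota'\) is a local isometry into \(\grafd_1\times\grafd_2\), and your link argument from (2) applies verbatim downstairs.
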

\begin{proof}
Item~(2) follows from two basic observations:
\begin{itemize}
\item Local isometries preserve hyperplanes and their intersections.
\item In a product of two cube complexes, hyperplanes decompose into two disjoint families corresponding to the factors, with every hyperplane in one family intersecting every hyperplane in the other.
\end{itemize}
Item~(1) then follows from Item~(2), \Cref{ProjofPS}, and the fact that every local isometry admits an elevation.
\end{proof}
The following diagram illustrates the universal properties of product structures described in the lemma, showing how local isometries factor uniquely through a given product subcomplex and its \(p\)-lift.
\[
\begin{tikzcd}[row sep=0.7pc]
\bar{\grafd_1}\times\bar{\grafd_2} \arrow[rr, "\bar\iota"]  \arrow[ddd, "p_1\times p_2"'] & & \bar Y \arrow[ddd, "p_Y"]\\
& \graf'_1\times\graf'_2 \arrow[lu, dashed, "\exists \bar q_1\times \bar q_2"', sloped] \arrow[ru, "\bar\iota'"]\\
& \grafd'_1\times\grafd'_2 \arrow[rd, "\iota'"] \arrow[dl, dashed, "\exists q_1\times q_2", sloped]\\
\grafd_1\times\grafd_2 \arrow[rr, "\iota"] & & Y
\end{tikzcd}
\]

We will often denote a product subcomplex \(K\subset Y\) and its
\(p\)-lift \(\bar K\subset\bar Y\) by
\[K=\grafd_1\itimes\grafd_2\qquad\text{and}\qquad\bar K=\bar{\grafd_1}\itimes\bar{\grafd_2},\] 
where the symbol \(\itimes\) indicates \emph{up to permuting factors} and both products \(\grafd_1\times\grafd_2\) and \(\bar{\grafd_1}\times\bar{\grafd_2}\) are the domains of the uniquely determined product structures as above.

\begin{remark}\label{remark:denoting}
Two caveats regarding this notation are worth noting.
First, since \(\iota\) need not be an embedding, the image \(K\) need not be isometric to a direct product.
Second, the notation \(\bar{\grafd_1}\itimes\bar{\grafd_2}\) is mildly ambiguous, as \(K\) has
infinitely many \(p\)-lifts. Nevertheless, once a specific lift is fixed or understood from context, we use this notation to emphasize that its image under \(p_Y\) is \(K\).
\end{remark}

\begin{lemma}[\cite{Oh22}, Lemma~2.12]\label{InclusionBetweenSPSes}
Let \(K=\grafd_1\itimes\grafd_2\) and \(K'=\grafd_1'\itimes\grafd_2'\) be standard product subcomplexes of \(Y\) (or of \(\bar Y\)).
Then \(K\subset K'\) if and only if \(\grafd_1\subset \grafd'_1\) and \(\grafd_2\subset \grafd'_2\), or \(\grafd_1\subset \grafd'_2\) and \(\grafd_2\subset \grafd'_1\).
\end{lemma}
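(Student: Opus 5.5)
The "if" direction is the easy half. Suppose, say, \(\grafd_1\subset\grafd'_1\) and \(\grafd_2\subset\grafd'_2\); the swapped case is symmetric. I would realize the inclusions as restrictions of the product structure of \(K'\). Working first in \(\bar Y\), the factors of \(\bar K'\) are subgraphs \(\grafd'_1\times\{\sfv'_2\}\) and \(\{\sfv'_1\}\times\grafd'_2\) of a product-embedded copy. The inclusion \(\grafd_i\subset\grafd'_i\) is taken via the universal property (the unique local isometries \(q_i\)), so the product structure of \(K\) factors as \(\iota'\circ(q_1\times q_2)\). Its image is then contained in \(\image(\iota')=K'\).

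For \(Y\) itself, the same factorization works. I would use the universal-property lemma to interpret "\(\grafd_i\subset\grafd'_i\)" as the existence of local isometries \(q_i\) compatible with the basepoint identifications.

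The "only if" direction is the real content. Assume \(K\subset K'\). By the first universal property (item (1) of the lemma preceding \Cref{remark:denoting}), the product structure \(\iota:\grafd_1\times\grafd_2\to Y\) has image in \(K'\). So there are unique local isometries \(q_1,q_2\) with \(\iota=\iota'\circ(q_1\times q_2)\), where \(q_j\) maps \(\grafd_j\) into some factor of \(K'\). The universal property is stated only up to permuting factors, and that is exactly the source of the two alternatives. Concretely, pass to \(\bar Y\) and a \(p\)-lift \(\bar K\subset\bar K'\), chosen via \Cref{ProjofPS}. The hyperplanes of \(\bar K\) split into two families, and every hyperplane in one family crosses every hyperplane in the other. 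Each family must land inside one factor-family of \(\bar K'\). Indeed, if two hyperplanes of one factor of \(\bar K\) lay in different factor-families of \(\bar K'\), they would cross in \(\bar K'\). That is impossible inside a tree-like factor, since hyperplanes of a single graph factor never cross. This is where weak specialness (no self-intersection) enters, guaranteeing that hyperplanes in \(\bar Y\) are well behaved.

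It remains to upgrade \(q_j\) to genuine inclusions of subgraphs. Since \(\grafd_j\) is leafless (standardness) and \(q_j\) is a local isometry of graphs, \(q_j\) is an immersion. In \(\bar Y\) the factors are trees with CAT(0) convex images, so immersions are embeddings. Descending to \(Y\) via the push-forward then gives \(\grafd_j\) as the image subgraph, i.e.\ as a subgraph of \(\grafd'_{\sigma(j)}\) for a permutation \(\sigma\).

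The main obstacle is the hyperplane-family matching step, namely ruling out a factor of \(K\) that mixes directions of \(K'\). My plan there is the crossing argument above, applied to two edges of a single leafless factor: I would connect them by a path in that factor and propagate the factor-label along adjacent edges, using that adjacent edges of one factor span no square.
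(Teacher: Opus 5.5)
Your overall strategy is sound and fits the framework the paper sets up. The paper only cites \cite{Oh22} for this lemma, but the universal-property lemma just before it rests on exactly the hyperplane dichotomy you describe. Applying its item~(1) to the product structure \(\iota\) of \(K\) inside \(K'\) gives local isometries \(q_1,q_2\) with \(\iota=\iota'\circ(q_1\times q_2)\), up to swapping factors.

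The genuine problem is the step you call the upgrade to inclusions, which rests on the wrong fact. That the lifts \(\bar q_j\colon\bar\grafd_j\to\bar\grafd'_{\sigma(j)}\) are embeddings of trees says nothing about injectivity of \(q_j\) downstairs. For instance, the double cover \(\sfC_6\to\sfC_3\) is a local isometry whose lift is an isomorphism of lines. Leaflessness is also beside the point, since every local isometry of graphs is an immersion. What actually forces injectivity is the definition of a product structure (\Cref{Def:ProductSubcomplex} together with \Cref{ProjofPS}(3)): \(\iota\) restricted to \emph{any} slice \(\grafd_1\times\{\sfv_2\}\) is an isomorphism onto its image. That restriction is \(q_1\) followed by the restriction of \(\iota'\) to a slice of \(K'\). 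Hence \(q_1\) is injective, and \(\iota(\grafd_1\times\{\sfv_2\})\) is a subgraph of a slice of \(K'\), i.e.\ of a copy of \(\grafd'_{\sigma(1)}\). Given the universal-property lemma, this one line is the whole `only if' direction for \(Y\). Your tree argument is valid, and needed, only for the \(\bar Y\) version.

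If you intend the hyperplane paragraph as a self-contained replacement for the universal property, it needs three repairs.
\begin{enumerate}
\item The crossing you produce lives in \(\bar K'\), possibly away from \(\bar K\). To contradict the disjointness of same-factor hyperplanes of \(\bar K\), you need crossings to be reflected in \(\bar K\). That is convexity of \(\bar K\) (it is the image of an elevation of a local isometry). In your local edge-by-edge version, the needed fact is the full-link condition: two adjacent edges of one factor span no square of \(\bar Y\) because \(\Lk(\bar\iota,x)\) has full image. Weak specialness plays no role here, since hyperplanes in the \(\CAT(0)\) cover are always well behaved.
\item You must also show that the two factor-families of \(\bar K\) land in \emph{different} families of \(\bar K'\), which holds because a square of \(\bar K\) is a square of \(\bar K'\). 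Otherwise \(\sigma\) need not be a permutation.
\item \Cref{ProjofPS} does not by itself give nested lifts \(\bar K\subset\bar K'\). When \(K'\) is not embedded, a component of \(p_Y^{-1}(K')\) can contain many \(p\)-lifts, as for the torus with two points identified in \Cref{Ex:disconnectedExamples}. The natural way to nest them is to elevate \(\iota'\circ(q_1\times q_2)\), which is possible only once the factorization is known.
\end{enumerate}

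Finally, your `if' direction is a tautology under your reading of \(\grafd_i\subset\grafd'_i\). Invoking the universal property there is circular, since it presupposes \(K\subset K'\). If the inclusions are read as inclusions of slices through a common vertex, the missing input is this: a local isometry from a product of connected graphs into an NPC square complex is determined by its two slices through one vertex, by uniqueness of square completion.
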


\subsubsection{(Reduced) Intersection complexes}

To \(Y\) (or its universal cover \(\bar Y\)), we associate a (possibly disconnected) labeled triangular complex, called the \emph{intersection complex}, which encodes the intersection patterns of maximal product subcomplexes. This construction was introduced by Fernandes~\cite{Fer12} and later generalized in~\cite{Oh22}.

\begin{definition}\label{IC}
For \(X\in\{Y,\bar Y\}\), the \emph{intersection complex} \(\cI(X)\) is the labeled triangular complex defined as follows:
\begin{enumerate}
\item The vertices correspond to maximal product subcomplexes of \(X\);
\item Let \(\{M_0,\dots,M_n\}\) be a finite collection of maximal product subcomplexes whose intersection contains standard product subcomplexes \(K_1,\dots,K_m\), each maximal among standard product subcomplexes contained in \(M_0\cap\cdots\cap M_n\).
Then the vertices \(M_0,\dots,M_n\) span \(m\)-copies of an \(n\)-simplex, one corresponding to each \(K_j\);
\item Each simplex \(\triangle\) corresponding to a standard product subcomplex \(K=\grafd_1\itimes\grafd_2\subset X\) is assigned the \emph{label} \(
\Lb\triangle := \grafd_1\times\grafd_2\), which is the domain of a standard product structure for \(K\).
\end{enumerate}
We occasionally denote the underlying complex of \(\cI(X)\) by \(|\cI(X)|\).
\end{definition}

\begin{remark}
In \cite{Oh22}, 
the complex $\cI(Y)$ was called the \emph{reduced intersection complex} and denoted by $\cRI(Y)$.
\end{remark}


\begin{theorem}[\cite{Oh22}, Lemma~2.14 and Theorem~3.4]\label{MaxtoMax}
Let \(\phi:\bar{Y}\rightarrow \bar{Y'}\) be a \((\lambda,\varepsilon)\)-quasi-isometry between the universal covers of two compact weakly special square complexes \(Y\) and \(Y'\).
Then there exist constants \(A=A(Y)\), \(B=B(Y')\) and \(D=D(\lambda,\varepsilon)\) such that for any finite collection \(\{\bar M_i\}\) of maximal product subcomplexes of \(\bar{Y}\) whose intersection \(\bar{W}=\bigcap_i \bar{M}_i\) contains a \(2\)-dimensional flat:
\begin{enumerate}
\item there exists a unique standard product subcomplex \(\bar K\subset \bar Y\) with \(d_H(\bar{W}, \bar K)<A\);
\item for each \(i\), there exists a unique maximal product subcomplex \(\bar M_i'\subset \bar{Y'}\) such that \(d_H(\bar{M_i}', \phi(\bar M_i))<D\) and the intersection \(\bar W' = \bigcap_i \bar M_i'\) contains a top-dimensional flat;
\item there exists a unique standard product subcomplex \(\bar K'\subset \bar{Y'}\) with \(d_H(\bar{W}', \bar K')<B\) such that \(d_H(\bar K', \phi(\bar K))<D\). 
\end{enumerate}
\end{theorem}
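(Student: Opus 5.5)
The plan is to deduce the statement from quasiflat rigidity for $2$-dimensional $\CAT(0)$ cube complexes (Huang; Bestvina--Kleiner--Sageev in the Salvetti case). The key step is a coarse characterisation of maximal product subcomplexes purely in terms of top-dimensional flats, which a quasi-isometry must respect.

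\textbf{Step 1: standard product subcomplexes of $\bar Y$.} First I will record what standard product subcomplexes of $\bar Y$ look like. Since $Y$ is weakly special, the elevation of a standard product structure is an isometric embedding $\bar\iota:\graf_1\times\graf_2\to\bar Y$. Here $\graf_1,\graf_2$ are leafless trees, so the image is a convex subcomplex.

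\textbf{Step 2: proof of Item~(1).} Let $\bar W=\bigcap_i\bar M_i$ be a finite intersection of such convex subcomplexes. Then $\bar W$ is convex, and by the universal property of product structures it has the form $\grafd_1\itimes\grafd_2$ with $\grafd_j\subset\graf_j$ subtrees. Because $\bar W$ contains a $2$-flat, each $\grafd_j$ contains a bi-infinite line. I then take $\bar K$ to be the product of the leafless cores of $\grafd_1$ and $\grafd_2$ (the union of all bi-infinite geodesics in each factor).

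To bound $d_H(\bar W,\bar K)$ uniformly, I will use that $Y$ is compact. There are only finitely many maximal product subcomplexes of $Y$ and finitely many intersection patterns up to the deck group. Moreover, each $\grafd_j$ is the elevation of a finite graph, so the trees hanging off its core have depth bounded by the diameter of the corresponding compact graph in $Y$. This gives a constant $A=A(Y)$.

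Uniqueness in Item~(1) will follow from the fact that two standard product subcomplexes at finite Hausdorff distance have the same set of bi-infinite lines in each factor. Hence, by \Cref{InclusionBetweenSPSes}, they coincide.

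\textbf{Step 3: Item~(2), the main obstacle.} I will characterise a maximal product subcomplex $\bar M$ coarsely: it is the union of all top-dimensional flats $F$ that can be joined to a fixed flat in $\bar M$ by a chain of flats, where consecutive flats share a half-flat or a flat strip. Equivalently, $\bar M$ is a maximal ``parallel set'' of flats, since in a product of leafless trees any two flats are connected in this way.

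By Huang's quasiflat theorem, $\phi$ sends each top-dimensional flat to within bounded Hausdorff distance of a finite union of orthants, and in fact of a flat. The bound depends only on $(\lambda,\varepsilon)$ and the complexes. The relations ``sharing a half-flat'' and ``sharing a strip'' are coarse notions, so $\phi$ preserves such chains. Hence $\phi(\bar M_i)$ lies within distance $D$ of a maximal product subcomplex $\bar M_i'$.

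For uniqueness of $\bar M_i'$, I will show that distinct maximal product subcomplexes are at infinite Hausdorff distance. Otherwise Step~1 and maximality would force equality.

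A flat $F\subset\bar W$ is sent near a flat $F'$ lying coarsely in every $\bar M_i'$. To get $F'$ actually inside $\bar W'$, I will use that, in a $\CAT(0)$ cube complex, the coarse intersection of convex subcomplexes is Hausdorff-close to their genuine intersection. This needs some care with the constants, and I expect it to be the most delicate point.

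\textbf{Step 4: Item~(3).} Apply Item~(1) to $\bar W'\subset\bar{Y'}$ to obtain $\bar K'$ with constant $B=B(Y')$. Since $\phi(\bar W)$ and $\bar W'$ are coarsely equal (Step~3), the triangle inequality gives $d_H(\bar K',\phi(\bar K))<D$ after enlarging $D$. Uniqueness of $\bar K'$ is again Step~1's uniqueness.
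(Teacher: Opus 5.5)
The gap is in Step~3. Your characterization of $\bar M$ as the union of the flats chained to a fixed flat of $\bar M$ is false, and Items~(2)--(3) rest on it. You only establish one inclusion, namely that any two flats of a product of leafless trees are chained.

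The reverse inclusion fails in exactly the situation the theorem addresses. Suppose $\bar M_1\neq\bar M_2$ are maximal product subcomplexes whose intersection contains a flat $F_0$. Every flat of $\bar M_1$ is chained to $F_0$ inside $\bar M_1$, and $F_0$ is chained to every flat of $\bar M_2$ inside $\bar M_2$. So the chain class contains $\bar M_1\cup\bar M_2$, and in general it contains the union of all maximal product subcomplexes in a component of $\cI(\bar Y)$.

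Concretely, take the Salvetti complex of the path $a-b-c-d$, with $\bar M_1=T_{a,c}\times\bbR_b$ and $\bar M_2=T_{b,d}\times\bbR_c$. A flat $\ell\times\bbR_b$, with $\ell\neq\bbR_c$ sharing a ray with $\bbR_c$, and a flat $m\times\bbR_c$, with $m\neq\bbR_b$ sharing a ray with $\bbR_b$, are chained through $\bbR_c\times\bbR_b$. Yet they lie in no common maximal product subcomplex. Similarly, for a normal bunch of grapes with at least two twigs, the class of any flat contains a whole copy of $\bar{UP_2(\graf)}$ by \Cref{theorem:structureofI}. Allowing strips as links makes this worse, since chains then also cross between maximal product subcomplexes that meet in no flat.

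So ``$\phi$ preserves chains'' only places $\phi(\bar M_i)$ near a union of many maximal product subcomplexes of $\bar{Y'}$. It gives neither the existence nor the uniqueness of a single $\bar M_i'$. What is missing is a property that singles out \emph{one} maximal product subcomplex and that $\phi$ coarsely preserves (via Huang's flat-to-flat theorem). For example, one could use the product structure: if $\ell_1\times\ell_2$ and $m_1\times m_2$ are flats of $\bar M$, then so are $\ell_1\times m_2$ and $m_1\times\ell_2$; equivalently, $\partial_T\bar M$ is a spherical join. This is where the substance of the cited result lies; the paper does not reprove it but imports it from \cite{Oh22}.

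There is a secondary error in the same step. The claim that ``the coarse intersection of convex subcomplexes is Hausdorff-close to their genuine intersection'' is false as stated, as two disjoint parallel lines show. In dimension $2$ you can bypass it. If a flat $F'$ lies in $N_R(C)$ for a convex subcomplex $C$, then $d(\cdot,C)$ is a bounded convex function on $F'$, hence constant. The sandwich lemma applied to $F'$ and its projection to $C$ then gives an isometric copy of $F'\times[0,d(F',C)]$, which forces $d(F',C)=0$ since $\bar{Y'}$ is $2$-dimensional. So once the $\bar M_i'$ are known, the flat near $\phi(F)$ lies in each of them, hence in $\bar W'$. The same argument shows that flats, and hence standard product subcomplexes, at finite Hausdorff distance coincide, which is what your uniqueness claims need.

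Your Item~(1) is essentially right. The uniformity of $A$ comes from $\bigcap_i\mathrm{Stab}(\bar M_i)$ acting cocompactly on $\bar W$, together with there being finitely many orbits of such collections.
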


By \Cref{MaxtoMax}, the intersection complex \(\cI(\bar X)\) is always a simplicial complex, whereas \(\cI(X)\) need not be.
Moreover, if \(\triangle'\subset\triangle\) are simplices in \(\cI(X)\) with labels \(\Lb\triangle=A\times B\) and \(\Lb{\triangle'}=A'\times B'\), then, up to permuting
factors, we have \(A\subseteq A'\) and \(B\subseteq B'\).
Thus, one may regard \(\Lb\triangle\) as naturally included in \(\Lb{\triangle'}\) although it is possible for a simplex and a proper face to carry the same label.

\begin{definition}[(Iso)morphism]\label{Def:Morphism}
Let \(X, X'\) be either compact weakly special square complexes or their universal covers.
A combinatorial map \(\Phi:\cI(X)\to\cI(X')\) is called 
\begin{enumerate}
\item a \emph{morphism} if for any pair of simplices \(\triangle',\triangle \subset \cI(X)\) with \(\triangle'\subset\triangle\),
\[
\Lb{\triangle} = \Lb{\triangle'}\Longleftrightarrow \Lb{\Phi(\triangle)} = \Lb{\Phi(\triangle')},
\]
and there exists a quasi-isometry \(\Phi_\triangle:\bar{\Lb{\triangle}}\to\bar{\Lb{\Phi(\triangle)}}\) such that the following diagram is commutative up to finite Hausdorff distance:
\[
\begin{tikzcd}[column sep=3pc]
\bar{\Lb{\triangle'}} \ar[r,"\Phi_{\triangle'}"] & \bar{\Lb{\Phi(\triangle')}}\\
\bar{\Lb{\triangle}}\ar[u,hookrightarrow]\ar[r,"\Phi_\triangle"] & \bar{\Lb{\Phi(\triangle)}}\ar[u,hookrightarrow]
\end{tikzcd}
\]
\item an \emph{isomorphism} if it is a morphism and an isometry between the underlying triangular complexes.
\end{enumerate}
\end{definition}

The next two theorems summarize key consequences of~\cite{Oh22}.
The first asserts that quasi-isometries induce isomorphisms of intersection complexes, and the second describes how intersection complexes behave under deck transformations.

\begin{theorem}[\cite{Oh22}, Theorem~C]\label{theorem:IsobetInt}
Any quasi-isometry \(\phi:\bar{Y}\rightarrow \bar{Y'}\) as in \Cref{MaxtoMax} induces an isomorphism \(\cI(\phi):\cI(\bar{Y}) \rightarrow \cI(\bar{Y'})\).
\end{theorem}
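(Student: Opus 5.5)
The plan is to build \(\cI(\phi)\) directly from \Cref{MaxtoMax} and check in turn that it is a well-defined simplicial map, that it respects labels, and that it is bijective.

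\textbf{Vertices.} Fix a quasi-inverse \(\psi:\bar{Y'}\to\bar Y\) of \(\phi\). A maximal product subcomplex \(\bar M\subset\bar Y\) contains a \(2\)-dimensional flat. This holds because its factors are leafless graphs, so each contains a bi-infinite geodesic line, and the product of two such lines is a flat. Applying \Cref{MaxtoMax}(2) to the one-element collection \(\{\bar M\}\) gives a unique maximal \(\bar M'\) with \(d_H(\bar M',\phi(\bar M))<D\). Set \(\cI(\phi)(\bar M)=\bar M'\). Applying the same construction to \(\psi\) gives a candidate inverse. Since \(\psi\circ\phi\) is at bounded distance from the identity, the uniqueness clause shows that \(\bar M\) is the unique maximal product subcomplex at bounded Hausdorff distance from \(\psi(\bar M')\). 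Hence the vertex map is a bijection. One point needs justifying here: uniqueness up to some constant must hold for every constant, not only for \(D\). I would extract this from the fact that distinct maximal product subcomplexes of \(\bar Y\) are at infinite Hausdorff distance, which follows from \Cref{InclusionBetweenSPSes} together with \Cref{MaxtoMax}(1).

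\textbf{Simplices.} Consider a simplex spanned by \(\bar M_0,\dots,\bar M_n\) and corresponding to a standard product subcomplex \(\bar K\) that is maximal in \(\bar W=\bigcap\bar M_i\). In \(\bar Y\) there is exactly one such \(\bar K\), since \(\cI(\bar Y)\) is simplicial. Because \(\bar K\) contains a flat, \Cref{MaxtoMax}(2) says \(\bar W'=\bigcap\bar M_i'\) contains a flat. \Cref{MaxtoMax}(3) then produces a unique standard \(\bar K'\) coarsely equal to \(\bar W'\) and to \(\phi(\bar K)\). I would argue that \(\bar K'\) is the maximal standard product subcomplex in \(\bar W'\), so that the vertices \(\bar M_i'\) span a simplex of \(\cI(\bar{Y'})\); this gives a simplicial map. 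Running the same argument with \(\psi\) gives the inverse, so \(\cI(\phi)\) is an isometry of the underlying complexes.

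\textbf{Morphism conditions.} The quasi-isometry \(\Phi_\triangle\) between \(\bar{\Lb\triangle}\cong\bar K\) and \(\bar{\Lb{\Phi(\triangle)}}\cong\bar K'\) will be \(\phi|_{\bar K}\) followed by a closest-point projection onto \(\bar K'\). This is a quasi-isometry because \(d_H(\bar K',\phi(\bar K))<D\) and the intrinsic metrics on convex subcomplexes agree with the ambient metric. For faces \(\triangle'\subset\triangle\), the two ways around the square both equal \(\phi\) restricted to \(\bar K\) up to bounded error. They therefore commute up to finite Hausdorff distance.

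For the label condition, \(\Lb\triangle=\Lb{\triangle'}\) means the associated subcomplexes satisfy \(\bar K=\bar K_{\triangle'}\). In the forward direction, \(\phi\) then sends both to the same coarse set, so by the uniqueness in \Cref{MaxtoMax}(3) their images coincide. For the converse, I would use that standard product subcomplexes with \(\bar K\subsetneq\bar K_{\triangle'}\) are at infinite Hausdorff distance, since by \Cref{InclusionBetweenSPSes} one factor is properly contained in another leafless graph, and then apply \(\psi\).

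\textbf{Main obstacle.} I expect the hardest step to be showing that \(\bar K'\) is the maximal standard product subcomplex of \(\bar W'\), rather than merely coarsely close to it, together with the claim that distinct standard product subcomplexes at finite Hausdorff distance must coincide. My first attempt would be as follows. Take the maximal standard subcomplex of \(\bar W'\) given by \Cref{MaxtoMax}(1) applied to \(\bar Y'\). By (3) it lies within bounded distance of \(\bar K'\). Both are standard, and I would combine \Cref{InclusionBetweenSPSes} with a coarse-containment lemma for leafless trees in a \(\CAT(0)\) square complex to conclude that they are equal.
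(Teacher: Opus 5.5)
The paper does not prove this statement; it quotes it from \cite{Oh22}, with \Cref{MaxtoMax} as the input. Your architecture is the natural derivation and matches how the paper itself uses \Cref{MaxtoMax}: the vertex map from \Cref{MaxtoMax}(2) applied to singletons, simplices via (3), an inverse from a quasi-inverse $\psi$, and $\Phi_\triangle$ as $\phi|_{\bar K}$ followed by closest-point projection.

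The one real gap is the rigidity fact you lean on at every stage: \emph{two standard product subcomplexes of $\bar Y$ at finite Hausdorff distance coincide}. You use it in four places:
\begin{enumerate}
\item for bijectivity on vertices;
\item implicitly, to identify the $\bar K$ produced by \Cref{MaxtoMax}(1) with the maximal standard product subcomplex of $\bar W$ that defines the simplex (otherwise (3) is not about the right object);
\item to see that $\bar K'$ actually lies in $\bar W'$ and is maximal there;
\item in the forward label condition, where the uniqueness clause of (3) does not apply, because $\triangle$ and $\triangle'$ have different intersections $\bar W'$.
\end{enumerate}
You rightly flag this as the main obstacle, but your proposed sources do not give it.
\begin{itemize}
\item \Cref{MaxtoMax}(1) needs the intersection to contain a flat already, and asserts uniqueness only below the fixed constant $A$. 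It is silent about two maximal product subcomplexes at Hausdorff distance $R\gg A$.
\item \Cref{InclusionBetweenSPSes} presupposes an honest inclusion.
\item A coarse-containment statement about leafless trees does not help either. Before you can compare factors, you must upgrade coarse containment to actual containment inside the square complex.
\end{itemize}

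The missing idea is two-dimensionality. Let $C\subset\bar Y$ be a convex subcomplex and $F$ a flat with $F\subset N_R(C)$.
\begin{itemize}
\item The function $d_C$ is convex on $\bar Y$ and bounded on $F\cong\bbR^2$, hence constant on $F$, say equal to $c$.
\item Suppose $c>0$. Choose $p$ in an open square $Q^\circ\subset F$, and let $\gamma$ be the geodesic from $p$ to its projection $\pi_C(p)$. Every point of $\gamma$ projects to $\pi_C(p)$, so $d_C(\gamma(s))=c-s$.
\item Since $\bar Y$ is $2$-dimensional, $Q^\circ$ is open in $\bar Y$. So an initial segment of $\gamma$ lies in $F$, where $d_C\equiv c$. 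This is a contradiction, so $c=0$ and $F\subset C$.
\end{itemize}
A standard product subcomplex is a product of leafless trees, hence a union of flats. Therefore any standard product subcomplex lying in a bounded neighbourhood of a convex subcomplex $C$ lies in $C$. Three consequences follow.
\begin{itemize}
\item Standard product subcomplexes at finite Hausdorff distance are equal. In particular, distinct maximal product subcomplexes are at infinite Hausdorff distance.
\item A standard $\bar K$ with $d_H(\bar W,\bar K)<A$ lies in every $\bar M_i$, hence in $\bar W$. It also contains every standard product subcomplex of $\bar W$, since each lies in $N_A(\bar K)$. So it is exactly the subcomplex defining the simplex.
\item The same argument in $\bar{Y'}$ shows that $\bar K'$ is the maximal standard product subcomplex of $\bar W'$, so the $\bar M_i'$ span the corresponding simplex.
\end{itemize}
With this lemma in place, the rest of your argument goes through as written.
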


\begin{theorem}[\cite{Oh22}, Theorem~3.15]\label{Thm:TPBCM}
The action of \(\pi_1(Y)\) on \(\bar Y\) induces an action on \(\cI(\bar Y)\) by isomorphisms of intersection complexes. This action descends to a morphism \(\rho_Y:\cI(\bar Y)\to\cI(Y)\), called the \emph{canonical quotient map}.
\end{theorem}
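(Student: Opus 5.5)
The statement is \Cref{Thm:TPBCM}, and the plan is to prove its two assertions separately. First, the deck group \(\pi_1(Y)\) acts on the set of maximal product subcomplexes of \(\bar Y\), compatibly with intersections. This gives an action on \(\cI(\bar Y)\) by isomorphisms. Second, the covering map \(p_Y\) sends maximal product subcomplexes of \(\bar Y\) to those of \(Y\). This produces the map \(\rho_Y\), which we then check is a morphism in the sense of \Cref{Def:Morphism}.

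\textbf{The action.} Take \(g\in\pi_1(Y)\) acting as a deck transformation. It is a combinatorial isometry of \(\bar Y\). If \(\bar\iota:\graf_1\times\graf_2\to\bar Y\) is a standard product structure, then \(g\circ\bar\iota\) is again a local isometry. It is a product structure with the same leafless factors, hence standard. Since \(g\) is a bijection on subcomplexes and preserves inclusion, it permutes maximal product subcomplexes. For maximal \(\bar M_0,\dots,\bar M_n\) we have \(g(\bigcap \bar M_i)=\bigcap g(\bar M_i)\). Hence \(g\) carries the standard product subcomplexes \(\bar K_j\) that are maximal in the intersection bijectively to those of the image intersection. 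So \(g\) induces a simplicial automorphism of \(\cI(\bar Y)\).

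The labels are \(\Lb{\triangle}=\graf_1\times\graf_2\) and \(\Lb{g\triangle}\). These are canonically isomorphic via the universal property lemma, because \(g\circ\bar\iota\) is a product structure for \(g(\bar K)\). Take \(\Phi_\triangle\) to be this isomorphism, lifted to universal covers. The face-inclusion squares then commute on the nose, since everything is induced by the single map \(g\), and the equality-of-labels condition is preserved. Alternatively, one can simply cite \Cref{theorem:IsobetInt} applied to the isometry \(g\), which is a \((1,0)\)-quasi-isometry. The group law holds because \((gh)\bar M=g(h\bar M)\).

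\textbf{The quotient map.} For a maximal \(\bar M=\bar\grafd_1\itimes\bar\grafd_2\), \Cref{ProjofPS}(\ref{push-forward}) gives a push-forward product structure \(\iota:\grafd_1\times\grafd_2\to Y\) with image \(p_Y(\bar M)\). Its factors are images of leafless graphs under coverings, hence leafless, so \(\iota\) is standard. Maximality passes down as follows. Suppose \(p_Y(\bar M)\subsetneq M\) with \(M\) standard. Take a pull-back \(p\)-lift of \(M\) through a point of \(\bar M\), using \Cref{ProjofPS}(\ref{pull-back}) and the maximality clause there. This lift contains \(\bar M\), by \Cref{InclusionBetweenSPSes} applied to the factors, and is strictly larger, contradicting maximality of \(\bar M\). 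This defines \(\rho_Y\) on vertices.

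For a simplex spanned by \(\bar M_0,\dots,\bar M_n\) and a component \(\bar K\) of their intersection, \(p_Y(\bar K)\) is a standard product subcomplex contained in \(\bigcap p_Y(\bar M_i)\). We send the simplex to the copy corresponding to the maximal standard product subcomplex \(K_j\supseteq p_Y(\bar K)\) of that intersection.

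\textbf{Main obstacle.} The hardest step is showing that \(p_Y(\bar K)\) is itself maximal in \(\bigcap p_Y(\bar M_i)\), so that \(K_j=p_Y(\bar K)\) and the images of faces are the corresponding faces. The plan is to lift \(K_j\) to a \(p\)-lift containing \(\bar K\). By the maximality clause of \Cref{ProjofPS}(\ref{pull-back}), this lift lies in each \(\bar M_i\), since each \(\bar M_i\) is a \(p\)-lift of its image. Maximality of \(\bar K\) then forces equality. The same mechanism gives well-definedness, so \(\rho_Y\) is a combinatorial map. It is \(\pi_1(Y)\)-invariant because \(p_Y\circ g=p_Y\).

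\textbf{Morphism property.} The label of \(\rho_Y(\triangle)\) has factors \(\grafd_i\) covered by the factors \(\bar\grafd_i\) of \(\Lb\triangle\). Their universal covers therefore agree, and we take \(\Phi_\triangle\) to be the identity on \(\bar{\Lb\triangle}\). This makes the squares commute. Finally, equality of labels under face inclusion is preserved in both directions, by \Cref{InclusionBetweenSPSes} together with the correspondence between lifts and projections just established.
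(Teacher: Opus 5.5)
Your construction of the action is fine. The vertex map is also fine, provided you get the \(p\)-lift of \(M\) containing \(\bar M\) in the right way: lift the factorization \(p_Y\circ\bar\iota_{\bar M}=\iota_M\circ(q_1\times q_2)\) supplied by the universal property, rather than taking ``the'' lift of \(M\) through a point of \(\bar M\).

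The gap is in the passage to simplices, exactly at the step you call the main obstacle. The maximality clause of \Cref{ProjofPS} only says that a pull-back is maximal among product subcomplexes of \(\bar Y\) with the same image in \(Y\). It says nothing about which lift of \(M_i\) contains a given lift of a subcomplex of \(M_i\). Lifting the factorization \(\iota_{K_j}=\iota_{M_i}\circ(q_1\times q_2)\) only gives \(\bar K_j\subset g\bar M_i\) for some \(g\in\pi_1(Y)\). To conclude \(g\bar M_i=\bar M_i\), you need that two distinct \(p\)-lifts of \(M_i\) never share a standard product subcomplex. That same fact is also needed, and not addressed, for two further points:
\begin{enumerate}
\item the vertices \(p_Y(\bar M_0),\dots,p_Y(\bar M_n)\) must be pairwise distinct, otherwise a simplex collapses and \(\rho_Y\) is not combinatorial;
\item your choice \(\Phi_\triangle=\mathrm{id}\) in the morphism check presupposes that the image label is \(p_Y(\bar K)\), which is the unproved step.
\end{enumerate}

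Under the embedded product property this fact is immediate. Distinct \(p\)-lifts of an embedded \(M_i\) are disjoint components of \(p_Y^{-1}(M_i)\), as used in \Cref{Lem:SPSinGBG} and in the proof of \Cref{lem:Connected}. With that input your argument goes through, and it covers every complex to which the paper actually applies the result (e.g.\ \(UD_2(\graf)\), by \Cref{corollary:maximal}).

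For a general weakly special \(Y\), the step can genuinely fail. Let \(\grafd_1\) consist of disjoint cycles \(c,c'\) joined by a path of length \(2\), and \(\grafd_2\) of disjoint cycles \(d,d'\) joined likewise. In \(\grafd_1\times\grafd_2\), identify the torus \(c\times d\) with \(c'\times d'\). The quotient \(Y\) has the following properties:
\begin{enumerate}
\item it is compact, NPC and weakly special, since no hyperplane crosses or osculates itself once the handles have length at least \(2\);
\item its only maximal product subcomplex is \(Y\) itself;
\item the two elevations of the quotient map through a lift of the glued torus are distinct maximal product subcomplexes of \(\bar Y\) that share a flat.
\end{enumerate}
The resulting edge of \(\cI(\bar Y)\) has both endpoints over the single vertex of \(\cI(Y)\), so no combinatorial map \(\rho_Y\) exists there. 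You therefore need to import a hypothesis of this kind (or whatever convention the cited source uses), rather than try to derive the step from \Cref{ProjofPS}.
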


The relation \(\cI(Y)\isom \cI(\bar Y)/\pi_1(Y)\) is reminiscent of the theory of complexes of groups.
If a group \(\bbG\) acts on a polyhedral complex \(\bar X\) by isometries, then the action gives rise to a complex of groups \(\cC(X)\) over the quotient \(X=\bar X/\bbG\). 
When \(\bar X\) is simply connected, the complex of groups \(\cC(X)\) is said to be \emph{developable} and \(\bar X\) is called its \emph{development}. Conversely, if a complex of groups \(\cC(X)\) is developable, then its development is constructed from the poset structure (with respect to inclusion) of left cosets of the local groups assigned to the cells of \(\cC(X)\). For details on (developable) complexes of groups, we refer to \cite[Chapter III]{BH}.

\begin{theorem}[\cite{BH}, Corollary~2.15 in Chapter III] \label{Thm:Developable}
A complex of groups \(\cC(X)\) is developable if and only if there exists a morphism \(\Phi:\pi_1(\cC(X))\to\bbG\) for some group \(\bbG\), which is injective on each local group, which is a group corresponding to a cell.
\end{theorem}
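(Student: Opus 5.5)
We will follow the strategy of \cite[Chapter~III.$\mathcal{C}$]{BH}. We prove the two implications separately, using the fundamental group \(\pi_1(\cC(X))\) of a complex of groups together with its canonical morphism \(\iota\colon\cC(X)\to\pi_1(\cC(X))\). Throughout, the complex of groups is the data of local groups \(G_\sigma\), monomorphisms \(\psi_a\) along edges \(a\) of the first barycentric subdivision, and twisting elements \(g_{a,b}\) satisfying the usual cocycle conditions.

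\emph{Necessity.} Suppose \(\cC(X)\) is developable, i.e.\ it is (isomorphic to) the complex of groups associated to an action of a group \(\bbG\) on a polyhedral complex \(\bar X\) with \(\bar X/\bbG=X\). Choose a lift \(\bar\sigma\) of each cell \(\sigma\) and elements \(h_a\in\bbG\) carrying lifts to the chosen lifts of faces. This determines the local groups as the stabilizers \(\bbG_{\bar\sigma}\), the maps \(\psi_a\) as conjugation by \(h_a\), and the twisting elements \(g_{a,b}=h_ah_bh_{ab}^{-1}\). The assignment \(G_\sigma=\bbG_{\bar\sigma}\hookrightarrow\bbG\), \(a\mapsto h_a\) is, by construction, a morphism \(\cC(X)\to\bbG\) that is injective on local groups. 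By the universal property of \(\pi_1(\cC(X))\), it factors through a homomorphism \(\Phi\colon\pi_1(\cC(X))\to\bbG\). Since \(\Phi\circ\iota\) is injective on each \(G_\sigma\), the morphism \(\Phi\) is injective on the images of the local groups, as required.

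\emph{Sufficiency.} Given \(\Phi\colon\pi_1(\cC(X))\to\bbG\) injective on local groups, set \(\phi=\Phi\circ\iota\). We build the \emph{development}
\[
D(X,\phi)=\bigl(\bbG\times X\bigr)/\!\sim,\qquad (g,x)\sim(g',x')\iff x=x' \text{ and } g^{-1}g'\in\phi(G_{\sigma(x)}),
\]
where \(\sigma(x)\) is the open cell containing \(x\). The gluing across faces is twisted by the elements \(\phi(a)\), so that cells \(g\phi(G_\sigma)\times\sigma\) are attached to \(g\phi(G_\sigma)\phi(a)^{-1}\times\tau\) for \(a\colon\sigma\to\tau\). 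The group \(\bbG\) acts by left multiplication with quotient \(X\). The stabilizer of the cell \([1,\sigma]\) is \(\phi(G_\sigma)\), which is isomorphic to \(G_\sigma\) by injectivity. Choosing the lifts \([1,\sigma]\) and the elements \(\phi(a)\), the complex of groups associated to this action recovers the \(\psi_a\) and \(g_{a,b}\), and hence is isomorphic to \(\cC(X)\). If one insists, as in the text, that the development be simply connected, we apply the construction with \(\bbG=\pi_1(\cC(X))\) and \(\phi=\iota\), which is legitimate since \(\iota\) is injective on local groups whenever some \(\phi\) factoring through it is. One then checks that \(D(X,\iota)\) is connected and simply connected by comparing its fundamental group with the kernel of \(\pi_1(\cC(X))\to\pi_1(\cC(X))\), as in \cite[III.$\mathcal{C}$.3.13]{BH}.

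\emph{Main obstacle.} We expect the hardest step to be the verification in the sufficiency direction: that the relation is well defined and compatible with the twisting elements (so that \(D(X,\phi)\) is a genuine polyhedral complex with cells embedding), and that the complex of groups induced by the \(\bbG\)-action matches \(\cC(X)\) on the nose, including the twisting elements. Our approach is to first check the cocycle identity \(\phi(g_{a,b})\phi(ab)=\phi(a)\phi(b)\), which holds in \(\pi_1(\cC(X))\) by definition. From this identity, transitivity of \(\sim\) and coherence of face identifications follow.
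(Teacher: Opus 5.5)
Your proposal is correct and is essentially the Bridson--Haefliger argument from Chapter III.$\mathcal{C}$: the canonical morphism of an action gives necessity, the development $D(X,\phi)$ gives sufficiency, and taking $\phi=\iota$ gives simple connectivity; the paper gives no proof of its own and simply cites this argument. Two small repairs are needed: your displayed pointwise relation on $\bbG\times X$ ignores the elements $\phi(a)$ and is only the formula for simple morphisms, so the definition should instead be the twisted face rule $[g,\sigma]\mapsto[g\phi(a)^{-1},\tau]$ that you state next (well defined by $\phi_{t(a)}\circ\psi_a=\mathrm{Ad}(\phi(a))\circ\phi_{i(a)}$ and coherent by the cocycle identity), and factoring a morphism through $\pi_1(\cC(X),T)$ requires it to be trivial on the maximal tree $T$, which you can arrange by a homotopy of morphisms without affecting injectivity on local groups.
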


\begin{theorem}[\cite{BH}, Theorem~3.13 and Corollary~3.15 in Chapter III]\label{Thm:Development} 
The development of a developable complex of groups is simply connected and unique up to isomorphisms.
\end{theorem}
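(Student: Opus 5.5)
The plan is to work with the explicit \emph{basic construction} for a developable complex of groups \(\cC(X)\) over a connected polyhedral complex \(X\). Fix a maximal tree \(T\) in the \(1\)-skeleton of the first barycentric subdivision of \(X\). Let \(\pi_1(\cC(X))=\pi_1(\cC(X),T)\) be the fundamental group, presented by the local groups \(G_\sigma\), the edge generators, and the relations coming from the twisting elements and from \(T\). Let \(\iota:\cC(X)\to\pi_1(\cC(X))\) be the canonical morphism; by \Cref{Thm:Developable}, developability means \(\iota\) is injective on each local group. Define the candidate development
\[
D(X,\iota)=\bigl(\pi_1(\cC(X))\times X\bigr)/\sim,
\]
where \((g,x)\sim(g',x')\) if and only if \(x=x'\) and \(g^{-1}g'\in\iota(G_\sigma)\), with \(\sigma\) the cell containing \(x\) in its interior. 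This space carries a natural \(\pi_1(\cC(X))\)-action with quotient \(X\). The first step is the routine check that the complex of groups induced by this action, via the usual choices of lifts, is isomorphic to \(\cC(X)\); this uses exactly the injectivity of \(\iota\) on local groups.

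The second step, simple connectivity, is the heart of the argument. I would prove the general exact sequence: for a group \(\bbG\) acting without inversions on a connected complex \(\bar X\) with quotient \(X\) and induced complex of groups \(\cC(X)\), there is a short exact sequence
\[
1\to\pi_1(\bar X)\to\pi_1(\cC(X))\to \bbG\to 1 .
\]
The plan for this is to describe \(\pi_1(\cC(X))\) through \(\cC(X)\)-loops, that is, edge-loops in the barycentric subdivision interleaved with local group elements, up to the elementary homotopies. Any edge-loop in \(\bar X\) projects to a \(\cC(X)\)-loop lying in the kernel of the map to \(\bbG\). Conversely, a \(\cC(X)\)-loop that maps to the identity of \(\bbG\) lifts step by step to a closed edge-loop in \(\bar X\), and elementary homotopies lift to homotopies across \(2\)-cells of \(\bar X\). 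Applying the sequence to \(\bar X=D(X,\iota)\) with \(\bbG=\pi_1(\cC(X))\) and the identity map forces \(\pi_1(D(X,\iota))=1\). I expect this step to be the main obstacle, specifically verifying that lifting is well defined on homotopy classes and that connectivity of \(D\) follows from the generators of \(\pi_1(\cC(X))\) being represented by edges and local groups.

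Finally, uniqueness. Let \(D'\) be any simply connected complex with a \(\bbG'\)-action whose induced complex of groups is isomorphic to \(\cC(X)\). The exact sequence gives \(\pi_1(\cC(X))\cong\bbG'\) compatibly with the local groups. The map \((g,x)\mapsto g\cdot\tilde x\), where \(\tilde x\) is the chosen lift of \(x\) in \(D'\), then descends to an equivariant combinatorial map \(D(X,\iota)\to D'\). This map is a local isomorphism because stabilizers match, so it is a covering; since both spaces are connected and simply connected, it is an isomorphism. Independence of the choice of maximal tree \(T\) follows because different choices give isomorphic fundamental groups and isomorphic basic constructions.
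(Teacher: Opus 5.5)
Your overall plan (basic construction for \(\iota_T\), then the exact sequence \(1\to\pi_1(\bar X)\to\pi_1(\cC(X))\to\bbG\to 1\) proved by lifting \(\cC(X)\)-loops, then a comparison argument) is the standard Haefliger/Bridson--Haefliger route that the paper cites. The problem is that the space you build is not the development, and the argument breaks there.

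Your formula \(D=(\pi_1(\cC(X))\times X)/\sim\), with \((g,x)\sim(g',x)\) iff \(g^{-1}g'\in\iota(G_\sigma)\), is the basic construction for a \emph{simple} morphism. That is a morphism whose edge elements are all trivial, so that each sheet \(\{g\}\times X\) is a strict fundamental domain. The canonical morphism \(\iota_T\) is not simple: it sends each edge \(a\) of the barycentric subdivision outside \(T\) to the generator \(a^+\). The relation \(\iota_{t(a)}\circ\psi_a=\mathrm{Ad}(\iota(a))\circ\iota_{i(a)}\) only gives \(\iota(G_{i(a)})\subseteq\iota(a)^{-1}\iota(G_{t(a)})\iota(a)\). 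It does not give \(\iota(G_{i(a)})\subseteq\iota(G_{t(a)})\), which your identification needs in order to be compatible with passing to faces.

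Concretely, take the trivial complex of groups over a cycle. Then \(\pi_1(\cC(X))\cong\bbZ\) and your \(D\) is \(\bbZ\times S^1\), which is disconnected. The morphism induced by the lifts \(\{1\}\times X\) kills every \(a^+\), so the map \(\pi_1(\cC(X))\to\bbG\) that your first step passes to the second is trivial, not the identity, and the exact sequence gives nothing. This is not a marginal case here: by \Cref{SM-types}, \(\cI(UP_2(\graf))\) is not simply connected unless the stem is a path, and the twisting is exactly what produces \(\cI(\bar{UP_2(\graf)})\).

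The fix is to define \(D(X,\iota)\) as the realization of the scwol with vertices \((g\,\iota(G_\sigma),\sigma)\) and edges \((g\,\iota(G_{i(a)}),a)\). Such an edge runs from \((g\,\iota(G_{i(a)}),i(a))\) to \((g\,\iota(a)^{-1}\iota(G_{t(a)}),t(a))\).
\begin{itemize}
\item Well-definedness of this terminal map uses the conjugation relation above. Composition of edges uses \(\iota_{t(a)}(g_{a,b})\,\iota(ab)=\iota(a)\iota(b)\).
\item With lifts \((\iota(G_\sigma),\sigma)\) and \((\iota(G_{i(a)}),a)\), the associated morphism is exactly \(\iota_T\).
\item Connectivity: the edges in \(T\) put all base vertices in one component. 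Each \(\iota(G_\sigma)\) stabilizes that component. The edge \((1,a)\) joins it to its \(\iota(a)^{-1}\)-translate. So the component's stabilizer contains a generating set of \(\pi_1(\cC(X),T)\), hence is everything.
\end{itemize}
Your lifting argument then does give \(\pi_1(D)=1\).

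Your uniqueness step needs the same correction. A pointwise choice \(x\mapsto\tilde x\) is continuous only if \(X\) lifts to a strict fundamental domain in \(D'\). Instead, choose lifts \(\tilde\sigma,\tilde a\) of vertices and edges whose edge elements are trivial on \(T\); this is possible since \(T\) is a tree. Then send \((g,\sigma)\mapsto\Phi(g)\tilde\sigma\) and \((g,a)\mapsto\Phi(g)\tilde a\), where \(\Phi\colon\pi_1(\cC(X),T)\to\bbG'\) is the isomorphism from the exact sequence. Since cosets of stabilizers parametrize orbits, this map is bijective on cells outright, so the covering-space detour is unnecessary.
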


Finally, both \(\cI(Y)\) and \(\cI(\bar Y)\) may be viewed as complexes of groups.
The local group associated to a simplex \(\triangle\subset\cI(Y)\) is \(\bbG_\triangle=\pi_1(\Lb\triangle)\), which is a direct product of two free groups.

\subsection{Quasi-isometry invariants and \texorpdfstring{\(\m Y\)}{Ymax}-hierarchy}\label{Subsection:Hierarchy}
This subsection studies how the (universal cover of the) subcomplex \(\m Y\), defined as the union of all maximal product subcomplexes of \(Y\), interacts with the large-scale geometry of the (universal cover of) \(Y\) and introduces a hierarchy of subclasses reflecting different degrees to which \(\m Y\) captures the geometric and algebraic structure of \(Y\).


\begin{definition}\label{definition:union of maximals}
For a weakly special square complex \(Y\), we denote by \(\m Y\) the (possibly disconnected) subcomplex of \(Y\) given by the union of all maximal product subcomplexes of \(Y\).
\end{definition}

The following lemmas summarizes basic properties of \(\m Y\) and \(\bar{\m Y}\).

\begin{lemma}\label{lemma:Y max facts}
Let \(Y\) be a weakly special square complex.
If \(\m Y\neq\varnothing\), then 
\begin{enumerate}
\item \(\m Y\) is a (possibly disconnected) weakly special square complex and \(\m{\bar Y}=p_Y^{-1}(\m Y)\);
\item the natural correspondence between standard product subcomplexes of \(\m X\) and of \(X\) induces an isomorphism
\[\cI(\m X)\isom \cI(X)\qquad\text{for } X=Y \text{ or } \bar Y.\]
\end{enumerate}
Moreover, if \(Y\) is compact, then the following are equivalent:
\[\m Y\neq\varnothing\ \Longleftrightarrow\ \cI(Y)\neq\varnothing\ \Longleftrightarrow\ \pi_1(Y)\text{ is non-hyperbolic}.\]
\end{lemma}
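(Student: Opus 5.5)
We prove the items in order, working with standard product structures throughout. For (1), \(\m Y\) is a subcomplex of the weakly special square complex \(Y\), so by \Cref{lem:SpecialSubcomplexes} each component is weakly special (and NPC). For the equality \(\m{\bar Y}=p_Y^{-1}(\m Y)\), we compare maximal product subcomplexes upstairs and downstairs.

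Let \(\bar M\subset\bar Y\) be maximal. By \Cref{ProjofPS}(1) it has a push-forward \(M=p_Y(\bar M)\), which is standard because leaflessness passes between the factor graphs of a pull-back/push-forward pair (the factors upstairs are universal covers of the factors downstairs). So \(M\) lies in some maximal \(M'\subset Y\). A pull-back of \(M'\) through a point of \(\bar M\), provided by \Cref{ProjofPS}(2), then contains \(\bar M\) by the maximality clause there. Maximality of \(\bar M\) forces equality, and hence \(\bar M\subset p_Y^{-1}(\m Y)\).

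Conversely, take a point of \(p_Y^{-1}(M)\) with \(M\subset Y\) maximal. It lies in some \(p\)-lift \(\bar M\), using deck transformations. If \(\bar M\subsetneq \bar N\) with \(\bar N\) standard, then pushing forward gives \(M\subseteq p_Y(\bar N)\), which is standard. By maximality of \(M\) we get \(p_Y(\bar N)=M\), and then the maximality clause of \Cref{ProjofPS}(2) gives \(\bar N=\bar M\). Thus the \(p\)-lifts of maximal subcomplexes are exactly the maximal subcomplexes of \(\bar Y\), which proves the equality.

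For (2), observe that every standard product subcomplex of \(X\) lies in some maximal one, by finiteness or local finiteness of the factor graphs together with \Cref{InclusionBetweenSPSes}. Hence every standard product subcomplex of \(X\) is contained in \(\m X\). Conversely, a local isometry into \(\m X\) composed with the inclusion \(\m X\hookrightarrow X\) is still an immersion. The main obstacle is that we also need it to be a local isometry into \(X\). We handle this with \Cref{Lem:ImmersionImpliesLocalisometry}, since \(X\) is \(2\)-dimensional and the domain is a product of two graphs. In the other direction, restricting the codomain of a local isometry to the subcomplex \(\m X\) keeps links injective with induced images.

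So the standard product subcomplexes of \(X\) and of \(\m X\) coincide as sets, together with their product structures. Inclusions and intersections are the same in both, so \Cref{IC} produces the same labeled complex, and \(\cI(\m X)\isom\cI(X)\). One should also check that \(\m X\) has the same universal-cover behaviour when \(X=\bar Y\). Working with \(p_Y^{-1}(\m Y)\) componentwise suffices, since the definition of standard subcomplexes in \(\bar Y\) involves only local isometries into \(\bar Y\).

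For the final equivalences with \(Y\) compact, we argue as follows.
\begin{itemize}
\item \(\m Y\neq\varnothing\Leftrightarrow\cI(Y)\neq\varnothing\) is immediate, since the vertices of \(\cI(Y)\) are the maximal product subcomplexes.
\item If a standard \(\grafd_1\itimes\grafd_2\) exists, each leafless finite graph \(\grafd_i\) contains a cycle \(\sfC_{k_i}\). The restricted map \(\sfC_{k_1}\times\sfC_{k_2}\to Y\) is an immersion, hence a local isometry by \Cref{Lem:ImmersionImpliesLocalisometry}. By \Cref{Ex:flat} this yields a \(2\)-flat in \(\bar Y\), so \(\pi_1(Y)\) is not hyperbolic.
\item Conversely, if \(\pi_1(Y)\) is non-hyperbolic, the flat plane theorem for the compact NPC complex \(Y\) gives an isometrically embedded flat in \(\bar Y\). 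Its combinatorial nature, from the cubical flat theorem of Wise–Woodhouse or Huang in dimension \(2\), gives a periodic square flat invariant under a \(\bbZ^2\). By \Cref{ProjofPS}(1) this flat pushes forward to a product subcomplex of \(Y\) with finite leafless factors containing cycles. It is therefore standard, so a maximal product subcomplex exists.
\end{itemize}
The delicate step is obtaining a periodic flat rather than just some flat. We rely on the known flat torus and \(\bbZ^2\)-subgroup results for compact special square complexes.
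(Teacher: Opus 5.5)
Your proof is correct. For items (1) and (2) it follows the paper, which simply cites \Cref{lem:SpecialSubcomplexes} and \Cref{Lem:ImmersionImpliesLocalisometry}. Your explicit comparison of maximal product subcomplexes in \(Y\) and \(\bar Y\) fills in what the paper leaves implicit for \(\m{\bar Y}=p_Y^{-1}(\m Y)\). Two justifications there need adjusting.

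First, you show that a maximal \(\bar M\) lies in a \(p\)-lift of a maximal \(M'\supseteq p_Y(\bar M)\). This does not follow from the maximality clause of \Cref{ProjofPS}(2), which only compares product subcomplexes with the \emph{same} image. Use the universal property instead: factor the product structure of \(p_Y(\bar M)\) through that of \(M'\), lift the factor maps to the trees, and translate by a deck transformation. A \(p\)-lift of \(M'\) ``through a point'' is not unique when \(M'\) is not embedded, so the lift has to be chosen this way.

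Second, in \(\bar Y\) local finiteness alone does not rule out infinite ascending chains. To get a maximal product subcomplex containing a given standard \(\bar K\), push \(\bar K\) forward, enlarge it in the compact \(Y\), and lift back as in (1).

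For the final equivalence your route genuinely differs. The paper cites Genevois's characterization of hyperbolicity for special groups \cite[Corollary~1.2]{Gen21}. You instead use the flat of \Cref{Ex:flat} in one direction, and Bridson's flat plane theorem together with \Cref{ProjofPS}(1) in the other. Your route is more geometric and does not use specialness at this step.

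However, the periodicity step you call delicate is not needed, and the results you cite would not supply it: the Wise--Woodhouse cubical flat torus theorem takes a \(\bbZ^2\) as input. What you actually need is simpler:
\begin{itemize}
\item In a \(2\)-dimensional \(\CAT(0)\) square complex, an isometrically embedded flat is automatically a subcomplex, tiled with four squares at each vertex.
\item Hence it is the image of a local isometry \(\bbR\times\bbR\to\bar Y\).
\item \Cref{ProjofPS}(1) pushes forward \emph{any} such local isometry to a product structure on finite leafless graphs in \(Y\), which is standard.
\item Compactness of \(Y\) then gives a maximal one.
\end{itemize}
With this, your argument needs no \(\bbZ^2\)-subgroup theorem at all. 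The paper's one-line citation trades that self-containment for reliance on a deep structural result about special groups.
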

\begin{proof}
Assertions~(1) and (2) follow directly from 
\Cref{Lem:ImmersionImpliesLocalisometry,lem:SpecialSubcomplexes}. 
The final equivalence is a consequence of \cite[Corollary~1.2]{Gen21}.
\end{proof}

\begin{lemma}\label{lem:One-endedness}
If \(\cI(\bar{\m Y})\) is connected, then \(\bar{\m Y}\) is one-ended.
\end{lemma}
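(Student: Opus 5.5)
Since \(\bar{\m Y}\) is, by definition and by \Cref{lemma:Y max facts}(1), the union of its maximal product subcomplexes, the plan is to combine two facts. First, each maximal product subcomplex is one-ended. Second, connectedness of \(\cI(\bar{\m Y})\) lets any two of them be chained through intersections that are themselves unbounded.

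\textbf{Pieces.} A maximal product subcomplex of \(\bar{\m Y}\) has the form \(\bar M=\bar{\grafd_1}\itimes\bar{\grafd_2}\), with \(\bar{\grafd_i}\) infinite leafless trees. It is the universal cover of a standard product \(\grafd_1\times\grafd_2\) of finite leafless graphs, and each such graph has a cycle. A product of two unbounded connected spaces is one-ended: given a ball \(B\) about a point \((a,b)\) and two points \((x,y),(x',y')\) far from \(B\), we may move first in the first coordinate out to a point far from \(a\), and then along the second coordinate. The resulting path avoids \(B\). I will state this with explicit constants in the product metric, which is bi-Lipschitz to the \(\ell^1\) metric on \(\bar M\).

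\textbf{Gluing.} Two maximal product subcomplexes joined by an edge of \(\cI(\bar{\m Y})\) intersect in a set containing a standard product subcomplex \(\bar K\). This \(\bar K\) contains a \(2\)-flat, so it is unbounded in both pieces. Now fix a basepoint \(\bar x\) and a radius \(R\), and take two points \(p,q\) of \(\bar{\m Y}\) outside the ball \(B(\bar x,R')\). Here \(R'\) will be chosen larger than \(R\), using the compactness of \(Y\); I return to this below. The point \(p\) lies in some \(\bar M\) and \(q\) in some \(\bar M'\). Connectedness of \(\cI(\bar{\m Y})\) gives a chain \(\bar M=\bar M_0,\dots,\bar M_k=\bar M'\) in which consecutive members share a standard product subcomplex \(\bar K_j\). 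In each \(\bar M_j\), one-endedness lets us join a far point of \(\bar K_{j}\) to a far point of \(\bar K_{j+1}\) by a path avoiding \(B(\bar x,R)\). Concatenating these paths joins \(p\) to \(q\) outside \(B(\bar x,R)\), so the complement of the ball has exactly one unbounded component.

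\textbf{Main obstacle.} A path that avoids the ball in the intrinsic metric of \(\bar M_j\) must also avoid the ball in \(\bar{\m Y}\). This holds because \(\bar M_j\subset\bar{\m Y}\) is convex. Indeed, the pull-back is an isometric embedding by \Cref{ProjofPS}, and its inclusion into \(\bar{\m Y}\) is locally convex, since links of leafless products are joins and the square complex is \(2\)-dimensional, as in \Cref{Lem:ImmersionImpliesLocalisometry}. Consequently, distances measured in \(\bar M_j\) dominate those in \(\bar{\m Y}\). This gives the direction we need: a point that is far from \(\bar x\) in \(\bar{\m Y}\) is also far in \(\bar M_j\).

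There are two further subtleties. The chain length \(k\) is unbounded, which is harmless because we only need to avoid a fixed ball. The points \(p,q\) lying in bounded complementary components are handled by cocompactness: every point of \(\bar{\m Y}\) lies in some \(\bar M\), and every \(\bar M\) is unbounded, so every point can be pushed to infinity inside its own \(\bar M\). The only part of that path which may enter the ball is controlled by the uniform constant \(R'-R\). This shows that \(\bar{\m Y}\) is one-ended.
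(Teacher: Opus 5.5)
Your proposal is correct and follows the paper's route. Connectedness of \(\cI(\bar{\m Y})\) chains any two maximal product subcomplexes through intersections that contain unbounded standard product subcomplexes, and each maximal product subcomplex is a product of two infinite trees, hence one-ended; your discussion of convexity and of points far from the basepoint fills in what the paper's short proof leaves implicit.

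There is one small slip. For a path inside \(\bar M_j\) to avoid the ambient ball, you need ``intrinsically far implies far in \(\bar{\m Y}\)'', i.e.\ \(d_{\bar{\m Y}}\ge d_{\bar M_j}\) on \(\bar M_j\). Convexity gives you this, since it makes the two metrics equal. The inequality \(d_{\bar M_j}\ge d_{\bar{\m Y}}\) that you cite as ``the direction we need'' holds automatically and does not give it.
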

\begin{proof}
If \(\cI(\bar{\m Y})\) is connected, for two maximal product subcomplexes \(\bar{M},\bar{M}'\) of \(\bar{\m Y}\), there exists a sequence \((\bar{M}=\bar{M}_1,\dots,\bar{M}_n= \bar{M}')\) of maximal product subcomplexes of \(\bar{\m Y}\) such that \(\bar{M}_i\cap \bar{M}_{i+1}\) is a standard product subcomplex and so is of infinite diameter.
Therefore, \(\bar{\m Y}\) is one-ended.
\end{proof}

\begin{remark}\label{Rmk:thickness}
The proof of \Cref{lem:One-endedness} also shows that \(\bar{\m Y}\) is \emph{thick} (of order \(\le 1\)) since each maximal product subcomplex is a direct product of two trees of infinite diameter.
See \cite{BDM,BD14} for background on thickness.
\end{remark}

Even when \(\m Y\) is connected, the associated intersection complexes 
\(\cI(\m Y)\) and \(\cI(\bar{\m Y})\) need not be connected.
In general, connectivity of \(\cI(\bar{\m Y})\) can occur only if \(\cI(\m Y)\) is connected, whereas the converse fails. 
The next examples illustrate two typical obstructions: one arising from non-standard intersections of maximal product subcomplexes, and another from the failure of maximal product subcomplexes to be embedded.

\begin{example}[Disconnected \(\cI(\m Y)\) and \(\cI(\bar{\m Y})\)]\label{Ex:disconnectedExamples}
Let \(Y\) be the wedge sum of two tori \(T^2_1, T^2_2\). Then \(Y=\m Y\) is connected but \(\cI(\m Y)\) consists of two isolated vertices. 
Moreover, \(\cI(\bar{\m Y})\) is a disjoint union of two infinite discrete sets, corresponding to the left cosets of \(\pi_1(Y)/\pi_1(T^2_1)\) and \(\pi_1(Y)/\pi_1(T^2_2)\).
See \Cref{figure:cI Y disconnected}.

On the other hand, let \(Y\) be obtained from a torus \(T^2\) by identifying two distinct points.
Then again \(Y=\m Y\) is connected and \(\cI(\m Y)\) consists of a single vertex.
However, \(\cI(\bar{\m Y})\) is an infinite discrete set corresponding to the cosets of 
\(\pi_1(Y)/\pi_1(T^2)\).
See \Cref{figure:cI bar Y disconnected}.
\end{example}

\begin{figure}[ht]
\centering
\begin{subfigure}{0.45\textwidth}
\[
Y=\m Y=
\begin{tikzpicture}[baseline=-.5ex]
\draw[thick] (-1,0) circle (1 and 0.5) (-1,0) circle (0.4 and 0.2);
\draw[thick] (-2,0) arc (180:0:0.3 and 0.2) (0,0) arc (0:180:0.3 and 0.2);
\draw[thick,dashed] (-2,0) arc (-180:0:0.3 and 0.2) (0,0) arc (0:-180:0.3 and 0.2);
\draw[thick, fill] (-2,0) circle (2pt) (-1.4,0) circle (2pt) (-0.6,0) circle (2pt);
\begin{scope}[xscale=-1]
\draw[thick] (-1,0) circle (1 and 0.5) (-1,0) circle (0.4 and 0.2);
\draw[thick] (-2,0) arc (180:0:0.3 and 0.2) (0,0) arc (0:180:0.3 and 0.2);
\draw[thick,dashed] (-2,0) arc (-180:0:0.3 and 0.2) (0,0) arc (0:-180:0.3 and 0.2);
\draw[thick, fill] (-2,0) circle (2pt) (-1.4,0) circle (2pt) (-0.6,0) circle (2pt);
\end{scope}
\draw[thick,fill] (0,0) circle (2pt);
\end{tikzpicture}
\]
\caption{\(\cI(\m Y)\) is disconnected}
\label{figure:cI Y disconnected}
\end{subfigure}
\begin{subfigure}{0.5\textwidth}
\[
Y=\m Y=
\begin{tikzpicture}[baseline=-.5ex]
\draw[thick] (-1,0) circle (1 and 0.5) (-1,0) circle (0.4 and 0.2);
\draw[thick] (-2,0) arc (180:0:0.3 and 0.2) (0,0) arc (0:180:0.3 and 0.2);
\draw[thick,dashed] (-2,0) arc (-180:0:0.3 and 0.2) (0,0) arc (0:-180:0.3 and 0.2);
\draw[thick, fill] (-2,0) circle (2pt) node[left] {\(v\)} (-1.4,0) circle (2pt) (-0.6,0) circle (2pt);
\draw[thick,fill] (0,0) circle (2pt) node[right] {\(w\)};
\end{tikzpicture}\bigg/v\sim w
\]
\caption{\(\cI(\m Y)\) is connected but \(\cI(\bar{\m Y})\) is disconnected}
\label{figure:cI bar Y disconnected}
\end{subfigure}
\caption{Examples of \(Y\)'s having disconnected \(\cI(\bar{\m Y})\)}
\label{figure:I and RI non connected}
\end{figure}
%


These examples demonstrate that connectivity of \(\m Y\) alone does not guarantee connectivity of the associated intersection complexes; additional structural assumptions are required.
The next definition isolates two natural conditions that control the behavior of intersections of maximal product subcomplexes.

\begin{definition}\label{definition:SIP_EPP}
Let \(Y\) be a compact weakly special square complex. We say that \(Y\) satisfies
\begin{enumerate}
\item the \emph{standard intersection property} if every nonempty intersection of maximal product subcomplexes is a disjoint union of standard product subcomplexes;
\item the \emph{embedded product property} if every product subcomplex of \(Y\) is embedded.
\end{enumerate}
By \Cref{lemma:Y max facts}, whenever \(\m Y\) is connected, each of the above properties holds for \(Y\) if and only if it holds for \(\m Y\).
\end{definition}

The following lemma explains how these properties govern the connectivity of intersection complexes and their relation to complexes of groups.

\begin{lemma}\label{lem:Connected}
Assume that \(\m Y\) is connected and satisfies the standard intersection property.
Then \(\cI(\m Y)\cong\cI(Y)\) is connected.
Moreover, if \(\m Y\) also satisfies the embedded product property, then \(\cI(\bar{\m Y})\) is connected, and \(\cI(\m Y)\) admits the structure of a developable complex of groups whose development is \(\cI(\bar{\m Y})\).
\end{lemma}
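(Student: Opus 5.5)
We argue in three stages: connectivity of \(\cI(\m Y)\), then of \(\cI(\bar{\m Y})\), then developability.

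\textbf{Connectivity of \(\cI(\m Y)\).} The isomorphism \(\cI(\m Y)\cong\cI(Y)\) is \Cref{lemma:Y max facts}(2), so it suffices to work in \(\m Y\). Let \(M,M'\) be maximal product subcomplexes. Since \(\m Y\) is the union of the finitely many maximal product subcomplexes and is connected, there is a chain \(M=M_1,\dots,M_n=M'\) with \(M_i\cap M_{i+1}\neq\varnothing\) for each \(i\). Indeed, the graph on maximal product subcomplexes whose edges record nonempty intersection must be connected: otherwise \(\m Y\) would split as a disjoint union of two nonempty closed subcomplexes. By the standard intersection property, each nonempty \(M_i\cap M_{i+1}\) is a disjoint union of standard product subcomplexes. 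Hence it contains a standard product subcomplex \(K_i\), maximal among those in \(M_i\cap M_{i+1}\). By \Cref{IC}(2), \(K_i\) gives an edge between \(M_i\) and \(M_{i+1}\). So \(\cI(\m Y)\) is connected.

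\textbf{Connectivity of \(\cI(\bar{\m Y})\).} Assume in addition the embedded product property. By \Cref{lemma:Y max facts}(1), \(\bar{\m Y}=p_Y^{-1}(\m Y)\). One must check that \(p_Y^{-1}(\m Y)\) is the universal cover of \(\m Y\), or work with the components. Since \(\m Y\) is locally convex only in favourable cases, it is cleaner to interpret \(\bar{\m Y}\) as the universal cover of \(\m Y\) itself.

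Because each \(M\) is embedded and is a locally isometric image of \(\grafd_1\times\grafd_2\), each \(p\)-lift \(\bar M\subset\bar{\m Y}\) maps onto \(M\) as a covering. The lifts of \(M\) are permuted transitively by \(\pi_1(\m Y)\), and their stabilizers are conjugates of \(\pi_1(M)\).

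Next, take two lifts \(\bar M,\bar M'\) whose projections meet. Then \(\bar M\cap\bar M'\), if nonempty, projects into \(M\cap M'\), which is a disjoint union of embedded standard product subcomplexes. A nonempty component of \(\bar M\cap\bar M'\) therefore contains a \(p\)-lift of such a \(K\), which is a standard product subcomplex and so gives an edge of \(\cI(\bar{\m Y})\).

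To get connectivity, fix \(\bar M_0\). Let \(\bar C\) be the union of all lifts reachable from \(\bar M_0\) by edges of \(\cI(\bar{\m Y})\). We show \(\bar C\) is open-and-closed in \(\bar{\m Y}\). Take a point of \(\bar C\) lying also in some lift \(\bar N\). It lies in \(\bar M\cap\bar N\) for a reachable \(\bar M\), so by the previous paragraph \(\bar N\) is adjacent to \(\bar M\) and hence reachable. Since \(\bar{\m Y}\) is connected, every lift is reachable.

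The main obstacle is the claim that a nonempty intersection \(\bar M\cap\bar N\) of lifts always contains a lift of a standard piece. Lifts of disjoint-union components might meet only in a point. This is exactly where the embedded product property is used: it makes \(p_Y|_{\bar M}\) the covering of an embedded subcomplex, so the intersection components of lifts are lifts of components of \(M\cap N\).

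\textbf{Developability.} Assign to each simplex \(\triangle\) of \(\cI(\m Y)\) the local group \(\pi_1(\Lb\triangle)\). Since the labels are embedded, this group injects into \(\pi_1(\m Y)\). Choosing basepoints and a maximal tree gives a morphism \(\pi_1(\cC)\to\pi_1(\m Y)\) that is injective on local groups, so \Cref{Thm:Developable} applies. The action of \(\pi_1(\m Y)\) on the connected complex \(\cI(\bar{\m Y})\) from \Cref{Thm:TPBCM} has quotient \(\cI(\m Y)\) and stabilizers equal to these local groups. By \Cref{Thm:Development}, its development is identified with \(\cI(\bar{\m Y})\), using the coset description of the development.
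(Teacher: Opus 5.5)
Your first two stages are correct and follow the paper. In the second you are in fact more careful than the paper's sketch: you give an explicit open--closed argument, and you observe that the component of \(p^{-1}(K)\) through a point of \(\bar M\cap\bar N\) lies in both lifts, which together with convexity gives \(\bar M\cap\bar N=\bar K\). One slip: \Cref{lemma:Y max facts}(1) concerns \(\m{\bar Y}\), not \(\bar{\m Y}\). These differ when \(\m\iota_*\) is not injective, and you rightly switch to the universal cover of \(\m Y\).

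The gap is in the developability stage. Your morphism \(\phi\colon\pi_1(\cC)\to\pi_1(\m Y)\), injective on local groups, does give developability via \Cref{Thm:Developable}. The action of \(\pi_1(\m Y)\) on \(\cI(\bar{\m Y})\), with quotient \(\cI(\m Y)\) and the right stabilizers, then identifies \(\cI(\bar{\m Y})\) with the coset complex of the local groups \emph{inside \(\pi_1(\m Y)\)}, i.e.\ with the development relative to \(\phi\). But the development in \Cref{Thm:Development} is built from cosets inside \(\pi_1(\cC)\). The complex you obtain is its quotient by \(\ker\phi\), and it is simply connected only if \(\phi\) is an isomorphism. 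Connectivity of \(\cI(\bar{\m Y})\) gives only surjectivity, and nothing in your argument addresses injectivity.

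This is not cosmetic. The lemma is used, via \Cref{theorem:structureofI}, precisely to obtain simple connectivity of \(\cI(\bar{UP_2(\graf)})\), on which \Cref{proposition:edge-simplex type} and its \((a,b,c)\) analogue rest. The paper supplies the missing input as follows. Under the standard intersection and embedded product properties, \(\m Y\) is the geometric realization of the complex of groups over \(\cI(\m Y)\): it is glued from the embedded standard product subcomplexes along face inclusions, and each finite intersection of maximal pieces is the disjoint union of the subcomplexes attached to the corresponding simplices. Hence \(\pi_1(\cC)\cong\pi_1(\m Y)\).

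Alternatively, you can close the gap with your own stage-two observation, which works for any finite family of lifts. The maximal product subcomplexes are convex in the \(\CAT(0)\) complex \(\bar{\m Y}\), and every nonempty finite intersection of them is a single lift of a standard product subcomplex. So \(\cI(\bar{\m Y})\) is exactly the nerve of a locally finite cover of \(\bar{\m Y}\) by subcomplexes with contractible intersections, and the nerve theorem makes it contractible. The result for actions on simply connected complexes underlying \Cref{Thm:Development} then yields both \(\pi_1(\cC)\cong\pi_1(\m Y)\) and the identification of \(\cI(\bar{\m Y})\) with the development.
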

\begin{proof}
If \(\m Y=\varnothing\), there is nothing to prove, so assume \(\m Y\neq\varnothing\).

Let \(M\) and \(M'\) be maximal product subcomplexes of \({\m Y}\).
Since \(\m Y\) is connected, there exists a sequence of distinct maximal product subcomplexes of \({\m Y}\) \((M=M_1,\,M_2,\,\dots,\, M_n=M')\) such that \(M_i\cap M_{i+1}\neq\varnothing\) for each \(i\).
By the standard intersection property, each intersection \(M_i\cap M_{i+1}\) is a union of standard product subcomplexes. Hence the corresponding vertices in \(\cI(\m Y)\) are joined by edges, and \(\cI(\m Y)\) is connected.

Now assume that \(\m Y\) also satisfies the embedded product property.
Then each component of the preimage in \(\bar{\m Y}\) of a maximal product subcomplex of \(\m Y\) is a maximal product subcomplex; in particular, two distinct components are disjoint.
Thus, if two maximal product subcomplexes \(\bar M_i\) and \(\bar M_{i+1}\) intersect, then \(p_{\m Y}(\bar M_i)\) and \(p_{\m Y}(\bar M_{i+1})\) intersect. Consequently, \(\bar M_i\cap\bar M_{i+1}\) contains a standard product subcomplex, and \(\cI(\bar{\m Y})\) is connected.

Regarding \(\cI(\m Y)\) as a complex of groups as described earlier, the embedded product property implies that \(\m Y\) is the geometric realizations of \(\cI(\m Y)\). Hence, \(\pi_1(\cI(\m Y))\), the fundamental group of the complex of groups \(\cI(\m Y)\), is isomorphic to \(\pi_1(\m Y)\), and by \Cref{Thm:Developable}, \(\cI({\m Y})\) is developable.
Since \(\cI({\m Y})\) is obtained from \(\cI(\bar{\m Y})\) via the action described in \Cref{Thm:TPBCM}, by \Cref{Thm:Development}, \(\cI(\bar{\m Y})\) is the development of \(\cI(\m Y)\).
\end{proof}

Assume that \(\m Y\) is nonempty and connected. The inclusion \(\m\iota:\m Y\to Y\) naturally induces:
\begin{enumerate}
\item a homomorphism \(\m\iota_*:\pi_1(\m Y)\to\pi_1(Y)\);
\item a map between universal covers \(\bar{\m\iota}:\bar{\m Y}\to\bar Y\) whose image lies in \(p_Y^{-1}(\m Y)=\m{\bar Y}\); equipping \(\m{\bar Y}\) with its intrinsic metric, \(\bar{\m\iota}\) is a local isometry satisfying \(p_Y\circ\bar{\m\iota}=p_{\m Y}\);
\item a morphism of intersection complexes
\(\cI(\bar{\m\iota}):\cI(\bar{\m Y})\to\cI(\bar Y)\).
\end{enumerate}
The next lemma relates injectivity properties of these induced maps.


\begin{lemma}\label{lem:UnionofSPS}
Let \(\m\iota:\m Y\to Y\) be the inclusion, and assume that \(\m Y\) is connected. Then the following are equivalent:
\begin{enumerate}
\item the induced homomorphism on fundamental groups \(\m\iota_*:\pi_1(\m Y)\to \pi_1(Y)\) is injective;
\item the induced map between universal covers \(\bar{\m\iota}:\bar{\m Y}\to\bar Y\) is an embedding;
\item the induced morphism of intersection complexes \(\cI(\bar{\m\iota}):\cI(\bar{\m Y})\to \cI(\m{\bar Y})\) is injective.
\end{enumerate}

Moreover, if \(\m\iota\) is a local isometry, then the map \(\bar{\m\iota}:\bar{\m Y}\to \bar Y\) is an isometric embedding.
\end{lemma}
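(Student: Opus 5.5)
The plan is to prove the cycle (1)\(\Rightarrow\)(2)\(\Rightarrow\)(3)\(\Rightarrow\)(1), and then treat the final assertion separately.

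\emph{(1)\(\Rightarrow\)(2).} Choose basepoints \(\bar y\in\bar{\m Y}\) and \(\bar{\m\iota}(\bar y)\in\bar Y\). Suppose \(\bar{\m\iota}(\bar a)=\bar{\m\iota}(\bar b)\). Take a path \(\bar\gamma\) in \(\bar{\m Y}\) from \(\bar a\) to \(\bar b\). It projects to a loop \(\gamma=p_{\m Y}(\bar\gamma)\) in \(\m Y\). Its image \(\m\iota\circ\gamma\) lifts to the closed loop \(\bar{\m\iota}\circ\bar\gamma\) in \(\bar Y\), so it is null-homotopic in \(Y\). Injectivity of \(\m\iota_*\) then makes \(\gamma\) null-homotopic in \(\m Y\), so its lift \(\bar\gamma\) is closed and \(\bar a=\bar b\). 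Hence \(\bar{\m\iota}\) is injective. Since it is already a local isometry onto its image in \(\m{\bar Y}\) with the intrinsic metric, and in particular an immersion, it is an embedding.

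\emph{(2)\(\Rightarrow\)(3).} Here I would use that \(\bar{\m\iota}\) carries standard product subcomplexes to standard product subcomplexes, because it is a local isometry, and that maximality is preserved. The latter holds because \(\m{\bar Y}=p_Y^{-1}(\m Y)\) by \Cref{lemma:Y max facts}, and every maximal product subcomplex of \(\bar Y\) lies in \(\m{\bar Y}\). If \(\bar{\m\iota}\) is injective, distinct maximal product subcomplexes have distinct images, so \(\cI(\bar{\m\iota})\) is injective on vertices. It is also injective on simplices, since components of intersections are mapped injectively.

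\emph{(3)\(\Rightarrow\)(1).} This is the step I expect to be the main obstacle, since it must convert combinatorial injectivity back into group injectivity. Suppose \(g\in\ker\m\iota_*\) is nontrivial. Then the deck transformation \(g\) of \(\bar{\m Y}\) satisfies \(\bar{\m\iota}\circ g=\bar{\m\iota}\), because \(\m\iota_*(g)\) acts trivially on \(\bar Y\). Pick a maximal product subcomplex \(\bar M\subset\bar{\m Y}\), which exists since \(\m Y\neq\varnothing\). Because \(\pi_1(\m Y)\) is torsion-free (\Cref{Rmk:torsion-free}) and acts freely, \(g\bar M\) is a maximal product subcomplex with \(\bar{\m\iota}(g\bar M)=\bar{\m\iota}(\bar M)\). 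It remains to show \(g\bar M\neq\bar M\), which gives a failure of injectivity on vertices and a contradiction. If \(g\bar M=\bar M\), I would replace \(\bar M\) by another vertex: the \(g\)-action fixes every vertex in the image, so it suffices to find one maximal subcomplex not stabilized by \(g\). Every point of \(\bar{\m Y}\) lies in some maximal product subcomplex, and \(g\) fixing all of them would force \(g\) to preserve each one. Taking a point \(\bar a\in\bar M\), \(g\bar a\in\bar M\) and \(\bar{\m\iota}\) restricted to \(\bar M\) is an isometric embedding of a product of trees. Since \(\bar{\m\iota}(g\bar a)=\bar{\m\iota}(\bar a)\), this forces \(g\bar a=\bar a\), contradicting freeness. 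So in either case injectivity fails, and (1) follows.

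\emph{Final assertion.} If \(\m\iota\) is a local isometry, then by the standard facts recalled above, \(\m\iota_*\) is injective and \(\bar{\m\iota}\) is an elevation of \(\m\iota\) to universal covers, hence an isometric embedding into \(\bar Y\).
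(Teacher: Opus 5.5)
Your proposal is correct and follows the paper's proof essentially step for step, via the same cycle (1)\(\Rightarrow\)(2)\(\Rightarrow\)(3)\(\Rightarrow\)(1). In (1)\(\Rightarrow\)(2) your path-lifting argument replaces the paper's observation that the components of \(p_Y^{-1}(\m Y)\) are simply connected. In (3)\(\Rightarrow\)(1), your use of injectivity of \(\bar{\m\iota}\) on \(\bar M\) together with freeness of the deck action is just an unpacking of the paper's claim that \(\m\iota_*\) is injective on the stabilizer of \(\bar M\). The appeal to torsion-freeness and the detour about ``replacing \(\bar M\) by another vertex'' are unnecessary: \(g\bar M\) is maximal simply because deck transformations are automorphisms, and your dichotomy \(g\bar M\neq\bar M\) versus \(g\bar M=\bar M\) already closes the argument.
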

\begin{proof}
\noindent (1)\(\Rightarrow\)(2)\quad
If \(\m\iota_*\) is injective, then we may regard \(\pi_1(\m Y)\) as a subgroup of \(\pi_1(Y)\). 
Then, each component of \(\m{\bar Y}=p_Y^{-1}(\m Y)\) is simply connected.
Consequently, \(\m{\bar Y}\) decomposes as a disjoint union of copies of \(\bar{\m Y}\), indexed by left cosets of \(\pi_1(\m Y)\) in \(\pi_1(Y)\).
It follows that the map \(\bar{\m\iota}:\bar{\m Y}\to\m{\bar Y}\subset\bar Y\) is an embedding.


\medskip

\noindent (2)\(\Rightarrow\)(3)\quad By construction, \(\bar{\m \iota}\) sends standard product subcomplexes of \(\bar{\m Y}\) to standard product subcomplexes of \(\bar Y\). If \(\bar{\m\iota}\) is injective, then distinct standard product subcomplexes of \(\bar{\m Y}\) remain distinct in \(\bar Y\). 
Therefore, the induced morphism \(\cI(\bar{\m\iota}):\cI(\bar{\m Y})\to \cI(\m{\bar Y})\isom \cI(\bar Y)\) is injective.


\medskip

\noindent (3)\(\Rightarrow\)(1)\quad 
Let \(g\in\ker(\m\iota_*)\), and let \(\bar M\subset \bar{\m Y}\) be a maximal product subcomplex. Then 
\[\bar{\m\iota}(g\cdot\bar M) = \m\iota_*(g)\cdot\bar{\m\iota}(\bar M)= \bar{\m\iota}(\bar M).\]
If \(\cI(\bar{\m\iota})\) is injective, then \(g\cdot\bar M=\bar M\), or equivalently, \(g\) lies in the stabilizer of \(\bar M\). However, by \Cref{SPS,lemma:Y max facts}, the restriction of \(\bar{\m\iota}\) to the stabilizer is injective. Hence, \(g\) must be the identity, which implies that \(\m\iota_*\) is injective.

\medskip

Finally, if \(\m\iota\) is a local isometric embedding, then \(\bar{\m\iota}:\bar{\m Y}\to\bar Y\) is an elevation of \(\m\iota\), and therefore, it is an isometric embedding.
\end{proof}

The example below shows that failure of \(\pi_1\)-injectivity of the inclusion \(\m\iota:\m Y\to Y\) may lead to highly nontrivial behavior of intersection complexes in the universal cover. 
In particular, there need not be a canonical identification between \(\cI(\bar{\m Y})\) and \(\cI(\m{\bar Y})\) (which is naturally isomorphic to \(\cI(\bar Y)\)).

\begin{figure}[ht]
\centering
\begin{subfigure}{.7\textwidth}
\begin{align*}
Y_0&=\begin{tikzpicture}[baseline=-.5ex,rotate=-90,yscale=-1]
\draw[thick, gray] (-1,0) -- (1,0) (0,0) -- (0,1); 
\draw[thick] (-1,1) -- (1,1) node[midway, left] {\(\scriptstyle\partial^+ Y_0\)};
\draw[thick,blue] (-1,0) -- (-1,1) (1,0) -- (1,1);
\draw[fill, blue] (-1,0) circle (2pt) (1,0) circle (2pt);
\draw[thick, gray] (-0.8,-0.6) -- (0.8,0.6) (0,0) -- (0,-1);
\draw[thick,red] (-1.8,-0.6) -- ++(2,0) (-0.2,0.6) -- ++(2,0);
\draw[thick,blue] (-1.8,-.6) -- (-0.2,.6) (0.2,-.6) -- (1.8,.6);
\draw[fill, red] (-0.8,-0.6) circle (2pt) (0.8,0.6) circle (2pt);
\draw[thick] (-0.8,-1.6) -- (0.8,-0.4) node[midway, right] {\(\scriptstyle\partial^- Y_0\)};
\draw[thick,red] (-0.8,-0.6) -- ++(0,-1) (0.8,0.6)-- ++(0,-1);
\draw[thick, gray] (0,0) -- (0,-1);
\draw[fill] (0,0) circle (2pt) node[above left] {\(\scriptstyle\bfzero\)};
\end{tikzpicture}\bigg/\sim=
\begin{tikzpicture}[baseline=-.5ex]
\draw[thick] (0,0) circle (2.5 and 1.25);
\draw[thick, gray] (0,0) circle (2 and 1) (2.5,0) -- ++(-0.5,0) -- ++(-0.4,0);
\draw[thick, dashed] (1.2,0) arc (180:0:0.2 and 0.1);
\draw[thick,dashed] (1.2,0) arc (-180:0:0.2 and 0.1);
\draw[thick, red] (0,0) circle (0.8 and 0.4) (0.8,0) -- ++(0.4,0);
\draw[thick, blue] (-2,0) arc (180:0:0.6 and 0.4) (-2.5,0) -- ++(0.5,0);
\draw[fill, blue] (-2,0) circle (2pt);
\draw[thick, dashed, blue] (-2,0) arc (-180:0:0.6 and 0.4);
\draw[thick, gray] (2,0) arc (0:180:0.6 and 0.4);
\draw[thick, gray, dashed] (2,0) arc (0:-180:0.6 and 0.4);
\draw[fill] (2,0) circle (2pt);
\draw[fill,red] (0.8,0) circle (2pt);
\draw (0,1.2) node[above] {\(\scriptstyle\partial^+ Y_0\)};
\draw (1.4,0.2) node {\(\scriptstyle\partial^- Y_0\)};
\end{tikzpicture}
\end{align*}
\caption{The building block \(Y_0\)}
\label{figure:five tori 1}
\end{subfigure}
\begin{subfigure}{\textwidth}
\begin{align*}
Y'&=\begin{tikzpicture}[baseline=-.5ex,scale=0.8,transform shape]
\begin{scope}[rotate=-90,yscale=-1]
\draw[thick, gray] (-1,0) -- (1,0) (0,0) -- (0,1); 
\draw[thick, green] (-1,1) -- (1,1);
\draw[thick,cyan] (-1,0) -- (-1,1) (1,0) -- (1,1);
\draw[fill, cyan] (-1,0) circle (2pt) (1,0) circle (2pt);
\draw[thick, gray] (-0.8,-0.6) -- (0.8,0.6) (0,0) -- (0,-1);
\draw[thick,red] (-1.8,-0.6) -- ++(2,0) (-0.2,0.6) -- ++(2,0);
\draw[thick,cyan] (-1.8,-.6) -- (-0.2,.6) (0.2,-.6) -- (1.8,.6);
\draw[fill, red] (-0.8,-0.6) circle (2pt) (0.8,0.6) circle (2pt);
\draw[thick] (-0.8,-1.6) -- (0.8,-0.4);
\draw[thick,red] (-0.8,-0.6) -- ++(0,-1) (0.8,0.6)-- ++(0,-1);
\draw[thick, gray] (0,0) -- (0,-1);
\draw[fill] (0,0) circle (2pt) node[above=1ex, left=-.5ex, transform shape=false] {\(\scriptstyle\bfzero_1\)};
\end{scope}
\begin{scope}[xshift=2cm,rotate=90,yscale=-1]
\draw[thick, gray] (-0.8,-0.6) -- (0.8,0.6) (0,0) -- (0,-1); 
\draw[thick] (-1,1) -- (1,1);
\draw[thick,red] (-1.8,-0.6) -- ++(2,0) (-0.2,0.6) -- ++(2,0);
\draw[thick] (-0.8,-1.6) -- (0.8,-0.4);
\draw[thick,red] (-0.8,-0.6) -- ++(0,-1) (0.8,0.6)-- ++(0,-1);
\draw[thick,blue] (-1.8,-.6) -- (-0.2,.6) (0.2,-.6) -- (1.8,.6);
\draw[thick,blue] (-1,0) -- (-1,1) (1,0) -- (1,1);
\draw[thick, gray] (-1,0) -- (1,0) (0,0) -- (0,1);
\draw[fill, red] (-0.8,-0.6) circle (2pt) (0.8,0.6) circle (2pt);
\draw[fill, blue] (-1,0) circle (2pt) (1,0) circle (2pt);
\draw[fill] (0,0) circle (2pt) node[above=1ex, left=-.5ex, transform shape=false] {\(\scriptstyle\bfzero_2\)};
\end{scope}
\begin{scope}[xshift=4cm,rotate=-90,yscale=-1]
\draw[thick, gray] (-1,0) -- (1,0) (0,0) -- (0,1); 
\draw[thick] (-1,1) -- (1,1);
\draw[thick,blue] (-1,0) -- (-1,1) (1,0) -- (1,1);
\draw[fill, blue] (-1,0) circle (2pt) (1,0) circle (2pt);
\draw[thick, gray] (-0.8,-0.6) -- (0.8,0.6) (0,0) -- (0,-1);
\draw[thick,orange] (-1.8,-0.6) -- ++(2,0) (-0.2,0.6) -- ++(2,0);
\draw[thick,blue] (-1.8,-.6) -- (-0.2,.6) (0.2,-.6) -- (1.8,.6);
\draw[fill, orange] (-0.8,-0.6) circle (2pt) (0.8,0.6) circle (2pt);
\draw[thick] (-0.8,-1.6) -- (0.8,-0.4);
\draw[thick,orange] (-0.8,-0.6) -- ++(0,-1) (0.8,0.6)-- ++(0,-1);
\draw[thick, gray] (0,0) -- (0,-1);
\draw[fill] (0,0) circle (2pt) node[above=1ex, left=-.5ex, transform shape=false] {\(\scriptstyle\bfzero_3\)};
\end{scope}
\begin{scope}[xshift=6cm,rotate=90,yscale=-1]
\draw[thick, gray] (-0.8,-0.6) -- (0.8,0.6) (0,0) -- (0,-1); 
\draw[thick] (-1,1) -- (1,1);
\draw[thick,orange] (-1.8,-0.6) -- ++(2,0) (-0.2,0.6) -- ++(2,0);
\draw[thick] (-0.8,-1.6) -- (0.8,-0.4);
\draw[thick,orange] (-0.8,-0.6) -- ++(0,-1) (0.8,0.6)-- ++(0,-1);
\draw[thick,violet] (-1.8,-.6) -- (-0.2,.6) (0.2,-.6) -- (1.8,.6);
\draw[thick,violet] (-1,0) -- (-1,1) (1,0) -- (1,1);
\draw[thick, gray] (-1,0) -- (1,0) (0,0) -- (0,1);
\draw[fill, orange] (-0.8,-0.6) circle (2pt) (0.8,0.6) circle (2pt);
\draw[fill, violet] (-1,0) circle (2pt) (1,0) circle (2pt);
\draw[fill] (0,0) circle (2pt) node[above=1ex, left=-.5ex, transform shape=false] {\(\scriptstyle\bfzero_4\)};
\end{scope}
\begin{scope}[xshift=8cm,rotate=-90,yscale=-1]
\draw[thick, gray] (-1,0) -- (1,0) (0,0) -- (0,1); 
\draw[thick] (-1,1) -- (1,1);
\draw[thick,violet] (-1,0) -- (-1,1) (1,0) -- (1,1);
\draw[fill, violet] (-1,0) circle (2pt) (1,0) circle (2pt);
\draw[thick, gray] (-0.8,-0.6) -- (0.8,0.6) (0,0) -- (0,-1);
\draw[thick,pink] (-1.8,-0.6) -- ++(2,0) (-0.2,0.6) -- ++(2,0);
\draw[thick,violet] (-1.8,-.6) -- (-0.2,.6) (0.2,-.6) -- (1.8,.6);
\draw[fill, pink] (-0.8,-0.6) circle (2pt) (0.8,0.6) circle (2pt);
\draw[thick] (-0.8,-1.6) -- (0.8,-0.4);
\draw[thick,pink] (-0.8,-0.6) -- ++(0,-1) (0.8,0.6)-- ++(0,-1);
\draw[thick, gray] (0,0) -- (0,-1);
\draw[fill] (0,0) circle (2pt) node[above=1ex, left=-.5ex, transform shape=false] {\(\scriptstyle\bfzero_6\)};
\end{scope}
\begin{scope}[xshift=10cm,rotate=90,yscale=-1]
\draw[thick, gray] (-0.8,-0.6) -- (0.8,0.6) (0,0) -- (0,-1); 
\draw[thick,green] (-1,1) -- (1,1);
\draw[thick,pink] (-1.8,-0.6) -- ++(2,0) (-0.2,0.6) -- ++(2,0);
\draw[thick] (-0.8,-1.6) -- (0.8,-0.4);
\draw[thick,pink] (-0.8,-0.6) -- ++(0,-1) (0.8,0.6)-- ++(0,-1);
\draw[thick,cyan] (-1.8,-.6) -- (-0.2,.6) (0.2,-.6) -- (1.8,.6);
\draw[thick,cyan] (-1,0) -- (-1,1) (1,0) -- (1,1);
\draw[thick, gray] (-1,0) -- (1,0) (0,0) -- (0,1);
\draw[fill, pink] (-0.8,-0.6) circle (2pt) (0.8,0.6) circle (2pt);
\draw[fill, cyan] (-1,0) circle (2pt) (1,0) circle (2pt);
\draw[fill] (0,0) circle (2pt) node[above=1ex, left=-.5ex, transform shape=false] {\(\scriptstyle\bfzero_6\)};
\end{scope}
\end{tikzpicture}\,\,\bigg/\sim\quad\text{and}\quad
H=\begin{tikzpicture}[baseline=-.5ex,scale=0.8,transform shape]
\draw[thick, gray] (0:1) -- (60:1) -- (120:1) -- (180:1) -- (240:1) -- (300:1) -- cycle;
\foreach \x in {1,2,3,4,5,6} {
\draw (0,0) -- ({\x*60+30}:{sqrt(3)/2});
\draw[fill] ({\x*60}:1) circle (2pt);
\draw ({\x*60}:1.3) node {\(\scriptstyle\bfzero_\x\)};
}
\draw[fill] (0,0) circle (2pt);
\end{tikzpicture}
\end{align*}
\caption{The complex \(Y=Y'\cup H/\sim\)}
\label{figure:five tori 2}
\end{subfigure}
\caption{The union of five tori with laces.}
\label{figure:five tori}
\end{figure}

\begin{example}[\(\pi_1\)-non-injective]\label{example:pi1-non-injective}
Let \(Y_0\) be the quotient of the union of three rectangles 
\[
Y_0 = \left(\{0\}\times[-1,1]\times[-1,1]\right)\cup \left([0,1]\times[-1,1]\times\{0\}\right) \cup \left([-1,0]\times\{0\}\times[-1,1]\right)\big/\sim,
\]
where \((x,y,z)\sim(x',y',z')\) if \((x,y,z)=(x',-y',z')\) or \((x', y', -z')\).
We denote by \(\partial^\pm Y_0\) the images under the quotient of the subsets with first coordinate \(x=\pm1\), and by \(\bfzero\) the image of the origin. See \Cref{figure:five tori 1}.

For \(N\ge 5\), let \(Y_1,\dots, Y_N\) be \(N\)-copies of \(Y_0\) with origins \(\bfzero_i\) for \(1\le i\le N\). Identifying \(\partial^- Y_i\) with \(\partial^- Y_{i+1}\) in the natural way (where \(Y_{N+1}\) is understood to be \(Y_1\)), produces a complex \(Y'\) with a unique locally isometrically embedded cycle \(\gamma\subset Y'\) of length \(2N\) passing through all \(\bfzero_i\).
Let \(H\) be the union of \(N\) squares arranged in an \(N\)-gon, and define \(Y\) by gluing \(Y'\) and \(H\) along the cycle \(\gamma\) and the boundary \(\partial H\); see \Cref{figure:five tori 2} for the case \(N=6\). Then \(\m Y=Y'\), and the inclusion \(\m\iota\colon \m Y\to Y\) is neither a local isometry nor \(\pi_1\)-injective, due to the presence of \(H\).

In either \(Y\) or \(Y'\), there are \(N\) maximal product subcomplexes \(M_i\), each isometric to the product of a handcuff graph and a circle,
\[
M_i\quad\isom\quad \begin{tikzpicture}[baseline=-.5ex]
\draw[thick] (-1,0) circle (0.5) (-0.5,0)--(0.5,0) (1,0) circle (0.5);
\draw[fill] (-1.5,0) circle (2pt) (-0.5,0) circle (2pt) (0.5,0) circle (2pt) (1.5,0) circle (2pt);
\end{tikzpicture}\,\times\,
\begin{tikzpicture}[baseline=-.5ex]
\draw[thick] (0,0) circle (0.5);
\draw[fill] (-0.5,0) circle (2pt) (0.5,0) circle (2pt);
\end{tikzpicture},
\]
arranged so that \(M_i \cap M_j \neq \varnothing\) precisely when \(|i-j|=1\), in which case the intersection is a torus. Hence, \(\cI(Y)\) is an \(N\)-cycle, and the cycle \(\gamma\) can be regarded as a loop representing \(\cI(Y)\). 
Since \(\gamma\) is null-homotopic in \(Y\), every lift of \(\gamma\) to \(\bar Y\) is again a loop, and \(\cI(\bar Y)\) contains infinitely many disjoint copies of the \(N\)-cycle \(\cI(Y)\).
Consequently, the induced map \(\cI(\bar{\m\iota}):\cI(\bar{\m Y})\rightarrow \cI(\m{\bar Y})\)
is combinatorial but neither injective nor surjective.

If instead \(N=4\), then \(\m Y=Y\) is the product of two handcuff graphs.
In this case \(\cI(Y)\isom\cI(\m Y)\) consists of a single vertex, whereas \(\cI(Y')\) has four isolated vertices.
There is no contradiction here, since \(Y'\subsetneq \m Y\).
\end{example}

The interaction between \(Y\) and its subcomplex \(\m Y\) can exhibit several distinct geometric and algebraic behaviors. 
To organize these possibilities, we introduce a hierarchy of subclasses of weakly special square complexes according to the extent to which \(\m Y\) reflects the structure of \(Y\). 
Beginning with the minimal requirement that \(\m Y\) be nonempty and connected, the successive levels impose increasingly rigid conditions---such as \(\pi_1\)-injectivity of the inclusion, compatibility with free product splittings, local isometry behavior, and ultimately the equality \(\m Y=Y\).

\begin{definition}[\(\m Y\)-hierarchy]\label{definition:Ymax hierarchy}
Let \(\cY\) denote the class of compact, connected, weakly special square complexes.
Define a subclass \(\cY_{(0)}=\{Y\in\cY: \m Y\text{ is nonempty and connected}\}\) and the following further subclasses:
\begin{align*}
\cY_{(1)}&=\{Y\in\cY_{(0)}: \m\iota_*:\pi_1(\m Y)\to\pi_1(Y)\text{ is injective}\}\\
\cY_{(2)}&=\{Y\in\cY_{(1)}: \pi_1(Y)\isom\image(\m\iota_*)\ast\bbF_N\text{ for some \(N\ge0\)}\}\\
\cY_{(3)}&=\{Y\in\cY_{(0)}: \m\iota:\m Y\to Y\text{ is a local isometry}\}\\
\cY_{(4)}&=\{Y\in\cY_{(0)}: \m Y\text{ contains all squares of \(Y\)}\}\\
\cY_{(5)}&=\{Y\in\cY_{(0)}: \m Y=Y\}.
\end{align*}
These subclasses are related by the inclusions
\(
\cY_{(5)}\subset \cY_{(4)}\subset \cY_{(2)}\cap\cY_{(3)}\) and
\(\cY_{(2)}\cup\cY_{(3)}\subset\cY_{(1)}.\)
\end{definition}

The hierarchy above organizes weakly special square complexes according to how faithfully the subcomplex \(\m Y\) reflects the geometry and algebraic structure of \(Y\).  
In particular, the subclass \(\cY_{(0)}\) provides the natural setting in which the assignment \(Y\mapsto \m Y\) is again well defined, as the following lemma shows.

\begin{lemma}\label{Lemma:Well-defined}
For every \(Y\in\cY_{(0)}\), the complex \(\m Y\) again belongs to \(\cY_{(0)}\). In particular, the assignment \(Y\mapsto\m Y\) defines a well-defined function \(\m{(\cdot)}:\cY_{(0)}\to\cY_{(0)}\).    
\end{lemma}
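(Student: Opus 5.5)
We must show that for \(Y\in\cY_{(0)}\), the complex \(\m Y\) is compact, connected and weakly special, and that \(\m{(\m Y)}\) is nonempty and connected. The plan is to prove the stronger identity \(\m{(\m Y)}=\m Y\); this gives the second part at once, since \(\m Y\) is nonempty and connected by the hypothesis \(Y\in\cY_{(0)}\).

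The first part is immediate. \(\m Y\) is a union of images of product structures from finite graphs, so it is a subcomplex of the compact complex \(Y\) and hence compact. It is connected by hypothesis. It is weakly special by \Cref{lem:SpecialSubcomplexes}, or equally by \Cref{lemma:Y max facts}(1).

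For the identity, I will compare standard product subcomplexes of \(\m Y\) with those of \(Y\).
\begin{enumerate}
\item Any standard product structure \(\iota\colon\grafd_1\times\grafd_2\to\m Y\) composed with the inclusion \(\m Y\hookrightarrow Y\) is an immersion into the \(2\)-dimensional complex \(Y\). By \Cref{Lem:ImmersionImpliesLocalisometry} it is therefore a local isometry into \(Y\), and the product-structure conditions persist. So every standard product subcomplex of \(\m Y\) is a standard product subcomplex of \(Y\).
\item Conversely, each maximal product subcomplex \(M\) of \(Y\) lies in \(\m Y\). Its structure map \(\grafd_1\times\grafd_2\to\m Y\) is again an immersion into a square complex, hence a local isometry by the same lemma. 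So \(M\) is a standard product subcomplex of \(\m Y\).
\item I claim \(M\) is maximal in \(\m Y\). If \(M\subset K\) with \(K\) standard in \(\m Y\), then by item 1 \(K\) is standard in \(Y\), so maximality of \(M\) in \(Y\) forces \(K=M\).
\item Conversely, let \(K\) be maximal in \(\m Y\). By item 1 it is standard in \(Y\), so it is contained in some maximal \(M\) of \(Y\); this uses finiteness, as standard product subcomplexes of the compact \(Y\) form a finite set. By item 2, \(M\) is standard in \(\m Y\), so maximality of \(K\) in \(\m Y\) gives \(K=M\).
\end{enumerate}
Thus the maximal product subcomplexes of \(\m Y\) are exactly those of \(Y\), which is the bijection already asserted in \Cref{lemma:Y max facts}(2). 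Taking unions gives \(\m{(\m Y)}=\m Y\).

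The main obstacle is the subtlety in item 2: the local isometry condition requires the link image to be an induced subcomplex, and a priori this could fail in \(\m Y\) while holding in \(Y\), or the reverse. This is exactly why I invoke \Cref{Lem:ImmersionImpliesLocalisometry} in dimension \(2\) rather than arguing about induced links directly. One also needs the images of \(\grafd_i\times\{\sfv\}\) to be unchanged, but these are identical subsets of \(\m Y\) and \(Y\), so the product-structure conditions are the same in both. Once this is checked, the remaining steps are formal.
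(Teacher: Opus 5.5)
Your proof is correct and follows the same route as the paper. The paper gives no separate proof of this lemma; it rests on \Cref{lemma:Y max facts}, which is exactly your combination of \Cref{lem:SpecialSubcomplexes} (so \(\m Y\) is a compact weakly special square complex) and \Cref{Lem:ImmersionImpliesLocalisometry} (so the maximal product subcomplexes of \(\m Y\) and of \(Y\) coincide), yielding \(\m{(\m Y)}=\m Y\), which is nonempty and connected.
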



\begin{example}[Triangle free RAAG]\label{example:raag}
Let \(\grafl\) be a finite, \(3\)-cycle-free, possibly disconnected, simple graph, and let \(\grafl_0\) denote the subgraph consisting of all edges of \(\grafl\).
Then the Salvetti complex \(S_\grafl\) is a compact, connected, special square complex satisfying \(\m S_\grafl=S_{\grafl_0}\).
Moreover, \(S_\grafl\not\in\cY_{(0)}\) if \(\grafl_0=\varnothing\), and \(S_\grafl\in\cY_{(4)}\) otherwise.
Finally, \(S_\grafl\in\cY_{(5)}\) if and only if \(\grafl_0=\grafl\).
This example shows that the inclusion \(\cY_{(5)}\subsetneq\cY_{(4)}\) is proper.
\end{example}
 
Further examples in \Cref{figure:counterexamples in cY} together with \Cref{example:pi1-non-injective} demonstrate that the remaining inclusions in the hierarchy are also strict.

\begin{figure}[ht]
\centering
\begin{subfigure}{.3\textwidth}
\[
\begin{tikzpicture}[baseline=-.5ex]
\draw[thick] (-1,0) circle (1 and 0.5) (-1,0) circle (0.4 and 0.2);
\draw[thick] (-2,0) arc (180:0:0.3 and 0.2) (0,0) arc (0:180:0.3 and 0.2);
\draw[dashed,thick] (-2,0) arc (-180:0:0.3 and 0.2) (0,0) arc (0:-180:0.3 and 0.2);
\draw[fill] (-2,0) circle (2pt) (-1.4,0) circle (2pt) (-0.6,0) circle (2pt);
\draw[fill] (0,0) circle (2pt) (0,0.6) circle (2pt) (0.6,0) circle (2pt);
\draw[thick] (0,0) arc (0:180:0.3 and 0.2) arc (180:0:0.6) arc (0:180:0.3 and 0.2);
\end{tikzpicture}
\]
\caption{\(Y\in\cY_{(4)}\setminus\cY_{(5)}\)}
\end{subfigure}
\begin{subfigure}{.3\textwidth}
\[
\begin{tikzpicture}[baseline=-.5ex]
\draw[thick,dashed] (0,0) arc (0:60:1 and 0.5);
\draw[thick] ({-1+cos(60)},{0.5*sin(60)}) arc (60:360:1 and 0.5);
\draw[thick] (-1,0) circle (0.4 and 0.2);
\draw[thick] (-2,0) arc (180:0:0.3 and 0.2) (0,0) arc (0:180:0.3 and 0.2);
\draw[dashed,thick] (-2,0) arc (-180:0:0.3 and 0.2) (0,0) arc (0:-180:0.3 and 0.2);
\draw[fill] (-2,0) circle (2pt) (-1.4,0) circle (2pt) (-0.6,0) circle (2pt);
\draw[fill] (0,0) circle (2pt) (0,0.6) circle (2pt) (0.6,0) circle (2pt);
\draw[thick] (0,0) arc (0:180:0.3 and 0.2) arc (180:0:0.6) arc (0:180:0.3 and 0.2);
\end{tikzpicture}
\]
\caption{\(Y\in(\cY_{(2)}\cap\cY_{(3)})\setminus\cY_{(4)}\)}
\end{subfigure}
\begin{subfigure}{.3\textwidth}
\[
\begin{tikzpicture}[baseline=-.5ex]
\draw[thick,dashed] (0,0) arc (0:60:1 and 0.5);
\draw[thick] ({-1+cos(60)},{0.5*sin(60)}) arc (60:360:1 and 0.5);
\draw[thick] (-1,0) circle (0.4 and 0.2);
\draw[thick] (-2,0) arc (180:0:0.3 and 0.2) (0,0) arc (0:180:0.3 and 0.2);
\draw[dashed,thick] (-2,0) arc (-180:0:0.3 and 0.2) (0,0) arc (0:-180:0.3 and 0.2);
\draw[fill] (-2,0) circle (2pt) (-1.4,0) circle (2pt) (-0.6,0) circle (2pt);
\draw[thick] (-.6,0) arc (180:90:0.6);
\begin{scope}[xscale=-1]
\draw[thick,dashed] (0,0) arc (0:60:1 and 0.5);
\draw[thick] ({-1+cos(60)},{0.5*sin(60)}) arc (60:360:1 and 0.5);
\draw[thick] (-1,0) circle (0.4 and 0.2);
\draw[thick] (-2,0) arc (180:0:0.3 and 0.2) (0,0) arc (0:180:0.3 and 0.2);
\draw[dashed,thick] (-2,0) arc (-180:0:0.3 and 0.2) (0,0) arc (0:-180:0.3 and 0.2);
\draw[fill] (-2,0) circle (2pt) (-1.4,0) circle (2pt) (-0.6,0) circle (2pt);
\draw[fill] (0,0) circle (2pt) node (A) {};
\draw[thick] (-.6,0) arc (180:90:0.6);
\end{scope}
\draw[fill] (0,0.6) circle (2pt);
\end{tikzpicture}
\]
\caption{\(Y\in\cY_{(2)}\setminus\cY_{(3)}\)}
\end{subfigure}
\begin{subfigure}{.4\textwidth}
\[
\begin{tikzpicture}[baseline=-.5ex]
\begin{scope}[xscale=-1]
\draw[thick] (-1,0) circle (1 and 0.5) (-1,0) circle (0.4 and 0.2);
\draw[thick] (-2,0) arc (180:0:0.3 and 0.2) (0,0) arc (0:180:0.3 and 0.2);
\draw[dashed,thick] (-2,0) arc (-180:0:0.3 and 0.2) (0,0) arc (0:-180:0.3 and 0.2);
\draw[fill] (-2,0) circle (2pt) (-1.4,0) circle (2pt) (-0.6,0) circle (2pt);
\draw[fill] (0,0) circle (2pt) node (A) {};
\end{scope}
\begin{scope}[xshift=3cm]
\draw[thick] ({cos(90)},{0.5*sin(90)}) arc (90:270:1 and 0.5) -- ({1.5+cos(-90)},{0.5*sin(-90)}) arc (-90:90:1 and 0.5) -- cycle;
\draw[fill] ({0.75+cos(90)},{0.5*sin(90)}) circle (2pt) node(A) {}
({0.75+cos(-90)},{0.5*sin(-90)}) circle (2pt) node (B) {}
(2.5,0) circle (2pt)
(-0.4,0) circle (2pt)
(0,0.2) circle (2pt)
(0,-0.2) circle (2pt)
(1.5,0.2) circle (2pt)
(1.5,-0.2) circle (2pt)
(1.9,0) circle (2pt);
\draw[thick] (0,0) circle (0.4 and 0.2) (1.5,0) circle (0.4 and 0.2);
\draw[thick] (-0.4,0) arc (0:180:0.3 and 0.2)
(1.9,0) arc (180:0:0.3 and 0.2)
(0.75,0.5) arc (90:-90:0.1 and 0.5);
\draw[thick,dashed] (-0.4,0) arc (0:-180:0.3 and 0.2)
(1.9,0) arc (-180:0:0.3 and 0.2)
(0.75,0.5) arc (90:270:0.1 and 0.5);
\draw[thick] (0,0.2) to[out=60,in=180] (A.center)
(0,-0.2) to[out=-60,in=180] (B.center)
(1.5,0.2) to[out=120,in=0] (A.center)
(1.5,-0.2) to[out=-120,in=0] (B.center);
\draw[thick,dashed] (0,0.2) to[out=0,in=-120] (A.center)
(0,-0.2) to[out=0,in=120] (B.center)
(1.5,0.2) to[out=180,in=-60] (A.center)
(1.5,-0.2) to[out=-180,in=60] (B.center);
\end{scope}
\end{tikzpicture}
\]
\caption{\(Y\in\cY_{(3)}\setminus\cY_{(2)}\)}
\end{subfigure}
\begin{subfigure}{.4\textwidth}
\[
\begin{tikzpicture}[baseline=-.5ex]
\begin{scope}
\draw[thick,dashed] (0,0) arc (0:60:1 and 0.5);
\draw[thick] ({-1+cos(60)},{0.5*sin(60)}) arc (60:360:1 and 0.5);
\draw[thick] (-1,0) circle (0.4 and 0.2);
\draw[thick] (-2,0) arc (180:0:0.3 and 0.2) (0,0) arc (0:180:0.3 and 0.2);
\draw[dashed,thick] (-2,0) arc (-180:0:0.3 and 0.2) (0,0) arc (0:-180:0.3 and 0.2);
\draw[fill] (-2,0) circle (2pt) (-1.4,0) circle (2pt) (-0.6,0) circle (2pt);
\draw[thick] (-.6,0) arc (180:90:0.6);
\begin{scope}[xscale=-1]
\draw[thick,dashed] (0,0) arc (0:60:1 and 0.5);
\draw[thick] ({-1+cos(60)},{0.5*sin(60)}) arc (60:360:1 and 0.5);
\draw[thick] (-1,0) circle (0.4 and 0.2);
\draw[thick] (-2,0) arc (180:0:0.3 and 0.2) (0,0) arc (0:180:0.3 and 0.2);
\draw[dashed,thick] (-2,0) arc (-180:0:0.3 and 0.2) (0,0) arc (0:-180:0.3 and 0.2);
\draw[fill] (-2,0) circle (2pt) (-1.4,0) circle (2pt) (-0.6,0) circle (2pt);
\draw[fill] (0,0) circle (2pt) node (A) {};
\draw[thick] (-.6,0) arc (180:90:0.6);
\end{scope}
\draw[fill] (0,0.6) circle (2pt);
\end{scope}
\begin{scope}[xshift=3cm]
\draw[thick] ({cos(90)},{0.5*sin(90)}) arc (90:270:1 and 0.5) -- ({1.5+cos(-90)},{0.5*sin(-90)}) arc (-90:90:1 and 0.5) -- cycle;
\draw[fill] ({0.75+cos(90)},{0.5*sin(90)}) circle (2pt) node(A) {}
({0.75+cos(-90)},{0.5*sin(-90)}) circle (2pt) node (B) {}
(2.5,0) circle (2pt)
(-0.4,0) circle (2pt)
(0,0.2) circle (2pt)
(0,-0.2) circle (2pt)
(1.5,0.2) circle (2pt)
(1.5,-0.2) circle (2pt)
(1.9,0) circle (2pt);
\draw[thick] (0,0) circle (0.4 and 0.2) (1.5,0) circle (0.4 and 0.2);
\draw[thick] (-0.4,0) arc (0:180:0.3 and 0.2)
(1.9,0) arc (180:0:0.3 and 0.2)
(0.75,0.5) arc (90:-90:0.1 and 0.5);
\draw[thick,dashed] (-0.4,0) arc (0:-180:0.3 and 0.2)
(1.9,0) arc (-180:0:0.3 and 0.2)
(0.75,0.5) arc (90:270:0.1 and 0.5);
\draw[thick] (0,0.2) to[out=60,in=180] (A.center)
(0,-0.2) to[out=-60,in=180] (B.center)
(1.5,0.2) to[out=120,in=0] (A.center)
(1.5,-0.2) to[out=-120,in=0] (B.center);
\draw[thick,dashed] (0,0.2) to[out=0,in=-120] (A.center)
(0,-0.2) to[out=0,in=120] (B.center)
(1.5,0.2) to[out=180,in=-60] (A.center)
(1.5,-0.2) to[out=-180,in=60] (B.center);
\end{scope}
\end{tikzpicture}
\]
\caption{\(Y\in\cY_{(1)}\setminus(\cY_{(2)}\cup\cY_{(3)})\)}
\end{subfigure}
\caption{(Counter)examples in \(\cY_{(i)}\). In each case, the subcomplex \(\m Y\) is a union of tori. All edges are contained in squares, except in (A), where the \(4\)-cycle on the right does not bound a square.}
\label{figure:counterexamples in cY}
\end{figure}

\begin{definition}[Quasi-isometry invariant]\label{Definition:QIinvariant}
Let \(\cY_1,\cY_2\subset\cY\).
A function \(\cF:\cY_1\to\cY_2\) is said to \emph{define a quasi-isometry invariant} (with respect to universal covers) if for all \(Y,Y'\in\cY_1\),
\[\bar Y\qisom \bar{Y'}\quad\Longrightarrow\quad \bar{\cF(Y)}\qisom \bar{\cF(Y')}.
\]
It is said to be \emph{complete} if the converse implication also holds.
\end{definition}


\begin{example}
For each \(Y\in\cY\), there exists a uniquely determined maximal leafless subcomplex \(Y_\noleaf\) of \(Y\), and the function \(\cF:\cY\to \cY_\noleaf\) defined by \(Y \mapsto Y_{\noleaf}\) is a complete quasi-isometry invariant, where \(\cY_\noleaf\) is the subclass of \(\cY\) consisting of leafless square complexes.
\end{example}

Following the above example, it is natural to ask if the operation \(Y\mapsto\m Y\) itself defines a quasi-isometry invariant.

\begin{question}\label{Question:DefiningQII}
For \(Y\in\cY_{(0)}\), by \Cref{Lemma:Well-defined}, the assignment 
\(Y\mapsto\m Y\) defines a well-defined map \(\m{(\cdot)}:\cY_{(0)}\to\cY_{(0)}\).
For which subclasses of \(\cY_{(0)}\) does this map define a (complete) quasi-isometry invariant?
\end{question}

None of the subclasses appearing in \Cref{definition:Ymax hierarchy} is expected to provide an exact answer to this question in full generality. 
Nevertheless, when one restricts attention to certain families of weakly special square complexes---most notably, the discrete \(2\)-configuration spaces of graphs considered in this paper---the question can be answered to a significant extent. 
In \Cref{section:properties}, we investigate how far such a classification can be carried out in this setting.

%
%

\section{Configuration spaces of graphs}\label{sec:graph configuration spaces}
In this section we recall fundamental facts about configuration spaces of graphs and graph braid groups. We review the structure of these spaces, identify when graph braid groups are hyperbolic, and determine precisely when they are (quasi-isometric to) free groups.

Unless stated otherwise, every (sub)graph \(\graf\) is assumed to be finite and simple and, as in \Cref{assumption:cube}, nontrivial and connected. 
Possible exceptions are as follows:
\begin{enumerate}
    \item A subset of vertices or edges of \(\graf\); in particular, a single vertex \(\sfv\) may be regarded as a trivial subgraph.
    \item The \emph{complement} \(\graf \setminus \graf_0\) of a (possibly disconnected or trivial) subgraph \(\graf_0 \subset \graf\), which is induced by \(V(\graf) \setminus V(\graf_0)\).
\end{enumerate}

We further fix the following notation and terminology for graphs:
\begin{enumerate}
\item An \emph{\(n\)-path} \(\sfP_n\) (an \emph{\(n\)-cycle} \(\sfC_n\), respectively) is a graph with \(n\) edges homeomorphic to a line segment (a circle, respectively).
\item The complete graph on \(n\) vertices is denoted by \(\sfK_n\), and the complete bipartite graph with partition sets of size \(m\) and \(n\) is denoted by \(\sfK_{m,n}\). In particular, \(\sfK_{1,n}\) (or \(\sfK_{n,1}\)) is called an \emph{\(n\)-star} and denoted by \(\sfS_n\).
\item For each vertex \(\sfv\) of \(\graf\), \(\Lk(\graf,\sfv)\) is realized as the subgraph \(\lk_\graf(\sfv)\) consisting of the vertices adjacent to \(\sfv\). The subgraph formed by the union of all edges incident to \(\sfv\), which is an \(n\)-star, is denoted by \(\st_\graf(\sfv)\).
\end{enumerate}




\subsection{Discrete configuration spaces of graphs}
Recall the definition of the discrete configuration space \(UD_n(\graf)\) of a graph \(\graf\) in \Cref{definition:discrete configuration spaces}, which is a special cube complex due to \Cref{thm:GBGSpecial}.
Notice that its cube structure does depend on the cell structure on \(\graf\). We say that \(\graf\) is \emph{sufficiently subdivided} for \(n\) if
\begin{enumerate}
\item each path between two non-bivalent vertices contains at least \(n-1\) edges;
\item each cycle contains at least \(n+1\) edges.
\end{enumerate}
In particular, a graph \(\graf\) is sufficiently subdivided for \(n=2\) if and only if \(\graf\) is simple.

\begin{proposition}[\cite{Abrams00,KKP12,PS12}]\label{Abrams}
Let \(\graf\) be a graph that is sufficiently subdivided for \(n\ge 1\). 
Then the discrete \(n\)-configuration space \(UD_n(\graf)\) is a strong deformation retract of \(UC_n(\graf)\), and therefore, the graph braid group \(\bbB_n(\graf)\) is the fundamental group of a compact special cube complex by \Cref{thm:GBGSpecial}.
\end{proposition}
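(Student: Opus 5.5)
The proof has two parts. First, \(UD_n(\graf)\) is a strong deformation retract of \(UC_n(\graf)\) when \(\graf\) is sufficiently subdivided. This is the content of Abrams' theorem, as refined by Kim--Ko--Park and Prue--Scrimshaw. Second, this retraction, combined with \Cref{thm:GBGSpecial}, gives the conclusion about \(\bbB_n(\graf)\).

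For the first part, I would work with the ordered spaces \(D_n(\graf)\subset C_n(\graf)\). The deformation retraction can be built to be \(\bbS_n\)-equivariant, and then it descends to the quotients \(UD_n(\graf)\subset UC_n(\graf)\). Set up the comparison as follows. Every point of \(C_n(\graf)\) lies in a unique open product of cells \(\sigma_1\times\dots\times\sigma_n\). The closure of that product lies in \(C_n(\graf)\) exactly when the closed cells \(\sigma_i\) are pairwise disjoint, and in that case the cell belongs to \(D_n(\graf)\). The obstruction therefore comes from configurations in which two points sit on cells whose closures meet.

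The retraction itself I would construct by induction on the cells of \(\graf^n\) meeting the fat diagonal. On each such cell, push points away from the shared vertices, sliding them along edges toward free positions. Points are allowed to move only along the subdivision, so that they never collide. The subdivision hypotheses are what provide room for this:
\begin{itemize}
\item at least \(n-1\) edges on every path between non-bivalent vertices lets up to \(n\) points be spread out along such a path;
\item at least \(n+1\) edges on every cycle ensures that a cycle can hold \(n\) points on pairwise disjoint closed cells without forcing wrap-around adjacency.
\end{itemize}

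An alternative route for the first part, if the explicit retraction is messy, is to argue through the homotopy type. Show that for each open cell of \(\graf^n\) meeting \(C_n(\graf)\), the intersection with \(C_n(\graf)\) deformation retracts onto its intersection with \(D_n(\graf)\), and then glue these retractions using a cellular filtration.

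For the second part, \(UD_n(\graf)\) is a strong deformation retract of \(UC_n(\graf)\), so \(\bbB_n(\graf)=\pi_1(UC_n(\graf))\cong\pi_1(UD_n(\graf))\). Since \(\graf\) is finite, \(UD_n(\graf)\) is a finite cube complex and hence compact. \Cref{thm:GBGSpecial} then says it is special, and connectedness holds because \(\graf\) is connected with enough vertices.

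The main obstacle is the sharpness of the subdivision constants, in particular near essential vertices. Several points may need to pass through the same essential vertex, and we must show that this locally contractible set of "bad" configurations can be retracted consistently. I would first attempt the local model of a star \(\sfS_k\) with arms of length \(n-1\), where the statement reduces to a known computation. I would then globalize by a Mayer--Vietoris-style gluing over stars of essential vertices.
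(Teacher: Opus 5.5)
Your second part matches the paper exactly. The paper gives no argument for the retraction: it takes it from \cite{Abrams00,KKP12,PS12} and adds only that \(\pi_1\) is unchanged, that \(UD_n(\graf)\) is a finite cube complex, and that it is special by \Cref{thm:GBGSpecial}. So if you simply cite the retraction, your proposal coincides with the paper.

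The sketch you offer for proving the retraction yourself, however, does not work as written. Your alternative route asks that, for each open cell of \(\graf^n\) meeting \(C_n(\graf)\), its intersection with \(C_n(\graf)\) deformation retract onto its intersection with \(D_n(\graf)\). But \(D_n(\graf)\) is a subcomplex of \(\graf^n\). So every open cell not in \(D_n(\graf)\) misses it entirely, and you would be retracting a nonempty set onto \(\varnothing\).

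Closed cells do not rescue this. For \(n\ge 3\) and any edge \(\sfe\), the closed cell \(\sfe\times\cdots\times\sfe\) meets \(C_n(\graf)\) but is disjoint from \(D_n(\graf)\). Indeed, any configuration in it has a point in the interior of \(\sfe\); that point's carrier cell must be \(\sfe\) itself, and \(\sfe\) contains all the other points. Hence no retraction can be assembled cell by cell or along a cellular filtration (note also that \(C_n(\graf)\) is not a subcomplex of \(\graf^n\)).

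Points crowded into one edge must be carried out of their cell and spread along a path of at least \(n-1\) edges. They pass through essential vertices where the direction of the push has to be chosen. The whole content of the theorem is that this can be done continuously and compatibly on all of \(C_n(\graf)\) at once. Your main route names this idea (``push points away \dots toward free positions''), but it supplies no mechanism making the pushes agree on common faces.

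The proposed localization does not fill this in either.
\begin{itemize}
\item A Mayer--Vietoris argument compares homology only. Even knowing \(H_*(UD_n(\sfS_k))\cong H_*(UC_n(\sfS_k))\) from a computation does not show that the \emph{inclusion} induces that isomorphism.
\item \(C_n(\graf)\) is not a union of configuration spaces of stars, because the \(n\) points distribute among the pieces. Compare the homotopy colimit over distributions of points in the proof of \Cref{theorem:hanging edge}.
\end{itemize}
Incidentally, the role of the \(n+1\) bound on cycles is not to avoid adjacency, since \(n\) points already sit on disjoint vertices of \(\sfC_n\). It is to leave a free vertex; without one, \(UD_n(\sfC_n)\) is a single point while \(UC_n(\sfC_n)\simeq S^1\).

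To make the first part self-contained you would need the genuinely global construction of the cited works. Otherwise, cite it, as the paper does.
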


The assignment \(UD_n(\cdot)\) may be regarded as a functor from the category of finite graphs (with inclusions) to the category of special cube complexes (with locally isometric embeddings) as follows:
\begin{lemma}[\cite{Abrams00}, Theorem~3.13]\label{Lem:embedding}
For any subgraph \(\graf_0\) of \(\graf\), the inclusion \(\graf_0\hookrightarrow\graf\) induces a locally isometric embedding \(\iota:UD_n(\graf_0)\hookrightarrow UD_n(\graf)\). In particular, \(\bbB_n(\graf)\) contains an undistorted subgroup isomorphic to \(\bbB_n(\graf_0)\).
\end{lemma}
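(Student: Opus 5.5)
The plan is to check directly that the natural map \(\iota:UD_n(\graf_0)\to UD_n(\graf)\) is an injective combinatorial map whose link maps are injective with induced-subcomplex images. The map sends a configuration \(\{\sigma_1,\dots,\sigma_n\}\) of pairwise disjoint cells of \(\graf_0\) to the same set, now regarded as cells of \(\graf\). Disjointness of closed cells does not depend on the ambient graph, so \(\iota\) is well defined. It is clearly injective, and it sends each cube (a product of the edges among the \(\sigma_i\)) isometrically onto a cube. So \(\iota\) is an injective combinatorial map, and it remains to verify the local condition.

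Fix a vertex \(x=\{v_1,\dots,v_n\}\) of \(UD_n(\graf_0)\), where the \(v_i\) are distinct vertices of \(\graf_0\). First I describe the link in \(UD_n(\graf)\). Its vertices are pairs \((v_i,e)\) with \(e\) an edge of \(\graf\) at \(v_i\) whose other endpoint is not occupied by any \(v_j\). A collection of such pairs spans a simplex exactly when the chosen edges come from distinct \(v_i\) and have pairwise disjoint closures, since then the corresponding cube exists. This condition is pairwise, which is also why the link is flag.

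The link of \(x\) in \(UD_n(\graf_0)\) is the same description with edges restricted to \(\graf_0\). Hence \(\Lk(\iota,x)\) is the inclusion of the subcomplex spanned by those vertices \((v_i,e)\) with \(e\in E(\graf_0)\). It is injective. Its image is induced: if vertices \((v_{i_0},e_0),\dots,(v_{i_k},e_k)\) with all \(e_j\in E(\graf_0)\) span a simplex in \(\Lk(UD_n(\graf),x)\), then the edges are disjoint and at distinct \(v_i\). The same cube therefore lies in \(UD_n(\graf_0)\), giving the simplex there. The one point needing care is that the "other endpoint unoccupied" condition is identical in both spaces, because the occupied vertices are the same. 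Thus \(\iota\) is a locally isometric embedding.

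Finally, I apply this on components. Each component of \(UD_n(\graf_0)\) maps by a local isometry into a component of \(UD_n(\graf)\), both being NPC by \Cref{thm:GBGSpecial}. The facts recalled after the definition of local isometry then say that \(\iota_*\) is injective and its image is undistorted. We may subdivide \(\graf\) sufficiently for \(n\), with \(\graf_0\) inheriting the subdivision. Then by \Cref{Abrams} this gives an undistorted copy of \(\bbB_n(\graf_0)\) in \(\bbB_n(\graf)\). The only real step is the induced-subcomplex check above, and even it is elementary; the mild subtlety is ensuring that the subdivision used for \(\graf\) is also sufficient for \(\graf_0\) (true, since paths between non-bivalent vertices of \(\graf_0\) and cycles of \(\graf_0\) are unions of such paths and cycles in \(\graf\)), or, if \(\graf_0\) is homeomorphic to a segment, noting that \(\bbB_n(\graf_0)\) is trivial.
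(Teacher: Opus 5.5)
Your verification that \(\iota\) is a locally isometric embedding is correct: the link description, the observation that the unoccupied-endpoint condition is the same in both spaces, and the fullness check all go through. The paper gives no proof of this lemma; it simply cites Abrams's Theorem~3.13, and your link computation is the standard argument behind that result. The problem is in the ``in particular'' part.

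\textbf{The gap.} Your parenthetical claim that a subdivision sufficient for \(n\) on \(\graf\) is automatically sufficient on \(\graf_0\) is false for \(n\ge 3\). A vertex that is bivalent in \(\graf\) can be a leaf of \(\graf_0\). So a path between non-bivalent vertices of \(\graf_0\) need not be a path between non-bivalent vertices of \(\graf\).

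\textbf{A counterexample.} Take \(n=3\), and let \(\graf\) be the tripod with legs of length \(2\), which is sufficiently subdivided for \(3\). Let \(\graf_0\) be the star of its center \(c\), a tripod with legs of length \(1\).
\begin{itemize}
\item \(UD_3(\graf_0)\) has four vertices and three edges. It has no squares, since all edges of \(\graf_0\) meet at \(c\). Hence it is a tree.
\item On the other hand, \(\bbB_3(\graf_0)\cong\bbB_3(\sfS_3)\cong\bbF_3\) by \Cref{theorem:elementary rank}.
\end{itemize}
So in this cell structure, the locally isometric embedding \(UD_3(\graf_0)\hookrightarrow UD_3(\graf)\) only exhibits the trivial group, not \(\bbB_3(\graf_0)\).

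\textbf{The repair.} Since \(\bbB_n\) depends only on the homeomorphism type, first subdivide every edge of \(\graf\) into \(n+1\) edges. Then:
\begin{itemize}
\item \(\graf_0\) remains a subcomplex.
\item Every non-bivalent vertex of the subdivided \(\graf_0\) is an original vertex.
\item Distinct original vertices are at distance at least \(n+1\).
\item Every cycle has at least \(3(n+1)\) edges, because \(\graf\) is simple.
\end{itemize}
Thus both \(\graf\) and \(\graf_0\) are sufficiently subdivided for \(n\). \Cref{Abrams} then applies to both, and your argument goes through unchanged. For \(n=2\), the case used most in the paper, no extra subdivision is needed, since ``sufficiently subdivided'' there just means simple, and simplicity does pass to subgraphs.
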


\begin{example}
As in \Cref{Fig:tripodFig}, \(UD_2(\sfS_3)\isom\sfC_6\) and \(UD_2(\sfC_3)\isom\sfC_3\), and therefore \(\bbB_2(\sfS_3)\isom\bbB_2(\sfC_3)\isom\bbZ\). 
For any graph \(\graf\) containing either a vertex of valency \(\geq 3\) or a \(3\)-cycle, it contains a subgraph isomorphic to \(\sfS_3\) or \(\sfC_3\), and so by \Cref{Lem:embedding}, we have a locally convex subcomplex of \(UD_2(\graf)\), which is isomorphic to \(\sfC_6\) or \(\sfC_3\), respectively.
\end{example}

\begin{figure}[ht]
\[
\setlength{\arraycolsep}{0.5pc}
\begin{array}{cccc}
\begin{tikzpicture}[baseline=-.5ex,scale=0.8,transform shape]
\draw[thick] (0,0) node[below] {\(\sfv\)} -- (-30:1) node[right] {\(\sfc\)} (0,0) -- (90:1) node[above] {\(\sfa\)} (0,0) -- (210:1) node[left] {\(\sfb\)};
\draw[fill] (-30:1) circle (2pt) (90:1) circle (2pt) (210:1) circle (2pt);
\end{tikzpicture}\quad &
\begin{tikzpicture}[baseline=-.5ex,scale=0.8,transform shape]
\foreach \i in {1,2,3,4,5,6} {
\draw[fill] ({\i*60+30}:1) circle (2pt);
\draw[thick] ({\i*60+30}:1) -- ({\i*60+90}:1);
}
\draw (30:1) node[right] {\(\{\sfa,\sfb\}\)}
(90:1) node[above] {\(\{\sfa,\sfv\}\)}
(150:1) node[left] {\(\{\sfa,\sfc\}\)}
(210:1) node[left] {\(\{\sfc,\sfv\}\)}
(270:1) node[below] {\(\{\sfb,\sfc\}\)}
(330:1) node[right] {\(\{\sfb,\sfv\}\)}
;
\end{tikzpicture}\quad &
\begin{tikzpicture}[baseline=-.5ex,scale=0.8,transform shape]
\foreach \i in {1,2,3} {
\draw[fill] ({\i*120-30}:1) circle (2pt);
\draw[thick] ({\i*120-30}:1) -- ({\i*120+90}:1);
}
\draw (90:1) node[above] {\(\sfb\)}
(-30:1) node[right] {\(\sfa\)}
(210:1) node[left] {\(\sfc\)}
;
\end{tikzpicture}\quad &
\begin{tikzpicture}[baseline=-.5ex,scale=0.8,transform shape]
\foreach \i in {1,2,3} {
\draw[fill] ({\i*120-30}:1) circle (2pt);
\draw[thick] ({\i*120-30}:1) -- ({\i*120+90}:1);
}
\draw (90:1) node[above] {\(\{\sfa,\sfc\}\)}
(-30:1) node[right] {\(\{\sfa,\sfb\}\)}
(210:1) node[left] {\(\{\sfb,\sfc\}\)}
;
\end{tikzpicture}\\
\sfS_3&
UD_2(\sfS_3)&
\sfC_3&
UD_2(\sfC_3)
\end{array}
\]
\caption{Examples of \(UD_2(\graf)\).}
\label{Fig:tripodFig}
\end{figure}

On the other hand, let \(n_1,\dots, n_m\ge 1\) be integers and set \(n=n_1+\dots+n_m\). Let \(\graf\) be a graph that is sufficiently subdivided for \(n\).
For finite pairwise disjoint subgraphs \(\graf_1, \dots, \graf_m\subset\graf\), we further assume that each \(\graf_i\) is sufficiently subdivided for \(n_i\).
We temporarily allow some of the \(\graf_i\) to have no edges, but only when \(n_i=1\).
Then there is a locally isometric embedding, called a \emph{canonical inclusion},
\[UD_{n_1}(\graf_1)\times\dots \times UD_{n_m}(\graf_m) \hookrightarrow UD_{n_1+\dots+n_m}(\graf).\]
In the special case where \(n_1=\dots=n_m=1\), this yields a locally isometric embedding \(\graf_1\times\dots\times\graf_m\hookrightarrow UD_m(\graf)\), whose image is denoted by \(\graf_1\itimes\dots \itimes\graf_m\).
In particular, when \(m=2\), the subcomplex \(\graf_1\itimes\graf_2\) is a product subcomplex. 

\begin{remark}\label{Remark:UD_nfordisconnected}
Allowing \(n_i=0\), if \(\graf=\graf_1\sqcup\dots\sqcup\graf_m\) is a disjoint union of graphs, then the unordered discrete \(n\)-configuration space decomposes as
\[UD_n(\graf)= \bigsqcup_{\substack{n_1+\dots+n_m=n \\ n_i\ge 0}} UD_{n_{1}}(\graf_1)\times\dots\times UD_{n_{m}}(\graf_m),\]
where each component is a special cube complex.
\end{remark}

\begin{proposition}\label{proposition:local isometric embedding into UD}
Let \(\graf\) be a graph that is sufficiently subdivided for \(n\ge 1\) and \(\grafl\) a graph. If there is a local isometry \(\iota:\grafl\to UD_n(\graf)\), then there exist vertices \(\sfv_2,\dots, \sfv_n\in V(\graf)\) and a local isometry \(\grafl\to\graf_0\) onto a subgraph \(\graf_0\subset\graf\) such that \(\iota\) factors through the canonical inclusion \(\graf_0\times\sfv_2\times\dots\times\sfv_n\to UD_n(\graf)\).
\end{proposition}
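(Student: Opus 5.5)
The plan is to prove that along the image of \(\iota\) exactly one token moves while the other \(n-1\) stay put, and that the moving token traces a local isometry into \(\graf\). Since \(\grafl\) is a connected graph and \(\iota\) is combinatorial, every vertex of \(\grafl\) goes to a vertex \(\{\sigma_1,\dots,\sigma_n\}\) of \(UD_n(\graf)\), consisting of \(n\) distinct vertices of \(\graf\). Every edge of \(\grafl\) goes to an edge of \(UD_n(\graf)\), which has the form \(\{e,\sfv_2,\dots,\sfv_n\}\) with \(e\) an edge of \(\graf\) and the \(\sfv_j\) fixed vertices: exactly one token moves along \(e\). So each edge \(\epsilon\) of \(\grafl\) carries a well-defined moving edge \(e(\epsilon)\in E(\graf)\) and a stationary \((n-1)\)-set \(S(\epsilon)\subset V(\graf)\).

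The key local step is to show that for two edges \(\epsilon,\epsilon'\) of \(\grafl\) sharing a vertex \(x\), we have \(S(\epsilon)=S(\epsilon')\). I will argue by contradiction. Suppose \(S(\epsilon)\neq S(\epsilon')\). At the configuration \(\iota(x)=\{\sfa_1,\dots,\sfa_n\}\), the edge \(\iota(\epsilon)\) moves a token \(\sfa_i\) and \(\iota(\epsilon')\) moves a different token \(\sfa_j\). These are two distinct vertices of \(\Lk(UD_n(\graf),\iota(x))\), and they are distinct because \(\iota\) is an immersion.
\begin{itemize}
\item If the closed edges \(e(\epsilon)\) and \(e(\epsilon')\) are disjoint, the two half-edges span a square of \(UD_n(\graf)\). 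Their images in the link are then adjacent, while in \(\Lk(\grafl,x)\), a discrete set, they are not. This contradicts the requirement that a local isometry have image an induced subcomplex.
\item If they intersect, they must share the endpoint that is not currently occupied. This means \(\sfa_i\) and \(\sfa_j\) are both adjacent to a common vertex \(\sfw\notin\iota(x)\), reached via \(e(\epsilon)=[\sfa_i,\sfw]\) and \(e(\epsilon')=[\sfa_j,\sfw]\). In this case the two half-edges span no square, so the contradiction cannot be local.
\end{itemize}

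I expect the second configuration to be the main obstacle. My first attempt would be to show that this situation forces \(\sfa_i,\sfw,\sfa_j\) to produce, together with \(\iota\), a failure of local isometry at a neighbouring vertex. If that fails, the fallback is to exploit the sufficient subdivision hypothesis, which keeps tokens far apart along paths and cycles. If even that does not close the case, I would reread the statement: it only asks for a local isometry \(\grafl\to\graf_0\) onto a subgraph, and perhaps the intended argument rests on the standing convention in \Cref{proposition:local isometric embedding into UD} that the map factors through hyperplanes. Concretely, hyperplanes of \(UD_n(\graf)\) are labelled by pairs (edge of \(\graf\), configuration of the other tokens up to moves). Consecutive edges of a locally geodesic path in a special cube complex lie in distinct non-osculating hyperplanes, and the no-inter-osculation condition of \Cref{thm:GBGSpecial} would then rule out switching the moving token. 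I would run this specialness argument carefully, since inter-osculation is exactly the configuration where two hyperplanes meet at \(\iota(x)\) without crossing there.

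Once the stationary set is locally constant, connectivity of \(\grafl\) makes it a global constant \(\{\sfv_2,\dots,\sfv_n\}\). The moving token then defines a combinatorial map \(f:\grafl\to\graf\setminus\{\sfv_2,\dots,\sfv_n\}\) with \(\iota=(f,\sfv_2,\dots,\sfv_n)\). Injectivity of \(\Lk(\iota,x)\) gives injectivity of \(\Lk(f,x)\), so \(f\) is an immersion. Because every graph immersion is a local isometry, \(f\) is a local isometry onto its image \(\graf_0\). Finally, the canonical inclusion \(\graf_0\times\sfv_2\times\dots\times\sfv_n\hookrightarrow UD_n(\graf)\) composed with \(f\) recovers \(\iota\), which is the required factorization.
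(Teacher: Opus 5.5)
\textbf{Genuine gap: the second bullet of your key local step cannot be closed, because the statement as written is false for every \(n\ge 2\).} That configuration occurs for honest local isometries. Take \(\graf=\sfS_3\) with centre \(\sfv\) and leaves \(\sfa,\sfb,\sfc\); it is simple, hence sufficiently subdivided for \(n=2\). All edges meet at \(\sfv\), so \(UD_2(\sfS_3)\) has no squares. It is the \(6\)-cycle through \(\{\sfa,\sfb\},\{\sfv,\sfb\},\{\sfc,\sfb\},\{\sfc,\sfv\},\{\sfc,\sfa\},\{\sfv,\sfa\}\), as in the example after \Cref{Lem:embedding}.

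The identity \(\sfC_6\to UD_2(\sfS_3)\) is an isometry, hence a local isometry. Yet no vertex of \(\sfS_3\) lies in all six configurations, so it factors through no \(\graf_0\times\sfv_2\). At \(\{\sfa,\sfb\}\) the two incident edges move \(\sfa\) and \(\sfb\) along \([\sfa,\sfv]\) and \([\sfb,\sfv]\), which share the unoccupied vertex \(\sfv\): exactly your second bullet.

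None of your fallbacks can rescue this:
\begin{itemize}
\item \emph{Neighbouring vertices.} Every link of \(\sfC_6\) is a pair of points and \(\iota\) is an isometry, so nothing fails at any vertex.
\item \emph{Sufficient subdivision.} Subdivide the legs and use the cycle \(\{\sfa_1,\sfb_1\},\{\sfv,\sfb_1\},\{\sfc_1,\sfb_1\},\{\sfc_1,\sfv\},\{\sfc_1,\sfa_1\},\{\sfv,\sfa_1\}\), with \(n-2\) further tokens parked far out on a leg. At every vertex the two edges either move the same token or move two tokens into the common free vertex \(\sfv\), so they span no square. This gives the same failure for every \(n\ge2\) and any amount of subdivision.
\item \emph{Specialness.} \(UD_n(\graf)\) is itself special, so specialness cannot exclude this. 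Your hyperplane claim is also reversed: two edges at a vertex spanning no square are dual to hyperplanes that osculate there. In a graph no two hyperplanes cross, so inter-osculation never enters.
\end{itemize}

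The paper's inductive proof has the same hole. When a different token moves, it asserts that \(\sff\) is disjoint from \(\sff'\). That fails precisely when the two moving edges share their unoccupied endpoint.

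What the paper actually needs is only the use in \Cref{Lem:SPSinGBG}, with \(n=2\). There \(\iota\) is the slice \(\grafl_1\times\sfw_2\) of a product structure \(\grafl_1\times\grafl_2\to UD_2(\graf)\), and in that setting your first bullet is the only case that arises:
\begin{itemize}
\item At \((x,\sfw_2)\), fix an edge \(\epsilon_2\) of \(\grafl_2\) at \(\sfw_2\).
\item Each edge \(\epsilon_1\) at \(x\) spans with \(\epsilon_2\) a square, which maps to a square \(\{\sff_1,\sff_2\}\) of \(UD_2(\graf)\) with \(\sff_1\cap\sff_2=\varnothing\).
\item Hence \(\iota(\epsilon_1\times\sfw_2)\) moves the token not moved by \(\iota(x\times\epsilon_2)\). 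With only two tokens, that token does not depend on \(\epsilon_1\).
\end{itemize}
So the stationary vertex is locally constant. Your final paragraph (connectivity, and immersions of graphs being local isometries) then completes the argument. The correct repair is to add such a product hypothesis to the statement, not to seek a proof of it as written.
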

\begin{proof}
We argue by induction on the number of edges of \(\grafl\).
If \(\grafl\) has only one edge, then its image is \(\sfe\itimes\sfv_2\itimes\cdots\itimes\sfv_n\) for some edge \(\sfe\in E(\graf)\) and vertices \(\sfv_2,\dots,\sfv_n\). Hence we are done.

Suppose that \(\grafl\) has at least two edges.
Let \(\grafl_0\subset\grafl\) be a connected subgraph obtained by removing one edge.
Then by the induction hypothesis, there exist a subgraph \(\graf_0\subset\graf\) and vertices \(\sfv_2,\dots,\sfv_n\) such that the assertion holds for the restriction \(\iota|_{\grafl_0}\).

Let \(\sfe\) be the edge not contained in \(\grafl_0\) but in \(\grafl\), and \(\sfe'\) be an edge sharing one endpoint \(\sfw\) with \(\sfe\).
That is, \(\sfe\cup\sfe'\) forms an induced subgraph of \(\grafl\), which is isomorphic to a \(2\)-path.
Let \(\iota(\sfe')=\sff'\itimes\sfv_2\itimes\cdots\itimes\sfv_n\) for some edge \(\sff'\) of \(\graf\).
Then there are two possibilities for \(\iota(\sfe)\) such that for some edge \(\sff\in\graf\),
\[\iota(\sfe)=\sff\itimes\sfv_2\itimes\cdots\itimes\sfv_n\qquad\text{ or }\qquad \iota(\sfe)=
\sfw\itimes\sfv_2\itimes\cdots\sfv_{i-1}\itimes\sff\itimes\sfv_{i+1}\itimes\cdots\itimes\sfv_n.\]
The former case is what we want by considering \(\graf_0\cup\sff\), which is connected, and we are done.
In the latter case, the edge \(\sff\) is away from \(\sff'\) and so there is a square of the form 
\(\sff'\itimes\sfv_2\itimes\cdots\itimes\sfv_{i-1}\itimes\sff\itimes\sfv_{i+1}\itimes\cdots\sfv_n\), which implies that the restriction \(\iota|_{\sfe\cup\sfe'}\) is not a locally isometric embedding. This contradiction completes the proof.
\end{proof}

\begin{proposition}\label{Prop:FreeAbelianSubgroup}
Let \(\graf\) be a graph. If \(\graf\) contains \(p\) cycles and \(q\) essential vertices, that are pairwise disjoint, for some \(p,q\) with \(p+2q\le n\), then \(\bbB_n(\graf)\) contains an undistorted subgroup isomorphic to \(\bbZ^{p+q}\). In particular, it is not quasi-isometric to a hyperbolic group if \(p+q\ge 2\).
\end{proposition}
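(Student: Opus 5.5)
We will use the canonical inclusion of a product of configuration spaces into \(UD_n(\graf)\). Let \(\sfC^1,\dots,\sfC^p\) be the given cycles and \(\sfv_1,\dots,\sfv_q\) the given essential vertices, all pairwise disjoint. For each \(\sfv_j\), choose three edges incident to \(\sfv_j\) and let \(\sfS^j=\st\) be the resulting \(3\)-star. After subdividing \(\graf\) enough (which does not change \(\bbB_n(\graf)\) by \Cref{Abrams}), we may shrink these stars so that the \(p+q\) subgraphs \(\sfC^i\) and \(\sfS^j\) remain pairwise disjoint, and each is sufficiently subdivided for the number of points it will carry. Put one point on each cycle and two points on each star. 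This uses \(p+2q\le n\) points. The remaining \(r=n-p-2q\ge 0\) points are placed at distinct vertices of \(\graf\) away from all these subgraphs; such vertices exist after further subdivision, since \(\graf\) is connected and not a segment. The canonical inclusion then gives a locally isometric embedding
\[
\iota\colon\ \sfC^1\times\cdots\times\sfC^p\times UD_2(\sfS^1)\times\cdots\times UD_2(\sfS^q)\times\{\sfw_1\}\times\cdots\times\{\sfw_r\}\hookrightarrow UD_n(\graf),
\]
where the \(\sfw_k\) are the vertices carrying the remaining points.

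Next we identify the domain. We have \(UD_2(\sfS_3)\isom\sfC_6\), as in \Cref{Fig:tripodFig}, and this persists after subdivision up to a cycle of larger length: \(UD_2\) of a subdivided tripod deformation retracts onto a cycle, and in fact \(\bbB_2(\sfS_3)\isom\bbZ\). The cleanest route is to use \(\sfS_3\) with exactly one edge per leg, so \(UD_2(\sfS^j)\) is literally \(\sfC_6\). This is allowed because sufficient subdivision for \(n_j=2\) only requires simplicity. The domain is then a product of \(p+q\) cycle graphs, that is, a torus \(\sfC_{i_1}\times\cdots\times\sfC_{i_{p+q}}\).

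By \Cref{Ex:flat}, any elevation to universal covers is an isometric embedding \(\bbR^{p+q}\to\bar{UD_n(\graf)}\). Since \(\iota\) is a local isometry, \(\iota_*\) is injective and its image \(\bbZ^{p+q}\) is undistorted in \(\bbB_n(\graf)\). For the last assertion: a hyperbolic group contains no \(\bbZ^2\) subgroup. Quasi-isometry transports an undistorted \(\bbZ^2\), i.e.\ a quasi-isometrically embedded flat, so a group quasi-isometric to a hyperbolic group cannot contain one either. Equivalently, hyperbolicity is a QI invariant and \(\bbR^2\) does not quasi-isometrically embed into a hyperbolic space.

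The main obstacle is the bookkeeping in the first step. Three things must be checked simultaneously: that the chosen subgraphs satisfy the hypotheses of the canonical inclusion (pairwise disjointness and the sufficient subdivision conditions), that the subdivisions needed for \(\graf\) are compatible with keeping the tripods small, and that the leftover points can be parked at isolated vertices disjoint from everything else. The point to verify is that subdividing \(\graf\) only lengthens edges. Hence we can take each star to be the three edges at \(\sfv_j\) nearest to it, which are disjoint from the cycles and from the other stars as long as the original objects were disjoint as subgraphs.
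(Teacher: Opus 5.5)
Your proposal is essentially the paper's proof: the same canonical inclusion $\sfC^1\times\cdots\times\sfC^p\times UD_2(\sfS^1)\times\cdots\times UD_2(\sfS^q)\times\sfw_1\times\cdots\times\sfw_r\hookrightarrow UD_n(\graf)$ with $UD_2(\sfS_3)\isom\sfC_6$, giving a locally isometric torus. The only difference is that you deduce non-hyperbolicity from quasi-isometry invariance of hyperbolicity rather than citing Genevois. One small caveat, which the paper's version shares: your claim that parking vertices exist ``since $\graf$ is connected and not a segment'' fails when $\graf$ is itself a cycle with $p=1$ and $n\ge 2$, because then every vertex lies on $\sfC^1$. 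In that case $\bbB_n(\graf)\isom\bbZ$ and the statement is immediate; otherwise some edge of $\graf$ lies outside $\bigcup_i\sfC^i$, and subdividing it supplies the needed vertices.
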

\begin{proof}
Let us assume that \(\graf\) is sufficiently subdivided for \(n+5\), and let \(\sfC_1,\dots,\sfC_p\), and \(\sfv_1,\dots,\sfv_q\) be pairwise disjoint cycles and essential vertices, respectively.
Since \(\graf\) is sufficiently subdivided for \(n+5\), the complement \(\graf\setminus\bigcup_{i=1}^p\sfC_i\setminus\bigcup_{i=1}^q\sfv_i\) contains at least one path \(\sfP\) of length at least \(n+2\), and so we may choose distinct vertices \(\sfw_1,\dots,\sfw_r\) in \(\sfP\) for \(r=n-p-2q\) such that the distance between \(\sfv_i\) and \(\sfw_j\) in \(\graf\) is at least two for any \(i, j\).

Since each \(\sfv_i\) is essential, there exist subgraphs \(\grafl_1,\dots,\grafl_q\), each of which is isomorphic to the \(3\)-star \(\sfS_3\).
Then we have a locally isometric embedding
\[
\sfC_1\times\dots\times\sfC_p\times UD_2(\grafl_1)\times\dots\times UD_2(\grafl_q)\times \sfw_1\times\dots\times\sfw_r \hookrightarrow UD_n(\graf).
\]
This means that \(\bbB_n(\graf)\) contains an undistorted subgroup isomorphic to \[
\prod_{i=1}^p\pi_1(\sfC_i)\times \prod_{j=1}^q\bbB_2(\grafl_j)
\isom
\bbZ^{p+q}.
\]
Hence it is not quasi-isometric to a hyperbolic group by \cite[Corollary~1.2]{Gen21}.
\end{proof}

The following corollary is immediate.
\begin{corollary}\label{corollary:hyperbolicity}
Let \(\graf\) be a graph that is not a path graph.
If \(\bbB_n(\graf)\) is quasi-isometric to a hyperbolic group, then:
\begin{enumerate}
\item when \(n=2\), \(\graf\) contains no pair of disjoint cycles;
\item when \(n=3\), \(\graf\) contains no disjoint pair consisting of a cycle and an essential vertex;
\item when \(n\ge 4\), \(\graf\) contains at most one essential vertex.
\end{enumerate}
\end{corollary}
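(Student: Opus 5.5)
The corollary follows from \Cref{Prop:FreeAbelianSubgroup} by contraposition, one case for each value of \(n\). In each case we take the forbidden configuration in \(\graf\) and read off pairwise disjoint cycles and essential vertices with \(p+2q\le n\) and \(p+q\ge 2\). Then \(\bbB_n(\graf)\) contains an undistorted \(\bbZ^{2}\), so it is not quasi-isometric to a hyperbolic group. This contradicts the hypothesis.

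\emph{Case \(n=2\).} Suppose \(\graf\) contains two disjoint cycles. Take \(p=2\) and \(q=0\), so \(p+2q=2\le n\) and \(p+q=2\).

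\emph{Case \(n=3\).} Suppose \(\graf\) contains a cycle and an essential vertex that are disjoint. Take \(p=1\) and \(q=1\), so \(p+2q=3\le n\) and \(p+q=2\).

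\emph{Case \(n\ge 4\).} Suppose \(\graf\) has two distinct essential vertices \(\sfv_1,\sfv_2\). Distinct vertices are disjoint as trivial subgraphs, so we may take \(p=0\) and \(q=2\), giving \(p+2q=4\le n\) and \(p+q=2\).

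The one point to check is that in the \(n\ge 4\) case the two essential vertices are allowed to be adjacent. In the proof of \Cref{Prop:FreeAbelianSubgroup}, \(\graf\) is first sufficiently subdivided for \(n+5\), which does not change \(\bbB_n(\graf)\) by \Cref{Abrams}. After subdivision, \(\sfv_1\) and \(\sfv_2\) are far apart. The \(3\)-stars \(\grafl_1,\grafl_2\) around them are then disjoint, and the embedding \(UD_2(\grafl_1)\times UD_2(\grafl_2)\times\cdots\hookrightarrow UD_n(\graf)\) exists as in that proof. The subdivision also keeps cycles disjoint from each other and from essential vertices in the other two cases.

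Finally, the assumption that \(\graf\) is not a path graph is not needed for the contrapositive itself. It only excludes the trivial group, for which the conclusions hold vacuously anyway.
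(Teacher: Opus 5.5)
Your proposal is correct and is exactly the paper's argument. The paper calls the corollary immediate from \Cref{Prop:FreeAbelianSubgroup}, and your choices \((p,q)=(2,0),\,(1,1),\,(0,2)\) for \(n=2,\,3,\,\ge 4\) are precisely that application; your check that adjacent essential vertices cause no problem, because the proposition's proof first subdivides \(\graf\), is also correct.
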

\begin{remark}
Indeed, the above necessary conditions are sufficient by \cite[Theorem~1.1]{Gen21GBG}.
\end{remark}

A graph with at most one essential vertex can be completely classified up to homeomorphism by the numbers of leaves and cycles; namely, such a graph is homeomorphic to one of the following:

\begin{definition}[Elementary bunches of grapes]\label{definition:FreeGroupCase}
For integers \(k,\ell\ge 0\) with \(k+2\ell\ge 2\), let \(\grafl=\grafl(k,\ell)\) be the graph obtained from the \(k\)-star \(\sfS_k\) with central vertex \(\sfv\) by attaching \(\ell\) copies of a \(3\)-cycle at \(\sfv\). That is, 
\[
\grafl=\left(\sfS_k\coprod_{i=1}^\ell\sfC_3^i\right)\bigg/ \sfv\sim \sfzero^i,
\] 
where \(\sfC_3^i\) is the \(i\)-th copy of a 3-cycle \(\sfC_3\) and \(\sfzero^i\) is its designated vertex.
We call such a graph \(\grafl\) an \emph{elementary bunch of grapes} of type \((k,\ell)\); `general' bunches of grapes will be defined later (see \Cref{Def:BunchesofGrapes}).
\end{definition}

\begin{theorem}\cite{KP12}\label{theorem:elementary rank}
Let \(\grafl=\grafl(k,\ell)\) be an elementary bunch of grapes with \(k+2\ell\ge 2\).
Then the graph \(n\)-braid group \(\bbB_n(\grafl)\) is a free group of rank \(N=N(n,k,\ell)\), where
\[
N(n,k,\ell)=\binom{n+k+\ell-2}{k+\ell-1}(k+2\ell-2)-\binom{n+k+\ell-2}{k+\ell-2}+1.
\]
\end{theorem}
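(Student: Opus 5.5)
The plan is to show that \(UD_n(\grafl)\) is homotopy equivalent to a graph, and then to compute the rank of its fundamental group, either as \(1-\chi(UD_n(\grafl))\) or directly as the number of critical \(1\)-cells. First I subdivide \(\grafl=\grafl(k,\ell)\) so that it is sufficiently subdivided for \(n\); by \Cref{Abrams} this does not change \(\bbB_n(\grafl)\). I then fix a maximal tree \(\sfT\subset\grafl\) by deleting one edge \(\sfe_i\) from each cycle \(\sfC_3^i\), chosen far from \(\sfv\). Rooting \(\sfT\) at the end of one of the \(k\) star arms and taking the standard planar order, \(\sfT\) has a single essential vertex \(\sfv\), of valency \(k+2\ell\), with one direction pointing back to the root.

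\textbf{Step 1 (freeness).} Although this section of the paper wants to avoid discrete Morse theory, the classical route is the Farley--Sabalka discrete gradient field on \(UD_n(\grafl)\). A cell is critical only if each of its edges is either a deleted edge \(\sfe_i\) or an edge at an essential vertex whose direction is blocked in the appropriate sense. Since \(\sfv\) is the only essential vertex, at most one edge of a critical cell can sit at an essential vertex.

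The delicate point is excluding critical \(2\)-cells of the form \(\{\sfe_i,\text{edge at }\sfv\}\) and \(\{\sfe_i,\sfe_j\}\). Here I would use that a deleted edge in a critical cell must have all vertices stacked so that it is non-collapsible, and then check that the chosen subdivision and ordering make such cells redundant. This is the step I expect to be the main obstacle, and there are two routes. The Morse route is to choose the deleted edges adjacent to \(\sfv\) (as in Farley--Sabalka), so that they become blocked at \(\sfv\) and cannot coexist with another edge at \(\sfv\). The geometric fallback is a collapse argument: every square of \(UD_n(\grafl)\) involves an edge on a cycle away from \(\sfv\), and such squares can be collapsed cycle by cycle. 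Either way the conclusion is that \(UD_n(\grafl)\) is homotopy equivalent to a graph, so \(\bbB_n(\grafl)\) is free.

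\textbf{Step 2 (rank).} There is exactly one critical \(0\)-cell, namely all \(n\) points stacked at the root. Hence the rank is the number \(c_1\) of critical \(1\)-cells.

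The first type of critical \(1\)-cell has its edge at \(\sfv\), going in direction \(d\ge 2\). It needs at least one vertex blocked in some direction \(j\) with \(1\le j<d\), and the remaining points are distributed among the directions at \(\sfv\) or stacked at the root. Counting distributions of \(n-1\) points into \(k+\ell\) effective slots gives binomials of the form \(\binom{n+k+\ell-2}{\cdot}\).

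The second type uses a deleted edge \(\sfe_i\), with the other points stacked. The ordering convention makes the two sides of a cycle behave like a single slot together with the extra generator \(\sfe_i\). This is why the formula involves \(k+\ell\) in the binomials but \(k+2\ell-2\) as a multiplier.

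Summing the two types should give
\[
c_1=\binom{n+k+\ell-2}{k+\ell-1}(k+2\ell-2)-\binom{n+k+\ell-2}{k+\ell-2}+1 .
\]

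\textbf{Step 3 (cross-check).} I would check the formula against \(n=2\) with \((k,\ell)=(3,0)\), which should give rank \(1\) since \(UD_2(\sfS_3)\isom\sfC_6\), and with \((k,\ell)=(0,1)\), which should also give rank \(1\). I would also compare with an independent computation of \(\chi(UD_n(\grafl))\), counting \(0\)- and \(1\)-cells and checking that rank \(=1-\chi\). This comparison is the safest way to certify the binomial bookkeeping, and, if the first route of Step 1 is used, to confirm that no critical \(2\)-cells were overlooked.
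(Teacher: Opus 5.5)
Your plan is sound, but only in its discrete Morse form, and the maximal tree has to be chosen accordingly from the start.

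\textbf{Deleted edges far from \(\sfv\) do not work.} With this choice, as in your setup, \(\sfv\) has valency \(k+2\ell\) in \(\sfT\) and the Morse complex is genuinely \(2\)-dimensional. Already for \(n=2\) and \(\ell\ge 2\) the cell \(\{\sfe_1,\sfe_2\}\) is critical: it consists of two disjoint deleted edges and has no vertices to block. Hence \(c_1=N+c_2>N\), and Step~2 would count the wrong thing. The collapse fallback should also be dropped. After subdivision there are squares formed by an arm edge not at \(\sfv\) together with an edge at \(\sfv\), or by edges in two different arms, and these involve no cycle edge away from \(\sfv\).

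\textbf{Deleted edges at \(\sfv\) work.} Take \(\sfe_i=[\sfv,a_i]\) for each grape. Then:
\begin{itemize}
\item \(\sfv\) has valency \(k+\ell\) in \(\sfT\), and every other vertex of \(\sfT\) has valency \(\le 2\);
\item every non-order-respecting edge (a deleted edge, or a tree edge at \(\sfv\)) contains \(\sfv\);
\item hence critical cells contain at most one edge, there is a single critical \(0\)-cell, and \(\bbB_n(\grafl)\) is free of rank \(c_1\).
\end{itemize}

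\textbf{The count.} Number the directions at \(\sfv\) by \(0\) (toward the root) and \(1,\dots,k+\ell-1\) in the planar order. Let \(B=\binom{n+k+\ell-2}{k+\ell-1}\) be the number of ways to distribute \(n-1\) points among the root stack and the \(k+\ell-1\) non-root directions. Stacks in the non-root directions are blocked because the edge occupies \(\sfv\).
\begin{itemize}
\item Each deleted edge gives \(B\) critical \(1\)-cells, not only the fully root-stacked one.
\item The tree edge in direction \(d\in\{2,\dots,k+\ell-1\}\) gives \(B-\binom{n-1+k+\ell-d}{k+\ell-d}\).
\end{itemize}
Since \(\sum_{j=1}^{k+\ell-2}\binom{n-1+j}{j}=\binom{n+k+\ell-2}{k+\ell-2}-1\), the total is \((k+2\ell-2)B-\binom{n+k+\ell-2}{k+\ell-2}+1=N(n,k,\ell)\). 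This is the precise content of your remark that the multiplier is \((k+\ell-2)+\ell\). For \(k=0\) there is no arm to root at; root at the free end \(a_1\) of a grape path instead, and the count is unchanged.

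\textbf{Comparison with the paper.} The paper computes nothing here. It takes the formula as the rank of \(H_1(UD_n(\grafl))\) from \cite{KP12}. It deduces freeness from the fact that \(UD_n(\grafl)\) is homotopy equivalent to a complex of dimension at most \(\min\{n,\#V_\ess(\grafl)\}\le 1\), a Ghrist/Farley--Sabalka type bound.

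Your route replaces both imported facts by a single Morse computation. It yields freeness and the rank at once and explains the shape of the formula. The price is that it relies on the discrete Morse machinery that the paper's freeness classification is designed to avoid, and on the careful choice of tree above.

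Your route also treats the circle \(\grafl(0,1)\) uniformly. The dimension bound as stated in the paper needs an exception there: there is no essential vertex, yet \(\bbB_n(\grafl)\isom\bbZ\).
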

\begin{remark}
Indeed, the above formula is for the rank of the first homology group \(H_1(UD_n(\grafl))\). However, one may observe that \(UD_n(\grafl)\) is homotopy equivalent to a CW complex of dimension \(\min\{n,\#V_\ess(\grafl)\}\), where \(V_\ess(\grafl)\) is the set of essential vertices, and therefore, \(\bbB_n(\grafl)\) is a free group of the same rank.
\end{remark}

\subsection{Geometric criteria for freeness}
In this section, we analyze when \(\bbB_n(\graf)\) is free from a geometric viewpoint. Rather than relying on combinatorial presentations, we detect obstructions to freeness via hyperbolicity and the presence of undistorted surface subgroups, and we describe how freeness arises from structural decompositions induced by the cubical geometry of configuration spaces.

We now give a geometric characterization of when \(\bbB_n(\graf)\) is free.

\begin{theorem}\label{theorem:freeness classification}
Let \(\graf\) be a graph which is not a path graph, and let \(n\ge 2\). Then the graph braid group \(\bbB_n(\graf)\) is (quasi-isometric to) a free group if and only if one of the following holds:
\begin{enumerate}
\item \(n=2\) and \(\graf\) is a planar graph without pairs of disjoint cycles;
\item \(n=3\) and \(\graf\) is either a tree, a graph with a single cycle passing through all essential vertices, or a subdivision of either an elementary bunch of grapes or a graph obtained from \(\sfK_{2,3}\) by attaching edges to essential vertices; 
\item \(n\ge4\) and \(\graf\) is a subdivision of an elementary bunch of grapes.
\end{enumerate}
\end{theorem}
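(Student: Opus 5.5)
Since a finitely generated group quasi-isometric to a free group is virtually free, and \(\bbB_n(\graf)\) is torsion-free by \Cref{Rmk:torsion-free}, it is free by Stallings' theorem. So "free" and "quasi-isometric to free" coincide, and the proof splits into necessity and sufficiency for each range of \(n\).

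\emph{Necessity.} A free group is hyperbolic, so \Cref{corollary:hyperbolicity} gives most constraints. For \(n\ge4\) it forces at most one essential vertex, i.e.\ a subdivision of an elementary bunch of grapes. For \(n=2\) it forbids disjoint cycles. It remains to show non-planarity obstructs freeness. I would exhibit an undistorted closed surface subgroup in \(UD_2(\sfK_5)\) and \(UD_2(\sfK_{3,3})\) and push it forward via \Cref{Lem:embedding} along a Kuratowski subdivision. This uses the classical fact that \(UD_2(\sfK_5)\) and \(UD_2(\sfK_{3,3})\) are closed surfaces of genus \(6\) and \(4\) (Abrams): every edge lies in exactly two squares and vertex links are cycles. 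A free group has no surface subgroup, and \Cref{Lem:embedding} makes the subgroup undistorted. For \(n=3\), hyperbolicity forbids a cycle disjoint from an essential vertex. A case analysis of graphs in which every cycle meets every essential vertex should leave trees, graphs with a single cycle through all essential vertices, subdivisions of elementary bunches of grapes, \(\sfK_{2,3}\)-type graphs with attached edges, and a finite list of minimal exceptions (e.g.\ \(\sfK_4\), \(\sfK_{3,3}\), theta graphs with extra structure). Each exception would be ruled out by an undistorted surface subgroup in its \(UD_3\), found as a \(2\)-dimensional closed surface subcomplex built from squares \(\sfe\itimes\sff\itimes\sfv\) and checked to be locally convex.

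\emph{Sufficiency.} For elementary bunches of grapes, \Cref{theorem:elementary rank} gives freeness for all \(n\). For trees and for the single-cycle case when \(n=3\), and for planar graphs with no two disjoint cycles when \(n=2\), I would decompose the configuration space geometrically. The plan is to cut \(\graf\) at a separating vertex or edge; this expresses \(UD_n(\graf)\) as a graph of spaces whose vertex spaces are products of smaller configuration spaces (\Cref{Remark:UD_nfordisconnected}) and whose edge spaces are lower-dimensional. Inductively all pieces have free fundamental group, and one must show the amalgamations never create \(\bbZ^2\) or surfaces. More directly, I would show the complex collapses onto a \(1\)-dimensional spine when no product subcomplex \(\grafd_1\itimes\grafd_2\) with both factors leafless exists. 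For \(n=2\), the absence of disjoint cycles means every square has a free face, and planarity should let iterated collapses remove all squares. For \(\sfK_{2,3}\) with attached edges at \(n=3\), a direct computation or collapse argument would be used.

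The main obstacle is the \(n=2\) planar direction: showing that a planar graph without disjoint cycles has free \(\bbB_2\). My route would use the structural classification of graphs without two disjoint cycles (Lovász/Dirac). Such graphs are, up to degree-\(\le2\) reductions, \(\sfK_5\), wheels, \(\sfK_{3,p}\) with edges among the three vertices, or graphs with a vertex meeting all cycles. Planarity excludes \(\sfK_5\) and \(\sfK_{3,p}\) for \(p\ge3\). I would then verify collapsibility to a graph family by family, using the planar embedding to order the collapses along faces.
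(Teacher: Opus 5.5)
Your necessity argument for $n=2$ and $n\ge4$ matches the paper's: hyperbolicity via \Cref{corollary:hyperbolicity}, and Kuratowski surfaces pushed in by \Cref{Lem:embedding}. One small slip: $UD_2(\sfK_5)$ and $UD_2(\sfK_{3,3})$ are non-orientable with $\chi=-5$ and $\chi=-3$; genus $6$ and $4$ belong to the ordered spaces $D_2$. This is harmless.

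The real gap is sufficiency. For $n=2$, your key premise, that having no disjoint cycles makes every square have a free face, is false. $UD_2(\sfK_5)$ is a closed surface, so no square has a free face. Even in the planar wheel $W_5$ (hub $\mathsf{h}$, rim $\mathsf{r}_1,\dots,\mathsf{r}_5$), the square $[\mathsf{r}_1,\mathsf{r}_2]\itimes[\mathsf{r}_3,\mathsf{r}_4]$ has no free face, since each of its four sides lies in two or three squares. The same examples, $UD_2(\sfK_5)$ and $UD_3(\sfK_{2,4})$, refute your general principle that the absence of leafless product subcomplexes yields a collapse onto a $1$-dimensional spine. Beyond that, ``planarity should let iterated collapses remove all squares'' is only a hope. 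A square complex with free $\pi_1$ need not collapse onto a graph, and you propose no collapsing order for wheels or for graphs with a vertex meeting all cycles. The paper closes this direction with Ko--Park's presentation (\Cref{scrg}): for planar $\graf$ the relators are commutators of pairs of disjoint boundary cycles, so if there are no such pairs, $\bbB_2(\graf)$ is free.

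For $n=3$, the sufficient cases are infinite families, and your criterion ``show the amalgamations never create $\bbZ^2$ or surfaces'' does not prove freeness. Graphs of free groups with nontrivial edge groups need not be free; for example, $\bbZ^2$ is an HNN extension of $\bbZ$ over $\bbZ$. No available result says that excluding $\bbZ^2$ and surface subgroups forces freeness in this generality.

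The missing idea is the paper's hanging-edge decomposition (\Cref{theorem:hanging edge}). Attaching a hanging path at $\sfv$ makes $\bbB_n(\graf')$ an iterated HNN extension of $\bbB_n(\graf)$ over groups $\prod_i\bbB_{m_i}(\graf_i)$, where the $\graf_i$ are the $\sfv$-components and $\sum_i m_i\le n-2$. For $n\le3$ each $m_i\le1$, so if the $\sfv$-components are trees the edge groups are trivial. Then $\bbB_3(\graf')\cong\bbB_3(\graf)\ast\bbF_N$ (\Cref{corollary:B3 hanging edge}). Peeling off hanging paths reduces every case to three base groups:
\begin{itemize}
\item $\bbB_3(\sfC_m)\cong\bbZ$;
\item $\bbB_3(\sfK_{2,3})\cong\bbF_3$, since each square of $UD_3(\sfK_{2,3})$ has its own free face and $\chi=-2$;
\item braid groups of elementary bunches of grapes.
\end{itemize}

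Your $n=3$ necessity analysis is also off target. $\sfK_4$ and $\sfK_{3,3}$ are already excluded by hyperbolicity, because a $3$- or $4$-cycle misses an essential vertex. The only family that satisfies the hyperbolicity condition but is absent from the list is $\sfK_{2,k}$ with $k\ge4$, possibly with edges attached. It is ruled out because $UD_3(\sfK_{2,4})$ is itself a closed hyperbolic surface with $\chi=-4$ (\Cref{proposition:K2k}). That is the specific surface your argument needs to exhibit.
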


Since \(\bbB_n(\graf)\) is finitely generated and torsion-free, it is quasi-isometric to a free group if and only if it is isomorphic to a free group.

The proof combines three main ingredients. First, hyperbolicity imposes strong restrictions on the underlying graph. Second, we detect undistorted surface subgroups in \(\bbB_n(\graf)\), which provide geometric obstructions to freeness. Third, we describe how graph operations give rise to group-theoretic decompositions, notably via iterated HNN extensions arising from the cubical structure of configuration spaces. Together, these ingredients show that freeness is completely governed by the large-scale geometry of \(UD_n(\graf)\).

We treat the cases \(n\ge4\), \(n=2\), and \(n=3\) separately, as each exhibits distinct geometric features.

\subsubsection*{The case \(n\ge4\)}

In this case, freeness is determined entirely by hyperbolicity.

\begin{proof}[Proof of \Cref{theorem:freeness classification} for \(n\ge4\)]
If \(\bbB_n(\graf)\) is free, then it is hyperbolic. By \Cref{corollary:hyperbolicity,definition:FreeGroupCase}, \(\graf\) must be a subdivision of an elementary bunch of grapes. Conversely, freeness follows from \Cref{theorem:elementary rank}.
\end{proof}

\subsubsection*{The graph \(2\)-braid group case}

We now turn to the case \(n=2\), where planar structure plays a central role.

\begin{proposition}\label{Non-planar}
If \(\graf\) is a non-planar graph, then \(\bbB_2(\graf)\) has an undistorted surface subgroup. 
In particular, \(\bbB_2(\graf)\) is not (quasi-isometric to) a free group.
\end{proposition}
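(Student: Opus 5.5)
By Kuratowski's theorem, a non-planar \(\graf\) contains a subdivision of \(\sfK_5\) or of \(\sfK_{3,3}\). By \Cref{Lem:embedding}, \(UD_2\) of that subgraph embeds locally isometrically in \(UD_2(\graf)\), so its fundamental group is undistorted in \(\bbB_2(\graf)\). An undistorted subgroup of an undistorted subgroup is undistorted. Hence it suffices to show that \(\bbB_2(\sfK_5)\) and \(\bbB_2(\sfK_{3,3})\) each contain an undistorted surface subgroup. Subdivision does not affect the statement, since \(\bbB_2\) depends only on the homeomorphism type and \(UD_2\) is a valid model once the graph is simple.

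The plan is to realize the surface directly. Classically (Abrams), \(UD_2(\sfK_5)\) and \(UD_2(\sfK_{3,3})\), with their simplicial structures, are closed non-orientable surfaces, of Euler characteristics \(-5\) and \(-3\) respectively. I will verify this by a direct count using the links of vertices.

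For \(\sfK_5\), a vertex \(\{a,b\}\) of \(UD_2(\sfK_5)\) has six incident edges: \(a\) can move to one of the three vertices other than \(a,b\), and likewise for \(b\). There is a square for every pair of disjoint edges \(a\text{–}x\), \(b\text{–}y\) with \(x\neq y\). So the link is the bipartite graph \(\sfK_{3,3}\) minus a perfect matching, which is a \(6\)-cycle. Hence \(UD_2(\sfK_5)\) is a closed surface. Counting cells gives \(V=10\), \(E=30\), \(F=15\), so \(\chi=-5\).

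For \(\sfK_{3,3}\), there are two kinds of vertices. If \(a,b\) lie in the same part, each has \(3\) neighbours in the other part, and the link is \(\sfK_{3,3}\) minus a matching, again a \(6\)-cycle. If \(a,b\) lie in different parts, each has \(2\) available moves, and the link is a \(4\)-cycle. Counting gives \(V=15\), \(E=36\), \(F=18\), so \(\chi=-3\).

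In both cases \(UD_2\) is therefore a connected closed surface with negative Euler characteristic, and it is the whole complex. Its fundamental group is a surface group, which embeds (by the identity) as an undistorted subgroup of the braid group of the corresponding Kuratowski graph.

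The conclusion then follows. A free group cannot contain a closed surface group of negative Euler characteristic, because subgroups of free groups are free and surface groups are not free. Undistortedness also gives the quasi-isometric statement: a group quasi-isometric to a free group is virtually free, and a surface group is not. Alternatively, the paper notes that torsion-freeness makes quasi-isometric to free equivalent to free.

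The main obstacle is the link computation and the connectivity of \(UD_2\) for these two graphs. Connectivity holds because both graphs are connected, have at least \(2\) vertices, and are not paths. One should also confirm that subdivision is harmless: it changes \(UD_2\) only up to homotopy, so we may work with the simple graphs \(\sfK_5\) and \(\sfK_{3,3}\) themselves.
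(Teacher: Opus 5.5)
Your proposal is correct and follows the paper's route. Like the paper, you combine Kuratowski's theorem, the locally isometric embedding of \(UD_2\) of the Kuratowski subgraph, and the fact that \(UD_2(\sfK_5)\) and \(UD_2(\sfK_{3,3})\) are closed hyperbolic surfaces; the paper only asserts this last fact, and your link and Euler characteristic counts verifying it are all correct (\(\chi=-5\) and \(\chi=-3\)).

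The only real difference is the final step. The paper invokes Bonk--Kleiner on quasi-isometrically embedded hyperbolic planes, whereas you use Nielsen--Schreier together with virtual freeness of groups quasi-isometric to free groups (or torsion-freeness). Your version of that step does not actually use undistortedness, despite how you phrase it.
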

Here, a \emph{surface (sub)group} refers to a (sub)group isomorphic to the fundamental group of a closed hyperbolic surface; in particular, it is quasi-isometric to the hyperbolic plane.
\begin{proof}
By Kuratowski's theorem, a graph is non-planar if and only if it contains a subdivision of either the complete bipartite graph \(\sfK_{3,3}\) or the complete graph \(\sfK_5\).
Since \(UD_2(\sfK_{3,3})\) and \(UD_2(\sfK_5)\) are homeomorphic to closed hyperbolic surfaces, \Cref{Lem:embedding} implies that \(\bbB_2(\graf)\) contains a quasi-isometrically embedded surface subgroup.

Consequently, the hyperbolic plane quasi-isometrically embeds into \(\bbB_2(\graf)\). 
By \cite[Theorem~1]{BK05}, any group admitting such an embedding cannot be quasi-isometric to a free group, and therefore cannot be isomorphic to one either.
\end{proof}


This provides a geometric obstruction to freeness: the presence of an undistorted surface subgroup prevents \(\bbB_2(\graf)\) from being quasi-isometric to a free group.
It is worth noticing that there exists a planar graph \(\graf\) for which \(\bbB_2(\graf)\) contains a surface subgroup \cite[Corollary~3.4]{Sab07}. However, it remains unknown whether this subgroup is undistorted.

\begin{question}\label{Question:UndistortedSurfacesubgroup}
Does there exist a planar graph \(\graf\) such that \(\bbB_2(\graf)\) contains an undistorted surface subgroup?
\end{question}

We next consider the planar case, where freeness is governed by the interaction of boundary cycles.
When \(\graf\) is planar, after fixing an embedding \(\graf\hookrightarrow\bbR^2\) (with \(\bbR^2\) regarded purely as a topological space and not as a cube complex), we define a \emph{boundary cycle} as the boundary of the closure of a bounded component of \(\bbR^2\setminus \graf\).
The following is a generalization of \Cref{Prop:FreeAbelianSubgroup} in graph \(2\)-braid groups.

\begin{theorem}[\cite{KP12}, Theorem~4.8]\label{scrg}
If \(\graf\) is a planar graph, then \(\bbB_2(\graf)\) admits a group presentation whose relators are commutators corresponding to pairs of disjoint boundary cycles.
In particular, if \(\graf\) has no pair of disjoint boundary cycles, then \(\bbB_2(\graf)\) is isomorphic to a free group.
\end{theorem}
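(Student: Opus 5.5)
The natural route is to work directly with the planar cubical model \(UD_2(\graf)\) and follow the strategy behind Ko--Park's presentation: reduce to a minimal complex, compute \(\pi_1\) via a maximal tree, and identify which \(2\)-cells survive as relators. Without loss of generality \(\graf\) is simple and sufficiently subdivided, so \(\bbB_2(\graf)=\pi_1(UD_2(\graf))\) by \Cref{Abrams}. Fix a planar embedding \(\graf\hookrightarrow\bbR^2\), and choose a maximal spanning tree \(\sfT\subset\graf\) with a root and an embedding-compatible (say counterclockwise) order on edges at each vertex. The squares of \(UD_2(\graf)\) are exactly \(\sfe\itimes\sff\) with \(\sfe,\sff\) disjoint edges of \(\graf\).

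First I collapse \(UD_2(\graf)\) along a discrete gradient (Farley--Sabalka style, determined by \(\sfT\) and the order) to a complex whose critical \(1\)-cells are the deleted edges of \(\graf\setminus\sfT\) paired with a vertex, together with the ``branch'' critical \(1\)-cells at essential vertices of \(\sfT\). Its critical \(2\)-cells are products of two disjoint edges. This yields a presentation of \(\bbB_2(\graf)\) with one relator per critical \(2\)-cell, each relator obtained by rewriting the boundary of the cell along the gradient flow.

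The second step uses planarity to reorganize the relators. Each deleted edge \(\sfe\in\graf\setminus\sfT\) determines a unique boundary cycle, namely the face it bounds on a chosen side. Using the planar order on half-edges, I would show that a critical \(2\)-cell \(\sfe\itimes\sff\) rewrites to a commutator \([x_{\sfe},y]\), where \(y\) is the word coming from \(\sff\) flowing along \(\sfT\). I would then perform Tietze transformations, trading the generators \(x_{\sfe}\) for loops around the boundary cycles, so that every relator becomes a commutator of the two generators associated with a pair of disjoint boundary cycles. Relators coming from cells whose associated cycles intersect should become trivial, or consequences of the others, since the flow of one edge across the other's face is absorbed by the tree collapse. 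The ``in particular'' clause is then immediate: with no pair of disjoint boundary cycles the presentation has no relators, so the group is free.

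The main obstacle is this rewriting step. The boundary of \(\sfe\itimes\sff\) flows to a word in which \(\sff\)'s endpoint moves through vertices of \(\sfT\), possibly passing essential vertices that lie inside the face of \(\sfe\), and this produces conjugated or branch-twisted terms. I would handle it by choosing \(\sfT\) and the ordering adapted to the faces, ordering faces by a depth-first traversal of the dual graph, and then inducting on the number of faces. Adding one deleted edge closes one new face \(F\). This changes the presentation by an HNN-type extension, with new generator \(x_F\) and relators \([x_F,y_{F'}]\) exactly for the faces \(F'\) disjoint from \(F\). Checking this inductive step via a Seifert--van Kampen decomposition of \(UD_2(\graf)\) along the subcomplex \(UD_2(\graf\setminus\sfe)\) is where the real work lies.
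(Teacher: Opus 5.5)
The paper gives no proof of this statement; it imports it from Ko--Park, where it is derived from a discrete-Morse (Farley--Sabalka) presentation of \(\bbB_2(\graf)\), using a careful choice of tree and embedding and an explicit analysis of the boundary words of critical \(2\)-cells. Your outline points in that direction, but everything nontrivial is deferred to the ``rewriting step'', and the two mechanisms you propose for it are false.

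Test them on the bowtie \(\graf\): two triangles \([\mathsf{v},\mathsf{a},\mathsf{b}]\) and \([\mathsf{v},\mathsf{c},\mathsf{d}]\) glued at \(\mathsf{v}\). Here \(\bbB_2(\graf)\cong\bbF_4\) by \Cref{theorem:elementary rank}, and \(\chi(UD_2(\graf))=10-18+5=-3\). Take the deleted edges to be \([\mathsf{a},\mathsf{b}]\) and \([\mathsf{c},\mathsf{d}]\); this is a face-adapted choice, since each bounds its own triangle. For \(n=2\) the critical \(2\)-cells are exactly the pairs of disjoint deleted edges. So there is one critical \(2\)-cell, hence five critical \(1\)-cells. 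Since \(H_1(\bbB_2(\graf))\cong\bbZ^4\), the single relator does not even lie in the commutator subgroup. Thus a critical \(2\)-cell need not rewrite to a commutator \([x_\sfe,y]\). Moreover, a relator coming from two faces that meet (here only at \(\mathsf{v}\)) is neither trivial nor a consequence of the others: it is a primitive relator that must be spent eliminating a generator. The same happens for the deleted pair \([\mathsf{v},\mathsf{a}],[\mathsf{c},\mathsf{d}]\); the problem disappears only when both deleted edges contain \(\mathsf{v}\).

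The inductive step is also misdescribed. Let \(\sfe=[\sfx,\sfz]\notin\sfT\), let \(\graf'\) be \(\graf\) with \(\sfe\) removed, and let \(\graf_\sfe\) be the subgraph induced on \(V(\graf)\setminus\{\sfx,\sfz\}\). Then \(UD_2(\graf)\) is \(UD_2(\graf')\) with the cells \(\sfe\itimes\sigma\) attached along \(\sfx\itimes\graf_\sfe\) and \(\sfz\itimes\graf_\sfe\), where \(\sigma\) ranges over the cells of \(\graf_\sfe\). Van Kampen therefore gives:
\begin{itemize}
\item one stable letter \(t_\grafd\) for each component \(\grafd\) of \(\graf_\sfe\);
\item relations \(t_\grafd^{-1}i_\sfx(c)\,t_\grafd=i_\sfz(c)\) for the cycles \(c\) of \(\grafd\).
\end{itemize}
These relations become commutators once the stationary point can be slid from \(\sfx\) to \(\sfz\) in \(\graf'\) while avoiding \(c\); otherwise they need not be.

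In the bowtie both possible behaviours occur, and neither matches your rule:
\begin{itemize}
\item Re-adding \([\mathsf{a},\mathsf{b}]\) produces one stable letter, yet \(\bbB_2\) stays \(\bbF_4\): the relation for the cycle \([\mathsf{v},\mathsf{c},\mathsf{d}]\) kills an old generator.
\item Re-adding \([\mathsf{v},\mathsf{a}]\) turns \(\bbB_2(\graf')\cong\bbF_2\) into \(\bbF_4\) via two free stable letters.
\end{itemize}
Your rule (one new generator \(x_F\), only commutator relators) raises the rank of \(H_1\) by exactly one at each step. It therefore predicts \(\operatorname{rank}H_1(\bbB_2(\graf))=N(\sfT)+2\in\{5,3,2\}\) (see \Cref{example:trees}), never \(4\).

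What is actually needed is the real content of the theorem. You need a choice of tree, order, or edge to add under which the relations coming from cycles that meet the new face are provably either used up in eliminating generators or implied by the others, while the surviving ones are commutators of words representing disjoint boundary cycles.
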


Therefore, \(\bbB_2(\graf)\) is free if and only if \(\graf\) is a planar graph without pairs of disjoint cycles.
Graphs without two disjoint cycles admit a complete combinatorial classification due to Lov\'asz \cite{Lovasz}: they are precisely the graphs for which there exist at most three vertices such that every (induced) cycle contains at least one of them.

The simplest example of such graphs is a tree.
\begin{example}[\cite{KP12}]\label{example:trees}
For a tree \(\sfT\), the graph \(2\)-braid group \(\bbB_2(\sfT)\) is a free group of rank \(N(\sfT)\)
\[
N(\sfT)=\sum_{\sfv\in V(\sfT)} \binom{\val_{\sfT}(\sfv)-1}{2},
\]
which is at least the number of essential vertices of \(\sfT\).
\end{example}

\begin{proof}[Proof of \Cref{theorem:freeness classification} for \(n=2\)]
If \(\bbB_2(\graf)\) is free, then it is hyperbolic, and hence by \Cref{corollary:hyperbolicity,Non-planar}, \(\graf\) is a planar graph without two disjoint cycles. The converse implication follows from \Cref{scrg}.
\end{proof}

\subsubsection*{The graph \(3\)-braid group case}
The case \(n=3\) exhibits additional phenomena and requires a more refined analysis combining surface subgroup obstructions and decomposition techniques.

We begin by giving a positive answer to \Cref{Question:UndistortedSurfacesubgroup} in the case of graph \(3\)-braid groups.
\begin{proposition}\label{proposition:K2k}
Let \(\sfK_{2,k}\) be the complete bipartite graph for \(k\ge 2\). 
Then \(\bbB_3(\sfK_{2,k})\) is free if and only if \(k\le 3\). If \(k\ge 4\), then \(\bbB_3(\sfK_{2,k})\) contains an undistorted surface subgroup, and in particular, is not free.
\end{proposition}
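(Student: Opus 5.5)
The plan is to treat the two directions separately. For $k\ge4$ the goal is an undistorted surface subgroup, found as a locally convex subcomplex of $UD_3(\sfK_{2,k})$ that is a closed hyperbolic surface. For $k\le 3$ the goal is to show directly that $\bbB_3(\sfK_{2,k})$ is free.

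\emph{The case $k\ge4$.} Write $\sfK_{2,k}$ with poles $\sfa,\sfb$ and middle vertices $\sfc_1,\dots,\sfc_k$, subdivided sufficiently for $n=3$. I would look for a subcomplex of the form ``one particle parked, two particles moving''. Fix one subdivision vertex $\sfx$ on the arc through $\sfc_k$ and view $UD_2$ of the complement inside $UD_3(\sfK_{2,k})$ via the canonical inclusion with $\sfx$ fixed. That alone only yields $UD_2(\sfK_{2,k-1})$ minus some cells, which is planar-type. So a better candidate is the union of square subcomplexes $\sfC\itimes\sfC'\itimes\sfw$ and of cells in which the third particle moves between the two cycles.

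Concretely, I would use the four arcs through $\sfc_1,\dots,\sfc_4$. Take the cycles $\sfC_{ij}=$ arc$_i\cup$arc$_j$ and note that two particles can occupy the two poles' neighbourhoods. The aim is to exhibit a $2$-dimensional subcomplex $S\subset UD_3(\sfK_{2,4})$ in which every vertex link is a single cycle of length $\ge4$. Such an $S$ is a closed NPC square surface with vertices of degree $\ge5$ somewhere, hence of negative Euler characteristic.

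The cleanest route I would try is a local-isometry argument. Find a closed surface $S$ with a square structure and an immersion $S\to UD_3(\sfK_{2,4})$ whose link maps are injective onto induced subgraphs. Then \Cref{Lem:embedding} lets $\sfK_{2,4}\subset\sfK_{2,k}$ propagate the result to all $k\ge 4$. Non-freeness then follows from \cite[Theorem~1]{BK05} exactly as in \Cref{Non-planar}. I would compute the links of $UD_3(\sfK_{2,4})$ at configurations with one particle at each pole and one on an arc. I expect $S$ to be (a component of) the subcomplex of configurations where exactly one particle lies in each of the three regions ``near $\sfa$'', ``near $\sfb$'', ``middle''. Checking that this subcomplex is a closed surface, that it is locally convex (induced links), and that $\chi<0$ is the main obstacle: it requires a careful count of vertices, edges and squares.

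\emph{The case $k\le3$.} For $k=2$, $\sfK_{2,2}$ is a cycle, so $\bbB_3$ is $\bbZ$. For $k=3$, I would decompose along the cubical structure. Cutting $UD_3(\sfK_{2,3})$ along the hyperplanes dual to edges at one pole $\sfa$ expresses $\bbB_3(\sfK_{2,3})$ as an iterated HNN extension/graph of groups. The vertex groups are $\bbB_3$ of the graph with $\sfa$ deleted and its edges turned into leaves, which is a tree (a subdivided tripod with legs), and $\bbB_j$ of smaller trees. The edge groups are $\bbB_2$ of trees with one particle fixed near $\sfa$.

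All these groups are free, by \Cref{example:trees} and the tree case. The task is then to show that each edge group maps isomorphically onto a free factor of a vertex group, or is trivial. If so, the HNN extensions remain free. If that verification fails, the fallback is a Morse-theory-free Euler characteristic plus hyperbolicity argument: \Cref{corollary:hyperbolicity}(2) applies since any cycle in $\sfK_{2,3}$ meets both essential vertices, so the group is hyperbolic. Then I would rule out surface subgroups via one-endedness considerations of the splitting.
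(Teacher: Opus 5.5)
\textbf{The case $k\ge 4$: the surface is never produced.} Your outline for this case is right (find a locally convex closed hyperbolic surface, then push it into $UD_3(\sfK_{2,k})$ via \Cref{Lem:embedding}). What is missing is the surface itself.

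The idea you need is that no further subdivision is required. $\sfK_{2,k}$ is already sufficiently subdivided for $n=3$: paths between the two essential vertices have $2$ edges and every cycle has $4$ edges. Moreover, for $k=4$ the whole complex $UD_3(\sfK_{2,4})$ is itself a closed surface:
\begin{itemize}
\item it has $\binom63=20$ vertices, $8\cdot\binom42=48$ edges and $12\cdot 2=24$ squares;
\item it has no cubes, because every edge of $\sfK_{2,4}$ contains $\sfa$ or $\sfb$;
\item every vertex link is a cycle: a $6$-cycle at $\{\sfa,\sfb,\sfc_i\}$ and at $\{\sfc_i,\sfc_j,\sfc_l\}$, and a $4$-cycle at $\{\sfa,\sfc_i,\sfc_j\}$ and at $\{\sfb,\sfc_i,\sfc_j\}$.
\end{itemize}
So $\chi=20-48+24=-4$, which your Gauss--Bonnet heuristic also gives: eight vertices of degree $6$, each contributing $-1/2$.

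Your candidate $S$ (``one particle in each of three regions'') is never pinned down, and it cannot be a proper subcomplex. A connected closed surface contains no proper closed subsurface; for instance, the vertices $\{\sfc_i,\sfc_j,\sfc_l\}$, with all three particles in the middle, must belong to $S$. Subdividing further, as you suggest, makes things worse. Three pairwise disjoint edges then exist, so $UD_3$ becomes $3$-dimensional and is no longer a surface.

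\textbf{The case $k=3$: the key verification is left open, and the fallback is invalid.} Your hyperplane-cutting idea does work, provided you stay with the unsubdivided complex and actually compute. Cut $UD_3(\sfK_{2,3})$ along the three hyperplanes dual to the edges $[\sfa,\sfc_i]$. Writing $\sfS_3=\sfK_{2,3}\setminus\sfa$ for the star at $\sfb$, the vertex spaces are $UD_3(\sfS_3)$, which is a tree, and $\sfa\itimes UD_2(\sfS_3)\isom\sfC_6$. The three edge spaces are contractible. Hence $\bbB_3(\sfK_{2,3})\isom\bbZ\ast\bbF_2\isom\bbF_3$.

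As written, however, you leave the free-factor verification undone. Your fallback does not work: hyperbolicity together with the absence of surface subgroups does not imply freeness. You would have to exclude one-ended free factors, and whether every one-ended hyperbolic group contains a surface subgroup is Gromov's open question.

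For comparison, the paper uses the unsubdivided complex in both cases. For $k=3$ it notes that every square of $UD_3(\sfK_{2,3})$ has a free edge, collapses the complex to a graph, and reads off the rank from $\chi=10-18+6=-2$. For $k\ge 4$ it checks directly that $UD_3(\sfK_{2,4})$ is a closed hyperbolic surface.
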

\begin{proof}
First, note that for any \(k\ge2\), \(\sfK_{2,k}\) is sufficiently subdivided for \(n=3\).

Since \(UD_3(\sfK_{2,2})\) is isomorphic to \(\sfK_{2,2}\) itself, we have \(\bbB_3(\sfK_{2,2})\isom\bbZ\).
In \(UD_3(\sfK_{2,3})\), every square contains an edge not shared with any other square, so the complex deformation retracts to a \(1\)-dimensional complex. Computing its Euler characteristic, \(\chi(UD_3(\sfK_{2,3}))=10-18+6=-2\), we conclude that \(\bbB_3(\sfK_{2,3})\) is a free group of rank \(3\).

For \(k\ge 4\), \(\sfK_{2,4}\) is a subgraph of \(\sfK_{2,k}\). By \Cref{Lem:embedding}, it suffices to prove that \(UD_3(\sfK_{2,k})\) is a closed hyperbolic surface.
Indeed, every edge in \(UD_3(\sfK_{2,4})\) is contained in exactly two squares and the link of every vertex in \(UD_3(\sfK_{2,4})\) is homeomorphic to a circle, which implies that \(UD_3(\sfK_{2,4})\) is a closed surface.
Since the Euler characteristic is \(\chi(UD_3(\sfK_{2,4}))=20-48+24=-4\), \(UD_3(\sfK_{2,4})\) is a closed hyperbolic surface.
\end{proof}

We now describe how attaching edges induces group-theoretic decompositions.
Let \(\graf'\) be the graph obtained from \(\graf\) by attaching an \(n\)-path \(\sfP=[\sfv_0,\dots, \sfv_n]\) to a vertex \(\sfv\) in \(\graf\) of valency at least two, identifying \(\sfv\) with \(\sfv_0\).
We assume that both \(\graf\) and \(\graf'\) are sufficiently subdivided for \(n+1\), and denote the components of \(\graf\setminus \sfv\) by \(\graf_1,\dots,\graf_k\), called \(\sfv\)-components of \(\graf\).

\begin{theorem}\label{theorem:hanging edge}
Let \(\graf, \graf',\sfP\) and \(\graf_i\)'s be as above.
The graph \(n\)-braid group \(\bbB_n(\graf')\) is obtained by iterated HNN extensions of \(\bbB_n(\graf)\) over \(\prod_i\bbB_{m_i}(\graf_i)\), where \(0\le \sum_i m_i\le n-2\).
\end{theorem}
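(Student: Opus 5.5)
Write \(X=UD_n(\graf')\). We decompose \(X\) according to how many tokens lie on the interior of the hanging path \(\sfP\setminus\sfv_0=\{\sfv_1,\dots,\sfv_n\}\) and build \(\bbB_n(\graf')\) from \(\bbB_n(\graf)=\pi_1(UD_n(\graf))\) as a graph of spaces.

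\textbf{Step 1: the vertex space.} Let \(X_0\subset X\) be the subcomplex of configurations with no token on \(\sfv_1,\dots,\sfv_n\), so \(X_0=UD_n(\graf)\), locally isometrically embedded by \Cref{Lem:embedding}. For \(j\ge1\), let \(X_j\) be the subcomplex of configurations with exactly \(j\) tokens in \(\sfP\setminus\sfv_0\), with no token on \(\sfv_0\) and no token on an edge at \(\sfv_0\). Because \(\sfP\) is long enough (sufficient subdivision for \(n+1\)), tokens on \(\sfP\) can be pushed monotonically toward the far end \(\sfv_n\). This should give a deformation retraction of \(X_j\) onto configurations with the \(j\) path tokens packed at \(\sfv_{n-j+1},\dots,\sfv_n\) and the remaining \(n-j\) tokens in \(\graf\setminus\sfv\). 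Hence \(X_j\simeq UD_{n-j}(\graf\setminus\sfv)\). By \Cref{Remark:UD_nfordisconnected}, this is a disjoint union of products \(\prod_iUD_{m_i}(\graf_i)\) with \(\sum m_i=n-j\).

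\textbf{Step 2: the gluing.} The only cells of \(X\) not contained in \(X_0\cup\bigsqcup_j X_j\) are those in which a token occupies \(\sfv_0\) or the edge \([\sfv_0,\sfv_1]\). I would look at the cells containing the edge \([\sfv_0,\sfv_1]\) as a token. Each is the product of that edge with a cell \(c\) in \(UD_{n-1}\) of \(\graf'\setminus\{\sfv_0,\sfv_1\}\), where \(c\) has \(j-1\) tokens on \(\sfP\) and the rest in \(\graf\setminus\sfv\). These cells assemble into a product \([\sfv_0,\sfv_1]\times Z_j\) with \(Z_j\simeq UD_{n-j}(\graf\setminus\sfv)\). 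One end of the product is glued into \(X_j\), and the other end, with the token at \(\sfv_0\), is glued into the part with \(j-1\) path tokens. Configurations with a token at \(\sfv_0\) and \(j-1\) tokens on \(\sfP\setminus\sfv_0\) deform, by pushing the \(\sfv_0\)-token into \(\graf\), into the stratum with \(j-1\) path tokens.

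Inductively on \(j=1,\dots,n\), each step attaches \(X_j\simeq\bigsqcup\prod UD_{m_i}(\graf_i)\) via a mapping cylinder whose edge maps are homotopy equivalences on the \(X_j\) side. So collapsing the \(X_j\)'s turns each component of \(Z_j\times[0,1]\) into a loop-type edge attached to the previous stage. Van Kampen then presents \(\bbB_n(\graf')\) as an iterated HNN extension of \(\bbB_n(\graf)\), with one stable letter per component of \(Z_j\), over edge groups \(\prod_i\bbB_{m_i}(\graf_i)\) with \(\sum_i m_i=n-j\). These edge groups inject, being carried by locally isometric product embeddings.

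\textbf{Main obstacle: the bound \(\sum m_i\le n-2\).} Steps with \(j\ge2\) give this directly. For \(j=1\) the edge groups would have \(\sum m_i=n-1\), so I must show these cylinders contribute no new stable letters. The plan is to show that the \(j=1\) stratum together with its cylinder deformation retracts into \(X_0\): the single path token can be slid back to \(\sfv_0\) and into \(\graf\). This uses that \(\sfv\) has valency at least two in \(\graf\), so some neighbour of \(\sfv\) in \(\graf\) is available. Doing this requires a careful combinatorial check that the \(j=1\) stratum and its cylinder glue onto \(X_0\) along a contractible-fibred piece.

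If that retraction fails, I would instead count components directly. Each component of \(Z_1\) would then contribute an amalgam rather than an HNN extension, and I would have to redo the bookkeeping with \(j\ge2\) as the only HNN levels, re-indexing \(j\) so that \(\sum m_i=n-j\le n-2\).
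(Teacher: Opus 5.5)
Your stratification by the number of tokens on $\sfP\setminus\sfv_0$, with the $\sfe$-cylinders ($\sfe=[\sfv_0,\sfv_1]$) linking consecutive strata, is the same as the paper's. However, Step~2 puts the source of the stable letters in the wrong place, and the argument fails there.

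Take $s=j-1\ge1$ path tokens. You claim that configurations with a token at $\sfv_0$ and $s$ path tokens ``deform, by pushing the $\sfv_0$-token into $\graf$, into the stratum with $j-1$ path tokens''. This is false. The region in question must also contain the cells with a token on an edge $[\sfv,\sfw]$, $\sfw\in\lk_\graf(\sfv)$, which you left out of your list of leftover cells. Together with $X_s$ this region is exactly $UD_r(\graf)\itimes UD_s(\sfP\setminus\sfv_0)\simeq UD_r(\graf)$, where $r=n-s$, whereas $X_s\simeq UD_r(\graf\setminus\sfv)$. So you are asserting that $UD_r(\graf\setminus\sfv)\hookrightarrow UD_r(\graf)$ is a homotopy equivalence.

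That already fails for $r=1$. There the region is $\st_\graf(\sfv)$, glued to $\graf\setminus\sfv$ at the $\val_\graf(\sfv)\ge2$ points of $\lk_\graf(\sfv)$: the token can leave $\sfv$ in several directions. This is precisely where the paper's stable letters come from. The paper writes $UD_r(\graf)=UD_r(\graf\setminus\sfv)\cup A_r$, where $A_r$ is the closure of the configurations with a token at $\sfv$ or on an edge at $\sfv$. $A_r$ meets the rest along $B_r$, a union of sheets indexed by $\lk_\graf(\sfv)$. This gives $\val_\graf(\sfv)-1$ stable letters per component, over $\prod_i\bbB_{m_i}(\graf_i)$ with $\sum_i m_i=r-1$. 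Since $r\le n-1$ (at least one token is on the path and one is pinned at $\sfv$), the bound $n-2$ follows.

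Conversely, your cylinders never produce loop-type edges. The $\sfv_1$-end of $\sfe\times Z_j$ includes into $X_j$ by a homotopy equivalence. So $X_j$ collapses through the cylinder onto the $\sfv_0$-end, carrying along whatever else is attached to $X_j$, and nothing new is created. This holds for every $j$, not only $j=1$, so your ``main obstacle'' is an artifact. With both of your deformations in force, everything would collapse onto $X_0$ and give $\bbB_n(\graf')\cong\bbB_n(\graf)$.

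Your count is also numerically wrong. Take $\graf=\sfS_3$ (suitably subdivided) with $\sfv$ its centre, and $n=2$. Then $\bbB_2(\graf')\cong\bbB_2(\sfS_4)\cong\bbF_3\cong\bbB_2(\graf)*\bbF_2$. One stable letter per component of $Z_j$ gives instead $\bbZ*\bbF_4$, or $\bbZ*\bbZ$ after discarding $j=1$. Your re-indexing fallback does not repair this.
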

\begin{proof}
The argument follows the usual decomposition of configuration spaces when attaching a hanging edge.

Let \(\sfe=[\sfv_0,\sfv_1]\) be an edge and \(\sfP'=[\sfv_1,\dots,\sfv_n]\) a subgraph of \(\sfP\subset\graf'\).
According to whether \(\sfe\) is one of the cells of \(\graf'\) comprising a given cell of \(UD_n(\graf')\) or not, we have a decomposition of \(UD_n(\graf')\) consisting of two types of subspaces
\[
UD_r(\graf)\itimes UD_s(\sfP')\quad\text{ and }\quad
UD_{r'}(\graf\setminus\sfv)\itimes \sfe \itimes UD_{s'}(\sfP'\setminus\sfv_1),
\]
where \(r+s=n\) and \(r'+s'=n-1\).
These spaces are connected by the canonical inclusions
\[
\begin{tikzcd}[column sep=-5pc]
UD_r(\graf)\itimes UD_s(\sfP')& &
UD_{r-1}(\graf\setminus \sfv)\itimes \sfe \itimes UD_s(\sfP')&&
UD_{r-1}(\graf)\itimes UD_{s+1}(\sfP')\\
&UD_{r-1}(\graf\setminus \sfv)\itimes \sfv_0 \itimes UD_s(\sfP'\setminus\sfv_1)\ar[lu]\ar[ru]&&
UD_{r-1}(\graf\setminus \sfv)\itimes \sfv_1 \itimes UD_s(\sfP'\setminus\sfv_1)\ar[lu]\ar[ru]
\end{tikzcd}
\]
Note that two maps in the middle are homotopy equivalences and \(UD_s(\sfP')\hty UD_{s'}(\sfP'\setminus\sfv_1)\hty *\). Therefore, we have
\[
UD_n(\graf')\hty
\colim\left(
\begin{tikzcd}[column sep=-1pc]
UD_n(\graf) && UD_{n-1}(\graf) && \quad\cdots\quad && UD_1(\graf)\\
&UD_{n-1}(\graf\setminus \sfv)\ar[lu,"\iota_{n-1}"]\ar[ru,"j_{n-1}"']&&
UD_{n-2}(\graf\setminus\sfv)\ar[lu,"\iota_{n-2}"]\ar[ru,"j_{n-2}"']&&
UD_1(\graf\setminus\sfv)\ar[lu,"\iota_1"]\ar[ru,"j_1"']
\end{tikzcd}
\right),
\]
where \(\colim\) is the homotopy colimit.
Then for each \(c\in UD_{r'}(\graf\setminus\sfv)\) and \(1\le r'<n\), we have \(\iota_{r'}(c)=c\cup\{\sfv\}\) and \(j_{r'}(c)=c\).

Let \(A_{r'}\) be the closure of the complement \(UD_{r'}(\graf)\setminus \image(j_{r'})\) in \(UD_{r'}(\graf)\), and let \(B_{r'}=A_{r'}\cap \image(j_{r'})\).
Then 
\[
A_{r'}=\left(UD_{r'-1}(\graf\setminus\sfv)\itimes\sfv\right) \cup \left(\bigcup_{\sfw\in\lk_\graf(\sfv)}
UD_{r'-1}(\graf\setminus\sff_\sfw)\itimes\sff_\sfw
\right)\quad\text{and}\quad
B_{r'}=\bigcup_{\sfw\in\lk_{\graf}(\sfv)} UD_{r'-1}(\graf\setminus\sff_\sfw)\itimes \sfw,
\]
where \(\sff_\sfw=[\sfv,\sfw]\) is an edge joining \(\sfv\) and \(\sfw\in\lk_\graf(\sfv)\).
By the assumption of being sufficiently subdivided for \(n+1\), we have homotopy equivalences \(UD_{r'-1}(\graf\setminus\sfv)\simeq UD_{r'-1}(\graf\setminus\sff_\sfw)\simeq UD_{r'-1}(\graf\setminus\st_\graf(\sfv))\), and
\begin{align*}
A_{r'}&\simeq UD_{r'-1}(\graf\setminus\st_\graf(\sfv))\times \st_\graf(\sfv)
\simeq UD_{r'-1}(\graf\setminus\st_\graf(\sfv)),\\
B_{r'}&\simeq UD_{r'-1}(\graf\setminus\st_\graf(\sfv))\times \lk_\graf(\sfv).
\end{align*}
Indeed, for each \(\sfw\in\lk_\graf(\sfv)\), the restriction of \(B_{r'}\to A_{r'}\) to \(UD_{r'-1}(\graf\setminus\st_\graf(\sfv))\times\sfw\) is the canonical identification.
%

Thus we have
\begin{align*}
\colim\left(
\begin{tikzcd}[ampersand replacement=\&,column sep=-2pc]
UD_{r'+1}(\graf)\ar[from=rd]\&\& UD_{r'}(\graf)\ar[from=ld]\\
\&UD_{r'}(\graf\setminus\sfv)
\end{tikzcd}\right)
&\simeq
\colim\left(
\begin{tikzcd}[ampersand replacement=\&,column sep=1pc]
UD_{r'+1}(\graf)\ar[from=rd]\&\& A_{r'}\ar[from=ld]\\
\&B_{r'}
\end{tikzcd}\right)\\
&\simeq
\colim\left(
\begin{tikzcd}[ampersand replacement=\&,column sep=1pc]
UD_{r'+1}(\graf)\&\vdots\& UD_{r'-1}(\graf\setminus\st_\graf(\sfv))
\ar[ll,bend right=60]
\ar[ll,bend right=30]
\ar[ll,bend right=-30]
\ar[ll,bend right=-60]
\end{tikzcd}\right),
\end{align*}
where the arrows in the last diagram are indexed by \(\lk_\graf(\sfv)\). Indeed, each arrow \(\eta_\sfw\) can be realized as a local isometry 
\[
\eta_\sfw:UD_{r'-1}(\graf\setminus\st_\graf(\sfv))\isom UD_{r'-1}(\graf\setminus\st_\graf(\sfv))\times\sfw\times\sfv\hookrightarrow UD_{r'+1}(\graf),
\]
where the last map is the canonical inclusion.
It follows that the fundamental group of the first union is nothing but a \((\val_\graf(\sfv)-1)\)-times iterated HNN extension of \(\bbB_{r'+1}(\graf)\) over the fundamental group of each component of \(UD_{r'-1}(\graf\setminus\st_\graf(\sfv))\cong UD_{r'-1}(\graf\setminus\sfv)\).
Notice that each component of \(UD_{r'-1}(\graf\setminus\sfv)\) is a product of the form \(\prod_i UD_{m_i}(\graf_i)\) 
with \(m_1+\cdots+m_k=r'-1\) (see \Cref{Remark:UD_nfordisconnected}).

Iterating this construction for \(r'=n-1,\dots,1\) yields the stated description of \(\bbB_n(\graf')\) as an iterated HNN extension.
\end{proof}

This result shows how elementary graph operations translate into iterated HNN extensions of graph braid groups. The following two corollaries are direct consequences of \Cref{theorem:hanging edge} and its proof.

\begin{corollary}\label{corollary:free factor}
The graph braid group \(\bbB_n(\graf')\) always has a free factor \(\bbF_{N}\), where \(N=\val_\graf(\sfv)-1\).

In particular, if \(n=2\), then \(\bbB_2(\graf')\isom\bbB_2(\graf)\ast\bbF_{N}\).
\end{corollary}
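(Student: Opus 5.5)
We extract this from the homotopy colimit decomposition in the proof of \Cref{theorem:hanging edge}, looking only at the last stage \(r'=1\). There \(UD_0(\graf\setminus\st_\graf(\sfv))\) is a single point. So the corresponding piece of the decomposition is obtained from \(UD_2(\graf)\) by attaching \(\val_\graf(\sfv)\) arcs, one for each \(\sfw\in\lk_\graf(\sfv)\). Each arc is glued along the point \(\eta_\sfw(\mathrm{pt})=\{\sfw,\sfv\}\) at both ends (through the intermediate space \(A_1\simeq\mathrm{pt}\)).

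Concretely, we rewrite the stage-\(1\) colimit \(\colim\bigl(UD_2(\graf)\leftarrow B_1\rightarrow A_1\bigr)\). Here \(A_1\) is contractible and \(B_1\simeq\lk_\graf(\sfv)\) is a discrete set of \(\val_\graf(\sfv)\) points, whose images in \(UD_2(\graf)\) all lie in the connected space \(UD_2(\graf)\). Gluing a cone (contractible \(A_1\)) along \(\val_\graf(\sfv)\) points to a connected space is homotopy equivalent to wedging on \(\val_\graf(\sfv)-1\) circles. At the level of fundamental groups this is an HNN extension over the trivial group, iterated \(\val_\graf(\sfv)-1\) times, which is a free product with \(\bbF_{\val_\graf(\sfv)-1}\).

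For general \(n\), the full space is built by successively gluing along this chain. Only the \(r'=1\) stage has trivial edge groups; the other stages contribute HNN extensions over possibly nontrivial groups. Those later extensions do not disturb the free factor produced at stage \(1\). The reason is that we may perform the gluings in an order where the contractible \(A_1\)-stage is attached last to a connected space. Alternatively, we can apply the graph-of-spaces description directly: the underlying graph of the graph of groups has \(\val_\graf(\sfv)-1\) independent loops whose edge spaces are points. Collapsing a maximal tree and reading off these loops gives \(\pi_1(\text{rest})\ast\bbF_N\) with \(N=\val_\graf(\sfv)-1\).

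For \(n=2\), the chain has only the two vertex spaces \(UD_2(\graf)\) and \(UD_1(\graf)\simeq\graf\), connected along \(UD_1(\graf\setminus\sfv)\). This is exactly the \(r'=1\) stage, so there are no further factors. We get \(UD_2(\graf')\simeq UD_2(\graf)\vee\bigvee^{N}S^1\), and hence \(\bbB_2(\graf')\isom\bbB_2(\graf)\ast\bbF_N\).

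The main point to check is the bookkeeping in the general-\(n\) case: that the point edge spaces at stage \(1\) form genuine loops in the graph of spaces after the higher stages are attached, rather than being absorbed into a tree. I would handle this by noting that the higher-stage pieces attach only to \(UD_{r}(\graf)\) with \(r\ge2\), while the stage-\(1\) arcs form a separate subgraph in the graph of spaces, meeting the rest only at the vertex \(UD_2(\graf)\) and at the terminal vertex \(UD_1(\graf)\).
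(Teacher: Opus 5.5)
Your proposal is correct and follows the paper's proof: both read off the free factor from the \(r'=1\) stage of the homotopy colimit in the proof of \Cref{theorem:hanging edge}, where the contractible \(\st_\graf(\sfv)\) is glued to \(UD_2(\graf)\) along the \(\val_\graf(\sfv)\) points \(\{\sfv,\sfw\}\); for \(n=2\) this is the whole decomposition, giving \(UD_2(\graf')\simeq UD_2(\graf)\vee\bigvee^{N}S^1\). Your extra observation is a useful addition that the paper leaves implicit: this stage meets the rest of the chain only at \(UD_2(\graf)\), so it can be attached last to a connected space. One wording fix: each arc runs from \(\{\sfv,\sfw\}\) to the common cone point, not back to \(\{\sfv,\sfw\}\) at both ends, which would give \(N+1\) circles instead of \(N\).
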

\begin{proof}
The final stage of the above construction (corresponding to \(r'=1\)) produces an HNN extension over the trivial group, contributing a free factor of rank \(\val_\graf(\sfv)-1\).
If \(n=2\), then \(UD_2(\graf')\hty UD_2(\graf)\amalg \st_\graf(\sfv)/\sim\), where for each \(\sfw\in\lk_\graf(\sfv)\) we identify \(\sfw\in \st_\graf(\sfv)\) with \(\{\sfv,\sfw\}\in UD_2(\graf)\), and we are done.
\end{proof}

\begin{corollary}\label{corollary:B3 hanging edge}
If \(n\le 3\) and \(\pi_1(\graf_i)\) is trivial for all \(i\), then \(\bbB_n(\graf')\isom\bbB_n(\graf)\ast\bbF_N\) for some \(N\).

In particular, if \(\graf\) is a tree, then \(\bbB_3(\graf)\) is free.
\end{corollary}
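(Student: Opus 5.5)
The plan is to read off the statement from the homotopy colimit decomposition in the proof of \Cref{theorem:hanging edge}. Both parts come from identifying the groups over which the iterated HNN extensions are taken.

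For \(n\le 3\), the stages are indexed by \(r'=n-1,\dots,1\). Each stage is a \((\val_\graf(\sfv)-1)\)-fold iterated HNN extension of the current group over the fundamental group of each component of \(UD_{r'-1}(\graf\setminus\sfv)\). Since \(r'-1\le n-2\le 1\), these components are either a point (when \(r'=1\)) or, by \Cref{Remark:UD_nfordisconnected}, of the form \(UD_1(\graf_i)\isom\graf_i\) (when \(r'=2\), which occurs only for \(n=3\)). By hypothesis each \(\graf_i\) is a tree, so every edge group is trivial. An HNN extension over the trivial group is a free product with \(\bbZ\). So at each stage the current group becomes itself \(\ast\) a free group, whose rank is \((\val_\graf(\sfv)-1)\) times the number of components used.

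The step needing care is that the homotopy colimit glues spaces along maps into \emph{different} \(UD_{r}(\graf)\), so the construction is not an HNN extension of a single fixed group from the start. I would handle this as follows. Gluing along components that are contractible (points or trees) is, up to homotopy, a wedge: the homotopy pushout of \(X\leftarrow T\rightarrow Z\) with \(T\) contractible is \(X\vee Z\). Inductively, then, \(UD_n(\graf')\) is homotopy equivalent to
\[
UD_n(\graf)\vee UD_{n-1}(\graf)\vee\cdots\vee UD_1(\graf)\vee\bigvee S^1 .
\]
The circles come from the redundant attaching maps, namely the \(\val_\graf(\sfv)\) parallel arrows per component minus one tree's worth. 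This requires each \(UD_{r}(\graf)\) with \(r<n\) to be connected, which holds once \(\graf\) is sufficiently subdivided. In particular, the colimit diagram is connected, so the gluing graph (vertices the \(UD_r(\graf)\), edges the contractible components) is connected.

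By van Kampen, \(\bbB_n(\graf')\isom\bbB_n(\graf)\ast\bbB_{n-1}(\graf)\ast\cdots\ast\bbB_1(\graf)\ast\bbF\). Here \(\bbB_1(\graf)=\pi_1(\graf)\) is free, and for \(n=3\), \(\bbB_2(\graf)\) is free when \(\graf\) is a planar graph without disjoint cycles. In general the lower terms need not be free, so I need to check what the statement actually asserts: it claims \(\bbB_n(\graf')\isom\bbB_n(\graf)\ast\bbF_N\). I therefore expect that, under the hypothesis that every \(\graf_i\) is a tree, \(\graf\) itself is very restricted. All cycles of \(\graf\) must pass through \(\sfv\), since any cycle avoiding \(\sfv\) would lie in some \(\graf_i\). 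Consequently the lower braid groups \(\bbB_{n-1}(\graf),\dots,\bbB_1(\graf)\) are free. For \(\bbB_1\) this is immediate. For \(\bbB_2(\graf)\) with \(n=3\), there are no two disjoint cycles since all cycles meet at \(\sfv\), and \(\graf\) is planar: a cycle-through-\(\sfv\) structure with trees hanging off is a bouquet-like graph, and one can embed the trees in the plane around \(\sfv\) and the cycles as loops. Freeness then follows from \Cref{theorem:freeness classification}(1). Absorbing these free factors gives \(\bbF_N\).

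For the final claim, when \(\graf\) is a tree I would induct on the number of edges after subdivision, building the tree by repeatedly attaching hanging paths at vertices of valency at least two, starting from a path whose \(\bbB_3\) is trivial. Subtrees are trees, so every \(\graf_i\) is a tree and the first part applies, giving free at each step. Attaching at a leaf only subdivides an edge, so the case of attaching at valency-one vertices is harmless. The main obstacle I expect is verifying rigorously the wedge decomposition of the homotopy colimit, with correct connectivity bookkeeping, together with the planarity argument for \(\bbB_2(\graf)\).
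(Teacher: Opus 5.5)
The gap is in your ``step needing care'': the wedge decomposition is false for \(n=3\). Before that, your first paragraph already is the paper's whole proof. By \Cref{theorem:hanging edge}, every HNN extension is taken over \(\prod_i\bbB_{m_i}(\graf_i)\) with \(\sum_i m_i\le n-2\le 1\). That group is either trivial or \(\pi_1(\graf_i)=1\), so each stage only adds a free factor to \(\bbB_n(\graf)\).

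Your replacement claim is
\[UD_n(\graf')\hty UD_n(\graf)\vee UD_{n-1}(\graf)\vee\cdots\vee UD_1(\graf)\vee\textstyle\bigvee S^1 .\]
In the original homotopy colimit, the space glued between \(UD_{r+1}(\graf)\) and \(UD_r(\graf)\) is \(UD_r(\graf\setminus\sfv)\) (via \(\iota_r, j_r\)), not \(UD_{r-1}(\graf\setminus\sfv)\). For \(r=2\) it contains \(UD_2(\graf_i)\). When the tree \(\graf_i\) has a vertex of valency \(\ge3\), this has nontrivial free fundamental group (\Cref{example:trees}), so that gluing is not a wedge.

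The contractible pieces \(UD_{r'-1}(\graf\setminus\st_\graf(\sfv))\), with \(\val_\graf(\sfv)\) parallel arrows, appear only in the paper's rewritten diagram. There \(UD_{r'}(\graf)\) has first been split as \(UD_{r'}(\graf\setminus\sfv)\cup A_{r'}\), and the first piece absorbed into \(UD_{r'+1}(\graf)\) through \(\iota_{r'}\). So \(UD_{r'}(\graf)\) is no longer a separate vertex space. You combined the vertex spaces of the original diagram with the edge spaces of the rewritten one. For \(n=2\) the only gluing spaces are \(UD_1(\graf\setminus\sfv)\), a forest, and a point, so your wedge is correct there; the failure starts at \(n=3\).

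Here is a concrete counterexample. Let \(\graf\) be a sufficiently subdivided cycle with one hanging path attached at \(\sfx\), and let \(\sfv\neq\sfx\) be a bivalent vertex of the cycle. Then \(\graf\setminus\sfv\) is a subdivided \(3\)-star, so the hypothesis holds. By \Cref{theorem:elementary rank}, \(\bbB_3(\graf)\isom\bbF_3\) and \(\bbB_2(\graf)\isom\bbF_2\), and \(UD_2(\graf\setminus\sfv)\hty S^1\). The homotopy colimit of \Cref{theorem:hanging edge} gives
\[\chi(UD_3(\graf'))=\chi(UD_3(\graf))+\chi(UD_2(\graf))+\chi(\graf)-\chi(UD_2(\graf\setminus\sfv))-\chi(\graf\setminus\sfv)=-2-1+0-0-1=-4 .\]
Your wedge would instead give \(\chi=-2-1+0-2-m\le-5\) for some \(m\ge0\), which is a contradiction.

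The conclusion \(\bbB_3(\graf')\isom\bbB_3(\graf)\ast\bbF_N\) still holds here, with \(N=2\), but your derivation of it is invalid. Drop the wedge step and the planarity discussion of \(\bbB_2(\graf)\): the lower braid groups never enter, and your first paragraph, read as a direct application of \Cref{theorem:hanging edge}, is the proof. Your induction for the tree case is fine once the first part is in place.
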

\begin{proof}
The relevant HNN extension is taken over the group \(\prod_i \bbB_{m_i}(\graf_i)\), which is trivial since \(0\le m_i\le n-2\le 1\). Hence, the extension reduces to a free product with a free group.
\end{proof}

In particular, the previous corollary recovers \cite[Theorem~1.4]{LSUTeam25}.

\begin{proof}[Proof of \Cref{theorem:freeness classification} for \(n=3\)]
Suppose that \(\bbB_3(\graf)\) is free. Then it is hyperbolic. Hence, by \Cref{corollary:hyperbolicity}, every cycle in \(\graf\), if exists, must pass through all essential vertices.
In particular, \(\graf\) cannot contain a subdivision of either \(\sfK_{3,3}\) or \(\sfK_5\), and is therefore planar.

If \(\graf\) contains at least three essential vertices and at least two cycles, then one can find a cycle disjoint from an essential vertex, contradicting hyperbolicity.
Thus, one of the following cases must occur:
\begin{enumerate}
\item \(\graf\) contains no cycles, i.e., it is a tree;
\item \(\graf\) contains only one essential vertex, i.e., it is a subdivision of an elementary bunch of grapes;
\item \(\graf\) contains a unique cycle passing through all essential vertices; or
\item \(\graf\) contains exactly two essential vertices and all cycles pass through both of them; equivalently, \(\graf\) is a subdivision of a graph obtained from \(\sfK_{2,k}\) (\(k\ge 2\)) by attaching edges to essential vertices.
\end{enumerate}
In Cases~(1) and~(2), \(\bbB_3(\graf)\) is free by \Cref{corollary:B3 hanging edge,theorem:elementary rank}. 
In Case~(3), repeated applications of \Cref{corollary:B3 hanging edge} show that \(\bbB_3(\graf)\cong\bbB_3(\sfC_m)\ast\bbF_N\) for some \(m\ge 3\) and \(N\ge 0\);
since \(\bbB_3(\sfC_m)\cong\bbZ\), it follows that \(\bbB_3(\graf)\) is free.
In Case~(4), if \(k=2\), then this reduces to Case~(3); otherwise, \Cref{proposition:K2k} implies that \(k=3\). Again applying \Cref{corollary:B3 hanging edge} repeatedly yields that \(\bbB_3(\graf)\cong\bbB_3(\sfK_{2,3})\ast\bbF_N\) for some \(N\ge 0\). Since \(\bbB_3(\sfK_{2,3})\cong\bbF_3\), it follows that \(\bbB_3(\graf)\) is free.
\end{proof}

This completes the classification, illustrating that freeness arises from the interplay between hyperbolicity, surface subgroup obstructions, and the cubical structure of configuration spaces.

\section{Graph \texorpdfstring{\(2\)}{2}-braid groups and \texorpdfstring{\(UP_2(\graf)\)}{UP2(Gamma)}}\label{section:properties}
In this section, we specialize to graph $2$-braid groups \(\bbB_2(\graf)\) and study them using the cubical structure of the discrete unordered \(2\)-configuration space \(UD_2(\graf)\). 
By \Cref{thm:GBGSpecial}, \(UD_2(\graf)\) is a special square complex and hence belongs to the class \(\cY\) in the \(\m Y\)-hierarchy (\Cref{definition:Ymax hierarchy}). 
Our aim is to analyze graph $2$-braid groups within this hierarchy, with particular emphasis on their large-scale geometry and quasi-isometry classification.

Recall that the homotopy type of \(UD_2(\graf)\) is invariant under the two operations on \(\graf\): smoothing (replacing two edges incident to a bivalent vertex by a single edge) and subdivision (the inverse operation), provided that \(\graf\) remains simple. 
For any simple graph \(\graf\), there exists a unique simple graph, up to smoothing and subdivision, with the minimal number of vertices among all simple graphs homeomorphic to \(\graf\); this graph is called the \emph{minimal simplicial model} of \(\graf\) \cite[Proposition~4.2]{AK2022}. 
Accordingly, the underlying graphs of graph \(2\)-braid groups only need to be in the class
\[\graphs \coloneqq \{\graf : \graf \text{ is the minimal simplicial model of a simple graph}\}.\]




\subsection{Maximal product subcomplexes for \texorpdfstring{\(UD_2(\graf)\)}{UD2(Gamma)}}
For \(\graf \in \graphs\), the intersection complexes \(\cI(UD_2(\graf))\) and \(\cI(\overline{UD_2(\graf)})\) are well defined and nonempty. 
To describe their structure in terms of \(\graf\), we first characterize the standard product subcomplexes of \(UD_2(\graf)\) and of its universal cover \(\bar{UD_2(\graf)}\).

As discussed before \Cref{proposition:local isometric embedding into UD}, a pair of disjoint subgraphs \(\graf_1\) and \(\graf_2\) of \(\graf\) determines a product subcomplex \(\graf_1 \itimes \graf_2 \subset UD_2(\graf)\). 
The following lemma shows that every product subcomplex arises in this way.

\begin{lemma}\label{Lem:SPSinGBG}
A subcomplex \(K\subset UD_2(\graf)\) is a product subcomplex if and only if there exists disjoint subgraphs \(\graf_1,\graf_2\subset \graf\) such that \(K=\graf_1\itimes\graf_2\).
Moreover, every \(p\)-lift of \(K\) is a component of \(p_{UD_2(\graf)}^{-1}(K)\).
\end{lemma}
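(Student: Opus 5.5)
The converse direction is immediate from the canonical inclusion: for disjoint subgraphs \(\graf_1,\graf_2\subset\graf\) (with \(\graf\) simple, hence sufficiently subdivided for \(n=2\)), the map \(\graf_1\times\graf_2\hookrightarrow UD_2(\graf)\) is a locally isometric embedding. Taking base vertices \(\sfv_i\in\graf_i\), the restrictions to \(\graf_1\times\{\sfv_2\}\) and \(\{\sfv_1\}\times\graf_2\) are isomorphisms onto their images, so this is a product structure in the sense of \Cref{Def:ProductSubcomplex}. The substance is therefore the forward direction, together with the claim about \(p\)-lifts.

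For the forward direction, let \(\iota:\grafd_1\times\grafd_2\to UD_2(\graf)\) be a product structure with image \(K\), and fix vertices \(\sfw_i\in\grafd_i\).

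1. Restrict \(\iota\) to \(\grafd_1\times\{\sfw_2\}\). This restriction is a local isometry of a graph into \(UD_2(\graf)\). By \Cref{proposition:local isometric embedding into UD} it factors through a canonical inclusion \(\graf_1\times\sfv\to UD_2(\graf)\), where \(\graf_1\subset\graf\) is the image subgraph and \(\sfv\) is a fixed vertex. Since \(\iota|_{\grafd_1\times\{\sfw_2\}}\) is an isomorphism onto its image, we get \(\grafd_1\isom\graf_1\).
2. Symmetrically, \(\iota(\{\sfw_1\}\times\grafd_2)=\sfu\itimes\graf_2\) for some vertex \(\sfu\) and subgraph \(\graf_2\), with \(\grafd_2\isom\graf_2\).
3. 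Identify the two base vertices. The point \(\iota(\sfw_1,\sfw_2)\) lies in both images, so it has the form \(\{x,\sfv\}=\{\sfu,y\}\) with \(x\in\graf_1\) and \(y\in\graf_2\).
4. Use the product structure to rule out mixing of the factors. An edge of \(\grafd_1\) at \(\sfw_1\) and an edge of \(\grafd_2\) at \(\sfw_2\) span a square, and its image must be a square of \(UD_2(\graf)\). In such a square one token moves along an edge of \(\graf_1\) while the other moves along a disjoint edge of \(\graf_2\). Hence the moving token in the first factor is \(x\), the stationary one is \(\sfv=y\), and \(\sfu=x\).
5. Propagate to all of \(K\). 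Running the same argument at every vertex of \(\grafd_1\times\grafd_2\), using connectivity and the fact that squares propagate the decomposition, gives \(\iota(a,b)=\{\phi_1(a),\phi_2(b)\}\) for graph maps \(\phi_i:\grafd_i\to\graf_i\).
6. Conclude disjointness. Since every pair \(\{\phi_1(a),\phi_2(b)\}\) is a configuration, \(\phi_1(a)\) and \(\phi_2(b)\) are disjoint for all \(a,b\). Thus \(\graf_1\cap\graf_2=\varnothing\) and \(K=\graf_1\itimes\graf_2\).

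For the statement about \(p\)-lifts, note that \(\graf_1\itimes\graf_2\) is the image of a locally isometric embedding of \(\graf_1\times\graf_2\), hence locally convex. Its preimage under \(p_{UD_2(\graf)}\) is therefore a disjoint union of convex copies of \(\bar\graf_1\times\bar\graf_2\), each being an elevation. By \Cref{ProjofPS}(2), a \(p\)-lift is a product subcomplex maximal among those projecting onto \(K\), and by construction it is such an elevation. Being connected and contained in the preimage, it is contained in one component, and maximality forces equality with that component.

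The main obstacle is step 5: ruling out a product structure that is not an embedding and that could swap the roles of the two tokens across the complex, for instance along a cycle. The plan is to use that \(UD_2(\graf)\) is the quotient of \(D_2(\graf)\) by the swap, and that a path of squares cannot exchange which token moves in which factor, because \(\graf_1\) and \(\graf_2\) are disjoint. If this propagation becomes messy, a fallback is to lift \(\iota\) to the ordered space \(D_2(\graf)\subset\graf\times\graf\) over the simply connected cover, where the two coordinate projections directly give \(\phi_1\) and \(\phi_2\).
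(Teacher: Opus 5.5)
Your proposal is correct and follows essentially the same route as the paper: identify the two slices via \Cref{proposition:local isometric embedding into UD}, propagate through squares to show that \(\iota\) is the canonical embedding \(\graf_1\times\graf_2\hookrightarrow UD_2(\graf)\) (which forces \(\graf_1\cap\graf_2=\varnothing\)), and deduce the \(p\)-lift statement from elevations of this locally isometric embedding; your step~4 also makes explicit the matching of base vertices, which the paper passes over. The obstacle you anticipate in step~5 does not arise: at a vertex \(\{\sfv,\sfw\}\) of \(UD_2(\graf)\) at most one square contains both \(\sfe_1\itimes\sfw\) and \(\sfv\itimes\sfe_2\), so the image of each square of \(\grafd_1\times\grafd_2\) is forced, induction along paths in \(\graf_2\) and then \(\graf_1\) just computes the given map \(\iota\) with no room for a token swap, and the fallback through \(D_2(\graf)\) is unnecessary.
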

\begin{proof}
We only need to prove the `only if' direction.

Let \(K\subset UD_2(\graf)\) be a product subcomplex with product structure \(\iota:\grafl_1\times\grafl_2\to UD_2(\graf)\).
By definition, for any vertices \(\sfw_1\in V(\grafl_1)\) and \(\sfw_2\in V(\grafl_2)\), we have \(\grafl_1\isom\iota(\grafl_1\times\sfw_2)\) and \(\grafl_2\isom\iota(\sfw_1\times\grafl_2)\).
By \Cref{proposition:local isometric embedding into UD}, then there exist vertices \(\sfv_1,\sfv_2\) and subgraphs \(\graf_1,\graf_2\) of \(\graf\) such that 
\[
\begin{tikzcd}[row sep=0pc]
\grafl_1\isom \grafl_1\times\sfw_2 \ar[r, "\iota"] & \iota(\grafl_1\times\sfw_2) = \graf_1\itimes\sfv_2\isom\grafl_1,\\
\grafl_2\isom \sfw_1\times\grafl_2 \ar[r, "\iota"] & \iota(\sfw_1\times\grafl_2) = \sfv_1\itimes\graf_2\isom\grafl_2.
\end{tikzcd}
\]
Thus, we may identify \((\grafl_i,\sfw_i)\) with \((\graf_i,\sfv_i)\) for \(i=1,2\); in particular, we consider the local isometry \(\iota:\graf_1\times\graf_2\to UD_2(\graf)\).

Let \(\sfe_1\in E(\graf_1)\) be an edge joining two vertices \(\sfv\) and \(\sfv'\) in \(\graf_1\); in particular, we have \(\iota(\sfe_1\times\sfv_2)=\sfe_1\itimes\sfv_2\).
Suppose that the restriction of \(\iota\) to \(\sfv\times\graf_2\) is a canonical embedding.
For each edge \(\sfe_2\in E(\graf_2)\) incident to \(\sfv_2\), then \(\iota\) must map the square \(\sfe_1\times\sfe_2\) \emph{isometrically} to a square \(\sfe_1\itimes\sfe_2\), which is uniquely determined as the square containing edges \(\sfe_1\itimes\sfv_2\) and \(\sfv\itimes\sfe_2\). Consequently, \(\iota(\sfe_1\times\sfv_2')=\sfe_1\itimes\sfv_2'\), where \(\sfv'_2\) is the other endpoint of \(\sfe_2\).

By propagating this argument along adjacent edges in \(\graf_2\), we see that the restriction of \(\iota\) to \(\sfe_1 \times \graf_2\) is a canonical embedding. 
Varying \(\sfe_1\) then yields that \(\iota\) is the canonical embedding \(\graf_1 \times \graf_2 \hookrightarrow UD_2(\graf)\), and therefore \(\graf_1\) and \(\graf_2\) must be disjoint.

The final assertion follows since \(\iota\) is a locally isometric embedding: every \(p\)-lift is the image of an elevation of \(\iota\), hence a connected component of \(p^{-1}_{UD_2(\graf)}(K)\).
\end{proof}

\begin{corollary}\label{corollary:maximal}
Let \(K=\graf_1\itimes\graf_2\subset UD_2(\graf)\) be a product subcomplex.
\begin{enumerate}
\item If \(K'=\grafl_1\itimes\grafl_2\) is a product subcomplex containing \(K\), then either \(\graf_1\subset\grafl_1,\ \graf_2\subset\grafl_2\) or \(\graf_1\subset\grafl_2,\ \graf_2\subset\grafl_1\).
\item \(K\) is standard if and only if \(\graf_1\) and \(\graf_2\) are connected, nontrivial and leafless.
\item \(K\) is maximal if and only if it is a standard product subcomplex and there is no pair of disjoint leafless subgraphs \(\graf_1'\) and \(\graf_2'\) such that \(\graf_1\subset\graf_1'\), \(\graf_2\subset\graf_2'\), and at least one of \(\graf_i\) is a proper subgraph of \(\graf_i'\).
\end{enumerate}

In particular, \(UD_2(\graf)\) satisfies the embedded product property (\cref{definition:SIP_EPP}).
\end{corollary}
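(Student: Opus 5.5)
Everything rests on \Cref{Lem:SPSinGBG}, which lets us write \(K=\graf_1\itimes\graf_2\) for disjoint subgraphs \(\graf_1,\graf_2\subset\graf\). The key observation, taken from the proof of that lemma, is that the product structure of \(K\) is the canonical embedding \(\graf_1\times\graf_2\hookrightarrow UD_2(\graf)\). Hence the domain factors \(\grafd_1,\grafd_2\) in the notation \(K=\grafd_1\itimes\grafd_2\) are exactly \(\graf_1\) and \(\graf_2\), and the product structure is injective. This injectivity, for every product subcomplex, is precisely the embedded product property, so the last sentence of the statement follows at once.

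For (1), I would use the universal property of product structures (the lemma following \Cref{SPS}), or equivalently \Cref{InclusionBetweenSPSes}. The inclusion \(K\subset K'\) gives a local isometry \(\graf_1\times\graf_2\to K'\), and this factors uniquely as \(q_1\times q_2\) through \(\grafl_1\times\grafl_2\), possibly after permuting factors. Both structures are canonical embeddings, so \(q_1\) and \(q_2\) are inclusions of subgraphs of \(\graf\). Concretely, look at the edges \(\sfe\itimes\sfv_2\) with \(\sfe\in E(\graf_1)\). Each lies in \(K'\), so \(\sfe\) lies in \(\grafl_1\) or in \(\grafl_2\). Because \(\graf_1\) is connected and the two factors of \(K'\) are disjoint, the same choice is made for every edge of \(\graf_1\). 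The factor of \(K'\) containing \(\graf_2\) is then forced to be the other one.

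For (2), I would apply \Cref{SPS}. A product subcomplex is standard when the factors of its push-forward structure are leafless. Since the structure is the canonical embedding of \(\graf_1\times\graf_2\), this says exactly that \(\graf_1\) and \(\graf_2\) are leafless. Connectedness and nontriviality come from \Cref{assumption:cube}, and they are needed anyway for the product to be a local isometry of connected complexes. The one point to check is that a push-forward of the standard pull-back returns the same factors \(\graf_i\), rather than some quotient of them. This holds because the map \(p_Y\) restricted to a \(p\)-lift factors through the embedded product \(\graf_1\times\graf_2\).

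Item (3) then follows from (1) and (2). A standard \(K\) fails to be maximal exactly when some standard \(K'\) properly contains it. By (1) this happens exactly when \(\graf_i\subset\grafl_i\) up to swapping, with at least one inclusion proper. By (2), \(\grafl_1\) and \(\grafl_2\) are disjoint leafless subgraphs. The main technical point is (1), namely the step from "\(K\subset K'\) as sets" to "inclusion of factors". I would handle it by the edge-by-edge argument above rather than by abstract universal properties.
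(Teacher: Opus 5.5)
Your proposal is correct and follows the paper's route. The paper's proof simply cites \Cref{SPS}, \Cref{Lem:SPSinGBG} (whose proof shows that the product structure is the canonical embedding) and \Cref{InclusionBetweenSPSes}, which are exactly the ingredients you use for the embedded product property, for Item~(2), and for Items~(1) and~(3). Your edge-by-edge argument for Item~(1) is a useful addition, because \Cref{InclusionBetweenSPSes} is stated only for standard product subcomplexes, whereas Item~(1) is claimed for arbitrary ones.
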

\begin{proof}
The proof follows from \Cref{SPS,Lem:SPSinGBG,InclusionBetweenSPSes}.
\end{proof}

\begin{definition}[(Maximally) standard subgraph]\label{Def:OrthogonalComplement}
Let \(\graf_1 \subset \graf\) be a leafless (possibly trivial or disconnected) subgraph. The \emph{orthogonal complement} of \(\graf_1\) in \(\graf\), denoted by \(\graf_1^{\perp}\), is the (possibly trivial, empty or disconnected) subgraph obtained from \(\graf \setminus \graf_1\) by iteratively deleting leaves and isolated vertices until stabilization.

A leafless subgraph \(\graf_1\) is called \emph{standard} if it is connected and nontrivial, and its orthogonal complement \(\graf_1^\perp\) is nontrivial. It is called \emph{maximally standard} if, in addition, \(\graf_1^\perp\) is standard and \(\graf_1=(\graf_1^\perp)^\perp\).

We denote by \(\cS(\graf)\) and \(\cM(\graf)\) the sets of all standard and maximally standard subgraphs of \(\graf\), respectively.
We endow \(\cS(\graf)\) and \(\cM(\graf)\) with graph structures by declaring an edge between a pair of standard (maximally standard, resp.) subgraphs \(\graf_1,\graf_2\subset\graf\) whenever \(\graf_1\subset \graf_2\cup\graf_2^\perp\) or \(\graf_2\subset \graf_1\cup\graf_1^\perp\).
\end{definition}

\begin{lemma}\label{lemma:properties of standard subgraphs}
Let \(\graf_1\) be a standard subgraph of \(\graf\). Then the following hold.
\begin{enumerate}
\item Any cycle contained in \(\graf_1\), as well as any component of \(\graf^\perp_1\), are standard.
\item There is a natural bijection between the subset of \(E(\cS(\graf))\) consisting of edges \(\{\graf_1,\graf_2\}\) with \(\graf_2\subset\graf_1^\perp\) and the set of standard product subcomplexes of \(UD_2(\graf)\). Moreover, there is a natural \(2\)-to-\(1\) map \[ V(\cM(\graf))\to V(\cI(UD_2(\graf)))\quad\text{ given by }\quad\graf_1\mapsto \graf_1\itimes\graf_1^\perp.\]
\item\label{Item} There exists a maximally standard subgraph containing \(\graf_1\). Moreover, for any standard subgraph \(\graf_2\) disjoint from \(\graf_1\), there is a unique maximally standard subgraph \(M(\graf_1;\graf_2)\) (\(N(\graf_1;\graf_2)\), resp.) containing \(\graf_1\) but not \(\graf_2\) that is maximal (minimal, resp.) with respect to inclusion.
\end{enumerate}
\end{lemma}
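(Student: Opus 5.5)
The plan is to treat \(\perp\) as a Galois-type correspondence on leafless subgraphs and then read off each item. First record three elementary facts.
\begin{enumerate}
\item For a leafless \(\graf_1\), the complement \(\graf_1^\perp\) is the largest leafless subgraph of \(\graf\setminus\graf_1\) (its ``core''). Iterated deletion of leaves and isolated vertices never removes an edge lying in a leafless subgraph.
\item \(\perp\) is inclusion-reversing: \(\graf_1\subset\graf_2\Rightarrow\graf_2^\perp\subset\graf_1^\perp\).
\item \(\graf_1\subset(\graf_1^\perp)^\perp\) for leafless \(\graf_1\), since \(\graf_1\) is a leafless subgraph of \(\graf\setminus\graf_1^\perp\).
\end{enumerate}
In particular, two disjoint leafless subgraphs \(\graf_1,\graf_2\) satisfy \(\graf_2\subset\graf_1^\perp\) and \(\graf_1\subset\graf_2^\perp\).

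\emph{Item (1).} A cycle \(\sfC\subset\graf_1\) is connected, nontrivial and leafless. By fact (2), \(\sfC^\perp\supseteq\graf_1^\perp\neq\varnothing\), so \(\sfC\) is standard. A component \(D\) of \(\graf_1^\perp\) is connected and leafless. It is nontrivial because isolated vertices are deleted. Finally \(D^\perp\supseteq\graf_1\) is nontrivial.

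\emph{Item (2).} By \Cref{corollary:maximal}, standard product subcomplexes correspond to unordered pairs of disjoint connected nontrivial leafless subgraphs. Such a pair \(\{\graf_1,\graf_2\}\) satisfies \(\graf_2\subset\graf_1^\perp\), and both members are standard since each lies in the other's complement. Since \(\itimes\) is symmetric, these pairs are exactly the edges of \(\cS(\graf)\) of the stated type.

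For the map \(\graf_1\mapsto\graf_1\itimes\graf_1^\perp\), first check that the image is maximal via \Cref{corollary:maximal}(3). Suppose \(\graf_1\subset\graf_1'\) and \(\graf_1^\perp\subset\graf_2'\) with \(\graf_1',\graf_2'\) disjoint and leafless. Then
\[
\graf_2'\subset\graf_1'^\perp\subset\graf_1^\perp ,
\]
so \(\graf_2'=\graf_1^\perp\). Consequently \(\graf_1'\subset(\graf_1^\perp)^\perp=\graf_1\). The swapped inclusion is handled symmetrically.

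For surjectivity, let \(\graf_1\itimes\graf_2\) be maximal. Maximality forces \(\graf_2\) to be the component of \(\graf_1^\perp\) containing it, and similarly for \(\graf_1\). The key point, which I expect to be the main obstacle of (2), is that \(\graf_1^\perp\) is in fact connected. Suppose \(D\) is another component of \(\graf_1^\perp\). Take a shortest path in \(\graf\) from \(D\) to \(\graf_1\cup\graf_2\). Its interior avoids \(D\), \(\graf_1\) and \(\graf_2\), and it ends in exactly one of \(\graf_1,\graf_2\); call that one \(\graf_i\).
\begin{itemize}
\item Then \(\graf_i\cup(\text{path})\cup D\) is connected and leafless.
\item It is disjoint from the other factor, because \(D\subset\graf_1^\perp\) is disjoint from \(\graf_1\) and \(D\) is a different component from \(\graf_2\).
\end{itemize}
This properly enlarges \(\graf_i\), contradicting maximality. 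Hence \(\graf_2=\graf_1^\perp\), and symmetrically \(\graf_1=\graf_2^\perp\), so both are maximally standard. The fibre over \(\graf_1\itimes\graf_1^\perp\) is \(\{\graf_1,\graf_1^\perp\}\), giving the \(2\)-to-\(1\) map.

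\emph{Item (3).} For existence, choose a component \(D\) of \(\graf_1^\perp\) and a maximal product subcomplex containing the standard subcomplex \(\graf_1\itimes D\); one exists by finiteness. By (2) its factor containing \(\graf_1\) is maximally standard.

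For \(\graf_2\) disjoint from \(\graf_1\), I would build the extremal objects by alternating closures.
\begin{itemize}
\item \(M(\graf_1;\graf_2)\): start from the component of \(\graf_2^\perp\) containing \(\graf_1\), take \(\perp\) and keep the component containing \(\graf_2\), then repeat. By monotonicity and finiteness this stabilizes at a pair \(A,A^\perp\) with \(\graf_1\subset A\) and \(\graf_2\subset A^\perp\).
\item \(N(\graf_1;\graf_2)\): run the same procedure starting from the component of \(\graf_1^\perp\) containing \(\graf_2\).
\end{itemize}
The hardest step is uniqueness. An arbitrary maximally standard \(A\supset\graf_1\) with \(\graf_2\not\subset A\) might still meet \(\graf_2\), so one cannot immediately conclude \(\graf_2\subset A^\perp\). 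I would first prove that such an \(A\) must in fact be disjoint from \(\graf_2\), using leaflessness of \(A\) together with the path-extension trick above. Once this is established, the inclusions \(\graf_1\subset A\subset\graf_2^\perp\) and monotonicity of the alternating procedure pin down the maximal and minimal choices uniquely.
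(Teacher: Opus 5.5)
Items (1) and (2) and the existence part of (3) are correct. In (2), your shortest-path argument that \(\graf_1^\perp\) is connected when \(\graf_1\itimes\graf_2\) is maximal fills in a detail the paper leaves implicit.

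The gap is in the uniqueness step of (3). You read ``containing \(\graf_1\) but not \(\graf_2\)'' as \(\graf_2\not\subset A\), and you plan to prove that such an \(A\) is automatically disjoint from \(\graf_2\). That claim is false. Take the bunch of grapes over the path stem \([\sfv_0,\sfv_1,\sfv_2]\) with one grape at each vertex. Let \(\graf_1\) be the grape at \(\sfv_0\), and let \(\graf_2\) be the restriction to the substem \([\sfv_1,\sfv_2]\); it is standard, with \(\graf_2^\perp=\graf_1\). Let \(A\) be the restriction to \([\sfv_0,\sfv_1]\). Then \(A\) is maximally standard, since \(A^\perp\) is the grape at \(\sfv_2\) and the orthogonal complement of that grape is \(A\). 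Moreover \(A\) contains \(\graf_1\) and does not contain \(\graf_2\), yet it meets \(\graf_2\) along the grape at \(\sfv_1\).

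Under your reading the conclusion itself also fails. Take the stem \(\sfS_3\) with centre \(\sfc\), leaves \(\sfw_1,\sfw_2,\sfw_3\), and one grape per leaf. Let \(\graf_1\) be the grape at \(\sfw_1\) and \(\graf_2\) the restriction to \([\sfw_2,\sfc,\sfw_3]\). The restrictions to \([\sfw_1,\sfc,\sfw_2]\) and \([\sfw_1,\sfc,\sfw_3]\) are incomparable maximally standard subgraphs containing \(\graf_1\) and not containing \(\graf_2\). So there is no unique maximal one.

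The intended reading, which is the one the paper's proof uses (it needs \(\graf_2\subset\graf_1'^\perp\)), is ``containing \(\graf_1\) and disjoint from \(\graf_2\)''. With this reading the step you expected to be hardest disappears. For a maximally standard \(A\supseteq\graf_1\) disjoint from \(\graf_2\), leaflessness gives \(\graf_2\subset A^\perp\). Inclusion reversal then gives \(A=(A^\perp)^\perp\subset\graf_2^\perp\). Since \(A\) is connected, it lies in the component \(C\) of \(\graf_2^\perp\) containing \(\graf_1\).

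Your alternating procedure also stops after one step, because \(C\) is already maximally standard. Let \(B\) be the component of \(C^\perp\) containing \(\graf_2\), and let \(X\itimes Y\) be any enlargement of \(C\itimes B\) with \(X\supseteq C\) and \(Y\supseteq B\). Then \(X\) is disjoint from \(\graf_2\), so \(X\subset\graf_2^\perp\) and hence \(X=C\). Next \(Y\subset C^\perp\), so \(Y=B\). Thus \(C\itimes B\) is maximal, and your argument from (2) gives \(C^\perp=B\) and \((C^\perp)^\perp=C\). The same reasoning is what your unproved claim needs: that the procedure stabilizes at a pair \(A,A^\perp\), not merely at two subgraphs each equal to a component of the other's complement. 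So \(M(\graf_1;\graf_2)=C\), exactly as in the paper.

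For the minimal one, \(N(\graf_1;\graf_2)=M(\graf_2;\graf_1)^\perp\). For any admissible \(A\), the connected subgraph \(A^\perp\) contains \(\graf_2\) and misses \(\graf_1\). Hence \(A^\perp\subset M(\graf_2;\graf_1)\), and so \(A=(A^\perp)^\perp\supseteq M(\graf_2;\graf_1)^\perp\).
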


\begin{proof}
Items~(1) and~(2) follow from \Cref{corollary:maximal,Def:OrthogonalComplement} together with the fact that any nontrivial, leafless (possibly disconnected) subgraph of a graph survives unchanged under the iterative deletion of leaves and isolated vertices.
With the additional fact that every standard product subcomplex is contained in a maximal product subcomplex, the first assertion of Item~(3) follows.



Now, let \(\graf_2\) be a standard subgraph disjoint from \(\graf_1\).
Let \(M(\graf_1;\graf_2)\) denote the component of \(\graf^\perp_2\) containing \(\graf_1\); by \Cref{corollary:maximal}, \(M(\graf_1;\graf_2)\) is maximally standard.
If \(\graf'_1\) is a maximally standard subgraph containing \(\graf_1\) but not \(\graf_2\), then \(\graf_1\subset\graf'_1=(\graf'^\perp_1)^\perp\subset\graf^\perp_2\). Since \(\graf'_1\) is connected, it must be contained in \(M(\graf_1;\graf_2)\), proving the uniqueness of \(M(\graf_1;\graf_2)\).
The minimal case \(N(\graf_1;\graf_2)\) is analogous; in fact, \(N(\graf_1,\graf_2)=M(\graf_2;\graf_1)^\perp\).
\end{proof}

Although the curve graph is classically defined for surfaces, one can define an analogous object for a graph \(\graf\). We define the \emph{curve graph} \(\cC(\graf)\) to be the induced subgraph of \(\cS(\graf)\) spanned by cycles; thus two vertices are adjacent if and only if the corresponding cycles are disjoint in \(\graf\).




\begin{theorem}\label{theorem:connected basis}
The following are equivalent:
\[\text{\(\cC(\graf)\) is connected}\ \Longleftrightarrow\ \text{\(\cS(\graf)\) is connected}\ \Longleftrightarrow\ \text{\(\cM(\graf)\) is connected}\ \Longleftrightarrow\ \text{\(\cI(UD_2(\graf))\) is connected}.\]
\end{theorem}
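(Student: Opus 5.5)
The plan is to prove the chain of equivalences by passing through the graph \(\cS(\graf)\). All three of \(\cC(\graf)\), \(\cM(\graf)\) and \(\cI(UD_2(\graf))\) will be compared with it using two retraction-type maps. Each such map sends adjacent vertices to adjacent (or equal) vertices, and each is a left inverse to the natural inclusion.

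\textbf{Step 1: \(\cC(\graf)\) versus \(\cS(\graf)\).} Every standard subgraph \(\graf_1\) is leafless, connected and nontrivial, so it contains a cycle \(c(\graf_1)\). By \Cref{lemma:properties of standard subgraphs}(1) this cycle is standard, and \(\graf_1\) and \(c(\graf_1)\) are adjacent in \(\cS(\graf)\), since \(c(\graf_1)\subset\graf_1\). If \(\graf_1\) and \(\graf_2\) are adjacent in \(\cS(\graf)\), then either one is contained in the other, or they are disjoint.
\begin{itemize}
\item If they are disjoint, any chosen cycles \(c(\graf_1)\), \(c(\graf_2)\) are disjoint, hence adjacent in \(\cC(\graf)\).
\item If \(\graf_1\subset\graf_2\), choose a cycle \(c\) in \(\graf_2^\perp\), which is nontrivial and leafless. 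Then \(c(\graf_1)\) and \(c(\graf_2)\) are both disjoint from \(c\), so they are joined by a path of length \(2\) in \(\cC(\graf)\).
\end{itemize}
Adjacency in \(\cS(\graf)\) thus translates into a path of length at most \(2\) in \(\cC(\graf)\), so connectedness of \(\cS(\graf)\) implies that of \(\cC(\graf)\). Conversely, \(\cC(\graf)\) is an induced subgraph of \(\cS(\graf)\), and every vertex of \(\cS(\graf)\) is adjacent to a cycle. Hence connectedness of \(\cC(\graf)\) implies that of \(\cS(\graf)\).

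\textbf{Step 2: \(\cS(\graf)\) versus \(\cM(\graf)\).} Here \(\cM(\graf)\) is an induced subgraph of \(\cS(\graf)\). By \Cref{lemma:properties of standard subgraphs}(3), each standard \(\graf_1\) lies in a maximally standard \(M\supset\graf_1\), which is adjacent to \(\graf_1\) in \(\cS(\graf)\). This gives connectedness of \(\cS(\graf)\) from that of \(\cM(\graf)\).

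For the converse, suppose \(\graf_1,\graf_2\) are adjacent in \(\cS(\graf)\). In the case \(\graf_2\subset\graf_1^\perp\), so that they are disjoint, I would use \(M(\graf_1;\graf_2)\). It is the component of \(\graf_2^\perp\) containing \(\graf_1\), so its complement contains \(\graf_2\). Choosing \(M_2\supset\graf_2\) inside the component of \(M(\graf_1;\graf_2)^\perp\) containing \(\graf_2\) makes \(M(\graf_1;\graf_2)\) and \(M_2\) disjoint and hence adjacent. In the nested case \(\graf_1\subset\graf_2\), take a cycle \(c\subset\graf_2^\perp\). Then \(M(\graf_1;c)\) and \(M(\graf_2;c)\) both lie in complementary position to a maximally standard subgraph containing \(c\), giving a path of length \(2\).

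The remaining difficulty is to connect all maximally standard subgraphs containing a given \(\graf_1\) by paths in \(\cM(\graf)\), so that the choice of \(M\) does not matter. I expect this to be the main obstacle. I would handle it via the pair \(M(\graf_1;\cdot)\) and \(N(\graf_1;\cdot)\): two maximally standard \(M,M'\supset\graf_1\) should both be adjacent to, or joined through, \(N(\graf_1;c)\) for a suitable cycle \(c\) disjoint from \(M\cup M'\). If no such cycle exists, I would argue via \(M^\perp\) and \(M'^\perp\) and a length-\(2\) path through a common cycle of the complements.

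\textbf{Step 3: \(\cM(\graf)\) versus \(\cI(UD_2(\graf))\).} I would use the \(2\)-to-\(1\) map \(\graf_1\mapsto\graf_1\itimes\graf_1^\perp\) of \Cref{lemma:properties of standard subgraphs}(2), which identifies \(M\) with \(M^\perp\). Two maximal product subcomplexes \(M_1\itimes M_1^\perp\) and \(M_2\itimes M_2^\perp\) are joined by an edge exactly when their intersection contains a standard product subcomplex. By \Cref{Lem:SPSinGBG} and \Cref{InclusionBetweenSPSes}, this means there are disjoint standard \(A,B\) with \(A\) contained in \(M_1\) or \(M_1^\perp\), \(B\) in the other, and similarly for \(M_2\). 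Translating, an edge of \(\cI(UD_2(\graf))\) corresponds to \(M_1\) (or \(M_1^\perp\)) sharing a standard subgraph with \(M_2\) (or \(M_2^\perp\)). Such a shared subgraph gives a path of length at most \(2\) between the classes in \(\cM(\graf)\), routed through a maximally standard subgraph containing it, using Step 2.

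Conversely, an edge of \(\cM(\graf)\) between \(M_1\) and \(M_2\) means \(M_1\subset M_2\cup M_2^\perp\). Since \(M_1\) is connected, it lies in \(M_2\) or in \(M_2^\perp\), and maximality forces \(M_1\in\{M_2,M_2^\perp\}\) up to disjoint position. In each case the corresponding product subcomplexes share a standard product subcomplex, so they are equal or adjacent in \(\cI(UD_2(\graf))\). Since \(M\) and \(M^\perp\) are adjacent in \(\cM(\graf)\), the \(2\)-to-\(1\) identification does not affect connectedness, which completes the chain.
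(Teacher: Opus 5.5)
\textbf{There is a genuine gap in Step~2, at the point you flagged yourself.} Step~1 is complete, and so is \(\cM(\graf)\Rightarrow\cS(\graf)\). The converse \(\cS(\graf)\Rightarrow\cM(\graf)\) is not proved. You never show that two maximally standard \(M,M'\) containing a common standard \(\graf_1\) are joined in \(\cM(\graf)\). Every case of your retraction argument reduces to this, and so does the ``using Step~2'' in Step~3.

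Your sketch only covers the case where \(M^\perp\cap M'^\perp\) contains a cycle \(c\); then \(N(\graf_1;c)\subset M\cap M'\). The fallback is not an argument, because \(M^\perp\) and \(M'^\perp\) can meet in a tree. For example, let \(\graf_1\) be a triangle joined by four paths to triangles \(a,b,a',b'\), and join these four triangles by paths to a common vertex \(z\). Let \(L\) be \(a\cup b\) together with the paths from \(z\) to them, and define \(L'\) similarly from \(a',b'\). Then \(L,L'\) are maximally standard, and \(M=L^\perp\), \(M'=L'^\perp\) both contain \(\graf_1\). But \(M^\perp\cap M'^\perp=\{z\}\), so there is neither a common cycle nor disjointness. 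None of your completed implications leads from \(\{\cC(\graf),\cS(\graf)\}\) to \(\{\cM(\graf),\cI(UD_2(\graf))\}\), so the chain is not closed.

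\textbf{How to close it.} The missing lemma is short once you use the components of \(\graf_1^\perp\). Since \(\perp\) reverses inclusion, \(M^\perp\) and \(M'^\perp\) are connected subgraphs of \(\graf_1^\perp\); say they lie in components \(K\) and \(K'\).
\begin{itemize}
\item If \(K\neq K'\), then \(M^\perp\) is disjoint from \(M'^\perp\), hence \(M^\perp\subset(M'^\perp)^\perp=M'\). So \(M,\,M^\perp,\,M'\) is a path in \(\cM(\graf)\).
\item If \(K=K'\), then \(M^\perp\subset K\) gives \(K^\perp\subset M\cap M'\). The subgraph \(M(\graf_1;K)\subset K^\perp\) is maximally standard by \Cref{lemma:properties of standard subgraphs}(3), giving the path \(M,\,M(\graf_1;K),\,M'\). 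In the example above, \(M(\graf_1;K)=\graf_1\).
\end{itemize}
With this lemma your retraction argument goes through.

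\textbf{Comparison with the paper.} The paper avoids this lemma. It takes a path in \(\cS(\graf)\) whose consecutive terms are disjoint and whose length is minimal. Then \(\grafl_i\cup\grafl_{i+2}\) is a standard subgraph disjoint from \(\grafl_{i+1}\), which links the canonical choices \(N(\grafl_{i+1};\grafl_i)\) and \(N(\grafl_{i+1};\grafl_{i+2})\). The paper also proves a single cycle of implications, returning from \(\cI(UD_2(\graf))\) directly to \(\cC(\graf)\), so it needs only one direction of each step.

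\textbf{Two smaller points in Step~3.}
\begin{itemize}
\item The claim that maximality forces \(M_1\in\{M_2,M_2^\perp\}\) is false; for instance \(\graf_{\sft,1}\subsetneq\graf_{\sft',1}\) for twigs of a bunch of grapes. The conclusion survives: \(M_1\subset M_2\) gives \(M_2^\perp\subset M_1^\perp\), so \(M_1\itimes M_2^\perp\) lies in both product subcomplexes.
\item For \(\cI\Rightarrow\cM\), after relabeling you have both \(A\subset M_1\cap M_2\) and \(B\subset M_1^\perp\cap M_2^\perp\). Then \(N(A;B)\subset M_1\cap M_2\) directly, and no appeal to Step~2 is needed.
\end{itemize}
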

\begin{proof}
\noindent (1)\(\Rightarrow\)(2)\quad This follows directly from
\Cref{Def:OrthogonalComplement,lemma:properties of standard subgraphs}.

\medskip

\noindent (2)\(\Rightarrow\)(3)\quad
Let \(\grafl\) and \(\grafl'\) be maximally standard subgraphs.
By assumption, there exists a path in \(\cS(\graf)\) 
\[\sfP=[\grafl=\grafl_1,\,\grafl_2,\dots,\,\grafl_N=\grafl'].\]
Without loss of generality, we may assume that every consecutive pair \(\grafl_i\) and \(\grafl_{i+1}\) is disjoint. Indeed, if \(\grafl_1\subset\grafl_2\), then choosing a component \(\grafl_2'\) of \(\grafl_2^\perp\), and if \(\grafl_2\subset\grafl_1\), choosing a component \(\grafl_1'\) of \(\grafl_1^\perp\), yields a standard subgraph which is disjoint from both \(\grafl_1\) and \(\grafl_2\).
In either case, inserting this subgraph between \(\grafl_1\) and \(\grafl_2\) produces a longer path with the same endpoints, in which all consecutive vertices are disjoint.

For each such disjoint pair \(\grafl_i\) and \(\grafl_{i+1}\), there is a subgraph of \(\cS(\graf)\) of the form
\[
\begin{tikzcd}
N(\grafl_i;\grafl_{i+1})\ar[r,-]& N(\grafl_{i+1};\grafl_i)\\
\grafl_i\ar[-,u]\ar[r,-] & \grafl_{i+1}\ar[u,-]
\end{tikzcd}
\]
where \(N(\grafl_{i};\grafl_{i+1})\) and \(N(\grafl_{i+1};\grafl_{i})\) are maximally standard.
Note that \(N(\grafl_{1};\grafl_{2})=\grafl_1\) and \(N(\grafl_{N};\grafl_{N-1})=\grafl_N\).

We further assume that the path \(\sfP\) is shortest so that \(\grafl_i\) and \(\grafl_{i+2}\) are not disjoint. 
Then \(\grafl_i\cup \grafl_{i+2}\) is a standard subgraph disjoint from \(\grafl_{i+1}\). It follows that the three subgraphs
\[N(\grafl_{i+1};\grafl_{i}),\quad N(\grafl_{i+1};\grafl_{i+2}),\quad N(\grafl_{i+1};\grafl_{i}\cup\grafl_{i+2})\] are maximally standard, and that the first two both contain the third.
This yields the following configuration in \(\cM(\graf)\):
\[
\begin{tikzcd}[column sep=small, row sep=small]
\cdots  N(\graf_{i-1};\graf_i) \ar[from=r,hookrightarrow] &N(\graf_i;\graf_{i-1}\cup\graf_{i+1})\ar[r,hookrightarrow]& N(\graf_i;\graf_{i+1})\ar[r,-] & N(\graf_{i+1};\graf_i) \ar[from=r,hookrightarrow] & N(\graf_{i+1};\graf_{i}\cup\graf_{i+2})\ar[r,hookrightarrow]  &\cdots\\
\cdots \ar[r,-]  & \graf_i \ar[rrr,-] \ar[lu,-]\ar[ru,-]\ar[u,-] & & & \graf_{i+1} \ar[lu,-]\ar[u,-]\ar[r,-]  &\cdots
\end{tikzcd}
\]
Therefore, one can construct a path in \(\cM(\graf)\) with endpoints \(\grafl\) and \(\grafl'\), which shows that \(\cM(\graf)\) is connected.

\medskip

\noindent (3)\(\Rightarrow\)(4)\quad 
Let \(M=\graf_1\itimes\graf_1^\perp\) and \(M'=\graf_1'\itimes(\graf_1')^\perp\) be two maximal product subcomplexes of \(UD_2(\graf)\).
Since \(\cM(\graf)\) is connected by assumption, there is a path \((\graf_1,\,\graf_2,\,\dots,\,\graf_N=\graf_1')\) in \(\cM(\graf)\).
By the definition of an edge in \(\cM(\graf)\), for each \(i=1,\dots,N-1\) the intersection \(\graf_i\itimes\graf_i^\perp \cap \graf_{i+1}\itimes\graf_{i+1}^\perp\) contains a standard product subcomplex. Hence, \[(M=\graf_1\itimes\graf_1^\perp,\,\graf_2\itimes\graf_2^\perp,
 \dots,\, \graf_N\itimes\graf_N^\perp=M')\] is a path in \(\cI(UD_2(\graf))\), proving that \(\cI(UD_2(\graf))\) is connected.

\medskip

\noindent (4)\(\Rightarrow\)(1)\quad 
To show that \(\cC(\graf)\) is connected, it suffices to consider two distinct cycles \(\sfC_1,\sfC_1'\in\cC(\graf)\) that are not disjoint; if they were disjoint, then by definition there would be an edge joining them.

By \Cref{lemma:properties of standard subgraphs}(1)\&(3), there exist maximally standard subgraphs \(\graf_1\) and \(\graf_1'\) containing \(\sfC_1\) and \(\sfC_1'\), respectively.
If \(\graf_1=\graf_1'\), then the sequence \((\sfC_1,\sfC_2,\sfC_1')\) is a path connecting \(\sfC_1,\sfC_1'\), where \(\sfC_2\) is a cycle in \(\graf^\perp_1\).  
Otherwise, let \(M=\graf_1\itimes\graf_1^\perp\) and \(M'=\graf_1'\itimes(\graf_1')^\perp\) be two maximal product subcomplexes (in particular, \(\grafl'_1\nsubset\graf_1^\perp\) and \(\grafl_1\nsubset(\graf'_1)^\perp\)).
By assumption, there is a path \((M=M_1,\,\dots,\, M_N=M')\) in \(UD_2(\graf)\), where \(M_i=\graf_i\itimes\graf_i^\perp\) for some \(\graf_i\in\cM(\graf)\).
For each \(i=1,\dots,N\), let \(A_i\) denote the set of cycles contained in \(\graf_i\cup\graf^\perp_i\).
Each \(A_i\) induces a connected subgraph of \(\cC(\graf)\). Moreover, for each \(i=1,\dots,N-1\), the intersection \(A_i\cap A_{i+1}\) contains cycles in \(\grafl_i\cup\grafl'_i\), where \(\grafl_i\itimes\grafl'_i\) is a standard product subcomplex contained in \(M_i\cap M_{i+1}\).
Thus consecutive \(A_i\) are adjacent, and it follows that \(\cC(\graf)\) is connected.
\end{proof}

\begin{example}
Let \(\graf'\) and \(\graf''\) be the graphs depicted in \Cref{figure:nonexample of sip} and \Cref{figure:not all boundary cycles}, respectively. The curve graph \(\cC(\graf')\) is not connected but \(\cC(\graf'')\) is connected.
\end{example}

\begin{figure}[ht]
\centering
\begin{subfigure}{.33\textwidth}
\[
\begin{tikzpicture}[baseline=-.5ex, scale=0.9,transform shape]
\draw[thick] (0,0) node (A) {} node[below] {\(\sfv_0\)} -- node[midway,left] {\(\sfe_1\)} (90:1) node (B) {} node[right] {\(\sfv_1\)} (0,0) -- node[midway,below] {\(\sfe_2\)} (210:1) node (C) {} node[above] {\(\sfv_2\)} (0,0) -- node[midway,below] {\(\sfe_3\)} (330:1) node (D) {} node[above] {\(\sfv_3\)};
\draw[fill] (0,0) circle (2pt);
\draw (30:1.2) node {\(\sfe'_0\)};
\draw (90:2.1) node {\(\sfe'_1\)};
\draw (210:2.2) node {\(\sfe'_2\)};
\draw (330:2.2) node {\(\sfe'_3\)};
\grape[30]{A};
\grape[90]{B};
\grape[210]{C};
\grape[330]{D};
\end{tikzpicture}
\]
\caption{\(\cC(\graf)=\{\text{four \(3\)-cycles}\}\), connected}
\label{figure:example of sip}
\end{subfigure}
\begin{subfigure}{.33\textwidth}
\[
\begin{tikzpicture}[baseline=-.5ex, scale=0.9,transform shape]
\draw[fill] (-1.5,-1.5) circle (2pt) (-1.5,1.5) circle (2pt) (1.5,1.5) circle (2pt) (1.5,-1.5) circle (2pt)
(-.5,-.5) circle (2pt) (-.5,.5) circle (2pt) (.5,.5) circle (2pt) (.5,-.5) circle (2pt);
\draw[thick] (-1.5,-1.5) rectangle (1.5,1.5)
(-.5,-.5) rectangle node {\(\sfC_4^0\)} (.5,.5)
(-.5,-.5) -- (-1.5,-1.5) (-.5,.5) -- (-1.5,1.5) (.5,.5) -- (1.5,1.5) (.5,-.5) -- (1.5,-1.5);
\draw (0,1) node {\(\sfC_4^1\)}
(1,0) node {\(\sfC_4^2\)}
(0,-1) node {\(\sfC_4^3\)}
(-1,0) node {\(\sfC_4^4\)}
(2,0) node {\(\sfC_4^5\)}
;
\end{tikzpicture}
\]
\caption{\(\cC(\graf')=\{\sfC_4^0,\dots,\sfC_4^5\}\), disconnected}
\label{figure:nonexample of sip}
\end{subfigure}
\begin{subfigure}{.33\textwidth}
\vspace{10pt}
\[
\begin{tikzpicture}[baseline=-.5ex, scale=0.9,transform shape]
\draw[thick] (-1.5,-0.5) rectangle node {\(\sfC_4^1\)} (-0.5,0.5) -- (0.5,0.5)
(-0.5,-0.5) -- (0.5,-0.5) rectangle node {\(\sfC_4^3\)} (1.5,0.5)
;
\draw (0,0) node {\(\sfC_4^2\)};
\draw[fill] 
(-1.5,-0.5) circle (2pt)
(-.5,-0.5) circle (2pt)
(.5,-0.5) circle (2pt)
(1.5,-0.5) circle (2pt)
(-1.5,0.5) circle (2pt)
(-.5,0.5) circle (2pt)
(.5,0.5) circle (2pt)
(1.5,0.5) circle (2pt)
;
\end{tikzpicture}
\]
\vspace{30pt}
\caption{\(\cC(\graf'')=\{\sfC_3^1, \sfC_3^3\}\), connected}
\label{figure:not all boundary cycles}
\end{subfigure}
\caption{(Non-)examples of graphs with connected curve graphs}
\label{figure:sip}
\end{figure}

\subsection{\texorpdfstring{\(UP_2\)}{UP2}-hierarchy}
For \(\graf \in \graphs\), we denote by \(UP_2(\graf)\) the subcomplex given by the union of all maximal standard subcomplexes of \(UD_2(\graf)\). 
Equivalently, \(UP_2(\graf)\coloneqq\m{UD_2(\graf)}\) in the sense of \Cref{definition:union of maximals}. 
This subcomplex depends only on the homeomorphism type of \(\graf\), as made precise in the following proposition.

\begin{proposition}\label{theorem:subdivision and UP2}
Let \(\graf\) be a (possibly non-simple) graph and \(\graf'\) a graph obtained from \(\graf\) by subdivision.
Then \(UP_2(\graf')\) is a cubical subdivision of \(UP_2(\graf)\).
In particular, \(\cI(\bar{UP_2(\graf)})\isom\cI(\bar{UP_2(\graf')})\) and the following diagram commutes: 
\[
\begin{tikzcd}
\pi_1(UP_2(\graf))\ar[r, "\m{\iota}_*"] \ar[d, "(\eta|)_*"',"\isom"] & \pi_1(UD_2(\graf))\ar[d,"\eta_*","\isom"']\\
\pi_1(UP_2(\graf'))\ar[r, "\m{\iota}_*"] & \pi_1(UD_2(\graf'))
\end{tikzcd}
\]
where \(\eta:UD_2(\graf)\to UD_2(\graf')\) is a canonical inclusion between topological spaces.
\end{proposition}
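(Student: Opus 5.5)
We reduce to a single elementary subdivision: \(\graf'\) is obtained from \(\graf\) by inserting one bivalent vertex \(\sfw\) in the interior of an edge \(\sfe=[\sfa,\sfb]\), producing edges \(\sfe_a=[\sfa,\sfw]\) and \(\sfe_b=[\sfw,\sfb]\). The general case then follows by induction, since compositions of cubical subdivisions are cubical subdivisions and the diagram is compatible with composition. Here \(\graf\) may be non-simple, so \(UD_2(\graf)\) is just the cube complex of Definition~\ref{definition:discrete configuration spaces}. We read \(UP_2(\graf)\) as the union of the product subcomplexes \(\graf_1\itimes\graf_2\) with \(\graf_1,\graf_2\) disjoint, connected, leafless, nontrivial and maximal. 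We do not use Lemma~\ref{Lem:SPSinGBG}, which presumes simplicity; we only use this combinatorial description.

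\textbf{Step 1: the combinatorial bijection.} Leafless connected subgraphs of \(\graf\) and of \(\graf'\) correspond bijectively. A subgraph \(\grafd\subset\graf\) containing \(\sfe\) corresponds to \(\grafd'\) obtained by replacing \(\sfe\) with \(\sfe_a\cup\sfe_b\). A leafless \(\grafd'\subset\graf'\) contains \(\sfe_a\) if and only if it contains \(\sfe_b\), because \(\sfw\) has valency \(2\). Disjointness is preserved: two disjoint leafless subgraphs of \(\graf'\) cannot share \(\sfw\), since \(\sfw\) lies in at most one of them. Inclusion is also preserved, so the correspondence respects maximality (Corollary~\ref{corollary:maximal}(3)). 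Therefore maximal pairs \((\graf_1,\graf_2)\) in \(\graf\) correspond exactly to maximal pairs \((\graf_1',\graf_2')\) in \(\graf'\).

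\textbf{Step 2: the subdivision.} For each such pair, \(\graf_1'\times\graf_2'\) is a cubical subdivision of \(\graf_1\times\graf_2\): the squares \(\sfe\times\sff\) are cut into \(\sfe_a\times\sff\) and \(\sfe_b\times\sff\). These subdivisions agree on overlaps, because an intersection of two such products is again a union of products of subgraphs, and the subdivision is determined edgewise. The canonical topological map \(\eta:UD_2(\graf)\to UD_2(\graf')\) is defined on cells by sending \(\sfe\times\sff\) to \((\sfe_a\cup\sfe_b)\times\sff\) and \(\{\sfe,\sfv\}\) to \(\sfe_a\cup\sfe_b\). It is defined only where \(\sfe\) is disjoint from the other cell, which is exactly where \(UP_2\) lives. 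Hence \(\eta\) restricts to a homeomorphism \(UP_2(\graf)\to UP_2(\graf')\) that is a cubical subdivision.

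The main obstacle is that \(UD_2(\graf)\) itself is \emph{not} simply subdivided by \(UD_2(\graf')\). The new vertices \(\{\sfw,\sfv\}\), with \(\sfv\) adjacent to \(\sfa\) or \(\sfb\), and the squares \(\sfe_a\times\sfe_b\)-adjacent configurations have no counterpart in \(UD_2(\graf)\). So Step~2 must be checked cell by cell: one has to verify that every cell of \(UP_2(\graf')\) lies in some \(\graf_1'\itimes\graf_2'\), and hence in the image of \(\eta\). This holds because the two factors in \(\graf_1'\itimes\graf_2'\) are disjoint and the configurations \(\{\sfw,\sfa\}\) lie in a single factor only.

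\textbf{Step 3: \(\pi_1\)-statements.} For \(\eta_*\) to be an isomorphism, we use that \(\eta\) is a homotopy equivalence. Both complexes deformation retract from \(UC_2\) under sufficient subdivision (Proposition~\ref{Abrams}). When \(\graf\) is not simple, we instead cite the known invariance of the homotopy type of \(UD_2\) under subdivision mentioned at the start of Section~\ref{section:properties}, applied to the inclusion \(\eta\) followed by the retraction \(UC_2(\graf')\to UD_2(\graf')\). The map \((\eta|)_*\) is an isomorphism because \(\eta|\) is a homeomorphism, and the square commutes because \(\eta\circ\m\iota=\m\iota\circ\eta|\) on the nose. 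Finally, a cubical subdivision induces an equivariant quasi-isometry of universal covers that maps maximal product subcomplexes onto maximal product subcomplexes and preserves their intersections. This yields \(\cI(\bar{UP_2(\graf)})\isom\cI(\bar{UP_2(\graf')})\), which can also be derived from Theorem~\ref{theorem:IsobetInt}.
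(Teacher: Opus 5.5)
Your Steps~1--2 follow the paper's route. The paper's proof consists of the correspondence between standard subgraphs of \(\graf\) and \(\graf'\), plus the remark that the conditions of \Cref{corollary:maximal} are invariant under subdivision. Your reduction to one elementary subdivision and the valency-two argument at \(\sfw\) make this explicit. The left vertical arrow, the commutativity and the \(\cI\)-isomorphism then follow as you say.

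Your description of the new cells in Step~2 is inaccurate. The cells of \(UD_2(\graf')\) with no counterpart are the vertices \(\{\sfw,\sfa\}\), \(\{\sfw,\sfb\}\), the edges \(\sfe_a\itimes\sfb\), \(\sfe_b\itimes\sfa\), the squares \(\sfe_a\itimes\sff\) with \(\sff\ni\sfb\), and so on. They are not the configurations \(\{\sfw,\sfv\}\) with \(\sfv\) adjacent to \(\sfa\) or \(\sfb\), which lie in \(\sfe\itimes\sfv\). Also, \(\eta\) subdivides all of \(UD_2(\graf)\), not only the part near \(UP_2\). The clean formulation is this: every closed cell of \(UD_2(\graf)\), viewed in \(UC_2\), is a union of cells of \(UD_2(\graf')\), and \(\eta(\graf_1\itimes\graf_2)=\graf_1'\itimes\graf_2'\) as subsets of \(UC_2\).

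The genuine gap is Step~3 when \(\graf\) is not simple. The invariance you cite from the start of \Cref{section:properties} is stated only ``provided that \(\graf\) remains simple''. \Cref{Abrams} with \(n=2\) likewise needs \(\graf\) simple. For non-simple \(\graf\), \(UD_2(\graf)\) need not be homotopy equivalent to \(UC_2(\graf)\), and the conclusion actually fails. Take \(\graf\) to be two bigons \(\sfC_2\) joined by an edge:
\begin{itemize}
\item \(UD_2(\graf)\) is the torus \(\sfC_2\itimes\sfC_2\) with two hanging edges, so \(\chi=6-10+4=0\) and \(\pi_1\cong\bbZ^2\).
\item Subdividing each bigon edge gives the simple graph \(\graf'\), two \(4\)-cycles joined by an edge, with \(\chi(UD_2(\graf'))=28-54+24=-2\). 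Consistently, \(\bbB_2(\graf')\cong\bbZ^2\ast\bbF_2\) by \Cref{Prop:free factor}.
\end{itemize}
Both complexes are aspherical, so \(\eta_*\) is not an isomorphism. This happens even though \(UP_2\) is a torus on both sides and the left arrow is an isomorphism.

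So your argument (\(UD_2(\graf)\hookrightarrow UC_2\) and \(UD_2(\graf')\hookrightarrow UC_2\) are both homotopy equivalences, hence so is \(\eta\)) proves the right-hand isomorphism exactly when \(\graf\) is simple. That is the only case used later, since \(\graphs\) consists of simple graphs. For non-simple \(\graf\) you should drop that claim rather than try to patch it. The paper's ``follow immediately'' passes over the same point.
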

\begin{proof}
There is a natural one-to-one correspondence between standard subgraphs of \(\graf\) and those of \(\graf'\); each standard subgraph of \(\graf'\) is obtained as a subdivision of a standard subgraph of \(\graf\).
Since the conclusion of \Cref{corollary:maximal} is invariant under subdivision of the underlying graph, the statements of the proposition follow immediately.
\end{proof}

\begin{lemma}\label{Lem:UP2empty}
For \(\graf\in\graphs\), the following are equivalent:
\[UP_2(\graf)=\varnothing\ \Longleftrightarrow\ \cI(UP_2(\graf))=\varnothing\ \Longleftrightarrow\ \text{\(\graf\) has no two disjoint cycles}\ \Longleftrightarrow\ \text{\(\bbB_2(\graf)\) is hyperbolic}.\]
\end{lemma}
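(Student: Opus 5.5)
We prove the chain of equivalences by linking each statement to the existence of two disjoint cycles in \(\graf\). Throughout we use \Cref{corollary:maximal}, which describes standard product subcomplexes of \(UD_2(\graf)\) as \(\graf_1\itimes\graf_2\) with \(\graf_1,\graf_2\) disjoint, connected, nontrivial and leafless.

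The first equivalence, \(UP_2(\graf)=\varnothing\Leftrightarrow\cI(UP_2(\graf))=\varnothing\), holds by definition. The vertices of \(\cI\) are the maximal product subcomplexes, and \(UP_2(\graf)\) is their union. By \Cref{lemma:Y max facts}(2) we also have \(\cI(UP_2(\graf))\isom\cI(UD_2(\graf))\).

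For the second equivalence:
\begin{itemize}
\item Suppose \(\graf\) contains disjoint cycles \(\sfC,\sfC'\). These are connected, nontrivial and leafless, so \(\sfC\itimes\sfC'\) is a standard product subcomplex. Any standard product subcomplex lies in a maximal one: by \Cref{InclusionBetweenSPSes} the inclusions correspond to inclusions of pairs of subgraphs of the finite graph \(\graf\), and there are only finitely many such pairs. Hence \(UP_2(\graf)\neq\varnothing\).
\item Conversely, suppose \(UP_2(\graf)\neq\varnothing\). Then there is a standard product subcomplex \(\graf_1\itimes\graf_2\) with \(\graf_1,\graf_2\) disjoint, nontrivial and leafless. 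A finite nontrivial leafless graph is not a tree, so each \(\graf_i\) contains a cycle. These two cycles are disjoint.
\end{itemize}

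For the third equivalence, \(UD_2(\graf)\) is a compact special square complex by \Cref{thm:GBGSpecial}. \Cref{lemma:Y max facts} gives \(\m{UD_2(\graf)}\neq\varnothing\) if and only if \(\bbB_2(\graf)\) is non-hyperbolic. As a consistency check, one direction also follows from \Cref{Prop:FreeAbelianSubgroup} with \(p=2,q=0\): two disjoint cycles give an undistorted \(\bbZ^2\), so \(\bbB_2(\graf)\) is not hyperbolic. The other direction agrees with \Cref{corollary:hyperbolicity} combined with Genevois's criterion.

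The only subtle point is the degenerate case where \(\graf\) is a path or too small, so that \(UD_2(\graf)\) might be disconnected or have trivial fundamental group. Since \(\graf\in\graphs\) is connected, \(UD_2(\graf)\) is connected whenever \(\graf\) has at least two vertices. If \(\bbB_2(\graf)\) is trivial, then \(\graf\) is a path: it contains no cycles, and the trivial group is hyperbolic. So all four statements hold simultaneously in that case as well.
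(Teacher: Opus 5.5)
Your proof is correct. Its middle step matches the paper: two disjoint cycles exist if and only if \(UD_2(\graf)\) has a standard product subcomplex, and hence a maximal one, via \Cref{corollary:maximal}.

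Where you differ is the link to hyperbolicity. The paper proves \emph{no two disjoint cycles} \(\Leftrightarrow\) \emph{\(\bbB_2(\graf)\) hyperbolic} using the graph-braid-specific \Cref{corollary:hyperbolicity} together with Genevois's classification \cite[Theorem~1.1]{Gen21GBG}. It invokes \Cref{lemma:Y max facts} only for the first equivalence. You instead connect \(UP_2(\graf)\neq\varnothing\) directly to non-hyperbolicity, using the general criterion for compact special square complexes recorded at the end of \Cref{lemma:Y max facts} (which rests on \cite[Corollary~1.2]{Gen21}). Your route is more economical. It also shows that, for \(n=2\), Genevois's graph-braid hyperbolicity criterion follows from the general cubical criterion plus the combinatorial description of product subcomplexes. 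The paper's route keeps the hyperbolicity equivalence tied to the known classification for graph braid groups, which \Cref{corollary:hyperbolicity} was set up to invoke.

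You also make explicit two points the paper leaves implicit:
\begin{itemize}
\item every standard product subcomplex lies in a maximal one, by finiteness;
\item in the degenerate case where \(\bbB_2(\graf)\) is trivial, all four statements hold; \Cref{corollary:hyperbolicity} excludes path graphs, so the paper does not cover this case explicitly.
\end{itemize}
One small point worth stating: identifying \(\bbB_2(\graf)\) with \(\pi_1(UD_2(\graf))\) uses that \(\graf\in\graphs\) is simple, i.e.\ sufficiently subdivided for \(n=2\) (\Cref{Abrams}).
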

\begin{proof}
The equivalences (1)\(\Leftrightarrow\)(2) and (3)\(\Leftrightarrow\)(4) follow from \Cref{lemma:Y max facts} and from \Cref{corollary:hyperbolicity} together with \cite[Theorem~1.1]{Gen21GBG}, respectively.
By \Cref{Lem:SPSinGBG,corollary:maximal}, \(UD_2(\graf)\) contains a maximal product subcomplex if and only if \(\graf\) contains two disjoint nontrivial leafless subgraphs, which is equivalent to the existence of two disjoint cycles. Hence, (2)\(\Leftrightarrow\)(3).
\end{proof}


If \(UP_2(\graf)\neq\varnothing\), then the following fact shows that \(UD_2(\graf)\) belongs to the subclass \(\cY_{(0)}\), defined in \Cref{definition:Ymax hierarchy}.

\begin{lemma}\label{Lem:UP_2Special}
For \(\graf\in\graphs\) with \(UP_2(\graf)\neq\varnothing\), the subcomplex \(UP_2(\graf)\) is a connected special square complex.
\end{lemma}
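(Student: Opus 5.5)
Specialness is immediate: \(UP_2(\graf)\) is a subcomplex of the special square complex \(UD_2(\graf)\) (\Cref{thm:GBGSpecial}), so it is special by \Cref{lem:SpecialSubcomplexes}. It is nonempty by assumption. Hence the whole content is connectivity, and I will prove it combinatorially in \(\graf\).

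By \Cref{lemma:properties of standard subgraphs} and \Cref{corollary:maximal}, every maximal product subcomplex has the form \(M_{\graf_1}=\graf_1\itimes\graf_1^\perp\) with \(\graf_1\in\cM(\graf)\). Each such \(M_{\graf_1}\) is connected, being a product of connected graphs. So it suffices to show that any two of them are joined by a chain of maximal product subcomplexes, consecutive ones intersecting. I will use two easy observations.

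First, suppose \(\graf_1\) and \(\graf_2\) are maximally standard and contain a common cycle \(\sfC\), or more generally \(\graf_1\cap\graf_2\) contains a vertex \(\sfv\). Then there are vertices \(\sfw_i\in\graf_i^\perp\), and I would connect the configurations \(\{\sfv,\sfw_1\}\) and \(\{\sfv,\sfw_2\}\) inside \(UP_2(\graf)\). The route is to move the second token from \(\sfw_1\) to \(\sfw_2\) while the first token stays in a maximal product containing \(\sfv\). This requires care and is where I expect the main difficulty.

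So I would instead organize the argument around configurations. A vertex \(\{\sfx,\sfy\}\) lies in \(UP_2(\graf)\) iff there are disjoint cycles (in fact leafless connected subgraphs) \(A\ni \sfx\) and \(B\ni\sfy\). Given two disjoint cycles \(\sfC,\sfD\), all configurations with one token on \(\sfC\) and one on \(\sfD\) lie in the connected torus \(\sfC\itimes\sfD\subset UP_2(\graf)\). Hence it suffices to connect any two tori \(\sfC\itimes\sfD\) and \(\sfC'\itimes\sfD'\).

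The key step is a \emph{replacement move}. If \(\sfD,\sfD'\) are both disjoint from \(\sfC\), the tori \(\sfC\itimes\sfD\) and \(\sfC\itimes\sfD'\) are connected in \(UP_2(\graf)\). If \(\sfD\cap\sfD'\neq\varnothing\), they share configurations \(\{\sfx,\sfy\}\) with \(\sfx\in\sfC\) and \(\sfy\in\sfD\cap\sfD'\). Otherwise, move one token along a path in \(\graf\setminus\sfC\) from \(\sfD\) to \(\sfD'\) while the other token sits on \(\sfC\). For this I need each intermediate configuration to lie in \(UP_2\), which requires a cycle through the moving token disjoint from a cycle through the fixed token.

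This is the main obstacle. The first thing I would try is to use the leafless component of \(\graf\setminus\sfC\) containing \(\sfD\) (a standard subgraph). If \(\sfD\) and \(\sfD'\) lie in the same component \(\grafl\) of \(\sfC^\perp\), then \(\sfC\itimes\grafl\) is a standard, hence connected, product subcomplex containing both tori. If they lie in different components, I pass through a cycle of \(\sfC\)'s own side instead.

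To reach an arbitrary \(\sfC'\itimes\sfD'\), I would use that \(\graf\) is connected and the minimal simplicial model: every vertex of \(\graf\) lies on a path between cycles. I would then induct along a chain of cycles \(\sfC=\sfC_0,\sfC_1,\dots\), alternately replacing one factor, using \(\sfC_i\cup\sfC_{i+1}\) when they meet. If this cycle-chaining cannot be made to work in general (the curve graph \(\cC(\graf)\) may be disconnected, cf.\ \Cref{figure:nonexample of sip}), the fallback is to exploit the fact that \(UP_2(\graf)\) may be connected even when \(\cI\) is not. In that case I would connect tori through non-standard overlaps: a single shared vertex configuration obtained by sliding a token around a common essential vertex.
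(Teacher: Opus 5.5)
Your specialness argument is fine, but connectivity, which is the entire content of the lemma, is never established. Your replacement move is justified only when \(\sfD\cap\sfD'\neq\varnothing\) or when \(\sfD,\sfD'\) lie in one component of \(\sfC^\perp\). The different-component case is left as ``pass through a cycle of \(\sfC\)'s own side''. The global chaining along cycles is, as you note yourself, unavailable in general because \(\cC(\graf)\) can be disconnected. The fallback of sliding a token around an essential vertex comes with no argument that the intermediate configurations stay in \(UP_2(\graf)\). So the proposal does not prove the lemma. A smaller point: for a vertex \(\{\sfx,\sfz\}\) of \(UP_2(\graf)\), \(\sfx\) and \(\sfz\) need not lie on disjoint cycles, only on disjoint connected leafless subgraphs; for example, \(\sfx\) may sit on a bridge between cycles. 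Your reduction to tori therefore needs the separate observation that every maximal product subcomplex is connected and contains a torus.

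The missing idea is that no token-sliding is needed at all: any two maximal product subcomplexes \(M=\graf_1\itimes\graf_1^\perp\) and \(M'=\graf_1'\itimes(\graf_1')^\perp\) already share a vertex. Your ``first observation'' was one step away from this. If \(\sfv\in\graf_1\cap\graf_1'\) and there is also a vertex \(\sfw\in\graf_1^\perp\cap(\graf_1')^\perp\), then \(\{\sfv,\sfw\}\in M\cap M'\). Otherwise one of these two intersections is empty; say \(\graf_1\cap\graf_1'=\varnothing\). The other case follows by replacing \(\graf_1,\graf_1'\) with \(\graf_1^\perp,(\graf_1')^\perp\), which are again maximally standard and define the same \(M,M'\). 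A nontrivial leafless subgraph disjoint from \(B\) survives the iterative pruning of \(\graf\setminus B\). Hence \(\graf_1\subset(\graf_1')^\perp\) and \(\graf_1'\subset\graf_1^\perp\), so \(\{\sfx,\sfz\}\in M\cap M'\) for any vertices \(\sfx\in\graf_1\) and \(\sfz\in\graf_1'\). Since each maximal product subcomplex is connected and any two of them intersect, their union \(UP_2(\graf)\) is connected. This two-case argument is exactly the paper's proof.
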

\begin{proof}
It is clearly a square complex, and by \Cref{lem:SpecialSubcomplexes} it is special. 
Thus it suffices to prove connectivity. 

Let \(M=\graf_1\itimes\graf_1^\perp\) and \(M'=\graf_1'\itimes(\graf_1')^\perp\) be maximal product subcomplexes of \(UD_2(\graf)\).
If both \(\graf_1\cap\graf_1'\) and \(\graf_1^\perp\cap{\graf_1'}^\perp\) are nonempty, then 
\(M \cap M'\) contains \((\graf_1\cap\graf_1')\itimes(\graf_1^\perp\itimes{\graf_1'}^\perp)\), which is nonempty.
Otherwise, we may assume that \(\graf_1\cap\graf_1'=\varnothing\).
Since \(\graf_1\) and \(\graf_1'\) are leafless subgraphs of \(\graf\setminus\graf_1'\) and \(\graf\setminus\graf_1\), respectively, we have \(\graf_1\subset{\graf_1'}^\perp\) and \(\graf_1'\subset\graf_1^\perp\), and thus,
\[
M\cap M'
= (\graf_1\cap {\graf_1'}^\perp)\itimes (\graf_1^\perp\cap\graf_1') = \graf_1\itimes\graf_1'\neq\varnothing.\qedhere
\]
\end{proof}


Motivated by \Cref{definition:Ymax hierarchy}, we introduce an analogous hierarchy on \(\graphs\), organized according to how well \(UP_2(\graf)\) embeds into \(UD_2(\graf)\).
Since the effect of removing leaves from the underlying graph on graph \(2\)-braid groups was analyzed in the previous section, we exclude that case here as well.

\begin{definition}[\(UP_2\)-hierarchy]\label{definition:up2hierarchy}
Define the subclass \(\graphs_{(0)}\subset\graphs\) by
\[
\graphs_{(0)} \coloneqq 
\{\graf \in \graphs : \graf \text{ is leafless and contains at least two disjoint cycles}\},
\]
and the following further subclasses of \(\graphs_{(0)}\) for \(1\le i\le 5\):
\(
\graf\in\graphs_{(i)}\Leftrightarrow UD_2(\graf)\in\cY_{(i)}\), 
where the classes \(\cY_{(i)}\) are given in \Cref{definition:Ymax hierarchy}. 
These subclasses satisfy the inclusions \(\graphs_{(5)}\subset\graphs_{(4)}\subset\graphs_{(2)}\cap\graphs_{(3)}\) and \(\graphs_{(2)}\cup\graphs_{(3)}\subset\graphs_{(1)}\).
\end{definition}

Let \(\grafl_1\) and \(\grafl_2\) be two disjoint subgraphs of a graph \(\graf\).
We say that \(\grafl_1\) and \(\grafl_2\) are \emph{separable} if there are two disjoint leafless subgraphs \(\graf_1\) and \(\graf_2\) such that \(\grafl_1\subset\graf_1\) and \(\grafl_2\subset\graf_2\).
For instance, a vertex \(\sfv\) and an edge \(\sfe\) are separable if and only if \(\{\sfv,\sfe\}\) lies in \(UP_2(\graf)\).

We now introduce three combinatorial conditions for \(\graf\in\graphs_0\), which reinterpret several subclasses in \Cref{definition:up2hierarchy} in purely combinatorial terms:
\begin{itemize}
\item [(A)]\label{Item:(A)} No triangle contains a bivalent vertex.
\item [(B)] Every pair of disjoint edges is separable.
\item [(C)] If two disjoint edges \(\sfe_1=[\sfv_1,\sfw_1]\), \(\sfe_2=[\sfv_2,\sfw_2]\) satisfy that \((\sfv_1,\sfe_2)\) and \((\sfe_1,\sfv_2)\) are separable, then \(\sfe_1\) and \(\sfe_2\) are separable.
\end{itemize}

We first relate condition (A) to the cubical structure of \(UD_2(\graf)\). Suppose that \(\graf\) does not satisfy condition (A) so that there exists a \(3\)-cycle in \(\graf\) with vertices \(\sfv_1,\sfv_2,\sfv_3\) and \(\val_{\graf}(\sfv_1)=2\).
Then the edge \(e=\sfv_1\itimes [\sfv_2,\sfv_3]\) in \(UD_2(\graf)\) is not a face of any square.
Conversely, such an edge exists only if \(\graf\) fails to satisfy condition (A). In particular, \(\graf\) satisfies condition (A) if and only if \(UD_2(\graf)\) is the union of all squares.

We next compare the three conditions. It is immediate that (B) implies (C). Moreover, (B) also implies (A).
Indeed, if (A) fails, then there is a \(3\)-cycle with vertices \(\sfv_1,\sfv_2,\sfv_3\) such that \(\val_\graf(\sfv_1)=2\).
Since \(\graf\) is leafless and connected and has at least two disjoint cycles, one of \(\sfv_2\) or \(\sfv_3\), say \(\sfv_2\), has valency at least \(3\).
Hence there is an edge \(\sfe\) joining \(\sfv_2\) and a vertex away from \(\sfv_1\) and \(\sfv_3\).
The two disjoint edges \(\sfe\) and \([\sfv_1,\sfv_3]\) are then not separable, contradicting (B).

\begin{lemma}\label{Lem:CombinatorialConditions}
For a graph \(\graf\in\graphs_{(0)}\), we have the following equivalences: 
\begin{enumerate}
\item \(\graf\in\graphs_{(3)}\) if and only if condition (C) holds for \(\graf\).
\item \(\graf\in\graphs_{(4)}\) if and only if \(\graf\in\graphs_{(5)}\) if and only if condition (B) holds for \(\graf\).
\end{enumerate}
\end{lemma}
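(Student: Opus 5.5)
The plan is to translate each class condition into a statement about which cells of \(UD_2(\graf)\) lie in \(UP_2(\graf)\), and then use the description of maximal product subcomplexes from \Cref{corollary:maximal}. The basic observation is that a cell \(\sigma_1\itimes\sigma_2\) of \(UD_2(\graf)\), with \(\sigma_i\) a vertex or an edge of \(\graf\), lies in \(UP_2(\graf)\) if and only if \(\sigma_1\) and \(\sigma_2\) are separable. Indeed, if \(\sigma_i\subset\graf_i\) with \(\graf_1,\graf_2\) disjoint and leafless, we may pass to the components containing \(\sigma_i\). These components are standard, so \(\graf_1\itimes\graf_2\) is a standard product subcomplex, and it lies in some maximal one by \Cref{lemma:properties of standard subgraphs}(3). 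The converse is immediate.

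For (2), I will first show \(\graphs_{(5)}\subset\graphs_{(4)}\) and that \(\graphs_{(4)}\) forces (B). Every pair of disjoint edges \(\sfe_1,\sfe_2\) spans a square \(\sfe_1\itimes\sfe_2\) of \(UD_2(\graf)\). If all squares lie in \(UP_2(\graf)\), then by the observation above every such pair is separable, which is (B). Conversely, assume (B). We already know that (B) implies (A), so \(UD_2(\graf)\) is the union of its squares. Every square is in \(UP_2(\graf)\) by (B) and the observation, hence \(UP_2(\graf)=UD_2(\graf)\) and \(\graf\in\graphs_{(5)}\). Combined with \(\graphs_{(5)}\subset\graphs_{(4)}\), this closes the cycle of equivalences.

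For (1), \(\m\iota\) is a local isometry exactly when, for every vertex \(x=\{\sfv_1,\sfv_2\}\) of \(UP_2(\graf)\), the subgraph \(\Lk(UP_2(\graf),x)\) is a full subgraph of \(\Lk(UD_2(\graf),x)\). Since links are graphs, this means the following: whenever two edges at \(x\) lie in \(UP_2(\graf)\) and span a square of \(UD_2(\graf)\), that square lies in \(UP_2(\graf)\). Edges of \(UD_2(\graf)\) at \(x\) have the form \(\sfe_1\itimes\sfv_2\) or \(\sfv_1\itimes\sfe_2\), with \(\sfe_i\) incident to \(\sfv_i\). Two edges of the same type never span a square. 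Two edges of different types span the square \(\sfe_1\itimes\sfe_2\) precisely when \(\sfe_1\cap\sfe_2=\varnothing\).

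Writing \(\sfe_i=[\sfv_i,\sfw_i]\), the hypothesis of fullness is therefore that \((\sfe_1,\sfv_2)\) and \((\sfv_1,\sfe_2)\) are separable, and the required conclusion is that \(\sfe_1,\sfe_2\) are separable. Letting \(x\) range over all vertices gives exactly condition (C), so (C) is equivalent to local convexity of \(UP_2(\graf)\). Membership in \(\graphs_{(0)}\) already guarantees \(UD_2(\graf)\in\cY_{(0)}\) by \Cref{Lem:UP_2Special}, so this yields \(\graf\in\graphs_{(3)}\).

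The main obstacle is justifying the cell criterion carefully, specifically that the enlargements \(\graf_i\) can be taken connected and extend to a maximal product subcomplex containing that cell. I would handle this by invoking \Cref{corollary:maximal} and \Cref{lemma:properties of standard subgraphs}(3). One also has to check that a vertex of \(UP_2(\graf)\) counts only when it lies in some maximal product subcomplex; the link argument is applied only at such vertices, so this causes no issue.
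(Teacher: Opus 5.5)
Your proposal is correct and follows the same route as the paper. Part~(2) is exactly the paper's argument: (B)\(\Rightarrow\)(A) gives that \(UD_2(\graf)\) is the union of its squares, and separability of the disjoint edges puts each square in \(UP_2(\graf)\). For part~(1), your link analysis at a vertex \(\{\sfv_1,\sfv_2\}\) spells out what the paper leaves as ``immediate from the definitions of a local isometry and separability.''
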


\begin{proof}
\noindent{(1)}\quad This is immediate from the definitions of a local isometry and separability.

\medskip

\noindent{(2)}\quad Suppose that condition (B) holds. Then (A) holds as observed above, so \(UD_2(\graf)\) is the union of its squares. Moreover, (B) ensures that every square is contained in some standard product subcomplex. Thus, \(UD_2(\graf)=UP_2(\graf)\), proving \(\graf\in\graphs_{(4)}\cap\graphs_{(5)}\).

Conversely, if (B) fails, then there exists a square in \(UD_2(\graf)\) which is not contained in any standard product subcomplex. Hence, \(UP_2(\graf)\) omits at least one square, so \(\graf\notin \graphs_{(4)}\cup\graphs_{(5)}\).
\end{proof}

\begin{remark}
We do not know the exact classification of graphs satisfying condition (B).
However, there are infinitely many graphs which are \(1\)-skeletons of polytopes and satisfying condition (B).
\end{remark}

The following examples illustrate that, apart from the equality \(\graphs_{(4)}=\graphs_{(5)}\), the classes in the \(UP_2\)-hierarchy are genuinely distinct.  
Each example highlights a specific geometric obstruction to the inclusion \(UP_2(\graf) \hookrightarrow UD_2(\graf)\) satisfying stronger properties, such as local convexity, \(\pi_1\)-injectivity, or inducing a free factor decomposition. Taken together, these examples show that no further identifications occur in the hierarchy.

\begin{example}[\(\graphs_{(4)}=\graphs_{(5)}\), dodecahedral graph]\label{example:dodecahedral graph 2}
Let \(\graf\) be the \(1\)-skeleton of the dodecahedron.
Then any two disjoint edges are separable, and thus \(UP_2(\graf)=UD_2(\graf)\).
\end{example}



\begin{example}[\(\graphs_{(2)}\cap \graphs_{(3)}\setminus \graphs_{(5)}\)]\label{example:bog2}
Consider the graph \(\graf\) shown in \Cref{figure:example of sip}. 
Since condition~(C) holds for \(\graf\), we have \(\graf\in\graphs_{(3)}\). 
However, \(\graf\) contains a \(3\)-cycle with bivalent vertices, so condition~(A), and hence condition~(B), fails. 
Indeed, \(UP_2(\graf)\) omits the squares \(\sfe_i\itimes\sfe_0'\) and \(\sfe_i\itimes\sfe_i'\) for \(1\le i\le 3\). Thus \(\graf\notin\graphs_{(5)}\).

Let \(\sfe_0'=[\sfw_0,\sfw_0']\). 
Each square \(\sfe_i\itimes\sfe_i'\) (\(\sfe_i\itimes\sfe_0'\), resp.) contains the edge \(\sfv_i\itimes\sfe_i'\) (\(\sfe_i\itimes\sfw_0\), resp.), and these edges are not contained in any other squares of \(UD_2(\graf)\). 
Collapsing these edges together with the unique squares containing them yields a cube complex \(UD_2'(\graf)\), which is a deformation retract of \(UD_2(\graf)\). 
Since \(UP_2(\graf)\) contains all squares of \(UD_2'(\graf)\), it follows that
\(\bbB_2(\graf)\cong \pi_1(UD_2'(\graf))\cong \pi_1(UP_2(\graf))\ast \bbF_N\),
and hence \(\graf\in\graphs_{(2)}\).
\end{example}

\begin{figure}[ht]
\centering
\begin{subfigure}{0.45\textwidth}
\[
\begin{tikzpicture}[baseline=-.5ex,scale=0.8,rotate=-90,transform shape,every node/.style={rotate=90}]
\draw[thick] (-30:2) node (A) {} node[right] {\(\sfv_3\)} -- node[midway,left] {\(\sfe_3\)} (-30:1) -- node[midway,below] {\(\sfe_2''\)} (90:1) -- node[midway,above] {\(\sfe_1\)} (90:2) node (B) {} node[above] {\(\sfv_1\)} (90:1) -- (210:1) node[midway,above] {\(\sfe_3''\)} -- node[midway,left] {\(\sfe_2\)} (210:2) node (C) {} node[right] {\(\sfv_2\)} (210:1) -- (-30:1) node[midway,left] {\(\sfe_1''\)};
\draw[fill] (-30:1) circle (2pt) node[left] {\(\sfw_3\)} (90:1) circle (2pt) node[above] {\(\sfw_1\)} (210:1) circle (2pt) node[left] {\(\sfw_2\)};
\draw (-30:3.2) node {\(\sfe_3'\)} (90:3.2) node {\(\sfe_1'\)} (210:3.2) node {\(\sfe_2'\)};
\grape[-30]{A};
\grape[90]{B};
\grape[210]{C};
\end{tikzpicture}
\]
\caption{Non-locally convex \(UP_2(\graf)\)}
\label{figure:non quasi-isometry}
\end{subfigure}
\begin{subfigure}{0.45\textwidth}
\[
\begin{tikzpicture}[baseline=-.5ex,scale=0.9,transform shape]
\useasboundingbox (-3,-3) rectangle (1,2);
\foreach \i in {1,2,3,4,5} {
\draw[fill] ({\i*72+180}:1) circle (2pt);
\draw[thick] ({\i*72+180}:1) -- ({(\i+1)*72+180}:1) -- ({(\i+3)*72+180}:1);
}
\draw[thick] (-2,0) node[above] {\(\sfv\)} -- node[midway,above] {\(\sfe\)} (-1,0) node[above] {\(\sfw\)};
\draw[fill] (-2,0) circle (2pt) node (A) {};
\grape[180]{A};
\end{tikzpicture}
\]
\caption{Non-free factor \(\pi_1(UP_2(\graf))\)}
\label{figure:non free factor}
\end{subfigure}
\begin{subfigure}{0.4\textwidth}
\[
\begin{tikzpicture}[baseline=-.5ex,scale=0.8,rotate=-90,transform shape,every node/.style={rotate=90}]
\draw[thick] (-30:2) node (A) {} node[right] {\(\sfv_3\)} -- node[midway,left] {\(\sfe_3\)} (-30:1) -- node[midway,below] {\(\sfe_2''\)} (90:1) -- node[midway,above] {\(\sfe_1\)} (90:2) node (B) {} node[above] {\(\sfv_1\)} (90:1) -- (210:1) node[midway,above] {\(\sfe_3''\)} -- node[midway,left] {\(\sfe_2\)} (210:2) node (C) {} node[right] {\(\sfv_2\)} (210:1) -- (-30:1) node[midway,left] {\(\sfe_1''\)};
\draw[fill] (-30:1) circle (2pt) node[left] {\(\sfw_3\)} (90:1) circle (2pt) node[above] {\(\sfw_1\)} (210:1) circle (2pt) node[left] {\(\sfw_2\)};
\draw (-30:3.2) node {\(\sfe_3'\)} (210:3.2) node {\(\sfe_2'\)};
\grape[-30]{A};
\grape[210]{C};
\begin{scope}[yshift=3cm]
\foreach \i in {1,2,3,4,5} {
\draw[fill] ({\i*72-90}:1) circle (2pt);
\draw[thick] ({\i*72-90}:1) -- ({(\i+1)*72-90}:1) -- ({(\i+3)*72-90}:1);
}
\end{scope}
\end{tikzpicture}
\]
\caption{Non-locally convex, non-free factor \(UP_2(\graf)\)}
\label{figure:only (1)}
\end{subfigure}
\begin{subfigure}{0.4\textwidth}
\[
\begin{tikzpicture}[baseline=-.5ex,scale=0.9,transform shape]
\useasboundingbox (-2,-3) rectangle (2,2);
\draw[thick] (-1.7,0) -- (-0.5,0) node[midway,below] {\(\sfe_2\)} (-0.5,1.5) arc (90:270:1.2 and 1.5) node[pos=0.25,above left] {\(\sfe_1\)} node[pos=0.75,below left] {\(\sfe_3\)};
\draw[thick] (1.7,0) -- (0.5,0) node[midway,below] {\(\sfe_2'\)} (0.5,1.5) arc (90:-90:1.2 and 1.5) node[pos=0.25,above right] {\(\sfe_1'\)} node[pos=0.75,below right] {\(\sfe_3'\)};
\draw[thick,fill] 
(-0.5,1.5) circle (2pt) node[above left] {\(\sfv_1\)}
(-0.5,0) circle (2pt) node[above left] {\(\sfv_2\)}
(-0.5,-1.5) circle (2pt) node[below left] {\(\sfv_3\)}
(-1.7,0) circle (2pt) node[left] {\(\sfv\)}
(0.5,1.5) circle (2pt) node[above right] {\(\sfw_1\)}
(0.5,0) circle (2pt) node[above right] {\(\sfw_2\)}
(0.5,-1.5) circle (2pt) node[below right] {\(\sfw_3\)}
(1.7,0) circle (2pt) node[right] {\(\sfw\)}
(0,2) circle (2pt) node[above] {\(\sfz_1\)}
(0,0.5) circle (2pt) node[above] {\(\sfz_2\)}
(0,-1) circle (2pt) node[above] {\(\sfz_3\)}
;
\draw[thick] (0,1.5) circle (0.5 and 0.5)
(0,0) circle (0.5 and 0.5)
(0,-1.5) circle (0.5 and 0.5);
\draw (0,-1.5) node {\(\sfC_3^3\)}
(0,0) node {\(\sfC_3^2\)}
(0,1.5) node {\(\sfC_3^1\)};
\end{tikzpicture}
\]
\caption{Non-\(\pi_1\)-injective}
\label{figure:only (0)}
\end{subfigure}
\caption{Counterexamples}
\label{figure:counterexamples}
\end{figure}

\begin{example}[\(\graphs_{(2)}\setminus \graphs_{(3)}\)]\label{example:not quasiisometry}
Let \(\graf\) be obtained from a graph consisting of a single \(3\)-cycle and three leaves by attaching a \(3\)-cycle at each leaf (see \Cref{figure:non quasi-isometry}).
Then \(UD_2(\graf)\) has exactly three maximal product subcomplexes corresponding to pairs of attached \(3\)-cycles and their complements.
Consequently, \(UP_2(\graf)\) omits nine squares of \(UD_2(\graf)\), namely \(\sfe_i\itimes\sfe_i'\), \(\sfe_i\itimes\sfe_i''\), and \(\sfe_i\itimes\sfe_{i+1}\). 
The absence of the squares \(\sfe_i\itimes\sfe_{i+1}\) violates the local convexity of \(UP_2(\graf)\) since it contains the edges \(\sfv_i\itimes\sfe_{i+1}\) and \(\sfe_i\itimes\sfv_{i+1}\). Hence \(\graf\not\in\graphs_{(3)}\).

However, each square \(\sfe_i\itimes\sfe_i'\) (\(\sfe_i\itimes\sfe_i''\), resp.) contains an edge \(\sfv_i\itimes\sfe_i'\) (\(\sfw_i\itimes\sfe_i''\), resp.) not contained in any other square of \(UD_2(\graf)\). Hence as before, we take an elementary collapsing on these pairs to obtain a subcomplex \(UD_2'(\graf)\).

In \(UD_2'(\graf)\), each square of the form \(\sfe_i\itimes\sfe_{i+1}\) has an edge \(\sfw_i\itimes \sfe_{i+1}\)\footnote{One may choose \(\sfe_i\itimes\sfw_{i+1}\) instead.} not contained in any other squares of \(UD_2'(\graf)\).
By collapsing these cells again, we finally obtain a subcomplex \(UD_2''(\graf)\), which is homotopy equivalent to \(UD_2(\graf)\), and moreover, \(UP_2(\graf)\) has all squares of \(UD_2''(\graf)\).
Therefore, \(\bbB_2(\graf)\isom\pi_1(UD_2''(\graf))\isom\pi_1(UP_2(\graf))\ast\bbF_N\) and so \(\graf\in\graphs_{(2)}\).
\end{example}

\begin{example}[\(\graphs_{(3)}\setminus \graphs_{(2)}\)]\label{example:(3) but not (2)}
Let \(\graf\) be obtained by connecting a \(3\)-cycle \(\sfC_3\) and the complete graph \(\sfK_5\) by an edge \(\sfe=[\sfv,\sfw]\) (see \Cref{figure:non free factor}).
Then \(UD_2(\graf)\) has a unique maximal product subcomplex, namely, \(UP_2(\graf)=\sfC_3\itimes\sfK_5\). It is then straightforward to verify that condition~(C) holds and thus \(\graf\in\graphs_{(3)}\).

Suppose, for contradiction, that \(\graf\in\graphs_{(2)}\). Then
\[\bbB_2(\graf)\isom \pi_1(UP_2(\graf))\ast\bbF_N\isom (\bbZ\times\bbF_6)\ast\bbF_N\ \text{ for some } N\ge0.\]
By \Cref{Lem:embedding}, \(\bbB_2(\sfK_5)\), which is a surface group as observed in the proof of \Cref{Non-planar}, embeds undistortedly in \(\bbB_2(\graf)\). 
If \((\bbZ\times\bbF_6)\ast\bbF_N\) contained such a surface subgroup, then the Kurosh subgroup theorem would imply that it lies in a conjugate of \(\bbZ\times\bbF_6\). However, this contradicts a theorem of Baumslag--Roseblade \cite{BS84}, which
states that any finitely presented subgroup of \(\bbZ\times\bbF_6\) is either free or virtually a direct product of free groups. Thus \(\graf\notin\graphs_{(2)}\).

Indeed, if there were a quasi-isometry between \(\bbB_2(\graf)\) and \((\bbZ\times\bbF_6)\ast\bbF_N\), then it would induce a quasi-isometric embedding of a surface group into \((\bbZ\times\bbF_6)\ast\bbF_N\). 
By \cite[Lemma~3.2]{PW02}, the image of such an embedding must lie within a bounded neighborhood of a conjugate of \(\bbZ\times\bbF_6\). 
However, \cite{BS08} shows that the hyperbolic plane does not quasi-isometrically embed into a product of a tree with \(\bbR\). 
Therefore, \(\bbB_2(\graf)\) is not even quasi-isometric to \(\pi_1(UP_2(\graf))\ast\bbF_N\).
\end{example}


\begin{example}[\(\graphs_{(1)}\setminus (\graphs_{(2)}\cup \graphs_{(3)})\)]\label{example:only (1)}
Let \(\graf\) be as in \Cref{figure:only (1)}.
Then \(UP_2(\graf)\) is the union of three maximal product subcomplexes corresponding to pairs of subgraphs separated by \(\sfe_1,\sfe_2\), and \(\sfe_3\), and contains the edges \(\sfv_i\itimes\sfe_{i+1}\) and \(\sfe_i\itimes\sfv_{i+1}\) but not the squares \(\sfe_i\itimes\sfe_{i+1}\). Therefore the local convexity fails and \(\graf\not\in\graphs_{(3)}\).

By the same elementary collapsing argument as in \Cref{example:not quasiisometry}, we obtain a deformation retract \(UD_2''(\graf)\subset UD_2(\graf)\) such that \(UP_2(\graf)\to UD_2''(\graf)\) is a local isometry.
Hence \(\iota_*:\pi_1(UP_2(\graf))\to \pi_1(UD_2''(\graf))\isom\bbB_2(\graf)\) is an injection, and therefore \(\graf\in\graphs_{(1)}\).

Since every omitted square lies in the subcomplexes \(\sfe_1\itimes(\sfK_5\setminus\sfv_1)\) or \(UD_2(\sfK_5)\), we have 
\[UD_2''(\graf) = \{\text{some edges}\}\cup UP_2(\graf) \cup (\sfe_1\itimes (\sfK_5\setminus\sfv_1)) \cup UD_2(\sfK_5),\]
which implies that \(\bbB_2(\graf)\isom \bbF_M\ast\pi_1(UP_2(\graf))\ast_{\bbF_3} \bbB_2(\sfK_5)\) for some \(M\ge 0\).

Suppose now that \(\graf\in\graphs_{(2)}\). 
Then there would exist an isomorphism
\[f:\bbF_M\ast\pi_1(UP_2(\graf))\ast_{\bbF_3}\bbB_2(\sfK_5)\longrightarrow\pi_1(UP_2(\graf))\ast\bbF_N\]
for some \(N\ge0\), preserving the distinguished subgroup \(\pi_1(UP_2(\graf))\) (the image of \(\iota_*\) in both the domain and the codomain).
On the one hand, the image of \(\bbB_2(\sfK_5)\) in the quotient
\(\bbB_2(\graf)\big/\langle\!\langle \pi_1(UP_2(\graf))\rangle\!\rangle\)
is nontrivial. 
On the other hand, since a surface group \(\bbB_2(\sfK_5)\) can only embed into a conjugate of \(\pi_1(UP_2(\graf))\), the image of \(f(\bbB_2(\sfK_5))\) in the quotient would be trivial. 
This contradiction shows that \(\graf\notin\graphs_{(2)}\).
\end{example}

\begin{example}[\(\graphs_{(0)}\setminus \graphs_{(1)}\)]\label{example:only (0)}
Let \(\graf\) be obtained from the complete bipartite graph \(\sfK_{2,3}\) by replacing bivalent vertices with \(3\)-cycles \(\sfC_3^i\) (see \Cref{figure:only (0)}).
Then \(UD_2(\graf)\) has exactly three maximal product subcomplexes \(\sfC_3^i\itimes (\sfC_3^i)^\perp\).
Moreover, \(UP_2(\graf)\) omits all cells of \(UD_2(\graf)\) incident to the vertex \(v=\{\sfv,\sfw\}\). More precisely, 
\[UD_2(\graf) = UP_2(\graf) \cup (\st_\graf(\sfv)\itimes \st_\graf(\sfw)),\]
where the intersection \(UP_2(\graf) \cap (\st_\graf(\sfv)\itimes \st_\graf(\sfw))\) consists of the \(12\)-edges \(\sfv_i\itimes \sfe_j'\) and \(\sfe_i\itimes \sfw_j\) for \(1\le i\neq j\le 3\).
These edges form a \(12\)-cycle \(\sfC_{12}\), which is locally isometrically embedded in \(UP_2(\graf)\) but null-homotopic in \(\st_\graf(\sfv)\itimes\st_\graf(\sfw)\).
Consequently, the inclusion \(\sfC_{12}\hookrightarrow UP_2(\graf)\) induces an injective homomorphism \(\bbZ\isom\pi_1(\sfC_{12})\rightarrow \pi_1(UP_2(\graf))\), whose image becomes trivial in \(\bbB_2(\graf)\) under \(\iota_*:\pi_1(UP_2(\graf))\to\bbB_2(\graf)\). Therefore, \(\iota_*\) is not injective and hence \(\graf\not\in\graphs_{(1)}\).
\end{example}

In summary, \Cref{figure:Venn diagram} collects representative examples for each level of the \(UP_2\)-hierarchy.


\begin{figure}[ht]
\[
\begin{tikzpicture}[baseline=-.5ex, scale=0.9, transform shape]
\draw[rounded corners] (-2,3.5) -- (-5.5,3.5) -- (-5.5,-3.5) -- (8.5,-3.5) -- (8.5,3.5) -- (2,3.5);
\draw (7.5, 1) node {\(\sfK_5\)};
\draw (7.5, 0) node {\(\sfK_{3,3}\)};
\draw (7.5, -1) node {\(\grafl(0,\ell\ge2)\)};
\draw (0,3.5) node {\(\{\graf\in\graphs: \graf\text{ is leafless}\}\)};
\draw[rounded corners] (-1.05,3) -- (-5,3) -- (-5,-3) -- (6.5,-3) -- (6.5,3) -- (1.05,3);
\draw (0,3) node {\(\graphs_{(0)}\)};
\draw (0,-0.5) node {Ex.~\ref{example:dodecahedral graph 2}} ++(130:1.5 and 1) arc (130:410:1.5 and 1);
\draw (0,0.3) node {\(\graphs_{(4)}=\graphs_{(5)}\)};
\draw (-1.25,-0.25) ++(100:3 and 2) arc (100:440:3 and 2);
\draw (-1.25,1.75) node {\(\graphs_{(2)}\)};
\draw (1.25,-0.25) ++(100:3 and 2) arc (100:440:3 and 2);
\draw (1.25,1.75) node {\(\graphs_{(3)}\)};
\draw (0,1) node {Ex.~\ref{example:bog2}};
\draw (-3,0) node {Ex.~\ref{example:not quasiisometry}};
\draw (3,0) node {Ex.~\ref{example:(3) but not (2)}};
\draw[rounded corners] (-0.75,2.5) -- (-4.5,2.5) -- (-4.5,-2.5) -- (4.5,-2.5) -- (4.5,2.5) -- (0.75,2.5);
\draw (0,2.5) node {\(\graphs_{(1)}\)};
\draw (3,2) node {Ex.~\ref{example:only (1)}};
\draw (5.5,0) node {Ex.~\ref{example:only (0)}};
\end{tikzpicture}
\]
\caption{Examples of \(UP_2\)-hierarchy}
\label{figure:Venn diagram}
\end{figure}

Despite the preceding examples, a complete combinatorial characterization of graphs in the subclasses \(\graphs_{(1)}\) and \(\graphs_{(2)}\) remains unclear. 
These observations lead naturally to the following question.

\begin{question}\label{Qn:LocalConvexity}
Which graphs belong to \(\graphs_{(1)}\) or \(\graphs_{(2)}\)? 
In particular, does there exist a planar graph in \(\graphs_{(1)}\setminus\graphs_{(2)}\)?
\end{question}


In the context of the \(UP_2\)-hierarchy, a natural next step is to address \Cref{Question:DefiningQII}. More precisely, one may ask whether the restriction of \(\m{(\cdot)}\) to the subclass \(UD_2(\graphs_{(0)})=\{UD_2(\graf): \graf\in\graphs_{(0)}\}\) defines a (complete) quasi-isometry invariant in the sense of \Cref{Definition:QIinvariant}. 

Clearly, the restriction to \(UD_2(\graphs_{(4)})(=UD_2(\graphs_{(5)}))\) is the identity map and therefore trivially defines a complete quasi-isometry invariant. We now establish quasi-isometry invariance for a further subclass.

\begin{lemma}\label{Lem:UP_2One-ended}
Let \(\graf\in\graphs\).
Suppose that \(UD_2(\graf)\) (equivalently, \(UP_2(\graf)\)) satisfies the standard intersection property.
Then both \(\cI({UP_2(\graf)})\) and \(\cI(\bar{UP_2(\graf)})\) are connected, and \(\cI({UP_2(\graf)})\) admits the structure of a developable complex of groups whose development is \(\cI(\bar{UP_2(\graf)})\). In particular, \(\pi_1(UP_2(\graf))\) is thick, and hence one-ended.
\end{lemma}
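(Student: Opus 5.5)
The plan is to reduce everything to \Cref{lem:Connected} applied to \(Y=UD_2(\graf)\). There are two hypotheses to verify: \(\m Y=UP_2(\graf)\) is nonempty and connected, and it satisfies both the standard intersection property and the embedded product property.

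First, I will handle the degenerate case. If \(UP_2(\graf)=\varnothing\), then by \Cref{Lem:UP2empty} the intersection complexes are empty and there is nothing to prove beyond vacuous statements. I will therefore tacitly assume \(\graf\) has two disjoint cycles, so that \(UP_2(\graf)\ne\varnothing\). Connectivity of \(UP_2(\graf)\) is then exactly \Cref{Lem:UP_2Special}.

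Next, I will check the two properties.
\begin{itemize}
\item The standard intersection property is assumed. By the remark in \Cref{definition:SIP_EPP}, it holds for \(UD_2(\graf)\) if and only if it holds for \(UP_2(\graf)\), because \(UP_2(\graf)\) is connected.
\item The embedded product property holds for \(UD_2(\graf)\) by \Cref{corollary:maximal}. It passes to \(UP_2(\graf)\) by the same remark, or directly, since product subcomplexes of \(UP_2(\graf)\) are product subcomplexes of \(UD_2(\graf)\) via \Cref{lemma:Y max facts}.
\end{itemize}
With these in hand, \Cref{lem:Connected} yields the following: \(\cI(UP_2(\graf))\isom\cI(UD_2(\graf))\) is connected, \(\cI(\bar{UP_2(\graf)})\) is connected, and \(\cI(UP_2(\graf))\) is a developable complex of groups with development \(\cI(\bar{UP_2(\graf)})\).

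Finally, I will deduce thickness and one-endedness. Connectivity of \(\cI(\bar{UP_2(\graf)})\) gives one-endedness of \(\bar{UP_2(\graf)}\) by \Cref{lem:One-endedness}, and thickness of order at most one by \Cref{Rmk:thickness}. Here I will make one point explicit. Every maximal product subcomplex of \(\bar{UP_2(\graf)}\) is a product \(\bar\graf_1\times\bar\graf_2\) of universal covers of leafless connected graphs containing cycles, so each factor is an infinite-diameter tree. Every edge in the intersection complex corresponds to an intersection containing a standard product subcomplex, hence one of infinite diameter. In particular, these chains of maximal product subcomplexes coarsely cover \(\bar{UP_2(\graf)}\), since \(UP_2(\graf)\) is by definition their union. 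Because \(UP_2(\graf)\) is compact, \(\pi_1(UP_2(\graf))\) acts geometrically on \(\bar{UP_2(\graf)}\). Thickness and one-endedness are quasi-isometry invariants, so both transfer to the group.

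The main obstacle is thus purely bookkeeping. One must confirm that the standard intersection property and the embedded product property genuinely transfer from \(UD_2(\graf)\) to \(UP_2(\graf)\). The point to check is that maximal product subcomplexes, and their intersections, are literally the same subcomplexes in both spaces, which is the content of \Cref{lemma:Y max facts}(2).
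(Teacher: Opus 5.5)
Your proposal is correct and follows the paper's proof essentially verbatim. It gets connectivity of \(UP_2(\graf)\) from \Cref{Lem:UP_2Special} and the embedded product property from \Cref{corollary:maximal}, combined with the assumed standard intersection property. It then applies \Cref{lem:Connected} for the connectivity and development statements, and \Cref{lem:One-endedness,Rmk:thickness} for thickness and one-endedness.

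Your explicit restriction to \(UP_2(\graf)\neq\varnothing\) is also implicit in the paper, since it is needed both to invoke \Cref{Lem:UP_2Special} and for \(\pi_1(UP_2(\graf))\) to make sense. Likewise, your remark that passing from \(\bar{UP_2(\graf)}\) to the group uses the geometric action of \(\pi_1(UP_2(\graf))\) and quasi-isometry invariance is a harmless clarification.
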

\begin{proof}
By \Cref{Lem:UP_2Special}, the complex \(UP_2(\graf)\) is connected. 
Combined with the standard intersection property and \Cref{corollary:maximal}, this shows that \(UP_2(\graf)\) satisfies the hypotheses of \Cref{lem:Connected}. Hence, the first two assertions follow.
The final assertion follows from \Cref{lem:One-endedness,Rmk:thickness}.
\end{proof}

\begin{example}[Graphs with \(\diam(\cM(\graf))=1\)]\label{Ex:diamM_graf=1}
Let \(\graf\) be a graph with \(\diam(\cM(\graf))=1\). Then for two maximal product subcomplexes \(M=\graf_1\itimes\graf_1^\perp\) and \(M'=\graf_1'\itimes(\graf_1')^\perp\) of \(UD_2(\graf)\), the intersection \(M\cap M'\) is either \(\graf_1\itimes (\graf_1')^\perp\) if \(\graf_1\subset \graf'_1\) or \(\graf_1\itimes \graf_1'\) if \(\graf_1\subset (\graf'_1)^\perp\), which is a standard product subcomplex.
Therefore, \(UD_2(\graf)\) satisfies the standard intersection property.

For example, let \(\graf\) be as described in \Cref{figure:example of sip}.
Then \(\cC(\graf)\) is connected and \(\cM(\graf)\) is of diameter \(1\).
In fact, the graph \(\graf\) is a typical example of a family of graphs, called \emph{bunches of grapes}, with \(\diam(\cM(\graf))=1\), which will be dealt with in the next section.
\end{example}

%
%
%
%
%

\begin{theorem}\label{Thm:SubclassdefiningQII}
Let \(\graphs'\) be the subclass of \(\graphs_{(0)}\) defined by \[\graphs'=\{\graf\in\graphs_{(2)}\cup\graphs_{(3)}:\text{\(UD_2(\graf)\) has the standard intersection property}\}.\]
Then the restriction of \(\m{(\cdot)}\) to 
\(UD_2(\graphs')\) defines a quasi-isometry invariant.
Moreover, the restriction of \(\m{(\cdot)}\) to 
\(UD_2(\graphs'\cap\graphs_{(2)})\) defines a complete quasi-isometry invariant.
\end{theorem}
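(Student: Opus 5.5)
The plan is to pass from a quasi-isometry of $\bar{UD_2(\graf_1)}$ and $\bar{UD_2(\graf_2)}$ to a quasi-isometry of the universal covers of $UP_2(\graf_1)$ and $UP_2(\graf_2)$, using the intersection complex as the bridge. Fix $\graf,\graf'\in\graphs'$ and a quasi-isometry $\phi:\bar{UD_2(\graf)}\to\bar{UD_2(\graf')}$. Since $\graphs'\subset\graphs_{(2)}\cup\graphs_{(3)}\subset\graphs_{(1)}$, \Cref{lem:UnionofSPS} shows that $\bar{UP_2(\graf)}$ embeds in $\bar{UD_2(\graf)}$ as a component of $\m{\bar{UD_2(\graf)}}$. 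By \Cref{Lem:UP_2One-ended}, $\cI(\bar{UP_2(\graf)})$ is connected, so the injective morphism $\cI(\bar{\m\iota})$ identifies it with a connected component of $\cI(\bar{UD_2(\graf)})$. By \Cref{theorem:IsobetInt}, $\phi$ induces an isomorphism $\cI(\phi)$, which carries components to components. After composing $\phi$ with a deck transformation, it therefore maps the component of the chosen lift $\bar{UP_2(\graf)}$ onto the component of a lift $\bar{UP_2(\graf')}$.

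Next I would show that $\phi$ restricts coarsely to a quasi-isometry $\bar{UP_2(\graf)}\to\bar{UP_2(\graf')}$. By \Cref{MaxtoMax}, each maximal product subcomplex $\bar M$ of the lift is sent within uniform Hausdorff distance $D$ of the corresponding $\bar M'$. The lift is the union of its maximal product subcomplexes, so $\phi$ maps $\bar{UP_2(\graf)}$ into the $D$-neighbourhood of $\bar{UP_2(\graf')}$ and is coarsely surjective onto it. The remaining issue is that the intrinsic metrics, rather than the ambient ones, must be compared. In case $\graphs_{(3)}$ the inclusion is a local isometry, so by \Cref{lem:UnionofSPS} it is an isometric embedding and there is nothing to do. In case $\graphs_{(2)}$, $\bbB_2(\graf)\isom\pi_1(UP_2(\graf))\ast\bbF_N$, and a free factor is undistorted. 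I would handle the mixed case, where one graph lies in $\graphs_{(2)}$ and the other in $\graphs_{(3)}$, in the same way, since both inclusions are quasi-isometric embeddings with respect to the intrinsic metric. This gives the quasi-isometry invariance.

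For completeness on $\graphs'\cap\graphs_{(2)}$, suppose $\pi_1(UP_2(\graf))\qisom\pi_1(UP_2(\graf'))$. Both groups are one-ended by \Cref{Lem:UP_2One-ended}. Then \Cref{PW} gives $\pi_1(UP_2(\graf))\ast\bbF_N\qisom\pi_1(UP_2(\graf'))\ast\bbF_{N'}$ provided $N,N'\ge1$. If $N=0$, the group $\bbB_2(\graf)$ is the one-ended group itself. Then the free product is one-ended on one side but has infinitely many ends on the other, so I would need to rule out the case $N=0\ne N'$ separately. I expect the two sides to agree here, since a quasi-isometry of the $UP_2$'s alone does not obviously control this. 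The forward direction already shows the converse implication is consistent. For the backward direction, I would either check via \Cref{corollary:free factor}-type collapsing that $N\ge1$ always holds for leafless graphs in $\graphs_{(2)}\setminus\graphs_{(5)}$, or note that $N=0$ forces $UD_2\hty UP_2$.

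The main obstacle is the metric comparison in the second step, namely that the coarse restriction of $\phi$ is a quasi-isometry for the intrinsic metrics when local convexity fails. It also requires that the chosen component of $\cI$ corresponds to exactly one lift of $UP_2$, which is where $\pi_1$-injectivity is used.
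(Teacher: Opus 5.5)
\paragraph{First assertion.} Your proof of quasi-isometry invariance is the paper's argument: restrict $\phi$ to a lift of $UP_2(\graf)$. It uses the standard intersection property (via \Cref{Lem:UP_2One-ended}) to make $\cI(\bar{UP_2(\graf)})$ connected, $\pi_1$-injectivity (\Cref{lem:UnionofSPS}) to match lifts of $UP_2$ with components of $\cI(\bar{UD_2(\graf)})$, and \Cref{MaxtoMax} for uniform Hausdorff control. Your comparison of intrinsic and ambient metrics is more careful than the paper's. The paper asserts that $\bar{UP_2(\graf)}$ is isometrically embedded for every $\graf\in\graphs'$, but that is only justified on $\graphs_{(3)}$. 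On $\graphs_{(2)}\setminus\graphs_{(3)}$ one needs exactly your observation that a free factor is undistorted, so the equivariant inclusion is a quasi-isometric embedding.

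\paragraph{The gap: completeness.} Your diagnosis of the case $N=0\neq N'$ is correct, but neither remedy you suggest closes it.
\begin{itemize}
\item The first remedy overlooks that $\graphs_{(5)}\subset\graphs_{(2)}$. There $UP_2(\graf)=UD_2(\graf)$, so $N=0$. Even if $N\ge1$ held off $\graphs_{(5)}$, you would still face pairs with $\graf\in\graphs_{(5)}$ and $\graf'\notin\graphs_{(5)}$.
\item The second remedy merely restates $N=0$.
\end{itemize}
No argument can handle this case. If $N=0<N'$, then $\bbB_2(\graf)=\pi_1(UP_2(\graf))$ is one-ended, while $\bbB_2(\graf')\cong\pi_1(UP_2(\graf'))\ast\bbF_{N'}$ has infinitely many ends. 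So the braid groups are never quasi-isometric, whatever the $UP_2$'s do, and the backward implication fails for any such pair with quasi-isometric $UP_2$'s.

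The paper's proof has the same omission: it invokes \Cref{PW}, whose first assertion requires $m,n\ge1$. What the argument (yours or the paper's) actually proves is completeness under the extra hypothesis that $N$ and $N'$ are both positive (or both zero). You should state that hypothesis rather than try to prove the mixed case. It does hold on $\grapegraph_\norm$, where \Cref{Prop:free factor} gives $N\ge\loops(\sfT)\ge 2$, and that is the only case the paper uses, in \Cref{theorem:QIbetweenGBGs}.
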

\begin{proof}
For \(\graf\in\graphs'\), the universal cover \(\bar{UP_2(\graf)}\) is isometrically embedded in \(\bar{UD_2(\graf)}\).
By \Cref{MaxtoMax} and the standard intersection property, if \(\graf,\graf'\in\graphs'\), then the restriction of any quasi-isometry \(\bar{UD_2(\graf)}\to\bar{UD_2(\graf')}\) to a copy of \(\bar{UP_2(\graf)}\) must be a quasi-isometry onto a copy of \(\bar{UP_2(\graf')}\).
It follows that \(\pi_1(UP_2(\graf))\) and \(\pi_1(UP_2(\graf'))\) are quasi-isometric, proving the first claim.

If, in addition, \(\graf\in\graphs'\cap\graphs_{(2)}\), then by \Cref{PW,Lem:UP_2One-ended}, the restriction of \(\m{(\cdot)}\) to \(UD_2(\graphs'\cap\graphs_{(2)})\) defines a complete quasi-isometry invariant.
\end{proof}

By \Cref{Ex:diamM_graf=1}, the condition that \(\graf\in\graphs_{(2)}\cup\graphs_{(3)}\) with \(\diam(\cM(\graf))=1\) is sufficient for \(\graf\) to belong to \(\graphs'\).

In general, \(\pi_1(UP_2(\graf))\) need not be one-ended even when \(\graf\in\graphs_{(2)}\cup\graphs_{(3)}\). This may lead to the situation
\[
\pi_1(UP_2(\graf))\cong \pi_1(UP_2(\graf'))*\bbF_N,\quad
\bbB_2(\graf)\cong \pi_1(UP_2(\graf))*\bbF_N,\quad
\bbB_2(\graf')\cong \pi_1(UP_2(\graf'))*\bbF_N,
\]
so that \(\bbB_2(\graf)\) and \(\bbB_2(\graf')\) are quasi-isometric, whereas \(\pi_1(UP_2(\graf))\) and \(\pi_1(UP_2(\graf'))\) are not.
Nevertheless, we expect that this phenomenon does not occur.

\begin{conjecture}
The restriction of \(\m{(\cdot)}\) to \(UD_2(\graphs_{(2)})\) defines a complete quasi-isometry invariant.
\end{conjecture}

\section{Bunches of grapes}\label{section:grapes}

In this section, we introduce an infinite family of graphs, called \emph{bunches of grapes} \(\graf\), which includes both trees and the elementary bunches of grapes defined in \Cref{definition:FreeGroupCase}. 
Our motivation is to identify graphs belonging to the class \(\graphs'\) appearing in \Cref{Thm:SubclassdefiningQII}. 
More precisely, we seek graphs satisfying the following properties:
\begin{itemize}
\item they belong to the subclasses \(\graphs_{(2)}\) or \(\graphs_{(3)}\) from \Cref{definition:up2hierarchy};
\item their discrete \(2\)-configuration spaces satisfy the standard intersection property (due to \Cref{Lem:UP_2One-ended}).
\end{itemize}
For the second requirement, \Cref{Ex:diamM_graf=1} provides a sufficient condition. 

A key structural feature of bunches of grapes is that all cycles are attached along a tree; this tree-like organization of cycles leads to refined structural properties for the associated graph \(2\)-braid groups.





\begin{definition}[Bunches of grapes]\label{Def:BunchesofGrapes}
Let \(\sfT\) be a (possibly trivial) finite tree, and let \(\loops:V(\sfT)\to \bbZ_{\ge 0}\) be a function. A \emph{bunch of grapes} \(\graf=(\sfT,\loops)\) is the graph obtained by attaching \(\loops(v)\) copies of a \(3\)-cycle, called \emph{grapes}, to each vertex \(v\in V(\sfT)\); the tree \(\sfT\) is called the \emph{stem} of \(\graf\).
We denote by \(\grapegraph\) the set of all bunches of grapes.

A \emph{substem} \(\sfT'\subset\sfT\) is a subtree of \(\sfT\); the restriction \(\graf_{\sfT'}=(\sfT', \loops|_{\sfT'})\) is a bunch of grapes over the substem \(\sfT'\). 
An edge of the stem \(\sfT\), regarded as a substem, is called a \emph{twig} of \(\graf\).

For a subset \(V_0\subset V(\sfT)\) or a substem \(\sfT'\subset\sfT\), we define
\(\loops(V_0) = \sum_{\sfv\in V_0}\loops(\sfv)\) or \(\loops(\sfT')=\loops(V(\sfT'))\).
\end{definition}

\begin{remark}\label{Remark:BunchesofGrapes}
The minimal simplicial representative of a graph of circumference \(\le 1\) is a bunch of grapes. 
Conversely, any bunch of grapes can be smoothed to obtain a graph of circumference \(\le 1\).
Since in this section we require that subgraphs of a bunch of grapes remain bunches of grapes, the above definition differs slightly from \Cref{Def:BunchesofGrapes_Intro}.
\end{remark}

\begin{example}
An elementary bunch of grapes in \Cref{definition:FreeGroupCase} has a star graph as its stem. 
The graph given in \Cref{figure:example of sip} is a (non-elementary) bunch of grapes whose stem is the star graph \(\sfS_3\) with the function \(\loops(\sfv)\equiv 1\) constant on all vertices.
\end{example}

\begin{example}\label{example:bunch of grapes}
Let a tree \(\sfT\) and a function \(\loops:V(\sfT)\to\bbZ_{\ge0}\) be given as follows:
\begin{align*}
\sfT&=\begin{tikzpicture}[baseline=-.5ex, scale=0.7]
\draw[thick,fill] (0,0) circle (2pt) node (A) {} node[left] {\(\sf0\)} -- node[above] {\(\sft_0\)} ++(5:2) circle(2pt) node(B) {} node[below left] {\(\sf1\)} -- node[right] {\(\sft_{10}\)} +(0,1.5) circle (2pt) node (C) {} node[above] {\(\sf{10}\)} +(0,0) -- node[right] {\(\sft_2\)} +(0,-1.5) circle (2pt) node(D) {} node[below] {\(\sf2\)} +(0,0) -- node[below] {\(\sft_3\)} ++(-10:1.5) circle(2pt) node(E) {} node[below] {\(\sf3\)} -- node[below] {\(\sft_4\)} ++(10:1.5) circle(2pt) node(F) {} node[below] {\(\sf4\)} -- node[left] {\(\sft_9\)} +(120:1) circle(2pt) node(G) {} node[above] {\(\sf9\)} +(0,0) -- node[right] {\(\sft_8\)} +(60:1) circle (2pt) node(H) {} node[above] {\(\sf8\)} +(0,0) -- node[below] {\(\sft_5\)} ++(-5:1) circle (2pt) node(I) {} node[below] {\(\sf5\)} -- node[above] {\(\sft_7\)} +(30:1) circle(2pt) node(J) {} node[right] {\(\sf7\)} +(0,0) -- node[below] {\(\sft_6\)} +(-30:1) circle (2pt) node(K) {} node[right] {\(\sf6\)};
\end{tikzpicture}&
\loops(\sfv) &=\begin{cases}
0 & \sfv=\sf1, \sf2, \sf5, \sf7;\\
1 & \sfv=\sf3, \sf4, \sf6, \sf8, \sf9;\\
2 & \sfv=\sf0;\\
3 & \sfv=\sf{10}.
\end{cases}
\end{align*}
Then the bunch of grapes \(\graf=(\sfT,\loops)\) is depicted in \Cref{figure:grapes stem and twigs}.
\end{example}

\begin{figure}[ht]
\begin{align*}
\graf&=\begin{tikzpicture}[baseline=-.5ex, scale=0.7]
\draw[thick,fill] (0,0) circle (2pt) node (A) {} -- ++(5:2) circle(2pt) node(B) {} -- +(0,1.5) circle (2pt) node (C) {}+(0,0) -- +(0,-1.5) circle (2pt) node(D) {} +(0,0) -- ++(-10:1.5) circle(2pt) node(E) {} -- ++(10:1.5) circle(2pt) node(F) {} -- +(120:1) circle(2pt) node(G) {} +(0,0) -- +(60:1) circle (2pt) node(H) {} +(0,0) -- ++(-5:1) circle (2pt) node(I) {} -- +(30:1) circle(2pt) node(J) {} +(0,0) -- +(-30:1) circle (2pt) node(K) {};
\grape[90]{A};
\grape[270]{A};
\grape[0]{C}; \grape[90]{C}; \grape[180]{C};
\grape[-90]{E};
\grape[-90]{F};
\grape[120]{G};
\grape[60]{H};
\grape[-30]{K};
\end{tikzpicture}&
\graf'&=\begin{tikzpicture}[baseline=-.5ex, scale=0.7]
\draw[thick,fill] (0,0) circle (2pt) node (A) {} -- ++(5:2) circle(2pt) node(B) {} -- +(0,1.5) circle (2pt) node (C) {}+(0,0) -- +(0,-1.5) circle (2pt) node(D) {} +(0,0) -- ++(-10:1.5) circle(2pt) node(E) {} -- ++(10:1.5) circle(2pt) node(F) {} -- +(120:1) circle(2pt) node(G) {} +(0,0) -- +(60:1) circle (2pt) node(H) {} +(0,0) -- ++(-5:1) circle (2pt) node(I) {} -- +(30:1) circle(2pt) node(J) {} +(0,0) -- +(-30:1) circle (2pt) node(K) {};
\grape[90]{A};
\grape[270]{A};
\grape[0]{C}; \grape[90]{C}; \grape[180]{C};
\grape[-90]{E};
\grape[-90]{F};
\grape[120]{G};
\grape[60]{H};
\grape[-30]{K};
\begin{scope}[red]
\grape[-90]{D};
\grape[30]{J};
\end{scope}
\end{tikzpicture}
\end{align*}
\caption{A bunch of grapes and its stem.}
\label{figure:grapes stem and twigs}
\end{figure}



\begin{assumption}
Unless otherwise specified (for instance, when considering a proper subgraph), any bunch of grapes \(\graf=(\sfT,\loops)\) is assumed to be a minimal simplicial model. That is, \(\loops(\sfv)\ge 1\) for every vertex \(\sfv\in V(\sfT)\) with \(\val_{\sfT}(\sfv)=2\).
\end{assumption}

We say that a bunch of grapes \(\graf=(\sfT,\loops)\) is:
\begin{enumerate}
\item \emph{large} if there exist at least two distinct vertices \(\sfv_1, \sfv_2\) of \(\sfT\) such that \(\loops(\sfv_i)>0\) for \(i=1,2\), and \emph{small} otherwise;
\item \emph{normal} if \(\graf\) is large and \(\loops(\sfv)\ge 1\) for each leaf \(\sfv\in\sfT\).
\end{enumerate}
Using this terminology, we define the subsets
\begin{align*}
\grapegraph_\larg&=\{\graf\in\grapegraph:\graf\text{ is large}\},&
\grapegraph_\norm&=\{\graf\in\grapegraph_\larg:\graf\text{ is normal}\}
\end{align*}

For instance, the graph \(\graf\) described in \Cref{example:bunch of grapes} is large but not normal. It becomes normal if we require \(\loops(\sf2),\loops(\sf3),\loops(\sf7)\ge 1\); see the modified graph \(\graf'\) in \Cref{figure:grapes stem and twigs}, where the required grapes are highlighted in red.



\begin{remark}\label{Lem:WhyNormal}
The terminology for a bunch of grapes \(\graf=(\sfT,\loops)\) may be compared with the graph classes introduced in the previous section as follows:
\begin{enumerate}
\item \(\graf\in\grapegraph_{\larg}\) if and only if \(\graf\in\graphs\) and \(\graf\) contains at least one pair of disjoint cycles. By \Cref{theorem:freeness classification,corollary:hyperbolicity}, \(\graf\notin\grapegraph_{\larg}\) if and only if \(\bbB_2(\graf)\) is free if and only if \(\bbB_2(\graf)\) is hyperbolic.
\item \(\graf\in\grapegraph_{\norm}\) if and only if \(\graf\in\graphs_{(0)}\). In \Cref{Prop:free factor}, we further show that this condition is equivalent to \(\graf\in\graphs_{(2)}\cap\graphs_{(3)}\).
\end{enumerate}
This comparison explains the choice of the adjectives \emph{large} and \emph{normal}.
Moreover, for any large bunch of grapes \(\graf\), removing all leaves produces a normal bunch of grapes \(\graf'\), and by \Cref{corollary:free factor} we have
\(\bbB_2(\graf)\cong \bbB_2(\graf')\ast \bbF_N\) for some \(N\ge 0\).
Consequently, from the perspective of large-scale geometry, it suffices to restrict attention to normal bunches of grapes when studying graph \(2\)-braid groups.
\end{remark}

In the remainder of this section, we assume that \(\graf=(\sfT,\loops)\) is a normal bunch of grapes; that is, \(\graf\in\grapegraph_{\norm}\). We begin by showing that \(\graf\) belongs to \(\graphs_{(2)}\cap\graphs_{(3)}\). Combined with \Cref{Lem:RIconnected} below, this will imply that \(\graf\) indeed belongs to the subclass \(\graphs'\) in \Cref{Thm:SubclassdefiningQII}. 

\begin{proposition}\label{Prop:free factor}
Let \(\graf=(\sfT,\loops)\in\grapegraph_\norm\). Then \(\grapegraph_\norm\subset\graphs_{(2)}\cap\graphs_{(3)}\).
More precisely, the subcomplex \(UP_2(\graf)\) is locally convex in \(UD_2(\graf)\) such that \(\bbB_2(\graf)\isom\pi_1(UP_2(\graf))\ast \bbF_N\) for some \(N\ge \loops(\sfT)\).
\end{proposition}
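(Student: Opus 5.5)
The plan is to exploit the tree-like structure of \(\graf=(\sfT,\loops)\). The cycles of \(\graf\) are exactly the grapes. Hence every connected nontrivial leafless subgraph is the \emph{hull} of a nonempty set \(G\) of grapes, namely the union of those grapes with the minimal substem containing their base vertices. From this description I will first make the combinatorics of \Cref{Def:OrthogonalComplement} explicit. For a standard subgraph \(\graf_1\), the complement \(\graf_1^\perp\) is the union of the hulls, taken inside each component of \(\graf\setminus\graf_1\), of the grapes lying there. In particular, two disjoint subgraphs \(A,B\) (vertices or edges) are separable if and only if there is a cut of \(\graf\) separating them such that each side contains a grape and contains the hull of its piece together with \(A\) (resp. \(B\)). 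Concretely, the cut is a stem vertex or stem edge on the \(\sfT\)-geodesic between the supports of \(A\) and \(B\). Normality (every leaf of \(\sfT\) carries a grape, and bivalent stem vertices carry grapes by the minimal model assumption) guarantees that every component of \(\sfT\) minus a vertex or an edge contains a grape. This is the fact that makes the criterion usable.

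\emph{Step 1: local convexity.} By \Cref{Lem:CombinatorialConditions}(1) it suffices to verify condition (C). Let \(\sfe_1=[\sfv_1,\sfw_1]\) and \(\sfe_2=[\sfv_2,\sfw_2]\) be disjoint edges with \((\sfv_1,\sfe_2)\) and \((\sfe_1,\sfv_2)\) separable. I will case on whether each \(\sfe_i\) is a stem edge or a grape edge, and consider the \(\sfT\)-geodesic joining their projections to \(\sfT\). Separability of \((\sfv_1,\sfe_2)\) forces a cut strictly between \(\sfv_1\) and \(\sfe_2\). Separability of \((\sfe_1,\sfv_2)\) forces a cut strictly between \(\sfe_1\) and \(\sfv_2\). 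I then want to show that one of these cuts, or a stem vertex between them on the geodesic, separates \(\sfe_1\) from \(\sfe_2\) with grapes on both sides. The only way this could fail is when the two edges share a stem vertex in their closed neighborhoods, as in the triangle configuration of \Cref{example:not quasiisometry}. Such configurations are excluded because there is no stem cycle, and because a grape edge at a stem vertex \(\sfv\) cannot be separated from anything containing \(\sfv\).

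\emph{Step 2: free factor.} Following \Cref{example:bog2}, I will show that every square \(\sfe_1\itimes\sfe_2\) of \(UD_2(\graf)\) not lying in \(UP_2(\graf)\) has a free face, that is, an edge contained in no other square. I will then collapse all such squares in a suitable order. By the criterion above, non-separable disjoint pairs \((\sfe_1,\sfe_2)\) occur in two ways: both edges lie near a common stem vertex \(\sfv\) with one of them a grape edge at \(\sfv\), or one of them is a grape edge opposite \(\sfv\). The candidate free face is \(\sfv'\itimes\sfe_2\), where \(\sfv'\) is the bivalent grape vertex of \(\sfe_1\). Such a vertex exists because grape vertices other than the base are bivalent, and condition (A) fails exactly there. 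I must check that these free faces are distinct for distinct omitted squares and remain free after earlier collapses; I expect this to be the main obstacle. If a single uniform choice fails, I will first collapse squares with a grape factor and then squares with two stem factors, as in \Cref{example:not quasiisometry}. The result is a deformation retract \(UD_2'(\graf)\) all of whose squares lie in \(UP_2(\graf)\). Since \(UP_2(\graf)\) is connected by \Cref{Lem:UP_2Special}, \(UD_2'(\graf)\) is \(UP_2(\graf)\) with edges attached, which gives \(\bbB_2(\graf)\isom\pi_1(UP_2(\graf))\ast\bbF_N\).

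\emph{Step 3: the bound \(N\ge\loops(\sfT)\).} I will compute \(N\) as the rank of the graph obtained by collapsing \(UP_2(\graf)\) to a point in \(UD_2'(\graf)\), i.e. as a difference of Euler characteristics. To get the lower bound, it suffices to exhibit \(\loops(\sfT)\) independent loops. For each grape \(\sfC\) at \(\sfv\), I will take the loop formed by the grape edge of \(\sfC\) opposite \(\sfv\), paired with a fixed stem neighbour of \(\sfv\) whose collapsed square removed part of it. This mirrors the free factor \(\bbF_{\val-1}\) of \Cref{corollary:free factor}. Independence would follow because each such loop uses a surviving edge meeting no other chosen loop.
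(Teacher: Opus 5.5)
Your Step~1 is essentially the paper's argument. The paper observes that two disjoint edges are non-separable exactly when both lie in $\grafl_\sfx=\graf_\sfx\cup\st_{\sfT}(\sfx)$ for a single stem vertex $\sfx$. This means either they are edges of two distinct grapes at $\sfx$, or one is a twig at $\sfx$ and the other is the edge of a grape at $\sfx$ opposite $\sfx$. In each case the hypothesis of (C) fails, because any leafless subgraph containing a cell of a grape at $\sfx$ contains $\sfx$. You should state this classification explicitly. Such pairs are not ``excluded'': they occur, and what fails for them is the hypothesis of (C).

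For the free factor you take a genuinely different route. The paper uses no collapses. It writes $UD_2(\graf)=UP_2(\graf)\cup\bigcup_{\sfx}UD_2(\grafl_\sfx)$ and checks that distinct pieces $UD_2(\grafl_\sfx)$ meet only inside $UP_2(\graf)$. It identifies $UP_2(\graf)\cap UD_2(\grafl_\sfx)=UD_2(\st_{\sfT}(\sfx))\cup(\graf_\sfx\itimes\lk_{\sfT}(\sfx))$ and shows its free fundamental group is a free factor of $\bbB_2(\grafl_\sfx)$, using a splitting $A_1\cup A_2\cup A_3$ and Theorem~\ref{theorem:elementary rank}. It then concludes by van Kampen.

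Your collapse route also works, but its crux, which you flag, is left open, and your fallback misdiagnoses it. Fix $\sfx$ with $m=\loops(\sfx)$ grapes, non-base vertices $\mathsf{u}_i,\mathsf{u}_i'$ and opposite edges $\mathsf{o}_i=[\mathsf{u}_i,\mathsf{u}_i']$. The squares outside $UP_2(\graf)$ attached to $\sfx$ are exactly:
\begin{itemize}
\item $\sft\itimes\mathsf{o}_i$, with $\sft$ a twig at $\sfx$;
\item $[\sfx,\mathsf{u}_i]\itimes\mathsf{o}_j$ and $[\sfx,\mathsf{u}_i']\itimes\mathsf{o}_j$, with $i\neq j$;
\item $\mathsf{o}_i\itimes\mathsf{o}_j$, with $i<j$.
\end{itemize}
None of these has two stem factors, since disjoint twigs are always separable in a normal bunch. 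So ``collapse the squares with two stem factors last'' addresses nothing.

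The real obstruction is that every edge of $\mathsf{o}_i\itimes\mathsf{o}_j$ lies in a second square; for example, $\mathsf{u}_i\itimes\mathsf{o}_j\subset[\sfx,\mathsf{u}_i]\itimes\mathsf{o}_j$. Your rule ``$\sfv'\itimes\sfe_2$'' is ambiguous for $\mathsf{o}_i$, since both its endpoints are bivalent. For $[\sfx,\mathsf{u}_i]\itimes\mathsf{o}_j$ with $\sfe_1=[\sfx,\mathsf{u}_i]$, it selects the non-free edge $\mathsf{u}_i\itimes\mathsf{o}_j$.

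What works is a two-stage collapse:
\begin{itemize}
\item First, collapse $\sft\itimes\mathsf{o}_i$ through $\sft\itimes\mathsf{u}_i$, and $[\sfx,\mathsf{u}_i]\itimes\mathsf{o}_j$ through $[\sfx,\mathsf{u}_i]\itimes\mathsf{u}_j$. Each such edge lies in exactly one square and not in $UP_2(\graf)$. You must avoid faces such as $\sfw\itimes\mathsf{o}_i$, which do lie in $UP_2(\graf)$.
\item After this, $\mathsf{u}_i\itimes\mathsf{o}_j$ is a free face of $\mathsf{o}_i\itimes\mathsf{o}_j$, so the remaining squares can be collapsed.
\end{itemize}

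Your Step~3 loops are too vague to check; for instance, $\sfw\itimes\sfC$ lies in $UP_2(\graf)$. They are also unnecessary. Count the cells outside $UP_2(\graf)$ attached to $\sfx$: there are $2m^2+m$ vertices, $6m^2-3m+2m\val_{\sfT}(\sfx)$ edges and $\tfrac{5m(m-1)}{2}+m\val_{\sfT}(\sfx)$ squares. This gives
$N=\chi(UP_2(\graf))-\chi(UD_2(\graf))=\sum_{\sfx}\bigl(\tfrac{3\loops(\sfx)(\loops(\sfx)-1)}{2}+\val_{\sfT}(\sfx)\loops(\sfx)\bigr)\ge\loops(\sfT)$, matching the paper's formula.

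In short, the paper's route localizes everything to elementary bunches and needs no ordering, at the cost of quoting Theorem~\ref{theorem:elementary rank}. Yours is self-contained and uniform with Examples~\ref{example:bog2} and~\ref{example:not quasiisometry}, but only once the collapse order above is supplied.
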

\begin{proof}
To show that \(UP_2(\graf)\) is locally convex in \(UD_2(\graf)\), it suffices, by \Cref{Lem:CombinatorialConditions}, to verify that \(\graf\) satisfies Condition~(C).

For each vertex \(\sfx\) of \(\sfT\), let \(\grafl_\sfx=\graf_\sfx\cup\st_\sfx(\sfT)\), which is isomorphic to an elementary bunch of grapes \(\grafl_\sfx=\grafl(\val_{\sfT}(\sfx),\loops(\sfx))\) in \Cref{definition:FreeGroupCase}.
Observe that two disjoint edges are not separable if and only if both are contained in \(\grafl_{\sfx}\) for some vertex \(\sfx\in\sfT\).
More precisely, two disjoint edges \(\sfe_1\) and \(\sfe_2\) are not separable if either
\begin{itemize}
\item \(\sfe_1\) and \(\sfe_2\) are contained in distinct grapes attached to the same vertex \(\sfx\), or
\item \(\sfe_1\) is a twig incident to \(\sfx\) and \(\sfe_2\) lies in a grape attached to \(\sfx\).
\end{itemize}
In the former case, no vertex of \(\sfe_1\) is separable from \(\sfe_2\). In the latter case, the only separable pair of a vertex and an edge related to \((\sfe_1,\sfe_2)\) is \((\sfw,\sfe_2)\), where \(\sfw\) is the endpoint of \(\sfe_1\) distinct from \(\sfx\). In both cases, Condition~(C) is satisfied. Thus, by \Cref{Lem:CombinatorialConditions}(1), \(UP_2(\graf)\) is locally convex in \(UD_2(\graf)\).

To show that \(\graf\in\graphs_{(2)}\), we consider the following decomposition
\[
UD_2(\graf)=UP_2(\graf)\cup \left(\bigcup_{\sfx\in V(\sfT)} UD_2(\grafl_{\sfx})\right).
\]
For any subset \(V_0\subset V(\sfT)\) with \(\sfx\not\in V_0\), we have
\begin{align*}
\left(UP_2(\graf)\cup\left(\bigcup_{\sfx'\in V_0} UD_2(\grafl_{\sfx'})\right)\right)\cap UD_2(\grafl_{\sfx})&=
UP_2(\graf)\cap UD_2(\grafl_{\sfx})=UD_2(\st_{\sfT}(\sfx))\cup\left(\graf_{\sfx}\itimes\lk_{\sfT}(\sfx)\right).
\end{align*}

For each \(\sfx'\in\lk_\sfT(\sfx)\), the intersection \(UD_2(\st_\sfT(\sfx))\cap(\graf_\sfx\itimes \sfx')=\sfx\itimes\sfx'\) is a vertex in \(UP_2(\graf)\). Hence,
\begin{align*}
\pi_1(UP_2(\graf)\cap UD_2(\grafl_\sfx))\isom \pi_1(UD_2(\st_\sfT(\sfx)))\ast\left(\Ast_{\sfx'\in\lk_\sfT(\sfx)}\pi_1(\graf_\sfx\itimes\sfx')\right)\isom \bbB_2(\st_\sfT(\sfx))\ast\left(\Ast_{\sfx'\in\lk_\sfT(\sfx)}\pi_1(\graf_\sfx)\right)\isom \bbF_{N'_\sfx},
\end{align*}
since \(\bbB_2(\st_{\sfT}(\sfx))\) is a free group by \Cref{theorem:elementary rank}.

Moreover, we may decompose \begin{align*}
UD_2(\grafl_{\sfx})=\underbrace{\left(UD_2(\st_{\sfT}(\sfx))\cup  \left(\graf_\sfx \itimes \lk_{\sfT}(\sfx)\right)\right)}_{A_1} \cup \underbrace{\left((\graf_\sfx\setminus \sfx)\itimes \st_{\sfT}(\sfx)\right)}_{A_2}\cup \underbrace{UD_2(\graf_\sfx)}_{A_3}, 
\end{align*}
where \(A_1\cap A_2=(\graf_\sfx\setminus\sfx) \itimes \lk_{\sfT}(\sfx)\), \(A_2\cap A_3=(\graf_\sfx\setminus\sfx)\itimes \sfx\) and \(A_1\cap A_3=\varnothing\).
Applying \Cref{theorem:elementary rank} again, we obtain
\[\bbF_{N_\sfx}\isom\pi_1(UD_2(\grafl_\sfx))\isom\bbB_2(\grafl_\sfx)\isom
\pi_1(UP_2(\graf)\cap UD_2(\grafl_\sfx))\ast \bbF_{N''_\sfx}.\]

Let \(\bbG=\Ast_{\sfx\in V(\sfT)}(\pi_1(UP_2(\graf)\cap UD_2(\graf_\sfx)))\). Then 
\begin{align*}
\bbB_2(\graf)&\isom
\pi_1(UP_2(\graf))\ast_\bbG\left(\Ast_{\sfx\in V(\sfT)}\bbB_2(\grafl_{\sfx})\right)
=\pi_1(UP_2(\graf))\ast\left(\Ast_{\sfx\in V(\sfT)} \bbF_{N_\sfx''}\right)\isom \pi_1(UP_2(\graf))\ast\bbF_N.
\end{align*}
Therefore, \(\graf\in\graphs_{(2)}\) as claimed.
\end{proof}
\begin{remark}
For completeness, we record the explicit formulas for the ranks of the free groups appearing in the proof of \Cref{Prop:free factor}:
\[
N_\sfx'=\binom{\val_\sfT(\sfx)-1}2+\val_\sfT(\sfx)\loops(\sfx),\quad\quad 
N_\sfx = \frac{(\val_\sfT(\sfx)+\loops(\sfx))(\val_\sfT(\sfx)+3\loops(\sfx)-3)}2+1,
\]
\[N_\sfx'' = N_\sfx-N_\sfx'=\frac{3\loops(\sfx)(\loops(\sfx)-1)}2+\val_\sfT(\sfx)\loops(\sfx)\quad\text{and}\quad N=\sum_{\sfx\in V(\sfT)}\left(\frac{3\loops(\sfx)(\loops(\sfx)-1)}2 + \val_\sfT(\sfx)\loops(\sfx)\right)\ge \loops(\sfT).\]
\end{remark}

\subsection{Structure induced from stems}
Let \(\sfP=[\sfv_0,\dots,\sfv_n]\) be a path substem of \(\sfT\).
The \emph{\(\sfP\)-components} of \(\sfT\) are the two components \(\sfT_{\sfP,1}\) and \(\sfT_{\sfP,2}\) of the (disconnected) tree with vertex set \(V(\sfT)\) and edge set \(E(\sfT)\setminus E(\sfP)\), containing \(\sfv_0\) and \(\sfv_n\), respectively.
The two restrictions \(\graf_{\sfT_{\sfP,1}}\) and \(\graf_{\sfT_{\sfP,2}}\) will be denoted by \(\graf_{\sfP,1}\) and \(\graf_{\sfP,2}\), respectively, and called \emph{\(\sfP\)-components} of \(\graf\).
We also denote the standard product complex \(\graf_{\sfP,1}\itimes\graf_{\sfP,2}\) by \(K(\sfP)\).
See \Cref{figure:paths in grapes} for an example.

\begin{figure}[ht]
\begin{align*}
\graf_{\sfP,1}&=
\begin{tikzpicture}[baseline=-.5ex, scale=0.7]
\draw[thick,gray, dashed] (B) +(0,0) -- ++(-10:1.5) -- ++(10:1.5) -- ++(-5:1);
\draw[thick,fill,blue] (0,0) circle (2pt) node (A) {} -- ++(5:2) circle(2pt) node(B) {} -- +(0,1.5) circle (2pt) node (C) {}+(0,0) -- +(0,-1.5) circle (2pt) node(D) {};
\draw (B) ++(-10:0.75) node[above] {\(\sfP\)};
\draw[thick,gray] (F) +(0,0) -- +(60:1) +(0,0) -- +(120:1);
\draw[thick,fill,blue] (B) +(0,0) ++ (-10:1.5) circle(2pt) node(E) {} ++(10:1.5) circle(2pt) node(F) {} +(120:1) circle(2pt) node(G) {} +(0,0) +(60:1) circle (2pt) node(H) {} +(0,0) ++(-5:1) circle (2pt) node(I) {} -- +(30:1) circle(2pt) node(J) {} +(0,0) -- +(-30:1) circle (2pt) node(K) {};
\draw[blue] (1,0.5) node {\(\sfT_{\sfP,1}\)};
\draw[blue] (6,-1) node {\(\sfT_{\sfP,2}\)};
\grape[90]{A};
\grape[270]{A};
\grape[0]{C}; \grape[90]{C}; \grape[180]{C};
\grape[-90]{D};
\grape[-30]{K};
\begin{scope}[gray]
\grape[-90]{E};
\grape[-90]{F};
\grape[120]{G};
\grape[60]{H};
\end{scope}
\grape[30]{J};
\end{tikzpicture}=\graf_{\sfP,2}&
\loops(\sfT_{\sfP,1})&=6,&
\loops(\sfT_{\sfP,2})&=2
\end{align*}
\caption{\(\sft\)-components of a bunch of grapes \(\graf\).}
\label{figure:paths in grapes}
\end{figure}

Since \(\graf\) is normal, any \(\sfP\)-component of \(\graf\) is a standard subgraph of \(\graf\) in the sense of \Cref{Def:OrthogonalComplement}.
In particular, if \(\sfP\) is a twig \(\sft\), then \(\graf_{\sft,1}^\perp = \graf\setminus\graf_{\sft,1}=\graf_{\sft,2}\), and therefore \(\graf_{\sft,1}\itimes\graf_{\sft,2}\) is a maximal product subcomplex \(M(\sft)\) of \(UD_2(\graf)\).

\begin{proposition}\label{proposition:twig_maximally standard}
Every standard subgraph \(\graf'\) of \(\graf\) is itself a bunch of grapes \(\graf'=(\sfT',\loops')\) for some proper substem \(\sfT'\subset\sfT\).
In particular, every maximally standard subgraph is a \(\sft\)-component of \(\graf\) for some twig \(\sft\), and \textit{vice versa}.
\end{proposition}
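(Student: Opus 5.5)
The plan is to analyze the structure of leafless connected subgraphs of \(\graf=(\sfT,\loops)\) first, and then pass to orthogonal complements.

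\textbf{Step 1: leafless connected subgraphs.} Let \(\graf'\subset\graf\) be connected, nontrivial and leafless. Every cycle of \(\graf\) is a grape, since the stem is a tree and the circumference is one. Hence if \(\graf'\) contains one edge of a grape, leaflessness forces it to contain the whole grape: otherwise the maximal path in \(\graf'\) inside that grape ends at a bivalent grape vertex of valency one in \(\graf'\). Let \(\sfT'=\graf'\cap\sfT\). This set is connected, because removing grapes attached at single vertices does not disconnect a connected graph. We may therefore regard \(\sfT'\) as a subtree, possibly a single vertex. Each leaf \(\sfv\) of \(\sfT'\) must carry a grape of \(\graf'\), since \(\sfv\) is not a leaf of \(\graf'\). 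Thus \(\graf'=(\sfT',\loops')\) with \(\loops'\le\loops|_{\sfT'}\) and \(\loops'\ge1\) at the leaves of \(\sfT'\).

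\textbf{Step 2: standardness forces proper substems and full grapes.} If \(\graf'\) is standard, then \(\graf'^\perp\) is nontrivial. So \(\graf'^\perp\) contains a cycle (a nontrivial leafless graph contains a cycle), that is, a grape disjoint from \(\graf'\). Its attaching vertex is then not in \(\sfT'\), so \(\sfT'\ne\sfT\).

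\textbf{Step 3: the orthogonal complement of a bunch.} For \(\graf'=(\sfT',\loops')\), I would compute \(\graf'^\perp\) directly. Deleting \(\graf'\) removes \(V(\sfT')\) and the grapes at those vertices. The pruning of leaves then deletes exactly those parts of \(\sfT\setminus\sfT'\) that carry no grapes beyond their leaves. The result is a disjoint union of bunches over subtrees of the components of \(\sfT\setminus\sfT'\). Each component is the pruned hull of the grape-carrying vertices of one branch. In particular, the components of \(\graf'^\perp\) are bunches hanging off distinct twigs \(\sft=[\sfx,\sfy]\) with \(\sfx\in\sfT'\) and \(\sfy\notin\sfT'\).

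\textbf{Step 4: maximally standard subgraphs.} Suppose \(\graf_1\) is maximally standard. Then \(\graf_1^\perp\) is connected, so only one branch at a twig \(\sft=[\sfx,\sfy]\) contributes. Since \(\graf_1=(\graf_1^\perp)^\perp\), Step 3 applied to \(\graf_1^\perp\), which lies inside \(\graf_{\sft,2}\), shows that \(\graf_1\) is a component of the complement of that piece. By maximality together with normality, this complement is all of \(\graf_{\sft,1}\). Here normality means every leaf of \(\sfT\) carries a grape, so pruning never deletes anything on the \(\sfx\)-side. Similarly \(\graf_1^\perp=\graf_{\sft,2}\).

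\textbf{Converse.} For a twig \(\sft\), both sides \(\graf_{\sft,i}\) are connected bunches with grapes at all their leaves, by normality. They are therefore leafless and nontrivial; if a side is a single stem vertex, it carries a grape because it is a leaf of \(\sfT\). The text already notes that \(\graf_{\sft,1}^\perp=\graf_{\sft,2}\), and symmetrically \(\graf_{\sft,2}^\perp=\graf_{\sft,1}\). So both sides are maximally standard.

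\textbf{Main obstacle.} The delicate step is Step 4: showing that \(\graf_1=(\graf_1^\perp)^\perp\) forces \(\graf_1\) to be an entire \(\sft\)-component, rather than a smaller bunch inside it. I would handle this by first proving the identity
\[\bigl(\graf_{\sft,2}\cap\graf'\bigr)^\perp\supseteq\graf_{\sft,1},\]
valid for any subbunch \(\graf'\) that contains a grape on the \(\sfy\)-side. I would then combine it with the fact, from Lemma~\ref{lemma:properties of standard subgraphs}, that double complements are monotone.
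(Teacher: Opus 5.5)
Your proposal is correct. Steps 1--2 spell out the paper's one-line remark that a leafless subgraph of a circumference-one graph is again a bunch of grapes, with \(\sfT'\neq\sfT\) because \({\graf'}^\perp\) must contain a grape.

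For the maximal case you take a genuinely different route. The paper argues on the stem by contradiction. If \(\sfT\setminus\sfT'\) were disconnected, each component would contain a leaf of \(\sfT\), which carries a grape by normality, so \({\graf'}^\perp\) would have at least two components. Hence exactly one twig leaves \(\sfT'\), and \(\sfT'\) is a \(\sft\)-component.

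You use connectivity of \(\graf_1^\perp\) only to place it inside \(\graf_{\sft,2}\) for one twig \(\sft=[\sfx,\mathsf{y}]\), with \(\sfx\) in the stem \(\sfT_1\) of \(\graf_1\). You then exploit the second defining condition \(\graf_1=(\graf_1^\perp)^\perp\): since \(\graf_{\sft,1}\) is leafless and disjoint from \(\graf_1^\perp\), it survives the pruning, so \(\graf_1\supseteq\graf_{\sft,1}\).

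You should state the reverse inclusion explicitly, because your sentence ``this complement is all of \(\graf_{\sft,1}\)'' only justifies \(\supseteq\). The missing line is: \(\sfT_1\) is a subtree containing \(\sfx\) but not \(\mathsf{y}\), so \(\sfT_1\subseteq\sfT_{\sft,1}\).

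The two approaches trade off as follows. The paper's argument is shorter and pins down the stem directly. However, it leaves implicit why \(\graf'\) contains every grape at the vertices of \(\sfT'\), which needs \(\graf'=({\graf'}^\perp)^\perp\). Yours uses both defining conditions and gives equality of subgraphs, grapes included, at the cost of the complement computation in Step 3. Normality also enters on opposite sides: the paper needs grape-carrying leaves in the branches of \(\sfT\setminus\sfT'\), while you need \(\graf_{\sft,1}\) to be leafless.

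Two small corrections.

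First, leaflessness of \(\graf_{\sft,1}\) and \(\graf_{\sft,2}\), used in Step 4 and in your converse, does not follow from normality alone. If \(\sfx\) is bivalent in \(\sfT\), it is a leaf of \(\sfT_{\sft,1}\). The grape there comes from the standing minimal-model assumption \(\loops(\sfv)\ge1\) for bivalent \(\sfv\). Without that assumption the ``vice versa'' fails, e.g.\ for the stem \([\sfa,\sfx,\mathsf{y},\sfb]\) with grapes only at \(\sfa\) and \(\sfb\).

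Second, the monotonicity of double complements invoked under ``Main obstacle'' is not needed, and it is not stated in \Cref{lemma:properties of standard subgraphs}. The containment \({\graf''}^\perp\supseteq\graf_{\sft,1}\) holds for every subgraph \(\graf''\subseteq\graf_{\sft,2}\), simply because the leafless graph \(\graf_{\sft,1}\) survives pruning.
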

\begin{proof}
Since every subgraph of a graph of circumference \(1\) also has circumference at most \(1\), we may write \(\graf'=(\sfT',\loops')\) for some stem \(\sfT'\) and function \(\loops'\). Since \(\graf'\) is standard, \(\sfT'\) must be a proper substem of \(\sfT\).

Assume that \(\graf'\) is maximally standard, i.e., \({\graf'}^{\perp}\) is standard and \(\graf'=({\graf'}^{\perp})^{\perp}\).
If \(\sfT\setminus\sfT'\) were disconnected, then since \(\sfT\) is a tree, each component of \(\sfT\setminus\sfT'\) would contain a leaf of \(\sfT\).
As \(\graf\) is normal, the restriction \(\graf_{\sfT\setminus\sfT'}\) would then have at least two connected components containing cycles, contradicting the connectedness of \({\graf'}^{\perp}\).
Hence \(\sfT\setminus\sfT'\) is connected.
Since \(\sfT\) is a tree, there is a unique twig \(\sft\) incident to but not contained in \(\sfT'\), and therefore \(\sfT'\) is a \(\sft\)-component of \(\sfT\).

The converse was observed prior to the proposition.
\end{proof}

\begin{corollary}\label{Corollary:TwigCorrespondence}
There is a one-to-one correspondence
\(E(\sfT)\rightarrow V(\cI(UP_2(\graf)))\) defined by \(\sft\mapsto M(\sft)=\graf_{\sft,1}\itimes\graf_{\sft,2}.\)
\end{corollary}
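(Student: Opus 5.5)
The plan is to read the correspondence off the chain of identifications already in place:
\[
E(\sfT)\longrightarrow\{\text{\(\sft\)-components}\}\big/\!\sim\;\longrightarrow\;\cM(\graf)\big/\!\sim\;\longrightarrow\;V(\cI(UD_2(\graf)))\isom V(\cI(UP_2(\graf))).
\]
Here \(\sim\) identifies a maximally standard subgraph with its orthogonal complement. The last isomorphism is \Cref{lemma:Y max facts}(2), since \(UP_2(\graf)=\m{UD_2(\graf)}\).

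First, by \Cref{lemma:properties of standard subgraphs}(2), the map \(\graf_1\mapsto\graf_1\itimes\graf_1^\perp\) from \(V(\cM(\graf))\) to maximal product subcomplexes is surjective and exactly \(2\)-to-\(1\), with fibres \(\{\graf_1,\graf_1^\perp\}\). Second, by \Cref{proposition:twig_maximally standard}, every maximally standard subgraph is a \(\sft\)-component for some twig \(\sft\), and conversely every \(\sft\)-component is maximally standard. For a twig \(\sft\) we have \(\graf_{\sft,1}^\perp=\graf_{\sft,2}\). Hence \(\sft\) determines the unordered pair \(\{\graf_{\sft,1},\graf_{\sft,2}\}\), which is precisely a fibre of the \(2\)-to-\(1\) map, and its image is \(M(\sft)\). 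This makes the map well defined, independent of the ordering of the components, and surjective.

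The remaining point, and the only genuine step, is injectivity. Suppose \(M(\sft)=M(\sft')\). By \Cref{corollary:maximal}(1) applied in both directions (or by \Cref{InclusionBetweenSPSes}), the factor pairs coincide up to order: \(\{\graf_{\sft,1},\graf_{\sft,2}\}=\{\graf_{\sft',1},\graf_{\sft',2}\}\). Thus the two substems \(\sfT_{\sft,1}\) and \(\sfT_{\sft,2}\) coincide with \(\sfT_{\sft',1}\) and \(\sfT_{\sft',2}\) up to order. In a tree, the twig is recovered as the unique edge of \(\sfT\) that is not contained in \(\sfT_{\sft,1}\cup\sfT_{\sft,2}\), equivalently the unique edge joining the two substems. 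Hence \(\sft=\sft'\).

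Care is needed at one point: the factors of \(M(\sft)\) must be recovered intrinsically from the subcomplex. This follows from the uniqueness of product structures up to permuting factors, together with the fact that standard product subcomplexes are embedded. Both facts are recorded in \Cref{corollary:maximal}.
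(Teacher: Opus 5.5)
Your proposal is correct and follows the paper's intended route. The paper gives no separate proof: the corollary is read off from \Cref{proposition:twig_maximally standard} (maximally standard subgraphs are exactly the \(\sft\)-components, with \(\graf_{\sft,1}^\perp=\graf_{\sft,2}\)) together with the \(2\)-to-\(1\) map of \Cref{lemma:properties of standard subgraphs}(2), and your injectivity check, recovering \(\sft\) as the unique edge of \(\graf\) outside \(\graf_{\sft,1}\cup\graf_{\sft,2}\), just spells out the step the paper leaves implicit.
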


For two twigs \(\sft,\sft'\) of \(\graf\), we may assume without loss of generality that \(\graf_{\sft,1}\subset\graf_{\sft',1}\) and \(\graf_{\sft',2}\subset\graf_{\sft,2}\).
Then the intersection \(M(\sft)\cap M(\sft')\) is exactly the standard product subcomplex \(\graf_{\sft,1}\itimes\graf_{\sft',2}\).
This observation generalizes as follows.

\begin{lemma}\label{Lem:RIconnected}
Let \(\{M(\sft_1),\dots,M(\sft_m)\}\) be a finite collection of maximal product subcomplexes of \(UP_2(\graf)\) corresponding to twigs \(\{\sft_1,\dots,\sft_m\}\) of \(\graf\).
Then the following are equivalent:
\begin{enumerate}
\item \(\bigcap_{i=1}^m M(\sft_i)\) is nonempty.
\item All \(\sft_i\)'s are colinear and \(\bigcap_{i=1}^m M(\sft_i)\) is the standard product subcomplex \(K(\sfP)\) where \(\sfP=\sfP\{\sft_1,\dots,\sft_m\}\) is the shortest path substem containing all the \(\sft_i\).
\end{enumerate}
In particular, \(UD_2(\graf)\) satisfies the standard intersection property.
\end{lemma}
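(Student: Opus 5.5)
The plan is to compute intersections of maximal product subcomplexes directly in the configuration space, using the twig description of \Cref{Corollary:TwigCorrespondence}. Each \(M(\sft_i)=\graf_{\sft_i,1}\itimes\graf_{\sft_i,2}\) consists of the cells \(\{\sigma,\tau\}\) with \(\sigma\) a cell of \(\graf_{\sft_i,1}\) and \(\tau\) a cell of \(\graf_{\sft_i,2}\); these are disjoint because \(\sft_i\) separates them. A cell \(\{\sigma,\tau\}\) lies in the intersection exactly when, for every \(i\), the twig \(\sft_i\) separates \(\sigma\) from \(\tau\) in the tree-like structure. Here \(\sigma\) and \(\tau\) are projected to the stem: a cell lying in a grape projects to the stem vertex it is attached to, and twig cells project to themselves. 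No twig \(\sft_i\) can itself carry \(\sigma\) or \(\tau\), since \(\sft_i\notin\graf_{\sft_i,1}\cup\graf_{\sft_i,2}\).

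For (1)\(\Rightarrow\)(2), suppose \(\{\sigma,\tau\}\) lies in the intersection. Let \(\sfx\) and \(\sfy\) be the images of \(\sigma\) and \(\tau\) in \(\sfT\), taking any endpoint if the cell is a twig. Each \(\sft_i\) separates them, so each \(\sft_i\) lies on the unique path \([\sfx,\sfy]\) in \(\sfT\). Hence all \(\sft_i\) are colinear. Let \(\sfP\) be the shortest path substem containing them, with extreme twigs \(\sft_a,\sft_b\).

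Next I check \(\bigcap_i M(\sft_i)=K(\sfP)\). Orient \(\sfP\) so that the \(\sfP\)-components satisfy \(\graf_{\sfP,1}\subset\graf_{\sft_i,1}\) and \(\graf_{\sfP,2}\subset\graf_{\sft_i,2}\) for all \(i\); this is possible after relabeling sides consistently along the path, as in the two-twig observation preceding the lemma. That observation gives \(K(\sfP)\subset M(\sft_i)\) for each \(i\). Conversely, a cell \(\{\sigma,\tau\}\) in the intersection must have \(\sigma\) on the \(\sfv_0\)-side of \(\sft_a\) and \(\tau\) on the \(\sfv_n\)-side of \(\sft_b\), or vice versa. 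Thus \(\sigma\subset\graf_{\sfP,1}\) and \(\tau\subset\graf_{\sfP,2}\), since these sides are exactly the \(\sfP\)-components. Because \(\graf\) is normal, \(K(\sfP)\) is a standard product subcomplex: each \(\sfP\)-component is connected, leafless, and contains a grape. The direction (2)\(\Rightarrow\)(1) is then trivial, since \(K(\sfP)\neq\varnothing\).

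The standard intersection property follows at once, because every nonempty intersection equals a single \(K(\sfP)\). The main care point is the orientation bookkeeping: one must ensure the inclusions \(\graf_{\sfP,j}\subset\graf_{\sft_i,j}\) hold with a consistent labeling, and handle the degenerate case \(m=1\) or repeated twigs, where \(\sfP=\sft\) and \(K(\sft)=M(\sft)\).
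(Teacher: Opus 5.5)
Your proof is correct, but your route to \((1)\Rightarrow(2)\) differs from the paper's.

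\emph{The paper's route.} It argues by contraposition. If the twigs are not colinear, it picks three non-colinear twigs \(\sft_1,\sft_2,\sft_3\) and, for each, the \(\sft_i\)-component avoiding the other two. These components are pairwise disjoint, so \(M(\sft_1)\cap M(\sft_2)\) and \(M(\sft_1)\cap M(\sft_3)\) are disjoint and the total intersection is empty. The identification \(\bigcap_i M(\sft_i)=K(\sfP)\) in the colinear case is left as a ``direct check''.

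\emph{Your route.} You take a witness cell \(\{\sigma,\tau\}\) in the intersection. Each \(\sft_i\) must separate the stem-projections of \(\sigma\) and \(\tau\), so every \(\sft_i\) lies on the single stem path between them.

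\emph{What each buys.} Your argument handles arbitrary \(m\) in one step. It also avoids two tree facts the paper uses tacitly: that a non-colinear family of edges contains a non-colinear triple, and that each edge of such a triple has a side missing the other two. You also carry out the check \(\bigcap_i M(\sft_i)=K(\sfP)\) with the same separation criterion, and only the two extreme twigs \(\sft_a,\sft_b\) are needed there. The paper's version instead exhibits a concrete obstruction: three maximal product subcomplexes that pairwise meet in standard product subcomplexes but have empty triple intersection. That is exactly why, in \Cref{Ex:3-star}, the \(1\)-skeleton is complete but non-colinear twigs span no higher simplices.

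One small point: leaflessness of the \(\sfP\)-components is not a consequence of normality alone. It also needs the standing minimal-model assumption \(\loops(\sfv)\ge 1\) when an endpoint of \(\sfP\) is bivalent in \(\sfT\). The paper glosses this the same way.
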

We say that twigs \(\sft_1,\dots,\sft_m\) are \emph{colinear} if there exists a path substem \(\sfP\subset\sfT\) containing all of them.
\begin{proof}
A direct check shows that if the twigs \(\sft_1,\dots,\sft_m\) are colinear, then \(\bigcap_{i=1}^m M(\sft_i)\) is precisely the nonempty standard product subcomplex \(K(\sfP)\), where \(\sfP=\sfP\{\sft_1,\dots,\sft_m\}\). Thus, it suffices to prove \((1)\Rightarrow(2)\).

Suppose instead that the twigs are not colinear in \(\sfT\). After reordering indices if necessary, we may assume that \(m\ge 3\) and that \(\sft_1,\sft_2,\sft_3\) are non-colinear.
For each \(i\in\{1,2,3\}\), let \(\graf_{\sft_i,1}\) denote the \(\sft_i\)-component of \(\graf\) that is disjoint from the other two twigs.
Then, for \(i\neq j\), the intersection \(M(\sft_i)\cap M(\sft_j)\) is the standard product subcomplex
\(\graf_{\sft_i,1}\itimes\graf_{\sft_j,1}\).
Since \(\graf_{\sft_2,1}\) and \(\graf_{\sft_3,1}\) are disjoint, the product subcomplexes \(\graf_{\sft_1,1}\itimes\graf_{\sft_2,1}\) and \(\graf_{\sft_1,1}\itimes\graf_{\sft_3,1}\) are disjoint as well.
Hence \(\bigcap_{i=1}^3 M(\sft_i)=\varnothing\), and consequently \(\bigcap_{i=1}^m M(\sft_i)=\varnothing\).
\end{proof}

Every \(\CAT(0)\) cube complex satisfies the \emph{Helly} property: any finite collection of pairwise intersecting convex subcomplexes has nonempty intersection.
Using the Helly property, we obtain an analogue of \Cref{Lem:RIconnected} for \(\bar{UP_2(\graf)}\).

\begin{lemma}\label{lem:Intofmaximal}
Let \(\{\bar{M(\sft_1)},\dots,\bar{M(\sft_m)}\}\) be a finite collection of pairwise intersecting maximal product subcomplexes of \(\bar{UP_2(\graf)}\) such that each \(\bar{M(\sft_i)}\) is a \(p\)-lift of the maximal product subcomplex \(M(\sft_i)\subset UP_2(\graf)\) corresponding to a twig \(\sft_i\) of \(\graf\). 
Then \(\sft_i\)'s are colinear. 
Moreover, \(\bigcap_{i=1}^m \bar{M(\sft_i)}\) is a standard product subcomplex which is a \(p\)-lift of the standard product subcomplex \(\bigcap_{i=1}^m M(\sft_i)\) of \(UP_2(\graf)\).
\end{lemma}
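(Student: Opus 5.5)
The plan is to work inside \(\bar{UP_2(\graf)}\), which is \(\CAT(0)\). By \Cref{Prop:free factor}, \(UP_2(\graf)\) is locally convex in \(UD_2(\graf)\); in any case it is an NPC square complex, so its universal cover is a \(\CAT(0)\) square complex. Each maximal product subcomplex \(M(\sft_i)=\graf_{\sft_i,1}\itimes\graf_{\sft_i,2}\) is embedded, by \Cref{corollary:maximal}, and is the image of a local isometry. By \Cref{Lem:SPSinGBG}, a \(p\)-lift \(\bar{M(\sft_i)}\) is a connected component of \(p^{-1}(M(\sft_i))\), and it is the image of an elevation, hence a convex subcomplex of \(\bar{UP_2(\graf)}\).

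The first step is colinearity. Since the \(\bar{M(\sft_i)}\) are pairwise intersecting convex subcomplexes, the Helly property gives that \(\bar W=\bigcap_i\bar{M(\sft_i)}\) is nonempty. Projecting by \(p=p_{UP_2(\graf)}\), we get \(\varnothing\neq p(\bar W)\subset\bigcap_i M(\sft_i)\). \Cref{Lem:RIconnected} then yields that the twigs are colinear and that \(\bigcap_i M(\sft_i)=K(\sfP)\), where \(\sfP=\sfP\{\sft_1,\dots,\sft_m\}\).

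The second step identifies \(\bar W\) as a \(p\)-lift of \(K(\sfP)\). Pick a vertex \(\bar x\in\bar W\). Then \(p(\bar x)\in K(\sfP)\), so let \(\bar K\) be the \(p\)-lift of \(K(\sfP)\) through \(\bar x\), that is, the component of \(p^{-1}(K(\sfP))\) containing \(\bar x\), which is convex by the same elevation argument.
\begin{itemize}
\item[(a)] For \(\bar K\subset\bar W\): since \(K(\sfP)\subset M(\sft_i)\) and \(\bar K\) is connected, \(\bar K\) lies in the component of \(p^{-1}(M(\sft_i))\) containing \(\bar x\), which is \(\bar{M(\sft_i)}\). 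This holds for every \(i\).
\item[(b)] For \(\bar W\subset\bar K\): \(\bar W\) is an intersection of convex subcomplexes, so it is convex and in particular connected. Also \(p(\bar W)\subset\bigcap_i M(\sft_i)=K(\sfP)\), so \(\bar W\subset p^{-1}(K(\sfP))\). Being connected and containing \(\bar x\), it lies in \(\bar K\).
\end{itemize}
Hence \(\bar W=\bar K\). By \Cref{Lem:SPSinGBG} and \Cref{ProjofPS}(2), \(\bar K\) is a standard product subcomplex, since \(\graf_{\sfP,1}\) and \(\graf_{\sfP,2}\) are leafless and connected.

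The main obstacle is the justification that \(p\)-lifts are convex and equal full connected components of preimages. This is needed both for Helly and for part (b). It rests on \(\iota: \graf_{\sfP,1}\times\graf_{\sfP,2}\to UP_2(\graf)\) being an injective local isometry, and I would take that from \Cref{Lem:SPSinGBG} together with \Cref{Lem:ImmersionImpliesLocalisometry}. I would also check that the intrinsic metric on \(UP_2(\graf)\) does not cause any problem here.
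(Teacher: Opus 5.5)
Your proof is correct and follows essentially the same route as the paper: the Helly property, projection to \(UP_2(\graf)\), \Cref{Lem:RIconnected} to obtain colinearity and \(\bigcap_i M(\sft_i)=K(\sfP)\), and then identification of \(\bigcap_i\bar{M(\sft_i)}\) with the \(p\)-lift of \(K(\sfP)\) through a common point. Your step (b) uses convexity, hence connectedness, of \(\bar W\). This only spells out what the paper leaves implicit when it asserts that a unique \(p\)-lift of \(K(\sfP)\) contains the intersection.
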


\begin{proof}
By \Cref{Lem:SPSinGBG}, any two \(p\)-lifts of \(M(\sft_i)\) in \(\bar{UP_2(\graf)}\) are components of \(p_{UP_2(\graf)}^{-1}(M(\sft_i))\), and in particular, if \(\sft_i=\sft_j\), then \(\bar{M(\sft_i)}\) and \(\bar{M(\sft_j)}\) must be identical.
Since \(\bigcap_{i=1}^m \bar{M(t_i)}\neq\varnothing\) by the Helly property of \(\CAT(0)\) cube complexes, we have
\[
\varnothing\neq p_{UP_2(\graf)}\left(\bigcap_{i=1}^m \bar{M(t_i)}\right) \subseteq \bigcap_{i=1}^m p_{UP_2(\graf)}(\bar{M(t_i)})=\bigcap_{i=1}^m {M(t_i)}.
\]
By \Cref{Lem:RIconnected}, the intersection \(\bigcap_{i=1}^m M(t_i)\) is the standard product subcomplex \(K(\sfP)\) corresponding to the shortest path \(\sfP\) containing all \(\sft_i\)'s.
It follows that there exists a unique \(p\)-lift \(\bar{K(\sfP)}\) of \(K(\sfP)\) which contains \(\bigcap_{i=1}^m \bar{M(\sft_i)}\). Moreover, since \(K(\sfP)\) is contained in each \(M(\sft_i)\), \(\bar{K(\sfP)}\) is contained in each \(\bar{M(\sft_i)}\) as well.
Therefore, \(\bigcap_{i=1}^m \bar{M(\sft_i)}\) is equal to \(\bar{K(\sfP)}\). 
\end{proof}

By \Cref{Thm:SubclassdefiningQII,Lem:RIconnected,Prop:free factor}, we immediately obtain the following fact.
\begin{theorem}\label{theorem:QIbetweenGBGs}
For \(\graf_1, \graf_2\in\grapegraph_\norm\), the graph \(2\)-braid groups \(\bbB_2(\graf_1)\) and \(\bbB_2(\graf_2)\) are quasi-isometric if and only if \(\pi_1(UP_2(\graf_1))\) and \(\pi_1(UP_2(\graf_2))\) are quasi-isometric. Namely, 
\[
\m{(\cdot)}:UD_2\left(\grapegraph_\norm\right)\longrightarrow UP_2\left(\grapegraph_\norm\right)
\]
defines a complete quasi-isometry invariant.
\end{theorem}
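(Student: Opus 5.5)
The plan is to deduce \Cref{theorem:QIbetweenGBGs} from \Cref{Thm:SubclassdefiningQII}. For that we first check that \(\grapegraph_\norm\subset\graphs'\cap\graphs_{(2)}\), where \(\graphs'\) is the class defined in that theorem. Let \(\graf\in\grapegraph_\norm\). \Cref{Lem:WhyNormal} gives \(\graf\in\graphs_{(0)}\). \Cref{Prop:free factor} then gives \(\graf\in\graphs_{(2)}\cap\graphs_{(3)}\): \(UP_2(\graf)\) is locally convex and \(\bbB_2(\graf)\isom\pi_1(UP_2(\graf))\ast\bbF_N\) with \(N\ge\loops(\sfT)\ge 2\). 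By \Cref{Lem:RIconnected}, \(UD_2(\graf)\) has the standard intersection property. Together these three facts place \(\graf\) in \(\graphs'\cap\graphs_{(2)}\).

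For the forward implication, suppose \(\bbB_2(\graf_1)\qisom\bbB_2(\graf_2)\). Since \(UP_2(\graf_i)\) is locally convex, \Cref{lem:UnionofSPS} shows that \(\bar{UP_2(\graf_i)}\) embeds isometrically in \(\bar{UD_2(\graf_i)}\), and its translates are the components of \(\m{\bar{UD_2(\graf_i)}}\). Take a quasi-isometry \(\phi\) between the universal covers. By \Cref{MaxtoMax}, \(\phi\) sends each maximal product subcomplex into a uniformly bounded neighbourhood of a maximal product subcomplex, and it preserves coarse intersections that contain flats. \Cref{lem:Intofmaximal} and \Cref{Lem:UP_2One-ended} show that the intersection complex of a copy of \(\bar{UP_2(\graf_1)}\) is connected, with adjacent vertices meeting in standard product subcomplexes. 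Hence \(\phi\) maps each copy of \(\bar{UP_2(\graf_1)}\) to within bounded Hausdorff distance of a single copy of \(\bar{UP_2(\graf_2)}\). Applying the same argument to a quasi-inverse of \(\phi\) shows that this restriction is a quasi-isometry. This is exactly the first statement of \Cref{Thm:SubclassdefiningQII}.

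For the converse, suppose \(\pi_1(UP_2(\graf_1))\qisom\pi_1(UP_2(\graf_2))\). Each group is finitely generated because \(UP_2(\graf_i)\) is compact. Using \(\bbB_2(\graf_i)\isom\pi_1(UP_2(\graf_i))\ast\bbF_{N_i}\) with \(N_i\ge 1\), the first part of \Cref{PW} gives \(\bbB_2(\graf_1)\qisom\bbB_2(\graf_2)\). The one-endedness supplied by \Cref{Lem:UP_2One-ended} is what makes the "moreover" part of \Cref{PW} available, so that the equivalence is an honest complete invariant in the sense of \Cref{Definition:QIinvariant}.

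The main obstacle is the forward step: showing that a quasi-isometry preserves the union of coarse copies of \(\bar{UP_2}\) as whole units, rather than splitting one copy across several copies of the other. The connectedness of \(\cI(\bar{UP_2(\graf)})\) through flat-containing intersections, together with the uniqueness clauses of \Cref{MaxtoMax}, is what rules this out. I would rely on \Cref{Thm:SubclassdefiningQII} as already established. An alternative route avoids it: \(\bbB_2(\graf_1)\qisom\bbB_2(\graf_2)\) with both \(\pi_1(UP_2)\) one-ended gives the QI of the free factors directly from the converse half of \Cref{PW}.
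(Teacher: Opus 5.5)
Your proposal is correct and is essentially the paper's argument: the paper deduces the theorem immediately from \Cref{Thm:SubclassdefiningQII} after noting, via \Cref{Prop:free factor} and \Cref{Lem:RIconnected}, that every normal bunch of grapes lies in \(\graphs'\cap\graphs_{(2)}\). Your remark that \(N_i\ge\loops(\sfT)\ge 2\) is exactly the hypothesis \(m,n\ge 1\) needed for the first part of \Cref{PW} in the completeness direction; the one slip is in your converse paragraph, since one-endedness and the ``moreover'' part of \Cref{PW} are not needed for completeness but instead give the invariance direction, which is the valid shortcut you describe at the end.
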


Moreover, the intersection complexes \(\cI(UP_2(\graf))\) and \(\cI(\bar{UP_2(\graf)})\) capture the algebraic structure of \(\pi_1(UP_2(\graf))\),


\begin{theorem}\label{theorem:structureofI}
For \(\graf\in\grapegraph_\norm\), the intersection complex 
\(\cI(UP_2(\graf))\) is a connected simplicial complex admitting the structure of a developable complex of groups whose development is \(\cI(\bar{UP_2(\graf)})\). 
Moreover, \(\cI(\bar{UP_2(\graf)})\) is a connected, simply connected flag complex.
In particular, \(\pi_1(UP_2(\graf))\) is thick, and hence one-ended.
\end{theorem}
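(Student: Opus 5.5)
Most of \Cref{theorem:structureofI} follows by assembling earlier results; the one genuinely new point is that \(\cI(\bar{UP_2(\graf)})\) is flag. The main ingredients are \Cref{Lem:UP_2One-ended}, \Cref{Lem:RIconnected} and \Cref{lem:Intofmaximal}.

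\textbf{Connectedness, developability and simple connectivity.} By \Cref{Lem:RIconnected}, \(UD_2(\graf)\) has the standard intersection property. By \Cref{corollary:maximal}, it also has the embedded product property. \Cref{Lem:UP_2Special} shows that \(UP_2(\graf)\) is connected. Hence \Cref{Lem:UP_2One-ended} (equivalently \Cref{lem:Connected}) gives three things: \(\cI(UP_2(\graf))\) and \(\cI(\bar{UP_2(\graf)})\) are connected, the former is a developable complex of groups with development the latter, and \(\pi_1(UP_2(\graf))\) is thick and one-ended. Simple connectivity of \(\cI(\bar{UP_2(\graf)})\) then follows from \Cref{Thm:Development}, since a development is simply connected.

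\textbf{\(\cI(UP_2(\graf))\) is simplicial.} By \Cref{Corollary:TwigCorrespondence}, vertices correspond to twigs. By \Cref{Lem:RIconnected}, a collection of twigs spans a simplex exactly when the twigs are colinear. In that case the intersection is the single standard product subcomplex \(K(\sfP)\), so exactly one simplex arises. Hence there are no multi-simplices and \(\cI(UP_2(\graf))\) is simplicial. Note that \(\cI(UP_2(\graf))\) need not be flag: three twigs at a vertex of valency \(\ge 3\) are pairwise colinear but not colinear.

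\textbf{Flagness of \(\cI(\bar{UP_2(\graf)})\).} Being simplicial is already given by the remark after \Cref{MaxtoMax}. For flagness, take maximal product subcomplexes \(\bar M_1,\dots,\bar M_m\) of \(\bar{UP_2(\graf)}\) that are pairwise joined by edges, so they pairwise intersect.
\begin{enumerate}
\item Each \(\bar M_i\) is a \(p\)-lift of some \(M(\sft_i)\) and is convex in the \(\CAT(0)\) cube complex \(\bar{UP_2(\graf)}\), being the elevation of a locally isometric embedding.
\item By the Helly property, \(\bigcap_i \bar M_i \neq \varnothing\).
\item \Cref{lem:Intofmaximal} then shows that the \(\sft_i\) are colinear and that \(\bigcap_i \bar M_i\) is a single standard product subcomplex, namely a \(p\)-lift of \(K(\sfP)\).
\end{enumerate}
Therefore the \(\bar M_i\) span exactly one simplex, which proves flagness.

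The one step needing care is the flagness argument. There I must confirm that an edge in \(\cI(\bar{UP_2(\graf)})\) really means nonempty intersection, which holds by the definition of the intersection complex. I must also confirm that distinct \(\bar M_i\) lifting the same twig cannot intersect, which holds by \Cref{Lem:SPSinGBG}.
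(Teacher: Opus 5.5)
Your proposal is correct and follows the paper's proof essentially step for step. Connectedness and developability come from \Cref{Lem:UP_2One-ended}, fed by \Cref{Lem:UP_2Special}, \Cref{corollary:maximal} and \Cref{Lem:RIconnected}; simple connectivity comes from \Cref{Thm:Development}; simpliciality of \(\cI(UP_2(\graf))\) comes from the uniqueness of \(K(\sfP)\) in \Cref{Lem:RIconnected}; and flagness of \(\cI(\bar{UP_2(\graf)})\) comes from \Cref{lem:Intofmaximal}. Your explicit Helly argument, and your remark that \(\cI(UP_2(\graf))\) itself need not be flag, simply spell out what the paper compresses into a citation.
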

\begin{proof}
By \Cref{corollary:maximal}, \(\cI(UP_2(\graf))\) is simplicial since any two edges in a tree are colinear. 
By definition, \(\cI(\bar{UP_2(\graf)})\) is simplicial, and by 
\Cref{Corollary:TwigCorrespondence,lem:Intofmaximal}, it is a flag complex.

By \Cref{Lem:UP_2Special}, \(UP_2(\graf)\) is connected. 
Moreover, by \Cref{corollary:maximal} it satisfies the embedded product property, and by \Cref{Lem:RIconnected} it satisfies the standard intersection property. 
Hence \Cref{Lem:UP_2One-ended} applies, implying that both intersection complexes are connected and that \(\cI(\bar{UP_2(\graf)})\) is the development of \(\cI(UP_2(\graf))\).

Finally, \Cref{Thm:Development} implies that \(\cI(\bar{UP_2(\graf)})\) is simply connected, and thickness (and hence one-endedness) follows from \Cref{lem:One-endedness,Rmk:thickness}.
\end{proof}

\begin{example}\label{Ex:3-star}
As a corollary of \Cref{Lem:RIconnected}, one can easily see that the intersection complex \(\cI(UP_2(\graf))\) is a simplicial complex whose \(1\)-skeleton is a complete graph \(\sfK_{\#E(\sfT)}\).
For instance, let \(\grafl=(\sfS_3,\loops\equiv 1)\in\grapegraph_\norm\) be the bunch of grapes depicted in \Cref{figure:example of sip}.
Then \(\cI(UP_2(\grafl))\) is a graph given as follows:
\[
\cI(UP_2(\grafl))=\begin{tikzpicture}[baseline=-.5ex]
\draw[fill] (90:1) circle (2pt) node[above] {\(\grafl_{\sfv_1}\itimes\grafl_{\sfe_2\cup\sfe_3}\)};
\draw[fill] (210:1) circle (2pt) node[left] {\(\grafl_{\sfv_2}\itimes\grafl_{\sfe_1\cup\sfe_3}\)};
\draw[fill] (330:1) circle (2pt) node[right] {\(\grafl_{\sfv_3}\itimes\grafl_{\sfe_1\cup\sfe_2}\)};
\draw (90:1)--(210:1) node[midway, left] {\(\grafl_{\sfv_1}\itimes\grafl_{\sfe_2}\)};
\draw (210:1)--(330:1) node[midway, below] {\(\grafl_{\sfv_2}\itimes\grafl_{\sfe_3}\)};
\draw (330:1)--(90:1) node[midway, right] {\(\grafl_{\sfv_1}\itimes\grafl_{\sfe_3}\)};
\end{tikzpicture}
\]
Note that there is no \(2\)-simplex in \(\cI(UP_2(\grafl))\) and \(\grafl_{\sfv_i}\) is a proper subgraph of \(\grafl^\perp_{\sfv_j}=\grafl_{\sfe_{j+1}\cup\sfe_{j+2}}\) for \(i\neq j\).
\end{example}

\subsection{Deformation retractions of intersection complexes}\label{Subsection:IntofBOG}
In this subsection, by making further use of the tree structure of the stem, we show that both \(\cI(UP_2(\graf))\) and \(\cI(\bar{UP_2(\graf)})\) admit deformation retractions onto \(1\)-dimensional complexes in a way that is compatible with isomorphisms.

Let \(\cO_{k}(\graf)\) denote the poset of all paths in \(\sfT\) of length at most \(k\), ordered by inclusion, regarded as a simplicial complex via its chains; if \(D=\diam(\sfT)\), then we write simply \(\cO(\graf)=\cO_{D}(\graf)\).
By \Cref{Lem:RIconnected,lem:Intofmaximal}, we obtain functions (sending open simplices to open simplices)
\[
\sfP_{\mbox{\textvisiblespace}}:\cI(UP_2(\graf))\longrightarrow\cO(\graf)\quad\text{ and }\quad
\sfP_{\mbox{\textvisiblespace}}:\cI(\bar{UP_2(\graf)})\longrightarrow \cO(\graf)
\]
defined as follows:
for each \(k\)-simplex \(\triangle=\{M(\sft_0),\dots,M(\sft_k)\}\subset\cI(UP_2(\graf))\), 
we set \[\sfP_\triangle=\sfP_{\bar\triangle}=\sfP\{\sft_0,\dots,\sft_k\},\]
where \(\bar\triangle=\{\bar{M(\sft_0)},\dots,\bar{M(\sft_k)}\}\subset\cI(\bar{UP_2(\graf)})\) is a \(k\)-simplex satisfying \(p_{UP_2}(\bar\triangle)=\triangle\).

The following lemma is straightforward and its proof is omitted.
\begin{lemma}\label{lemma:maximal is longest paths}
The following statements hold:
\begin{enumerate}
\item If \(\triangle\subset \triangle'\) are simplices in \(\cI(UP_2(\graf))\), then \(\sfP_{\triangle}\subset \sfP_{\triangle'}\).
\item The function \(\sfP_{\mbox{\textvisiblespace}}\) induces a one-to-one correspondence between maximal simplices in \(\cI(UP_2(\graf))\) and paths in \(\sfT\) joining leaves.
\item Two maximal simplices \(\triangle,\triangle'\) intersect in \(\cI(UP_2(\graf))\) if and only if \(\sfP_\triangle\) and \(\sfP_{\triangle'}\) share at least one edge.
\end{enumerate}
\end{lemma}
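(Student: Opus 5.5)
The whole argument rests on \Cref{Corollary:TwigCorrespondence} and \Cref{Lem:RIconnected}. By the former, vertices of \(\cI(UP_2(\graf))\) are the maximal product subcomplexes \(M(\sft)\), one for each twig \(\sft\in E(\sfT)\). By the latter, a finite set \(\{M(\sft_0),\dots,M(\sft_k)\}\) spans a simplex exactly when the twigs are colinear, and then there is exactly one such simplex, since the intersection is the single standard product subcomplex \(K(\sfP\{\sft_0,\dots,\sft_k\})\). So simplices of \(\cI(UP_2(\graf))\) are in bijection with colinear sets of twigs, and \(\sfP_\triangle\) is the shortest path substem containing the corresponding twigs. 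Each item will be read off from this description.

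For (1), if \(\triangle\subset\triangle'\), the twig set of \(\triangle\) is contained in that of \(\triangle'\). The path \(\sfP_{\triangle'}\) contains all twigs of \(\triangle\). In a tree the shortest path containing a set of edges is the unique geodesic between the two extreme edges, and it lies in any path containing them, so \(\sfP_\triangle\subset\sfP_{\triangle'}\).

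For (2), a simplex is maximal if and only if its twig set is a maximal colinear set. A maximal colinear set consists of all edges of a path. That path cannot be extended at either end, and a path in a tree can always be extended at an endpoint of valency at least \(2\). Hence both endpoints are leaves. Conversely, the edge set of a leaf-to-leaf path is colinear and admits no colinear enlargement: if an extra edge \(\sfe\) were colinear with them, some path would contain \(\sfP\cup\sfe\), and this would force extension past a leaf. Since \(\triangle\mapsto\sfP_\triangle\) recovers the twig set as \(E(\sfP_\triangle)\) in the maximal case, the correspondence is bijective.

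For (3), maximal simplices \(\triangle,\triangle'\) correspond to twig sets \(E(\sfP_\triangle)\) and \(E(\sfP_{\triangle'})\). Two simplices of a simplicial complex intersect exactly when they share a vertex, since any common subset of twigs is still colinear and so spans a common face. Shared vertices are shared twigs. So \(\triangle\cap\triangle'\ne\varnothing\) if and only if \(\sfP_\triangle\) and \(\sfP_{\triangle'}\) share an edge.

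The only mildly delicate point is in (2): checking that a maximal colinear set is exactly the full edge set of a leaf-to-leaf path, rather than a proper subset of some path. This holds because adding any edge of the spanning path keeps the set colinear, so a maximal set must already contain every edge of its spanning path. Everything else is immediate from \Cref{Lem:RIconnected}.
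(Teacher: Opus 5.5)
Your proof is correct and is the routine verification the paper has in mind; the paper omits the proof as straightforward. Everything reduces, via \Cref{Corollary:TwigCorrespondence} and \Cref{Lem:RIconnected}, to the fact that simplices of \(\cI(UP_2(\graf))\) correspond bijectively to nonempty colinear sets of twigs, combined with elementary facts about paths in trees, exactly as you argue.
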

Here, a simplex is \emph{maximal} if it is not properly contained in any larger simplex.

We also define a section \(\cO(\graf)\to\cI(UP_2(\graf))\): for each path substem \(\sfP=[\sfv_0,\dots,\sfv_{k+1}]\) of length \(k+1\), we set
\[\triangle_{\sfP}=\{M(\sft_0),\dots,M(\sft_k)\},\]
where \(\sft_i=[\sfv_i,\sfv_{i+1}]\) is a twig for each \(0\le i\le k\).

We define filtrations \(\cI_{k}(UP_2(\graf))\) and \(\cI_{k}(\bar{UP_2(\graf)})\) as the preimages of \(\cO_{k}(\graf)\) under the functions \(\triangle\mapsto\sfP_\triangle\); by construction, each of these complexes has dimension \(k-1\).
By definition of the filtrations and the map \(\sfP_{\mbox{\textvisiblespace}}\), the canonical quotient map \(\rho:\cI(\bar{UP_2(\graf)})\to\cI(UP_2(\graf))\) respects the filtrations and we denote by \(\rho_{k}\) its restriction to the \(k\)-th level
\[
\rho_{k}:\cI_{k}(\bar{UP_2(\graf)})\longrightarrow\cI_{k}(UP_2(\graf)).
\]

Since \(\sfP_\triangle\) has length at least \(k+1\) for every \(k\)-simplex \(\triangle\), the map \(\sfP_{\mbox{\textvisiblespace}}\) is always surjective, but in general, fails to be injective.
However, it is injective when \(k\le 2\).
Therefore we obtain isomorphisms between one-dimensional simplicial complexes
\[
\begin{tikzcd}[column sep=3pc]
\cI_{2}(UP_2(\graf))\ar[r,yshift=.5ex,"\sfP_{\mbox{\textvisiblespace}}"] & \cO_{2}(\graf),
\ar[l,yshift=-.5ex,"\triangle_{\mbox{\textvisiblespace}}"]
\end{tikzcd}
\]
so that vertices and edges in \(\cI_{2}(UP_2(\graf))\) correspond to twigs and pairs of adjacent twigs in \(\graf\), respectively. See \Cref{figure:Ile2UP2}.

\begin{figure}[ht]
\begin{align*}
\cI_{2}(UP_2(\graf))&=
\begin{tikzpicture}[baseline=-.5ex, scale=0.8]
\begin{scope}[lightgray]
\draw[thick,fill] (0,0) circle (2pt) node (A) {} -- ++(5:2) circle(2pt) node(B) {} -- +(0,1.5) circle (2pt) node (C) {}+(0,0) -- +(0,-1.5) circle (2pt) node(D) {} +(0,0) -- ++(-10:1.5) circle(2pt) node(E) {} -- ++(10:1.5) circle(2pt) node(F) {} -- +(120:1) circle(2pt) node(G) {} +(0,0) -- +(60:1) circle (2pt) node(H) {} +(0,0) -- ++(-5:1) circle (2pt) node(I) {} -- +(30:1) circle(2pt) node(J) {} +(0,0) -- +(-30:1) circle (2pt) node(K) {};
\grape[90]{A};
\grape[270]{A};
\grape[0]{C}; \grape[90]{C}; \grape[180]{C};
\grape[-90]{D};
\grape[-90]{E};
\grape[-90]{F};
\grape[120]{G};
\grape[60]{H};
\grape[-30]{K};
\grape[30]{J};
\grape[40]{B};
\grape[-105]{I};
\end{scope}
\draw[thick,fill] 
(5:1) circle (2pt) node (E1) {} node[above] {\(\sft_1\)} (B) +(0,0.75) circle (2pt) node (E2) {} node[above] {\(\sft_{10}\)}
+(0,-0.75) circle (2pt) node (E3) {} node[below] {\(\sft_2\)}
+(-10:0.75) circle (2pt) node (E4) {} node[below] {\(\sft_3\)}
(E) +(10:0.75) circle (2pt) node (E5) {} node[below] {\(\sft_4\)}
(F) +(120:0.5) circle (2pt) node (E6) {} node[above] {\(\sft_9\)}
+(60:0.5) circle (2pt) node (E7) {} node[above] {\(\sft_8\)}
+(-5:0.5) circle (2pt) node (E8) {} node[below] {\(\sft_5\)}
(I) +(30:0.5) circle (2pt) node (E9) {} node[right] {\(\sft_7\)}
+(-30:0.5) circle (2pt) node (E10) {} node[right] {\(\sft_6\)}
;
\draw[thick] 
(E1.center) -- (E2.center) -- (E3.center) -- (E4.center) -- (E1.center) -- node[midway,below left] {\(\sfK(\sf1)\)} (E3.center)
(E2.center) -- (E4.center) -- node[midway,above] {\(\sfK(\sf3)\)} (E5.center) -- (E6.center) -- (E7.center) -- (E8.center) -- node[midway,below] {\(\sfK(\sf4)\)} (E5.center) -- (E7.center) 
(E6.center) -- (E8.center) -- node[midway,above] {\(\sfK(\sf5)\)} (E9.center) -- (E10.center) -- (E8.center)
;
\end{tikzpicture}&
\begin{cases}
\sfK(\sf1)\isom \sfK_4,\\
\sfK(\sf3)\isom \sfK_2,\\
\sfK(\sf4)\isom \sfK_4,\\
\sfK(\sf5)\isom \sfK_3
\end{cases}
\end{align*}
\caption{\(\cI_{2}(UP_2(\graf))\) superimposed on a normal bunch of grapes \(\graf\), with the latter shown in gray.}
\label{figure:Ile2UP2}
\end{figure}

\begin{lemma}\label{lemma:deformation retract}
For each \(k\ge 2\), there exist strong deformation retractions 
\[
r_k:\cI_{k+1}(UP_2(\graf))\to \cI_{k}(UP_2(\graf))\quad\text{and}\quad\bar r_k:\cI_{k+1}(\bar{UP_2(\graf)})\to \cI_{k}(\bar{UP_2(\graf)})
\]
satisfying that \(\rho_{k}\circ\bar r_k=r_k\circ\rho_{k+1}\).
\end{lemma}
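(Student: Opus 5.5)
The plan is to remove, one path at a time, the simplices whose associated path $\sfP_\triangle$ has length exactly $k+1$. I will show that these simplices come in disjoint blocks, each of which is an open star of an edge inside a single top simplex. Each block then collapses linearly, with the identity on $\cI_k$.

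\emph{Step 1 (downstairs).} Fix a path substem $\sfP=[\sfv_0,\dots,\sfv_{k+1}]$ of length $k+1$ with twigs $\sft_0,\dots,\sft_k$. By \Cref{Lem:RIconnected}, a simplex $\triangle\subset\cI(UP_2(\graf))$ satisfies $\sfP_\triangle=\sfP$ if and only if its vertex set consists of twigs of $\sfP$ and contains both end twigs $\sft_0$ and $\sft_k$. Since $\cI(UP_2(\graf))$ is simplicial, such a $\triangle$ is determined by its vertex set. Hence these simplices are exactly the faces of $\triangle_{\sfP}$ containing the edge $e_\sfP=\{M(\sft_0),M(\sft_k)\}$, i.e. the open star of $e_\sfP$ in $\triangle_\sfP$.

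Every simplex of $\cI_{k+1}\setminus\cI_k$ has a path of length exactly $k+1$, and the path is a function of the simplex. So $\cI_{k+1}$ is $\cI_k$ together with the pairwise disjoint open stars $\mathrm{st}^\circ(e_\sfP,\triangle_\sfP)$, one for each $\sfP$ of length $k+1$. Each $\triangle_\sfP\setminus \mathrm{st}^\circ(e_\sfP)$ is the union of the two facets opposite $M(\sft_0)$ and $M(\sft_k)$. It lies in $\cI_k$, because both facets correspond to paths of length $k$.

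The simplex $\triangle_\sfP$ strong deformation retracts onto this union. I would use the straight-line retraction that pushes points away from the barycenter of $e_\sfP$, i.e. the standard collapse of a simplex from a free face, which is available here since $k\ge2$ gives at least three vertices. Define $r_k$ to be these retractions on each $\triangle_\sfP$ and the identity on $\cI_k$. The pieces agree on overlaps, because overlaps lie in $\cI_k$ where every piece is the identity. Continuity then follows from finiteness (or local finiteness) of the cell structure.

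\emph{Step 2 (upstairs).} I repeat the construction in $\cI(\bar{UP_2(\graf)})$. The key claim is that every simplex $\bar\sigma$ with $\sfP_{\bar\sigma}=\sfP$ of length $k+1$ lies in a \emph{unique} $k$-simplex $\bar\triangle$ lifting $\triangle_\sfP$. For existence, \Cref{lem:Intofmaximal} shows that the intersection of the vertices of $\bar\sigma$ is a $p$-lift $\bar{K(\sfP)}$ of $K(\sfP)$. Since $K(\sfP)\subset M(\sft_i)$ for every $i$, and $p$-lifts are components of preimages by \Cref{Lem:SPSinGBG}, each $\sft_i$ has a unique lift $\bar{M(\sft_i)}$ containing $\bar{K(\sfP)}$. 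These lifts pairwise intersect, so they span a simplex by flagness (\Cref{theorem:structureofI}). Uniqueness follows because any $k$-simplex over $\triangle_\sfP$ containing $\bar\sigma$ has its vertex intersection equal to $\bar{K(\sfP)}$ again, which forces the same vertices.

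Consequently the open stars $\mathrm{st}^\circ(\bar e,\bar\triangle)$ partition $\cI_{k+1}(\bar{UP_2(\graf)})\setminus\cI_k(\bar{UP_2(\graf)})$. Defining $\bar r_k$ by the same linear collapse on each $\bar\triangle$ gives a strong deformation retraction. The canonical quotient $\rho$ maps each $\bar\triangle$ simplicially onto $\triangle_\sfP$, sending the end-twig edge to $e_\sfP$, and the collapse is defined by the same barycentric formula on both. This gives $\rho_k\circ\bar r_k=r_k\circ\rho_{k+1}$, and the same argument shows that $\bar r_k$ is $\pi_1(UP_2(\graf))$-equivariant.

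The main obstacle is Step 2's uniqueness of the top simplex containing a given lifted simplex. Without it, the upstairs open stars could overlap and the local collapses would not glue. I would establish it entirely through the vertex-intersection characterization in \Cref{lem:Intofmaximal}, as sketched above.
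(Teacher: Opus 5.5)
Your proposal is correct and follows essentially the paper's argument. Each top simplex \(\triangle_\sfP\) with \(\len(\sfP)=k+1\) is collapsed from its free end-twig edge onto the union \(\triangle_1\cup\triangle_2\) of the two facets, \(\cI_k\) is fixed, and the same is done upstairs once one knows that each lifted end-twig edge lies in a unique \(k\)-simplex of \(\cI_{k+1}(\bar{UP_2(\graf)})\). You derive that uniqueness from \Cref{lem:Intofmaximal} and the fact that \(p\)-lifts are components of preimages, which is a more explicit version of the paper's stabilizer argument.

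Two small technical fixes are needed. First, the radial projection must be centered at a point just outside \(\triangle_\sfP\), beyond the barycenter of \(e_\sfP\); centered at the barycenter itself, the map is undefined there and admits no continuous extension. Second, \(\cI(\bar{UP_2(\graf)})\) is not locally finite, so continuity upstairs should come from the weak topology, or from the uniform Lipschitz bound on the finitely many simplex shapes, rather than from local finiteness.
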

\begin{proof}
By the definition of the filtration and \(\rho_{k}\), there is a commutative diagram whose rows are strictly increasing filtrations:
\[
\begin{tikzcd}[column sep=2pc, row sep=2pc]
\cI_{2}(\bar{UP_2(\graf)})\arrow[r]\arrow[d,"\rho_{2}"]&\cI_{3}(\bar{UP_2(\graf)})\arrow[r]\arrow[d,"\rho_{3}"]&\cdots\arrow[r]&
\cI_{D}(\bar{UP_2(\graf)})
\arrow[r,equal]\arrow[d,"\rho_{D}"]&\cI(\bar{UP_2(\graf)})\arrow[d,"\rho"]\\
\cI_{2}(UP_2(\graf))\arrow[r]&\cI_{3}(UP_2(\graf))\arrow[r]&
\cdots\arrow[r]&
\cI_{D}(UP_2(\graf))\arrow[r,equal]&\cI(UP_2(\graf)),
\end{tikzcd}
\] 
where \(D\) is the diameter of \(\sfT\).

For \(k\ge 2\), let \(\triangle=\{M(\sft_0),\dots,M(\sft_k)\}\subset\cI_{k+1}(UP_2(\graf))\) be a \(k\)-simplex such that the twigs \(\sft_i\) are consecutive and their union equals \(\sfP_\triangle\), and let \(\triangle_0=\{M(\sft_0),M(\sft_k)\}\) be an edge of \(\triangle\).
By \Cref{lemma:maximal is longest paths}, for any sub-simplex \(\triangle'\) with \(\triangle_0\subseteq\triangle'\subseteq\triangle\), we have \(\sfP_{\triangle'}=\sfP_\triangle\).
Moreover, by the definition of filtrations, each simplex in \(\cI_{k+1}(UP_2(\graf))\) containing \(\triangle_0\) as a face is itself a face of \(\triangle\); in particular, \(\triangle\) is a unique \(k\)-simplex in \(\cI_{k+1}(UP_2(\graf))\) containing \(\triangle_0\). 

Consequently, there exists a strong deformation retraction \(r_\triangle\) from the simplex \(\triangle\) onto the union of two faces \(\triangle_1=\{M(\sft_0),\dots,M(\sft_{k-1})\}\) and \(\triangle_2=\{M(\sft_1),\dots,M(\sft_k)\}\) (see \Cref{figure:deformation retract on simplex} for the cases \(k=2,3\)).
The map \(r_\triangle\) extends to a map \(\cI_{k+1}(UP_2(\graf))\to\cI_{k+1}(UP_2(\graf))\) that fixes all simplices other than \(\triangle\).
Composing the maps \(r_\triangle\) over all \(k\)-simplices \(\triangle\) in \(\cI_{k+1}(UP_2(\graf))\), we obtain a strong deformation retraction \(r_k:\cI_{k+1}(UP_2(\graf))\rightarrow \cI_{k}(UP_2(\graf))\).

Now let \(\bar{\triangle}=\{\bar{M(\sft_0)},\dots,\bar{M(\sft_k)}\}\subset\cI_{k+1}(\bar{UP_2(\graf)})\) be a \(k\)-simplex with \(\rho(\bar\triangle)=\triangle\) and consecutive twigs \(t_i\), and let \(\bar{\triangle}_0=\{\bar{M(\sft_0)},\bar{M(\sft_k)}\}\).
As before, \(\bar\triangle\) is the unique \(k\)-simplex containing \(\bar\triangle_0\): indeed the stabilizers of \(\bar\triangle\) and \(\bar\triangle_0\) under the group action given in \Cref{Thm:TPBCM} coincide, and \(\triangle\) is the unique \(k\)-simplex containing \(\rho(\bar\triangle_0)\). 
Hence, there exists a strong deformation retraction \(\bar r_{\bar\triangle}\) from \(\bar\triangle\) onto the union of two faces \(\bar\triangle_1=\{\bar{M(\sft_0)},\dots,\bar{M(\sft_{k-1})}\}\) and \(\bar\triangle_2=\{\bar{M(\sft_1)},\dots,\bar{M(\sft_k)}\}\).
This map extends to \(\cI_{k+1}(\bar{UP_2(\graf)})\) by fixing all simplices other than \(\bar\triangle\). 
Composing the maps \(\bar r_{\bar\triangle}\) over all \(k\)-simplices \(\bar\triangle\) in \(\cI_{k+1}(\bar{UP_2(\graf)})\), we therefore obtain a strong deformation retraction \(\bar r_{k}:\cI_{k+1}(\bar{UP_2(\graf)})\to \cI_{k}(\bar{UP_2(\graf)})\) satisfying the commutativity relation \(\rho_{k}\circ \bar r_k = r_k\circ \rho_{k+1}\).
\end{proof}

\begin{figure}[ht]
\begin{subfigure}{0.49\textwidth}
\centering
\begin{adjustbox}{width=\textwidth}
\begin{tikzcd}
\begin{tikzpicture}[baseline=-.5ex]
\fill[lightgray,opacity=0.5] (90:2) -- (210:2) -- (330:2) -- cycle;
\draw[thick] (0,0) node {\(\triangle\)}
(90:2) node[above] {\(M(\sft_0)\)} -- node[midway,sloped, above] {\(\triangle_1=\{M(\sft_0),M(\sft_1)\}\)}
(210:2) node[left] {\(M(\sft_1)\)} -- node[midway,below] {\(\triangle_2=\{M(\sft_1),M(\sft_2)\}\)} 
(330:2) node[right] {\(M(\sft_2)\)} -- node[midway,sloped, above] {\(\triangle_0=\{M(\sft_0),M(\sft_2)\}\)}
cycle
;
\draw[fill] 
(90:2) circle (2pt)
(210:2) circle (2pt)
(330:2) circle (2pt)
;
\foreach \i in {-4,-3,-2,-1,0} {
\draw[thin,->,gray] (30:1) +({\i*sqrt(3)*cos(-60)/5},{\i*sqrt(3)*sin(-60)/5}) -- ({2*cos(210)-2*sqrt(3)*cos(60)*\i/5},{2*sin(210)-2*sqrt(3)*sin(60)*\i/5});
}
\foreach \i in {1,2,3,4} {
\draw[thin,->,gray] (30:1) +({\i*sqrt(3)*cos(-60)/5},{\i*sqrt(3)*sin(-60)/5}) -- ({2*cos(210)+2*sqrt(3)*\i/5},{2*sin(210)});
}
\end{tikzpicture}\ar[r,"\displaystyle r_\triangle"]&
\begin{tikzpicture}[baseline=-.5ex]
\draw[thick]
(90:2) node[above] {\(M(\sft_0)\)} -- node[midway,sloped, above] {\(\triangle_1=\{M(\sft_0),M(\sft_1)\}\)}
(210:2) node[left] {\(M(\sft_1)\)} -- node[midway,below] {\(\triangle_2=\{M(\sft_1),M(\sft_2)\}\)} 
(330:2) node[right] {\(M(\sft_2)\)}
;
\draw[fill] 
(90:2) circle (2pt)
(210:2) circle (2pt)
(330:2) circle (2pt)
;
\end{tikzpicture}
\end{tikzcd}
\end{adjustbox}
\caption{\(k=2\)}
\end{subfigure}
\begin{subfigure}{0.49\textwidth}
\centering
\begin{adjustbox}{width=\textwidth}
\begin{tikzcd}
\begin{tikzpicture}[baseline=-.5ex]
\fill[lightgray,opacity=0.5] (-1,1.5) -- ++(15:4) -- ++(0,{-3-8*sin(15)}) -- cycle;
\fill[lightgray,opacity=0.5] (-1,1.5) -- (-1,-1.5) -- ++(-15:4) -- cycle;
\draw[thick] (-1,-1.5) ++(-15:4) -- (-1,1.5);
\foreach \i in {-4,-3,-2,-1,0} {
\draw[thin,->] (-1,-1.5) ++(-15:4) ++(0,{1.5+4*sin(15)+\i*(1.5+4*sin(15))/5}) -- ++({-(5+\i)*(4*cos(15))/5},0);
}
\foreach \i in {1,2,3,4} {
\draw[thin,->] (-1,-1.5) ++(-15:4) ++(0,{1.5+4*sin(15)+\i*(1.5+4*sin(15))/5}) -- ++({-(5-\i)*(4*cos(15))/5},0);
}
\fill[lightgray,opacity=0.5] (-1,-1.5) -- (-1,1.5) -- ++(15:4) -- cycle;
\fill[lightgray,opacity=0.5] (-1,-1.5) -- ++(-15:4) -- ++(0,{3+8*sin(15)}) -- cycle;
\draw[thick] 
(-1,-1.5) node[left] {\(M({\sft_2})\)} -- (-1,1.5) node[left] {\(M({\sft_1})\)} -- node[midway,sloped,below] {\(\triangle_1=\{M({\sft_0}),M({\sft_1}),M({\sft_2})\}\)} ++(15:4) node (E1) {} node[right] {\(M({\sft_0})\)} -- cycle -- node[midway,sloped,above] {\(\triangle_2=\{M({\sft_1}),M({\sft_2}),M({\sft_3})\}\)} ++(-15:4) node (E2) {} node[right] {\(M({\sft_3})\)}
({-1+4*cos(15)},{1.5+4*sin(15)}) -- ({-1+4*cos(-15)},{-1.5+4*sin(-15)}) node[midway, above,sloped] {\(\triangle_0=\{M({\sft_0}),M({\sft_3})\}\)}
;
\draw (1.5,0) node {\(\triangle\)};
\end{tikzpicture}\ar[r,"\displaystyle r_\triangle"]&
\begin{tikzpicture}[baseline=-.5ex]
\fill[lightgray,opacity=0.5] (-1,1.5) -- (-1,-1.5) -- ++(-15:4) -- cycle;
\draw[thick] (-1,-1.5) ++(-15:4) -- (-1,1.5);
\fill[lightgray,opacity=0.5] (-1,-1.5) -- (-1,1.5) -- ++(15:4) -- cycle;
\draw[thick] 
(-1,-1.5) node[left] {\(M({\sft_2})\)} -- (-1,1.5) node[left] {\(M({\sft_1})\)} -- node[midway,sloped,below] {\(\triangle_1=\{M({\sft_0}),M({\sft_1}),M({\sft_2})\}\)} ++(15:4) node (E1) {} node[right] {\(M({\sft_0})\)} -- cycle -- node[midway,sloped,above] {\(\triangle_2=\{M({\sft_1}),M({\sft_2}),M({\sft_3})\}\)} ++(-15:4) node (E2) {} node[right] {\(M({\sft_3})\)}
;
\end{tikzpicture}
\end{tikzcd}
\end{adjustbox}
\caption{\(k=3\)}
\end{subfigure}
\caption{Deformation retraction \(r_\triangle\)}
\label{figure:deformation retract on simplex}
\end{figure}

\begin{corollary}\label{corollary:developable complex-of-groups}
For each \(k\ge 2\), the complex \(\cI_{k}(UP_2(\graf))\) is a developable complex of groups, whose fundamental group is \(\pi_1(UP_2(\graf))\), and \(\cI_{k}(\bar{UP_2(\graf)})\) is the development of \(\cI_{k}(UP_2(\graf))\).
\end{corollary}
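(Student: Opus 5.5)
The plan is to deduce the corollary from \Cref{theorem:structureofI} and \Cref{lemma:deformation retract}. By \Cref{theorem:structureofI}, the full complex \(\cI(UP_2(\graf))=\cI_D(UP_2(\graf))\) is a developable complex of groups. Its development is \(\cI(\bar{UP_2(\graf)})\), and its fundamental group is \(\pi_1(UP_2(\graf))\). The last identification comes from the proof of \Cref{lem:Connected}, using the embedded product property (\Cref{corollary:maximal}), which makes \(UP_2(\graf)\) the geometric realization of \(\cI(UP_2(\graf))\).

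For each \(k\ge 2\), I regard \(\cI_k(UP_2(\graf))\) as the subcomplex of groups obtained by restricting the local groups \(\bbG_\triangle=\pi_1(\Lb\triangle)\) and the inclusion maps to the simplices of \(\cI_k\). Developability then passes to the restriction, by \Cref{Thm:Developable}: the morphism \(\pi_1(\cI(UP_2(\graf)))\to\pi_1(UP_2(\graf))\), injective on local groups, precomposed with the natural map \(\pi_1(\cI_k)\to\pi_1(\cI)\), is still injective on each local group. Hence \(\cI_k(UP_2(\graf))\) is developable.

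Next I compute \(\pi_1(\cI_k(UP_2(\graf)))\) by descending induction on \(k\) from \(D\) using the retractions \(r_k\). Each step of \(r_k\) collapses the top simplex \(\triangle\) through its free face pair. The simplices removed in passing from \(\cI_{k+1}\) to \(\cI_k\) are exactly those \(\triangle'\) with \(\triangle_0\subseteq\triangle'\subseteq\triangle\). By \Cref{lemma:maximal is longest paths}, all of these share the same path \(\sfP_\triangle\), so they carry the same label \(K(\sfP_\triangle)\) and the same local group. Removing such an interval of simplices with constant local group is an elementary collapse of complexes of groups, and it does not change the fundamental group: the generators and relations it adds are redundant, since the edge \(\triangle_0\) is homotopic through \(\triangle\) into \(\triangle_1\cup\triangle_2\) and its local group already sits inside \(\bbG_{\triangle_1\cap\triangle_2}\). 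I expect this step to be the main obstacle, because it requires checking that the presentation of \(\pi_1\) of the complex of groups (generators for edges and local groups, with twisting relations) really loses nothing. The proof should use that the local groups along the collapsed interval all equal \(\pi_1(K(\sfP_\triangle))\) and inject into those of \(\triangle_1\) and \(\triangle_2\).

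Finally, the development. By \Cref{lemma:deformation retract}, \(\bar r_k\) is compatible with \(\rho\), so \(\cI_k(\bar{UP_2(\graf)})\) is \(\pi_1(UP_2(\graf))\)-invariant with quotient \(\cI_k(UP_2(\graf))\). The complex of groups it induces is the restricted one, since stabilizers are unchanged. Being a deformation retract of the simply connected \(\cI(\bar{UP_2(\graf)})\) (\Cref{theorem:structureofI}), it is simply connected. It is also connected, because each collapse preserves connectivity. By \Cref{Thm:Development} (uniqueness of developments), \(\cI_k(\bar{UP_2(\graf)})\) is the development of \(\cI_k(UP_2(\graf))\). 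This also gives an alternative route to the \(\pi_1\) computation: \(\pi_1\) of a developable complex of groups acts on its simply connected development with the given quotient and stabilizers, so \(\pi_1(\cI_k(UP_2(\graf)))\cong\pi_1(UP_2(\graf))\).
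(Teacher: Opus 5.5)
Your proposal is correct and follows the same route as the paper. The paper's proof is the one-line observation that each collapse in \Cref{lemma:deformation retract} removes only simplices carrying identical labels, and hence identical local groups. So the complex-of-groups data, its fundamental group \(\pi_1(UP_2(\graf))\), and the compatible development \(\cI_k(\bar{UP_2(\graf)})\) all survive the retraction.

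You make the implicit steps explicit: developability by restricting the morphism of \Cref{Thm:Developable}, and invariance of \(\pi_1\) under label-preserving collapses (or equivalently via the action on the simply connected \(\cI_k(\bar{UP_2(\graf)})\)). You then identify the development through simple connectivity and \Cref{Thm:Development}.
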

\begin{proof}
This follows from \Cref{lemma:deformation retract}, since each deformation retraction collapses simplices having identical labels.
\end{proof}

We note that \(\cI_{2}(UP_2(\graf))\) is obtained as a tree-like union of complete graphs \(\sfK_{\val_\sfT(\sfv)}\) over all vertices \(\sfv\in V(\sfT)\) (see \Cref{figure:Ile2UP2}). Consequently, \(\pi_1(|\cI_{2}(UP_2(\graf))|)\) is free; in particular, any cycle without backtracking---not necessarily embedded---represents a nontrivial element of this fundamental group.
The following proposition shows that such cycles constitute an obstruction only when considering the simple connectedness of \(|\cI(UP_2(\graf))|\).

\begin{proposition}\label{SM-types}
For \(\graf=(\sfT,\loops)\in\grapegraph_{\norm}\), the following are equivalent:
\begin{enumerate}
\item The stem \(\sfT\) is a path graph.
\item The intersection complex \(\cI(UP_2(\graf))\) is a simplex.
\item The intersection complex \(\cI(UP_2(\graf))\) is simply connected.
\end{enumerate}
\end{proposition}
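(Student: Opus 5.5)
I would prove the cycle of implications (1)\(\Rightarrow\)(2)\(\Rightarrow\)(3)\(\Rightarrow\)(1), working throughout with the twig description of \(\cI(UP_2(\graf))\) from \Cref{Corollary:TwigCorrespondence} and \Cref{Lem:RIconnected}.

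For (1)\(\Rightarrow\)(2), suppose \(\sfT\) is a path. Then every finite set of twigs is colinear. By \Cref{Lem:RIconnected}, every collection \(\{M(\sft_1),\dots,M(\sft_m)\}\) of maximal product subcomplexes therefore has nonempty intersection, and that intersection is a single standard product subcomplex \(K(\sfP)\). Hence each such collection spans exactly one simplex. Since the vertices correspond bijectively to \(E(\sfT)\), the complex \(\cI(UP_2(\graf))\) is the full simplex on \(\#E(\sfT)\) vertices.

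(Equivalently, \Cref{lemma:maximal is longest paths}(2) gives a unique maximal simplex, because a path has a unique leaf-to-leaf path.) The implication (2)\(\Rightarrow\)(3) is immediate, since a simplex is contractible.

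For (3)\(\Rightarrow\)(1), I argue by contraposition. Suppose \(\sfT\) is not a path. Then some vertex \(\sfv\) has \(\val_\sfT(\sfv)\ge 3\); pick three twigs \(\sft_1,\sft_2,\sft_3\) incident to \(\sfv\).

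\begin{itemize}
\item Any two of these twigs are colinear, so \(M(\sft_i)\) and \(M(\sft_j)\) span an edge.
\item The three together are not colinear, so by \Cref{Lem:RIconnected} no \(2\)-simplex fills the triangle they form.
\end{itemize}

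I must show this \(3\)-cycle \(\gamma\) is not null-homotopic in \(|\cI(UP_2(\graf))|\). The plan is to use \Cref{lemma:deformation retract}: composing the strong deformation retractions \(r_k\) gives a homotopy equivalence \(\cI(UP_2(\graf))\simeq\cI_2(UP_2(\graf))\).

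\begin{itemize}
\item Each \(r_k\) is the identity on \(\cI_k\). The twigs \(\sft_i\) are pairwise adjacent, so their pairwise paths have length \(2\), and \(\gamma\) lies in \(\cI_2(UP_2(\graf))\). Thus \(\gamma\) is unchanged under the retraction.
\item Via \(\sfP_{\mbox{\textvisiblespace}}\), the complex \(\cI_2(UP_2(\graf))\cong\cO_2(\graf)\) is the tree-like union of the complete graphs \(\sfK_{\val_\sfT(\sfx)}\), one for each vertex \(\sfx\), as noted before the proposition.
\item In particular \(\gamma\) is an embedded \(3\)-cycle in the \(1\)-dimensional complex \(\cI_2(UP_2(\graf))\), namely a triangle inside \(\sfK_{\val_\sfT(\sfv)}\).
\item An embedded cycle in a graph is nontrivial in \(\pi_1\) of that graph, hence nontrivial in \(\pi_1(|\cI(UP_2(\graf))|)\).
\end{itemize}

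This contradicts simple connectivity, which proves (3)\(\Rightarrow\)(1).

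The main obstacle is justifying that the deformation retraction does not kill \(\gamma\). The delicate point is that the \(r_k\) fix \(\cI_k\) pointwise, and this is exactly what \Cref{lemma:deformation retract} asserts for strong deformation retractions. It then remains only to check that \(\gamma\) lies in \(\cI_2\), which holds because paths through two twigs sharing \(\sfv\) have length \(2\). The ranks or labels of the local groups play no role here, since only the underlying complex matters.
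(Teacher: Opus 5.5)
Your proposal is correct and takes essentially the same approach as the paper: (1)\(\Rightarrow\)(2)\(\Rightarrow\)(3) are immediate, and for (3)\(\Rightarrow\)(1) the paper also uses the strong deformation retractions of \Cref{lemma:deformation retract} to pass to the graph \(\cI_2(UP_2(\graf))\), where a vertex of valency \(m\ge 3\) gives an embedded \(\sfK_m\) that is not simply connected. Your version additionally spells out that the triangle lies in \(\cI_2\) and is fixed by the retraction, a point the paper leaves implicit.
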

\begin{proof}
It is immediate that (1) implies (2), and (2) implies (3).

To show that (3) implies (1), suppose that \(\cI(UP_2(\graf))\) is simply connected. By \Cref{lemma:deformation retract}, the subcomplex \(\cI_{2}(UP_2(\graf))\) is then also simply connected. As observed above and illustrated in \Cref{figure:Ile2UP2}, this subcomplex is precisely the graph encoding adjacency between twigs.
If \(\sfT\) has a vertex of valency \(m\ge 3\), then \(\cI_{2}(UP_2(\graf))\) contains an embedded complete graph \(\sfK_m\), which is not simply connected. This contradicts the simple connectedness of \(\cI(UP_2(\graf))\). Hence, \(\sfT\) cannot have an essential vertex, and therefore \(\sfT\) is a path graph as claimed.
\end{proof}

The following result shows that the deformation retractions above are compatible with isomorphisms. It will not be used in this paper but will be needed in a forthcoming work, so the reader may safely skip it.

\begin{lemma}
Let \(\graf=(\sfT,\loops),\graf'=(\sfT',\loops')\in\grapegraph_\norm\). For each \(k\ge2\), let \(r_k, \bar r_k\) and \(r'_k, \bar r'_k\) be the strong deformation retractions of \(\cI_{k+1}(UP_2(\graf)), \cI_{k+1}(\bar{UP_2(\graf)})\) and \(\cI_{k+1}(UP_2(\graf')),\cI_{k+1}(\bar{UP_2(\graf')})\), respectively.

If there exist isomorphisms \(\psi:\cI(UP_2(\graf))\to\cI(UP_2(\graf'))\) and \(\bar\psi:\cI(\bar{UP_2(\graf)})\to\cI(\bar{UP_2(\graf')})\),
then for each \(k\ge2\), there exist isomorphisms  
\[
\psi_{k}:\cI_{k}(UP_2(\graf))\to\cI_{k}(UP_2(\graf'))
\quad\text{ and }\quad
\bar\psi_{k}:\cI_{k}(\bar{UP_2(\graf)})\to\cI_{k}(\bar{UP_2(\graf')}),
\]
which are the restrictions of \(\psi\) and \(\bar\psi\), respectively, and satisfy 
\(\psi_{k} \circ r_k=r'_k\circ \psi_{k+1}\) and \(\bar\psi_{k}\circ \bar r_k=\bar r'_k\circ \bar\psi_{k+1}\).
\end{lemma}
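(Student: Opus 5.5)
The plan is to use induction on \(k\), going downward from \(k=D\), where \(\cI_D=\cI\). We take \(\psi_D=\psi\) and \(\bar\psi_D=\bar\psi\). The key point is that the filtration level of a simplex can be read off intrinsically from the isomorphism data, in particular from labels, so any isomorphism must preserve it.

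First, characterize \(\cI_k\) intrinsically. A \(j\)-simplex \(\triangle\) has path \(\sfP_\triangle\) of length at least \(j+1\), and the label of \(\triangle\) is \(K(\sfP_\triangle)\). By \Cref{Lem:RIconnected}, a simplex \(\triangle\) lies in \(\cI_k\) exactly when \(\sfP_\triangle\) has length at most \(k\). The length of \(\sfP_\triangle\) can be detected from label equalities between faces. Indeed, \(\sfP_\triangle\) equals the path spanned by its two extremal twigs, so \(\triangle\) has the same label as a \(1\)-dimensional face. Moreover, \(\sfP_\triangle\) has length exactly \(k\) if and only if \(\triangle\) is contained in a simplex of dimension \(k-1\), namely \(\triangle_{\sfP_\triangle}\), with the same label, and is contained in no strictly larger simplex with that label.

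Concretely, call \(\triangle\) \emph{\(k\)-bounded} if every simplex \(\triangle'\supseteq\triangle\) with \(\Lb{\triangle'}=\Lb\triangle\) has dimension at most \(k-1\). The maximal such \(\triangle'\) is \(\triangle_{\sfP_\triangle}\), which has dimension \(\mathrm{length}(\sfP_\triangle)-1\). So \(\triangle\in\cI_k\) if and only if \(\triangle\) is \(k\)-bounded. By \Cref{Def:Morphism}, an isomorphism preserves inclusions of simplices and preserves equality of labels between a simplex and its faces. Hence it preserves \(k\)-boundedness, restricts to isomorphisms \(\psi_k\) and \(\bar\psi_k\) on the filtration levels, and the morphism condition is inherited from the restriction.

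Second, establish compatibility with the retractions. The map \(r_k\) was built simplex by simplex. For each \(k\)-simplex \(\triangle=\triangle_{\sfP}\) with \(\mathrm{length}(\sfP)=k+1\), it retracts \(\triangle\) onto \(\triangle_1\cup\triangle_2\), its two faces omitting an extremal twig. Here \(\triangle_0\) is the unique edge spanned by the extremal twigs, and is also the unique edge with the same label as \(\triangle\). Since \(\psi_{k+1}\) preserves labels, it sends \(\triangle_0\) to the corresponding edge \(\triangle'_0\) of \(\psi(\triangle)\), and therefore sends \(\{\triangle_1,\triangle_2\}\) to \(\{\triangle'_1,\triangle'_2\}\). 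The only freedom left is the choice of the explicit retraction \(r_\triangle\).

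We fix \(r_\triangle\) canonically, for instance as the linear retraction determined by the vertex ordering along \(\sfP\), with \(\triangle_0\) collapsing across. The orientation reversal of \(\sfP\) swaps \(\triangle_1\) and \(\triangle_2\), so the construction must be symmetric under this reversal. The affine simplicial isomorphism \(\psi|_\triangle\) then carries \(r_\triangle\) to \(r'_{\psi(\triangle)}\), giving \(\psi_k\circ r_k=r'_k\circ\psi_{k+1}\). The same argument works verbatim upstairs, using the uniqueness of \(\bar\triangle\) containing \(\bar\triangle_0\) from the proof of \Cref{lemma:deformation retract}.

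The main obstacle is this canonicity: \(\psi\) preserves the twig ordering only up to reversal. The remedy is to define \(r_\triangle\) in a way that depends only on the unordered pair \(\{\triangle_1,\triangle_2\}\) and on the combinatorial simplex structure, for example by radial projection from the barycenter of \(\triangle_0\). If the retractions in \Cref{lemma:deformation retract} are not already of this form, we would redefine them this way; that lemma still holds with this choice.
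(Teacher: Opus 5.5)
Your proposal is correct, and it proves level-preservation by a different route from the paper.

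\textbf{Level-preservation.} The paper runs a downward induction that ties level-preservation to the commutation relation. Suppose a \(k\)-simplex \(\triangle\) with \(\len(\sfP_\triangle)=k+1\) had image with \(\len(\sfP_{\psi(\triangle)})=k+m\) for some \(m\ge2\). Then the already-established relation \(\psi_{k+m-1}\circ r_{k+m-1}=r'_{k+m-1}\circ\psi_{k+m}\) would force \(r'_{k+m-1}\) to fix a simplex lying outside \(\cI_{k+m-1}\), which is impossible. Lower-dimensional simplices are handled only implicitly, through faces.

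You instead read the level off the label data directly: \(\triangle\in\cI_k\) if and only if every \(\triangle'\supseteq\triangle\) with \(\Lb{\triangle'}=\Lb\triangle\) has dimension at most \(k-1\), the largest such \(\triangle'\) being \(\triangle_{\sfP_\triangle}\). This is valid for two reasons. First, label equality between a face and a simplex is equivalent to equality of paths, since \(K(\sfP')\supsetneq K(\sfP)\) whenever \(\sfP'\subsetneq\sfP\). Second, in the universal cover the lift of \(\triangle_{\sfP_\triangle}\) through \(\bar\triangle\) exists by \Cref{lem:Intofmaximal} and flagness.

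Your route is non-inductive and treats every simplex at once. It also shows that any isomorphism preserves every filtration level independently of how the retractions were chosen. The paper's route avoids stating this characterization, but it makes level-preservation depend on the retractions.

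\textbf{Commutation with the retractions.} Your observation that \(\triangle_0\) is the unique edge of \(\triangle\) carrying the same label as \(\triangle\) is exactly the paper's appeal to \Cref{Def:Morphism}(1). It gives that \(\psi\) carries \(\{\triangle_1,\triangle_2\}\) to \(\{\triangle'_1,\triangle'_2\}\). The paper stops there, in effect treating \(r_k\) as a map on simplices. You are right that pointwise commutation also needs \(r_\triangle\) to be equivariant under the simplicial isomorphism \(\psi|_\triangle\).

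\textbf{A correction to your example.} Radial projection from the barycenter \(b\) of \(\triangle_0\) is not a retraction. The point \(b\) lies in the part being collapsed, the projection is undefined at \(b\), and it cannot be extended continuously there, already for \(k=2\).

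Let \(\hat\sigma\) be the barycenter of the face spanned by \(M(\sft_1),\dots,M(\sft_{k-1})\). Either of the following works instead.
\begin{itemize}
\item Project radially from \(p=b+\epsilon(b-\hat\sigma)\) for some \(\epsilon>0\). The barycentric coordinates of \(p\) are negative exactly at the interior vertices. Hence every ray from \(p\) enters \(\triangle\) through facets containing \(\triangle_0\) and exits through \(\triangle_1\cup\triangle_2\), and points of \(\triangle_1\cup\triangle_2\) are fixed.
\item Send \(b\mapsto\hat\sigma\), fix all other vertices, and extend linearly over the stellar subdivision of \(\triangle\) at \(b\). Then use the straight-line homotopy. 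For \(k=2\) this is exactly the map in the paper's figure.
\end{itemize}
Both choices are invariant under every simplicial automorphism of \(\triangle\) that preserves \(\triangle_0\) setwise. So you also do not need your claim that \(\psi\) preserves the twig order up to reversal. That claim is true, since betweenness of twigs is detected by label equalities, but your proposal does not prove it.
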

\begin{proof}
Since \(\cI(UP_2(\graf))\) and \(\cI(UP_2(\graf'))\) have the same dimension, \(\sfT\) and \(\sfT'\) have the same diameter \(D\).
Define \(\psi_{k}\) and \(\bar\psi_{k}\) to be the restrictions of \(\psi\) and \(\bar\psi\) to the corresponding filtrations.
Since both \(\psi\) and \(\bar\psi\) are isomorphisms, their restrictions are injective combinatorial maps onto their images.
We prove by downward induction on \(k\) that these images equal \(\cI_{k}(UP_2(\graf'))\) and \(\cI_{k}(\bar{UP_2(\graf')})\), and that the stated commutativity relations hold. 

For \(k\ge D\), we have \(\psi_{k}=\psi\) and \(\bar\psi_{k}=\bar\psi\), while both \(r_k\) and \(r'_k\) are the identity maps.
Hence, the relations hold trivially.

Assume that the statement holds for levels \(k+m\) with \(m\ge 1\).
Let \(\triangle\) be a \(k\)-simplex in \(\cI_{k+1}(UP_2(\graf))\).
By the definition of the filtrations, \(\sfP_\triangle\) is a path substem of length \((k+1)\), and by the definition of \(r_{k+1}\), we have \(r_{k+1}(\triangle)=\triangle\).
Since \(\psi\) maps each \(k\)-simplex to a \(k\)-simplex, \(\triangle'=\psi(\triangle)\) is again a \(k\)-simplex.
By construction, the path substem \(\sfP_{\triangle'}\) has length at least \((k+1)\).

Assume that \(\len(\sfP_{\triangle'})=k+m\) for some \(m\ge 2\). Regarding \(\triangle\) as a simplex in \(\cI_{k+m}(UP_2(\graf))\), the inductive hypothesis gives 
\[\psi_{k+m-1}\circ r_{k+m-1}(\triangle)=r'_{k+m-1}\circ \psi_{k+m}(\triangle).\] 
Since \(r_{k+m-1}(\triangle)=\triangle\), the left-hand side equals \(\triangle'\) and thus \(\triangle'=r'_{k+m-1}(\triangle')\).
However, by definition, \(r'_{k+m-1}\) cannot fix \(\triangle'\) since \(\len(\sfP_{\triangle'})=k+m\).
This contradiction shows \(\len(\sfP_{\triangle'})=k+1\). Applying the same argument to \(\psi^{-1}\) yields surjectivity onto \(\cI_{k}(UP_2(\graf'))\).

Now, let \(\triangle\subset\cI_{k+1}(UP_2(\graf))\) be a simplex and set \(\triangle'=\psi(\triangle)\). Then we have 
\begin{align*}
\psi_{k}\circ r_k(\triangle)&= \begin{cases}
\triangle' & \len(\sfP_\triangle)\le k;\\
\psi(\triangle_1\cup \triangle_2) & \len(\sfP_\triangle) = k+1,
\end{cases}&
r'_k\circ\psi_{k+1}(\triangle)&= \begin{cases}
\triangle' & \len(\sfP_\triangle)\le k;\\
\triangle'_1\cup\triangle'_2 & \len(\sfP_\triangle) = k+1,
\end{cases}
\end{align*}
where \(\triangle_i\) and \(\triangle'_i\) are the faces of \(\triangle\) and \(\triangle'\) defined in the proof of \Cref{lemma:deformation retract}, and by \Cref{Def:Morphism}(1), the isomorphism \(\psi\) preserves the unions of these faces, that is \(\psi(\triangle_1\cup\triangle_2)=\triangle'_1\cup\triangle'_2\). Therefore, we have \(\psi_{k}\circ r_k=r'_k\circ\psi_{k+1}\).

The same argument applies to \(\bar\psi_{k}\), yielding the corresponding statements for \(\bar r_k\).
\end{proof}

\section{Applications to \texorpdfstring{\(2\)}{2}-braid groups over bunches of grapes}\label{Section:Application to bunches of grapes}
In this section, we first investigate a specific geometric feature of the intersection complexes \(\cI(UP_2(\graf))\) and \(\cI(\overline{UP_2(\graf)})\) for a normal bunch of grapes \(\graf=(\sfT,\loops)\), arising from the structure of its stem \(\sfT\). 
Using this description, we then provide a partial answer to \Cref{Question:QItoRAAG}, which asks whether graph braid groups are quasi-isometric to RAAGs. 
Finally, we show that bunches of grapes furnish examples that highlight the essential role of the \(\m Y\)-hierarchy in the study of relative hyperbolicity for graph \(2\)-braid groups.


\subsection{Sequences of leaves and maximal simplices}\label{subsection:sequences of leaves}
Having shown in \Cref{SM-types} that for \(\graf=(\sfT,\loops)\in\grapegraph_{\norm}\), \(\cI(UP_2(\graf))\) is simply connected when the stem is a path, we now investigate the case where the stem \(\sfT\) is not a path.
We introduce \emph{sequences of leaves} in \(\sfT\) and relate them to sequences of maximal simplices in \(\cI(UP_2(\graf))\) and \(\cI(\overline{UP_2(\graf)})\). 
These sequences provide a convenient combinatorial description of loops in \(\cI(UP_2(\graf))\) and allow us to detect when such loops are homotopically nontrivial. 
In particular, we obtain criteria ensuring that certain periodic sequences of maximal simplices in the development \(\cI(\overline{UP_2(\graf)})\) cannot project to null-homotopic loops in \(\cI(UP_2(\graf))\).

Let \(\sfw_\bullet = (\sfw_0,\sfw_1,\dots, \sfw_n)\) be a sequence of leaves in \(\sfT\) such that \(\sfw_i\neq\sfw_{i+1}\) for each \(0\le i\le n-1\). Each oriented path \(\sfP_i(\sfw_\bullet)\) in \(\sfT\) from \(\sfw_i\) to \(\sfw_{i+1}\) has length at least two, and the intersection \(\sfP_i(\sfw_\bullet)\cap\sfP_{i+1}(\sfw_\bullet)\) contains at least the twig incident to the shared leaf \(\sfw_{i+1}\).
Concatenating the paths \(\sfP_i(\sfw_\bullet)\) yields a (not necessarily embedded) path \(\sfP(\sfw_\bullet)\) in \(\sfT\).

Viewing it as a sequence of twigs, any path in \(\sfT\) of length \(k\) determines a path of length \((k-1)\) in \(\cI_{2}(UP_2(\graf))\) by \Cref{Lem:RIconnected,lemma:deformation retract}.
Let \(\cP_i(\sfw_\bullet)\) be the (oriented) path in \(\cI_{2}(UP_2(\graf))\) corresponding to \(\sfP_i(\sfw_\bullet)\).
Each \(\cP_i(\sfw_\bullet)\) has length at least one, and \(\cP_i(\sfw_\bullet)\cap\cP_{i+1}(\sfw_\bullet)\neq\varnothing\).
We define \(\cP(\sfw_\bullet)\) to be the concatenation of these paths; in particular, if \(\sfw_0=\sfw_n\), then \(\cP(\sfw_\bullet)\) is a loop in \(\cI(UP_2(\graf))\).

Furthermore, we define a sequence of maximal simplices in \(\cI(UP_2(\graf))\) by
\[\triangle_\bullet(\sfw_\bullet)=(\triangle_0(\sfw_\bullet),\dots,\triangle_{n-1}(\sfw_\bullet)),\qquad\triangle_i(\sfw_\bullet)=\triangle_{\sfP_i(\sfw_\bullet)}.\]
It follows that \(\triangle_i(\sfw_\bullet)\cap\triangle_{i+1}(\sfw_\bullet)=\triangle_{\sfP_i(\sfw_\bullet)\cap\sfP_{i+1}(\sfw_\bullet)}\neq\varnothing\).
Moreover, each path \(\cP_i(\sfw_\bullet)\) is a subcomplex of \(\triangle_i(\sfw_\bullet)\), namely \(r_{2}\circ\dots\circ r_{D-1}(\triangle_i(\sfw_\bullet))\) in \(\cI_{2}(UP_2(\graf))\subset\cI(UP_2(\graf))\), where \(D=\diam(\sfT)\).

\begin{proposition}\label{proposition:leaf sequence}
Let \(\sfw_\bullet=(\sfw_0,\dots,\sfw_n=\sfw_0)\) with \(n\ge3\) be a closed sequence of leaves in \(\sfT\) such that \(\sfw_i \neq \sfw_{i+1}\) for each \(i\).
Then \(\cP(\sfw_\bullet)\) is not homotopically trivial in \(\cI(UP_2(\graf)))\) if, for all \(i \pmod n\),
\[\cP_i(\sfw_\bullet)\cap \cP_{i+1}(\sfw_\bullet)\cap\cP_{i+2}(\sfw_\bullet)=\varnothing
\quad\text{or equivalently,}\quad
\triangle_i(\sfw_\bullet)\cap\triangle_{i+1}(\sfw_\bullet)\cap\triangle_{i+2}(\sfw_\bullet)=\varnothing.\]
\end{proposition}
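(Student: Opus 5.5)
We will reduce to the one-dimensional complex \(\cI_{2}(UP_2(\graf))\). By \Cref{lemma:deformation retract}, the composite \(r_2\circ\cdots\circ r_{D-1}\) is a strong deformation retraction of \(\cI(UP_2(\graf))\) onto \(\cI_{2}(UP_2(\graf))\), and \(\cP(\sfw_\bullet)\) already lies in \(\cI_{2}(UP_2(\graf))\). So it suffices to show that \(\cP(\sfw_\bullet)\) is homotopically nontrivial in the graph \(\cI_{2}(UP_2(\graf))\). Under the isomorphism \(\sfP_{\mbox{\textvisiblespace}}\), this graph is \(\cO_2(\graf)\): vertices are twigs, edges are pairs of adjacent twigs, and it is a tree-like union of the cliques \(\sfK_{\val_\sfT(\sfv)}\), one for each essential vertex \(\sfv\). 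Its fundamental group is free, and we will use that a closed edge path in a graph is nontrivial exactly when its cyclic reduction, obtained by iteratively cancelling backtracks \(e\bar e\), is nonempty.

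Next we describe \(\cP(\sfw_\bullet)\) combinatorially. Since \(\sfP_i=\sfP_i(\sfw_\bullet)\) is a geodesic in the tree \(\sfT\), its twig sequence \(\cP_i\) has no backtracking. The concatenation in \(\cP(\sfw_\bullet)\) happens along the overlap \(\sfP_i\cap\sfP_{i+1}\). This overlap is a path starting at the leaf \(\sfw_{i+1}\) and ending at a branch vertex \(\sfb_{i+1}\), and it consists of at least one twig. Traversing \(\sfP_i\) and then returning along \(\sfP_{i+1}\) through this common segment produces backtracking. After reduction, the loop becomes the cyclic concatenation of the reduced segments \(\sigma_i\). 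Here \(\sigma_i\) is the subpath of \(\cP_i\) from the twig just before \(\sfb_i\) to the twig just after \(\sfb_{i+1}\), with appropriate care at the clique vertices, since turning at \(\sfb\) passes from one twig at \(\sfb\) to another twig at \(\sfb\) by a single clique edge.

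The hypothesis \(\cP_i\cap\cP_{i+1}\cap\cP_{i+2}=\varnothing\) is what prevents a whole segment from collapsing. In tree terms, it says that the overlap \(\sfP_i\cap\sfP_{i+1}\) (near \(\sfw_{i+1}\)) and the overlap \(\sfP_{i+1}\cap\sfP_{i+2}\) (near \(\sfw_{i+2}\)) share no twig. Hence the branch vertex \(\sfb_{i+1}\) precedes, or coincides with, \(\sfb_{i+2}\) along \(\sfP_{i+1}\). It follows that each reduced segment \(\sigma_{i+1}\) is nonempty, or in the coinciding case is a single clique edge at a vertex. Moreover, at each junction the two incoming twigs at \(\sfb_{i+1}\) come from different branches, namely those towards \(\sfw_i\) and towards \(\sfw_{i+2}\), so consecutive segments do not cancel. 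The plan is to verify this with a local picture at each branch vertex. The possible configurations are: the three directions towards \(\sfw_i\), \(\sfw_{i+1}\) and \(\sfw_{i+2}\) are distinct, or two of them coincide. We will check that in each case the edge entering \(\sfb\) and the edge leaving \(\sfb\) are distinct edges of the clique \(\sfK_{\val(\sfb)}\). Once this holds, the cyclic word is cyclically reduced and nonempty, and hence nontrivial in the free group \(\pi_1(\cI_2(UP_2(\graf)))\).

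The equivalence between the two forms of the hypothesis follows from \(\triangle_i\cap\triangle_{i+1}=\triangle_{\sfP_i\cap\sfP_{i+1}}\) together with \Cref{lemma:maximal is longest paths}(3): a triple intersection of the simplices is nonempty exactly when the three paths share a twig, and this is exactly when the corresponding \(\cP\)'s share a vertex. We expect the main obstacle to be the bookkeeping at the junctions, in particular the degenerate case where \(\sfb_{i+1}=\sfb_{i+2}\). There the whole contribution of \(\cP_{i+1}\) shrinks to one clique edge, and one must confirm that it still differs from the neighbouring edges. We will handle this by noting that the three twigs involved at \(\sfb\) lie towards three distinct leaves \(\sfw_i\), \(\sfw_{i+1}\), \(\sfw_{i+2}\), which the disjointness hypothesis forces to be in pairwise distinct branches.
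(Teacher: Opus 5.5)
Your proposal is correct and follows essentially the same route as the paper: you cancel the backtracking along the overlaps \(\cQ_i=\cP_i^{-1}\cap\cP_{i+1}\) to obtain reduced segments \(\cP_i'\) (your \(\sigma_i\)), use the triple-intersection hypothesis to show each has length at least one and that consecutive segments meet at the branch vertex through three distinct twigs, and conclude that the resulting non-backtracking loop in the graph \(\cI_{2}(UP_2(\graf))\) is nontrivial. Your explicit use of the retraction \(r_2\circ\cdots\circ r_{D-1}\) to transfer nontriviality back to \(\cI(UP_2(\graf))\) supplies a step the paper leaves implicit; also, bivalent stem vertices contribute \(\sfK_2\) cliques as well, though this does not affect the argument.
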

\begin{proof}
For brevity, we abbreviate \(\cP_i(\sfw_\bullet)\) and \(\cP(\sfw_\bullet)\) by \(\cP_i\) and \(\cP\), respectively.
As observed above, \(\cP_i\cap\cP_{i+1}\neq\varnothing\) for each \(i\).
Thus the triple-intersection condition \(\cP_i\cap\cP_{i+1}\cap\cP_{i+2}=\varnothing\) implies that \(\cP_i\) is not the inverse of \(\cP_{i+1}\), or equivalently, three consecutive leaves in \(\sfw'_\bullet\) are all distinct.
For \(0\le i\le n-1\), let \(\cQ_i=\cP^{-1}_i\cap\cP_{i+1}\) be the oriented subpath in \(\cI_{2}(UP_2(\graf))\) starting at the vertex corresponding to the twig incident to \(\sfw_{i+1}\). Due to the triple-intersection condition, \(\cQ_i\) is then strictly shorter than \(\cP_i\) and disjoint from \(\cQ_{i+1}\).
Consequently, \(\cP_i\) admits a decomposition \(\cP_i=\cQ_{i-1}\cP_i'\cQ_i^{-1}\), where the reduced path \(\cP_i'\) has legnth at least one.

Let \(\sfv_i\) be the unique essential vertex of the minimal substem of \(\sfT\) containing \(\sfw_i,\sfw_{i+1},\sfw_{i+2}\).
The subpath \(\cP_i'\) begins with the vertex corresponding to an edge \(\sfe_{i-1}'\subset\st_{\sfT}(\sfv_{i-1})\) and ends with the vertex corresponding to an edge \(\sfe_i\subset\st_{\sfT}(\sfv_{i})\).
Then the triple-intersection condition ensures that \(\sfe_i=\sfe_i'\) while \(\sfe_i\neq\sfe'_{i-1}\) and \(\sfe'_{i-1}\neq\sfe_{i+1}\). 
It follows that the concatenation \(\cP'_i\cP'_{i+1}\) has no backtracking. Thus the loop \(\cP\) is freely homotopic to the reduced concatenation:
\begin{align*}
\cP&= \cP_0\cP_1\cdots \cP_{n-1}=(\cQ_{-1}\cP'_0\cQ_0^{-1})(\cQ_0\cP'_1\cQ_1^{-1})\cdots(\cQ_{n-2}\cP'_{n-1}\cQ_{n-1}^{-1})\sim \cP'_0\cP'_1\cdots\cP'_{n-1}.
\end{align*}
This yields a loop in \(\cI_{2}(UP_2(\graf))\) without backtracking. Since a non-backtracking loop in a graph cannot be null-homotopic, the claim follows.
\end{proof}

\begin{figure}[ht]
\centering
\begin{subfigure}{.48\textwidth}
\[
\tilde\sfD_n=
\begin{tikzpicture}[baseline=-.5ex, scale=0.9, transform shape]
\draw[fill] (-1,0) +(120:1) circle (2pt) -- +(0,0) circle (2pt) -- +(240:1) circle (2pt) +(0,0) -- ++(1,0) circle (2pt) ++(1,0) circle (2pt) --++(1,0) +(60:1) circle (2pt) -- +(0,0) circle (2pt) -- +(-60:1) circle (2pt);
\draw[dashed] (0,0) -- (1,0);
\draw (0.5,0) node[below] {\(\underbrace{\hspace{3cm}}_{n-4}\)};
\end{tikzpicture}
\]
\vspace{45pt}
\caption{The affine Dynkin diagram \(\tilde\sfD_n\)}
\label{figure:affine dynkin diagram}
\end{subfigure}
\begin{subfigure}{.48\textwidth}
\[
\sfT_{a,b,c}=
\begin{tikzpicture}[baseline=-.5ex, scale=0.9, transform shape]
\draw[fill] (0,0) circle (2pt) --++(60:1) circle (2pt) ++(60:1) circle (2pt);
\draw[fill] (0,0) -- ++(180:1) circle (2pt) + (180:1) circle (2pt) +(0,0) -- node[midway,below] {\(\underbrace{\hphantom{\hspace{4cm}}}_{c}\)} ++(180:2) circle (2pt) ++(180:1) circle (2pt);
\draw[fill] (0,0) -- ++(-60:1) circle (2pt) -- node[midway,sloped,above] {\(\overbrace{\hphantom{\hspace{3cm}}}^{b}\)} ++(-60:1) circle (2pt) ++(-60:1) circle (2pt);
\draw[dashed] (0,0) --node[midway,sloped,above] {\(\overbrace{\hphantom{\hspace{2cm}}}^{a}\)}  ++(60:2) circle (2pt) (60:1) -- (60:2)
(-60:2) -- (-60:3) (180:3) -- (180:4);
\end{tikzpicture}
\]
\caption{The tripod \(\sfT_{a,b,c}\) of type \((a,b,c)\)}
\label{figure:tripod}
\end{subfigure}
\caption{The affine Dynkin diagram \(\tilde\sfD_n\) and the tripod \(\sfT_{a,b,c}\)}
\label{figure:obstruction graphs}
\end{figure}

We say that a graph \(\grafl'\) \emph{leaf-respectingly embeds} into a graph \(\grafl\)  if there is an embedding \(\grafl'\hookrightarrow\grafl\) that maps leaves of \(\grafl'\) to leaves of \(\grafl\).
We are specifically interested in leaf-respecting embeddings of the following stems into the stem \(\sfT\):
\begin{definition}\label{Definition:Dynkin and tripod}
\begin{itemize}
\item An \emph{affine Dynkin diagram} \(\tilde\sfD_n\) (\(n\ge 5\)) is a tree with exactly two trivalent vertices and four leaves. The remaining \(n-5\) vertices are bivalent such that the length between the trivalent vertices is \(n-4\). See \Cref{figure:affine dynkin diagram}.
\item A \emph{tripod} of type \((a,b,c)\) (\(a,b,c\ge 1\)) is a tree with exactly one trivalent vertex and three leaves such that the lengths of the paths from the trivalent vertex to the three leaves are \(a,\, b\), and \(c\), respectively. See \Cref{figure:tripod}.
\end{itemize}
\end{definition}

It is straightforward to verify that any leaf-respecting embedded substem isomorphic to \(\tilde\sfD_n\) (for \(n\ge 5\)) or \(\sfT_{a,b,c}\) (for \(1\le a<b<c\)) yields a sequence of leaves satisfying the condition of \Cref{proposition:leaf sequence}: Specifically, these take the form
\[\sfw^1_{\bullet}=(\sfw_0,\dots,\sfw_3,\,\sfw_4=\sfw_0)\quad\text{ or }\quad\sfw^2_{\bullet}=(\sfw'_0,\dots,\sfw'_3=\sfw'_0),\] 
where the \(\sfw_i\) are leaves of \(\tilde\sfD_n\) and the \(\sfw'_j\) are leaves of \(\sfT_{a,b,c}\).

We now introduce terminology for a sequence of maximal simplices \(\triangle_\bullet=(\triangle_0,\dots,\triangle_{N})\) in \(\cI({UP_2(\graf)})\)  (or \(\cI(\bar{UP_2(\graf)})\)) satisfying \(\triangle_i\cap\triangle_{i+1}\neq\varnothing\).
We say that the sequence \(\triangle_\bullet\) is
\begin{enumerate}
\item
of \emph{edge-simplex type} if \(\triangle_i\cap\triangle_{i+1}\cap\triangle_{i+2}=\varnothing\) and \(\dim\triangle_i\) is \(1\) if \(i\) is even, and \(\ge 2\) if \(i\) is odd.
\item
of \emph{type \((a,b,c)\)} for some \(1\le a<b<c\) if every vertex of \(\triangle_{i+1}\) lies in \(\triangle_i\cup\triangle_{i+2}\) such that
\[
\dim\triangle_i=
\begin{cases}
a+b-1 & \text{if } i\equiv 0 \pmod{3},\\[2pt]
b+c-1 & \text{if } i\equiv 1 \pmod{3},\\[2pt]
c+a-1 & \text{if } i\equiv 2 \pmod{3},
\end{cases}
\quad\text{and}\quad
\dim(\triangle_i\cap\triangle_{i+1})\ge
\begin{cases}
b-1 & \text{if } i\equiv 0 \pmod{3},\\[2pt]
c-1 & \text{if } i\equiv 1 \pmod{3},\\[2pt]
a-1 & \text{if } i\equiv 2 \pmod{3}.
\end{cases}
\]
\end{enumerate}

\begin{remark}\label{Rmk:TypeofSequence}
The above notions of edge-simplex type and type \((a,b,c)\) apply to sequences of maximal simplices in any intersection complex, not only in \(\cI(UP_2(\graf))\).
\end{remark}

Since \(\cI(\bar{UP_2(\graf)})\) is the development of \(\cI(UP_2(\graf))\), the nontrivial loop \(\cP(\sfw^1_{\bullet})\) lifts to a bi-infinite path in \(\cI(\bar{UP_2(\graf)})\). Consequently, the sequence of maximal simplices \(\triangle(\sfw^1_{\bullet})=(\triangle_0,\dots,\triangle_3)\) in \(\cI(UP_2(\graf))\) induced by \(\sfw^1_\bullet\) lifts to a bi-infinite sequence of maximal simplices 
\(\bar\triangle_\bullet=(\dots,\bar\triangle_{-1},\bar\triangle_0,\bar\triangle_1,\dots)\) in \(\cI(\bar{UP_2(\graf)})\)
satisfying \(\rho(\bar\triangle_{4n+i})=\triangle_i\) for all \(n\in\bbZ\), \(0\le i\le 3\). By construction, \(\bar\triangle_\bullet\) is of edge-simplex type.
Similarly, \(\sfw^2_{\bullet}\) yields a bi-infinite sequence of maximal simplices in \(\cI(\bar{UP_2(\graf)})\) of type \((a,b,c)\).

In the remainder of this subsection, we show that if the image under \(\rho\) of a bi-infinite sequence of maximal simplices in \(\cI(\bar{UP_2(\graf)})\) of either type forms a loop, then the loop must be homotopically nontrivial in \(\cI(UP_2(\graf))\).

\begin{lemma}\label{proposition:edge-simplex type}
Let \(\bar\triangle_\bullet=(\bar\triangle_0,\dots,\bar\triangle_{N})\) be a sequence of maximal simplices in \(\cI(\bar{UP_2(\graf)})\) of edge-simplex type.
If \((\rho(\bar\triangle_0),\dots,\rho(\bar\triangle_{N}))\) forms a closed sequence of maximal simplices of edge-simplex type, then \(\bar\triangle_0\neq\bar\triangle_{N}\).
\end{lemma}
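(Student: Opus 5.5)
Suppose for contradiction that \(\bar\triangle_0=\bar\triangle_N\). The plan is to turn the sequence \(\bar\triangle_\bullet\) into a closed loop in \(\cI(\bar{UP_2(\graf)})\) whose image under \(\rho\) is a non-backtracking loop in the graph \(\cI_2(UP_2(\graf))\), up to the retractions of \Cref{lemma:deformation retract}. Simple connectivity of \(\cI(\bar{UP_2(\graf)})\) (\Cref{theorem:structureofI}) then makes the downstairs loop null-homotopic, while the argument of \Cref{proposition:leaf sequence} shows it is essential.

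\textbf{Step 1: recover leaf data.} By \Cref{lemma:maximal is longest paths}(2), each maximal simplex \(\rho(\bar\triangle_i)\) corresponds to a leaf-to-leaf path \(\sfP_i=\sfP_{\rho(\bar\triangle_i)}\) in \(\sfT\). The edge-simplex condition forces \(\dim\rho(\bar\triangle_i)=1\) for even \(i\), so \(\sfP_i\) has length \(2\): both twigs meet at a vertex whose two incident twigs end in leaves. For odd \(i\), \(\sfP_i\) has length at least \(3\). Consecutive simplices intersect, so by \Cref{lemma:maximal is longest paths}(3) consecutive paths share a twig. The empty triple intersections say that no twig is common to three consecutive paths.

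Since each \(\sfP_i\) joins leaves and consecutive paths overlap in at least a twig, I would orient them so that \(\sfP_i\) and \(\sfP_{i+1}\) share an endpoint leaf. The even-index paths have length two, so they share a leaf endpoint with each neighbour. This yields a closed leaf sequence \(\sfw_\bullet\) with \(\triangle_i(\sfw_\bullet)=\rho(\bar\triangle_i)\). The triple-intersection hypothesis is then exactly the hypothesis of \Cref{proposition:leaf sequence}. Hence the loop \(\cP(\sfw_\bullet)\) is homotopically nontrivial in \(\cI(UP_2(\graf))\). I expect this orientation/bookkeeping to be the main obstacle. The difficulty is checking that the overlap of consecutive leaf paths always contains a shared endpoint twig; otherwise the overlap could be an interior segment, which would require a \(Y\)-shaped configuration. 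I would rule that case out using that the even paths have length two and the emptiness of triple intersections.

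\textbf{Step 2: lift.} For each \(i\), pick a vertex \(\bar x_i\in\bar\triangle_i\cap\bar\triangle_{i+1}\) and join consecutive \(\bar x_{i-1},\bar x_i\) by an edge path inside \(\bar\triangle_i\). Choose these edge paths to be the lifts of the paths \(\cP_i\subset\triangle_i\) used in \Cref{proposition:leaf sequence}. This is possible because \(\rho\) restricted to a simplex is an isomorphism onto its image, as noted in the proof of \Cref{lemma:deformation retract} using the stabilizer and uniqueness argument there. If \(\bar\triangle_0=\bar\triangle_N\), close up inside \(\bar\triangle_0\) to get a loop \(\bar\cP\) with \(\rho(\bar\cP)\) homotopic to \(\cP(\sfw_\bullet)\), after adjusting within the contractible simplex \(\triangle_0\).

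\textbf{Step 3: contradiction.} By \Cref{theorem:structureofI}, \(\bar\cP\) is null-homotopic in \(\cI(\bar{UP_2(\graf)})\). Since \(\rho\) is continuous, \(\cP(\sfw_\bullet)\) is null-homotopic in \(\cI(UP_2(\graf))\), contradicting Step 1. Therefore \(\bar\triangle_0\neq\bar\triangle_N\).
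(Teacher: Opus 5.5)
Your proposal is correct and follows essentially the same route as the paper: extract a closed leaf sequence from the downstairs edge-simplex sequence, invoke \Cref{proposition:leaf sequence}, and contradict the simple connectivity of the development. The obstacle you flag in Step~1 is dispatched as in the paper. A $1$-dimensional maximal simplex meets its higher-dimensional neighbour in a single vertex, i.e.\ one twig of a length-two leaf-to-leaf path; such a twig is necessarily a leaf twig, hence a shared endpoint, so the $Y$-shaped case never arises. Your Step~2 just makes explicit the lifting argument that the paper compresses into one line.
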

\begin{proof}
By assumption, \(N\) is even and \(\rho(\bar\triangle_N)=\rho(\bar\triangle_0)\).
Let \(\triangle_i=\rho(\bar\triangle_i)\) and assume without loss of generality that \(\triangle_0\) has dimension \(1\).
Since \(\rho\) sends maximal simplices to maximal simplices and exactly one of \(\{\triangle_{i},\triangle_{i+1}\}\) is \(1\)-dimensional, these simplices must intersect at a single vertex.
Moreover, as \(\bar\triangle_{i}\cap \bar\triangle_{i+1}\) and \(\bar\triangle_{i+1}\cap \bar\triangle_{i+2}\) are distinct by hypothesis, the vertices \(\triangle_{i}\cap \triangle_{i+1}\) and \(\triangle_{i+1}\cap \triangle_{i+2}\) are also distinct.
Consequently, the path \(\sfP_{\triangle_i}\cap \sfP_{\triangle_{i+1}}\) consists of the unique twig incident to a leaf, and the twigs \(\sfP_{\triangle_{2k-1}}\cap \sfP_{\triangle_{2k}}\) and \(\sfP_{\triangle_{2k}}\cap \sfP_{{2k+1}}\) are distinct.

Assume for contradiction that \(\bar\triangle_0=\bar\triangle_N\). The preceding argument implies that the sequence \((\triangle_0,\dots,\triangle_{N-1})\) is induced by a closed leaf sequence \((\sfw_0,\dots,\sfw_{N}=\sfw_0)\). By \Cref{proposition:leaf sequence}, this sequence forms a homotopically nontrivial loop in \(\cI({UP_2(\graf)})\). However, this contradicts the fact that \(\cI(\bar{UP_2(\graf)})\) is simply connected, since it would imply that \(\rho(\bar\triangle_{\bullet})=\triangle_{\bullet}\) is null-homotopic. Therefore, \(\bar\triangle_0\neq\bar\triangle_N\).
\end{proof}

In general, \(\dim(\bar\triangle\cap\bar\triangle')\) may be strictly smaller than \(\dim(\rho(\bar\triangle)\cap\rho(\bar\triangle'))\). The preceding proof relies on the presence of a \(1\)-dimensional maximal simplex and the fact that \(\rho\) preserves maximal simplices to circumvent this issue.
To address this subtlety for sequences of type \((a,b,c)\), we define the set of available tripod types for a tree \(\sfT\):
\[
\tripod(\sfT)=\{
(a,b,c): \sfT_{a,b,c}\text{ for \(1\le a< b< c\) leaf-respectingly embeds into }\sfT\}.
\]
We equip this set with a linear order as follows: \((a,b,c)<(a',b',c')\) if and only if:
\begin{enumerate}
\item \(a+b+c<a'+b'+c'\), or
\item \(a+b+c=a'+b'+c'\) and \(a+b<a'+b'\), or
\item \(a+b+c=a'+b'+c'\), \(a+b=a'+b'\), and \(a<a'\).
\end{enumerate}

\begin{lemma}\label{proposition:type (a_b_c)}
Suppose that \(\tripod(\sfT)\) is nonempty and \((a,b,c)\) is the minimal element, and let \(\bar\triangle_\bullet=(\bar\triangle_0,\dots,\bar\triangle_{N})\) be a sequence of maximal simplices in \(\cI(\bar{UP_2(\graf)})\) of type \((a,b,c)\).
If \((\rho(\bar\triangle_0),\dots,\rho(\bar\triangle_{N}))\) forms a closed sequence of maximal simplices of type \((a,b,c)\), then \(\bar\triangle_0\neq\bar\triangle_{N}\).
\end{lemma}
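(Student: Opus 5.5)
The plan is to mimic the proof of \Cref{proposition:edge-simplex type}. We first recover a leaf-respecting tripod inside \(\sfT\) from the projected sequence, and then use \Cref{proposition:leaf sequence} together with the simple connectivity of \(\cI(\bar{UP_2(\graf)})\) from \Cref{theorem:structureofI}.

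Write \(\triangle_i=\rho(\bar\triangle_i)\). By \Cref{lemma:maximal is longest paths}(2), each maximal simplex \(\triangle_i\) corresponds to a path \(\sfP_{\triangle_i}\) in \(\sfT\) joining two leaves, and its length is \(\dim\triangle_i+1\). So the lengths of \(\sfP_{\triangle_0},\sfP_{\triangle_1},\sfP_{\triangle_2}\) are \(a+b,\ b+c,\ c+a\), repeating with period three. By \Cref{lemma:maximal is longest paths}(3) and \Cref{lem:Intofmaximal}, consecutive paths share a subpath whose length is at least \(b,c,a\) respectively. Here we use \(\dim(\bar\triangle_i\cap\bar\triangle_{i+1})\le\dim(\triangle_i\cap\triangle_{i+1})\), so the lower bounds on overlaps pass from the lift down to the quotient.

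Next we check that three consecutive leaf-to-leaf paths \(\sfP_{\triangle_i},\sfP_{\triangle_{i+1}},\sfP_{\triangle_{i+2}}\) span a tripod. The condition that every vertex of \(\triangle_{i+1}\) lies in \(\triangle_i\cup\triangle_{i+2}\) means that \(\sfP_{\triangle_{i+1}}\) is covered by its two overlaps. Call the tripod's legs \(\alpha,\beta,\gamma\). The length constraints then force the overlaps to be exactly the legs. If the three paths were colinear, or if the shared leaves overlapped in a degenerate way, we would obtain a leaf-respecting tripod \(\sfT_{a',b',c'}\) with \((a',b',c')<(a,b,c)\), or one with repeated leg lengths. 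Ruling this out is where the minimality of \((a,b,c)\) in \(\tripod(\sfT)\) is used. The argument reads off the leg lengths from
\[
\alpha+\beta=a+b,\qquad \beta+\gamma=b+c,\qquad \gamma+\alpha=c+a,
\]
which forces \(\{\alpha,\beta,\gamma\}=\{a,b,c\}\). Any overlap strictly longer than the prescribed lower bound would shorten some leg and produce a smaller tripod type.

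\textbf{This is the main obstacle:} controlling the case where \(\dim(\triangle_i\cap\triangle_{i+1})\) is strictly larger than \(\dim(\bar\triangle_i\cap\bar\triangle_{i+1})\). I would handle it by showing that any excess overlap yields an embedded tripod violating the minimality of \((a,b,c)\) under the lexicographic order.

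Once each triple of consecutive paths is identified as a tripod with prescribed legs, the sequence \((\triangle_0,\dots,\triangle_{N-1})\) is induced by a closed leaf sequence \(\sfw_\bullet\). Its consecutive leaves are the endpoints shared by \(\sfP_{\triangle_i}\) and \(\sfP_{\triangle_{i+1}}\), and it satisfies \(\sfw_i\neq\sfw_{i+1}\). The tripod structure gives \(\triangle_i\cap\triangle_{i+1}\cap\triangle_{i+2}=\varnothing\), since the three legs of a tripod pairwise share only the central vertex and no twig. So \Cref{proposition:leaf sequence} applies, and the loop \(\cP(\sfw_\bullet)\) is homotopically nontrivial in \(\cI(UP_2(\graf))\).

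Now suppose \(\bar\triangle_0=\bar\triangle_N\). Then the lifted sequence closes up in \(\cI(\bar{UP_2(\graf)})\), which is simply connected by \Cref{theorem:structureofI}. Since \(\rho\) comes from a covering action on the development (\Cref{corollary:developable complex-of-groups}), a closed lift means the projected loop is null-homotopic, contradicting the previous step. Therefore \(\bar\triangle_0\neq\bar\triangle_N\).
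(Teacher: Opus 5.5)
Your outline matches the paper's. You project to \(\cI(UP_2(\graf))\), use minimality of \((a,b,c)\) to force the overlaps to be exactly \(b,c,a\), deduce empty triple intersections, apply \Cref{proposition:leaf sequence}, and contradict simple connectivity of the development. For that last step continuity of \(\rho\) suffices; \(\rho\) is not a covering, since stabilizers are nontrivial.

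The gap is the step you flag, and your mechanism for it is wrong. Three consecutive paths are not the sides of one tripod: \(\sfP_{\triangle_i}\) joins \(\sfw_i,\sfw_{i+1}\) and \(\sfP_{\triangle_{i+2}}\) joins \(\sfw_{i+2},\sfw_{i+3}\), and nothing forces \(\sfw_{i+3}=\sfw_i\). So the system \(\alpha+\beta=a+b\), \(\beta+\gamma=b+c\), \(\gamma+\alpha=c+a\) has no basis. The correct local picture, which the paper uses, is that two consecutive paths of lengths \(L_i,L_{i+1}\) sharing \(s\) twigs span a leaf-respecting tripod with legs \(L_i-s,\ s,\ L_{i+1}-s\). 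Excess overlap lowers the total, but that tripod lies in \(\tripod(\sfT)\) only if its legs are pairwise distinct. For the \(c\)-overlap they always are, and for the \(b\)-overlap the paper supplies an auxiliary tripod. For the \(a\)-overlap, though, the legs \(c-x,\ a+x,\ b-x\) coincide when \(2x\in\{b-a,\,c-a\}\). In those cases no smaller admissible tripod need exist, and the paper's substitute \((b-a_k,c-a_k,c+a_k)\) has strictly larger sum.

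In fact the step, and the statement itself, fail. Let \(\sfT\) have a vertex \(\sfz\) adjacent to a leaf \(\sfw_0\) and to vertices \(\sfx_1,\sfx_2\). Let each \(\sfx_j\) be adjacent to a leaf \(\sfw_j\) and joined by a path of length \(2\) to a leaf \(\sfw_j'\), and set \(\loops\equiv 1\). Then \(\tripod(\sfT)=\{(1,2,3),(1,2,4)\}\), so \((1,2,3)\) is minimal.

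The leaf sequence \((\sfw_0,\sfw_1,\sfw_2',\sfw_0,\sfw_2,\sfw_1',\sfw_0)\) gives paths of lengths \(3,5,4,3,5,4\) with consecutive overlaps of \(2,3,2,2,3,2\) twigs. This is a closed sequence of type \((1,2,3)\) in which both \(a\)-overlaps have excess \(1\); the tripod at \(\sfx_2\) has legs \(2,2,1\). Its triple intersections are nonempty. Its loop in \(\cI(UP_2(\graf))\) reduces to the backtrack \(M([\sfz,\sfx_1])\to M([\sfz,\sfx_2])\to M([\sfz,\sfx_1])\), so it is null-homotopic.

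It even lifts to a closed sequence of type \((1,2,3)\) in \(\cI(\bar{UP_2(\graf)})\).
\begin{itemize}
\item Since \(K(\sfP(\sfw_2,\sfw_1'))\subset M([\sfz,\sfx_1])\cap M([\sfz,\sfx_2])\), you can take lifts \(\bar\triangle_1,\bar\triangle_4\) of \(\triangle_{\sfP(\sfw_1,\sfw_2')}\) and \(\triangle_{\sfP(\sfw_2,\sfw_1')}\) through a common pair of lifts \(\bar M_1,\bar M_2\) of \(M([\sfz,\sfx_1])\) and \(M([\sfz,\sfx_2])\).
\item Close each side through a lift of \(M([\sfw_0,\sfz])\) meeting \(\bar M_2\), respectively \(\bar M_1\).
\item The required intersections are automatic. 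Inside the product \(\bar M_2\), such a lift cuts out a slab \(\bar A\times(\text{whole second factor})\). The relevant faces of \(\bar\triangle_1\) and \(\bar\triangle_4\) cut out slabs \((\text{whole first factor})\times\bar B\), which meet it.
\end{itemize}

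So no argument of this shape can succeed as stated. The lemma needs an extra hypothesis, for instance that the projected sequence has empty triple intersections, which is exactly what \Cref{proposition:leaf sequence} requires.
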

\begin{proof}
By assumption, \(N=3m\) for some integer \(m\ge 1\), and \(\rho(\bar\triangle_N)=\rho(\bar\triangle_0)\).
Let \(\triangle_i=\rho(\bar\triangle_i)\)
and assume without loss of generality that \(\triangle_0\) has dimension \(a+b\).
By hypothesis, any three consecutive simplices \(\triangle_{i-1}\), \(\triangle_i\) and \(\triangle_{i+1}\) have distinct dimensions and thus are distinct; furthermore, every vertex of \(\triangle_i\) is contained in the union \(\triangle_{i-1} \cup \triangle_{i+1}\). Consequently, \(\sfP_{\triangle_{i-1}}\cap\sfP_{\triangle_i}\) and \(\sfP_{\triangle_{i}}\cap\sfP_{\triangle_{i+1}}\) contribute distinct leaves of \(\sfP_{\triangle_{i}}\) as both are connected and properly contained in \(\sfP_{\triangle_{i}}\).

To show that the sequence \((\triangle_0,\dots,\triangle_{N-1})\) forms a homotopically nontrivial loop in \(\cI({UP_2(\graf)})\), we first establish that the intersection dimensions are exactly \(\dim(\triangle_{3k}\cap\triangle_{3k+1})=b\), \(\dim(\triangle_{3k+1}\cap\triangle_{3k+2})=c\) and
\(\dim(\triangle_{3k+2}\cap\triangle_{3k+3})=a\) (with indices mod \(N\)). We proceed by contradiction using the minimality of \((a,b,c)\) in \(\tripod(\mathsf{T})\):
\begin{enumerate}
\item If \(\dim(\triangle_{3k+1}\cap\triangle_{3k+2})=c+c_k\) with \(c_k>0\), then the leaves \(\sfw_{3k+1},\sfw_{3k+2},\sfw_{3k+3}\) form a leaf-respecting embedded tripod of type \((a-c_k,b-c_k,c+c_k)\), which contradicts the minimality of \((a,b,c)\).
\item If \(\dim(\triangle_{3k+2}\cap\triangle_{3k+3})=a+a_k\) with \(a_k>0\), then the leaves \(\sfw_{3k+2},\sfw_{3k+4},\sfw_{3k+5}\) form a leaf-respecting embedded tripod of type \((b-a_k,c-a_k,c+a_k)\), which likewise contradicts the minimality of \((a,b,c)\).
\item If \(\dim(\triangle_{3k}\cap\triangle_{3k+1})=b+b_k\) with \(b_k>0\), then the configuration yields tripods of types \((a-b_k,c-b_k,b+b_k)\) and \((a-b_k,a+b_k,c-b_k)\).
However, since \(a<b<c\), it is impossible for the equalities \(c-b_k=b+b_k\) and \(a+b_k=c-b_k\) to hold simultaneously. It follows that at least one of these configurations---after reordering its components to satisfy the strict inequality \(a'<b'<c'\)---belongs to \(\tripod(\mathsf{T})\). Since any such type is strictly smaller than \((a,b,c)\) in the linear order, we must have $b_k=0$.
\end{enumerate}

These dimension constraints imply that the sequence \((\triangle_0,\dots,\triangle_{N-1})\) satisfies the triple-intersection condition of \Cref{proposition:leaf sequence}, thereby forming a homotopically nontrivial loop in \(\cI({UP_2(\graf)})\). As in the previous proof, this contradicts the simple connectivity of the development, so \(\bar\triangle_0\neq\bar\triangle_N\).
\end{proof}
\begin{figure}[ht]
\centering
\begin{subfigure}{.49\textwidth}
\[
\begin{tikzpicture}[baseline=-.5ex, scale=0.7, transform shape]
\draw[fill] (-4,0) circle (2pt) node[left] {\(\sfw_{3k+1}\)} -- node[midway,below] {\(\underbrace{\hphantom{\hspace{3cm}}}_{b-c_k}\)} (-1,0) circle (2pt) -- node[midway,below] {\(\underbrace{\hphantom{\hspace{1cm}}}_{c_k}\)} (0,0) circle (2pt) -- node[midway,below] {\(\underbrace{\hphantom{\hspace{1cm}}}_{a_{k}}\)} (1,0) circle (2pt) -- node[midway,below] {\(\underbrace{\hphantom{\hspace{4cm}}}_{c-a_{k}}\)} (5,0) circle (2pt) node[right] {\(\sfw_{3k+2}\)};
\draw[fill] (-1,0) -- node[midway, above, sloped] {\(\overbrace{\hphantom{\hspace{2cm}}}^{a-c_k}\)} ++(150:2) circle (2pt) node[left] {\(\sfw_{3k+3}\)};
\draw[fill] (1,0) --  node[midway,above, sloped] {\(\overbrace{\hphantom{\hspace{3cm}}}^{b-a_k}\)} ++(30:3) circle (2pt) node[right] {\(\sfw_{3k+4}\)};
\end{tikzpicture}
\]
\caption{When \(c_k>0\).}
\label{figure:triple intersection 1}
\end{subfigure}
\begin{subfigure}{.49\textwidth}
\[
\begin{tikzpicture}[baseline=-.5ex, scale=0.7, transform shape]
\draw[fill] (-5,0) circle (2pt) node[left] {\(\sfw_{3k+2}\)} -- node[midway,below] {\(\underbrace{\hphantom{\hspace{4cm}}}_{c-a_k}\)} (-1,0) circle (2pt) -- node[midway,below] {\(\underbrace{\hphantom{\hspace{1cm}}}_{a_k}\)} (0,0) circle (2pt) -- node[midway,below] {\(\underbrace{\hphantom{\hspace{1cm}}}_{b_{k+1}}\)} (1,0) circle (2pt) -- node[midway,below] {\(\underbrace{\hphantom{\hspace{2cm}}}_{a-b_{k+1}}\)} (3,0) circle (2pt) node[right] {\(\sfw_{3k+3}\)};
\draw[fill] (-1,0) -- node[midway, above, sloped] {\(\overbrace{\hphantom{\hspace{3cm}}}^{b-a_k}\)} ++(150:3) circle (2pt) node[left] {\(\sfw_{3k+4}\)};
\draw[fill] (1,0) --  node[midway,above, sloped] {\(\overbrace{\hphantom{\hspace{4cm}}}^{c-b_{k+1}}\)} ++(30:4) circle (2pt) node[right] {\(\sfw_{3k+5}\)};
\end{tikzpicture}
\]
\caption{When \(a_k>0\).}
\label{figure:triple intersection 2}
\end{subfigure}
\begin{subfigure}{.49\textwidth}
\[
\begin{tikzpicture}[baseline=-.5ex, scale=0.7, transform shape]
\draw[fill] (-3,0) circle (2pt) node[left] {\(\sfw_{3k}\)} -- node[midway,below] {\(\underbrace{\hphantom{\hspace{2cm}}}_{a-b_k}\)} (-1,0) circle (2pt) -- node[midway,below] {\(\underbrace{\hphantom{\hspace{1cm}}}_{b_k}\)} (0,0) circle (2pt) -- node[midway,below] {\(\underbrace{\hphantom{\hspace{2cm}}}_{c_k}\)} (2,0) circle (2pt) -- node[midway,below] {\(\underbrace{\hphantom{\hspace{2cm}}}_{b-c_k}\)} (4,0) circle (2pt) node[right] {\(\sfw_{3k+1}\)};
\draw[fill] (-1,0) -- node[midway, above, sloped] {\(\overbrace{\hphantom{\hspace{4cm}}}^{c-b_k}\)} ++(150:4) circle (2pt) node[left] {\(\sfw_{3k+2}\)};
\draw[fill] (2,0) --  node[midway,above, sloped] {\(\overbrace{\hphantom{\hspace{1cm}}}^{a-c_k}\)} ++(30:1) circle (2pt) node[right] {\(\sfw_{3k+3}\)};
\end{tikzpicture}
\]
\caption{When \(b_k>0\).}
\label{figure:triple intersection 0}
\end{subfigure}

\caption{Triple intersections of paths}
\label{figure:triple intersection}
\end{figure}

\subsection{Quasi-isometric to RAAGs}
Following \Cref{Ex:Salvetti complex}, we denote by \(S_\grafl\) the Salvetti complex associated to a simple graph \(\grafl\) and by \(X_\grafl\) its universal cover. If \(\grafl=\grafl_1\sqcup\dots\sqcup\grafl_n\sqcup\{\text{isolated vertices}\}\), then \(S_\grafl\) is a wedge sum of the subcomplexes \(S_{\grafl_i}\), each locally convex in \(S_\grafl\); consequently, \(\cI(S_\grafl)\) is the disjoint union of the \(\cI(S_{\grafl_i})\). 

In this subsection, we study when \(\bbB_2(\graf)\), for \(\graf\in\grapegraph\), is quasi-isometric to a RAAG \(\bbA_\grafl\), equivalently, when \(\bar{UD_2(\graf)}\) is quasi-isometric to \(X_\grafl\). In \cite{Oh22}, two families of bunches of grapes were constructed: one whose \(2\)-braid groups are quasi-isometric to RAAGs and the other whose \(2\)-braid groups are not. 

\begin{proposition}[{\cite[Proposition~5.16, Corollary~5.18]{Oh22}}]
The following holds:
\begin{enumerate}
\item Let \(\graf_1=(\sfS_n,\loops_1)\) be a bunch of grapes over a star graph \(\sfS_n\) for some \(n\ge 3\) such that \(\loops_1(\sfv)=1\) for every leaf \(\sfv\) (see \Cref{fig:qitoRAAG}).
Then \(\bbB_2(\graf_1)\) is quasi-isometric to a RAAG \(A_\grafl\ast\bbZ\) for some tree \(\grafl\) of diameter at least \(3\).
\item Let \(\graf_2=(\tilde\sfD_n,\loops_2)\) be a bunch of grapes over an affine Dynkin diagram \(\tilde\sfD_n\) for some \(n\ge 5\) such that \(\loops_2(\sfv)=1\) for every leaf and bivalent vertex \(\sfv\) (see \Cref{fig:notqitoRAAG}).
Then \(\bbB_2(\graf_2)\) is not quasi-isometric to any RAAG.
\end{enumerate}
\end{proposition}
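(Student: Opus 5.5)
Part (1) is a quasi-isometry to a RAAG; part (2) is an obstruction. In both parts I would first reduce to \(UP_2\). Each \(\graf_i\) is a normal bunch of grapes. By \Cref{Prop:free factor}, \(\bbB_2(\graf_i)\isom\pi_1(UP_2(\graf_i))\ast\bbF_N\) with \(N\ge \loops(\sfT)\ge 1\). By \Cref{theorem:structureofI}, \(\pi_1(UP_2(\graf_i))\) is one-ended. So by \Cref{PW} the quasi-isometry type of \(\bbB_2(\graf_i)\) is governed by that of \(\pi_1(UP_2(\graf_i))\), and being quasi-isometric to \(\bbA_\grafl\ast\bbZ\) reduces to \(\pi_1(UP_2(\graf_i))\qisom\bbA_\grafl\).

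For (1), the stem is the star \(\sfS_n\) with center \(\sfx\). By \Cref{Lem:RIconnected} the maximal product subcomplexes are \(M(\sft_j)=\graf_{\sfv_j}\itimes\graf_{\sfS_n\setminus \sft_j}\), one for each twig. Here \(\graf_{\sfv_j}\) is a \(3\)-cycle, and every pairwise intersection \(M(\sft_i)\cap M(\sft_j)=\graf_{\sfv_i}\itimes\graf_{\sfv_j}\) is a torus. I would try to identify \(UP_2(\graf_1)\) up to homotopy with a tree of spaces: each \(M(\sft_j)\) deformation retracts to a circle times a graph, glued along tori. Then I would compute \(\pi_1\) as a graph of groups and recognize it as \(\bbA_\grafl\) for a tree \(\grafl\). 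The expected shape of \(\grafl\) is a central vertex (a loop around the center grapes) joined to the cycle generators \(c_j\), with extra leaves, giving diameter at least \(3\). The alternative route avoids an exact isomorphism. Both \(\pi_1(UP_2(\graf_1))\) and \(\bbA_\grafl\) for a tree \(\grafl\) of diameter \(\ge3\) are graphs of \(\bbF\times\bbZ\) pieces glued along \(\bbZ^2\), and by Behrstock--Neumann all such tree RAAGs are quasi-isometric. So it would suffice to show that \(\pi_1(UP_2(\graf_1))\) is the fundamental group of a nonpositively curved graph manifold type complex of this form; \(\cI(\bar{UP_2(\graf_1)})\) being a tree-like flag complex supports this.

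For (2), I would argue by contradiction through intersection complexes. Suppose \(\bbB_2(\graf_2)\qisom\bbA_\grafl\). Then \(\pi_1(UP_2(\graf_2))\qisom\bbA_\grafl\) with \(\bbA_\grafl\) one-ended and \(\grafl\) triangle-free, so by \Cref{theorem:IsobetInt} we get \(\cI(\bar{UP_2(\graf_2)})\isom\cI(X_\grafl)\). The stem \(\tilde\sfD_n\) gives a closed leaf sequence \(\sfw^1_\bullet\) satisfying \Cref{proposition:leaf sequence}. Its lift is a bi-infinite sequence of maximal simplices of edge-simplex type in \(\cI(\bar{UP_2(\graf_2)})\) that is periodic under a deck transformation, and \Cref{proposition:edge-simplex type} shows it does not close up. Transport this sequence to \(\cI(X_\grafl)\) using \Cref{Rmk:TypeofSequence}. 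The needed RAAG fact is that in \(\cI(X_\grafl)\) the maximal simplices correspond to maximal joins (stars of vertices) in \(\grafl\), and an edge-simplex chain forces a cycle or long path pattern in \(\grafl\). One then shows that the \(\bbA_\grafl\)-action (or the finite quotient \(\cI(S_\grafl)\)) forces such a sequence either to close up or to be incompatible with the labels. Label compatibility (\Cref{Def:Morphism}) identifies which vertices have cyclic versus non-cyclic factors.

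The main obstacle is this RAAG-side analysis in (2): describing edge-simplex-type sequences in \(\cI(X_\grafl)\) and deriving a contradiction, since the isomorphism of \(\cI\) is not equivariant. I would first try to show that in \(\cI(X_\grafl)\) any edge-simplex-type sequence stays in the star of a single flat's coset pattern, hence is bounded. That contradicts the unbounded non-closing sequence obtained from \Cref{proposition:edge-simplex type} together with the quasi-isometry control given by \Cref{MaxtoMax}.
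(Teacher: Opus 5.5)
Your opening reduction via \Cref{Prop:free factor}, \Cref{theorem:structureofI} and \Cref{PW} is fine. For (2), however, passing from \(\bbB_2(\graf_2)\qisom\bbA_\grafl\) to "\(\pi_1(UP_2(\graf_2))\) is quasi-isometric to a one-ended RAAG" needs \Cref{Prop:StartingPoint}, because \(\bbA_\grafl\) may have several one-ended free factors.

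\textbf{Part (2): the RAAG-side step is missing.} This is where you locate the obstacle, and the substitute you suggest would not close it. \Cref{proposition:edge-simplex type} only says a lifted chain never closes up; it does not say the chain is unbounded. Even if edge-simplex chains in \(\cI(X_\grafl)\) were bounded (you give no reason), a bounded chain in the locally infinite complex \(\cI(\bar{UP_2(\graf_2)})\) need not close up, so no contradiction follows.

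The missing idea, used in the paper's proof of \Cref{theorem:NotQItoRAAG} (of which (2) is the case \(\sfT=\tilde\sfD_n\)), is to build a \emph{closed} chain on the RAAG side and pull it back:
\begin{enumerate}
\item Push the periodic bi-infinite chain \(\bar\triangle_\bullet\) forward by \(\cI(\phi)\) and project by \(\rho_\grafl\) to \(\cI(S_\grafl)\).
\item Since \(\cI(S_\grafl)\) is finite, pigeonhole gives a closed subchain \((\triangle'_{4n},\dots,\triangle'_{4m})\).
\item Since \(S_\grafl\) has a single vertex, every local intersection complex \(\cI_{\bar x}(X_\grafl)\) maps isomorphically onto \(\cI(S_\grafl)\) (\Cref{lem:CopiesofR}). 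So this subchain lifts to a closed edge-simplex chain in \(\cI(X_\grafl)\).
\item \(\cI(\phi)^{-1}\) carries it to a closed edge-simplex chain in \(\cI(\bar{UP_2(\graf_2)})\), which \Cref{proposition:edge-simplex type} forbids.
\end{enumerate}
The non-equivariance of \(\cI(\phi)\) never matters. The asymmetry exploited is that loops in \(\cI(S_\grafl)\) always lift to loops, whereas \(\cP(\sfw_\bullet)\) in \(\cI(UP_2(\graf_2))\) does not. Also, it is the vertices of \(\cI(S_\grafl)\), not its maximal simplices, that correspond to maximal join subgraphs of \(\grafl\), and these need not be vertex stars.

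\textbf{Part (1): the paper only cites this from \cite{Oh22}, and your first route rests on a wrong picture.} Any two twigs of \(\sfS_n\) are colinear and no three are. So by \Cref{Lem:RIconnected} the pieces \(M(\sft_j)\) meet pairwise in the tori \(\graf_{\sfv_i}\itimes\graf_{\sfv_j}\), and all triple intersections are empty. Thus \(UP_2(\graf_1)\) is a graph of spaces over \(\sfK_n\), not over a tree (compare \Cref{Ex:3-star}), and its \(\pi_1\) surjects onto \(\pi_1(\sfK_n)\). The statement asserts only a quasi-isometry, and your guessed tree \(\grafl\) has no basis.

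The Behrstock--Neumann route is the viable one, but ``graph of \(\bbF\times\bbZ\) pieces glued along \(\bbZ^2\)'' is not the right criterion. Gluing fibre to fibre can produce \(\bbZ\times\bbF_k\), a RAAG on a diameter-\(2\) star, which lies outside the tree class. What you must verify is:
\begin{enumerate}
\item Each piece \(M(\sft_j)=\graf_{\sfv_j}\itimes\graf_{\sfS_n\setminus\sft_j}\) is homotopy equivalent to \(S^1\times\Sigma_j\). Here \(\Sigma_j\) is a regular neighbourhood of the planar graph \(\graf_{\sfS_n\setminus\sft_j}\): a planar surface with \(n+\loops_1(\text{centre})\) boundary circles, in which every grape is boundary-parallel.
\item Along each torus \(\graf_{\sfv_i}\itimes\graf_{\sfv_j}\), the fibre of one piece is a boundary curve of the other, so every gluing is a flip.
\item Only \(n-1\) boundary circles of each \(\Sigma_j\) are glued, so every Seifert piece meets the boundary.
\end{enumerate}
Then \(UP_2(\graf_1)\) is homotopy equivalent to a non-geometric graph manifold with boundary in which every piece has free boundary. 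Behrstock--Neumann's bisimilarity classification puts its fundamental group in the quasi-isometry class of tree RAAGs of diameter \(\ge3\), and \Cref{PW} then gives \(\bbB_2(\graf_1)\qisom\bbA_\grafl\ast\bbZ\).
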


\begin{figure}[ht]
\centering
\begin{subfigure}{.45\textwidth}
\[
\begin{tikzpicture}[baseline=-.5ex, scale=0.9,transform shape]
\draw[fill] (0,0) circle (2pt);
\foreach \i in {0,1,2,3,4,5} {
\draw[fill] (0,0) -- ({\i*60}:1) circle (2pt) node (A) {};
\grape[{\i*60}]{A};
}
\end{tikzpicture}
\]
\caption{A bunch of grapes over a star graph \(\sfS_6\)}
\label{fig:qitoRAAG}
\end{subfigure}
\begin{subfigure}{.5\textwidth}
\[
\begin{tikzpicture}[baseline=-.5ex,scale=0.9,transform shape]
\draw[fill] (0,0) circle (2pt) -- (120:1) circle (2pt) node (A) {};
\draw[fill] (0,0) -- (240:1) circle (2pt) node (B) {};
\draw[fill] (0,0) -- (1,0) circle (2pt) node (C) {} -- (2,0) circle (2pt) node (D) {} -- (3,0) circle (2pt) node (E) {} -- (4,0) circle (2pt);
\draw[fill] (4,0) -- +(60:1) circle (2pt) node (F) {} +(0,0) -- +(-60:1) circle (2pt) node (G) {};
\grape[120]{A};
\grape[240]{B};
\grape[90]{C};
\grape[90]{D};
\grape[90]{E};
\grape[60]{F};
\grape[-60]{G};
\end{tikzpicture}
\]
\caption{A bunch of grapes over an affine Dynkin diagram \(\tilde{\mathsf{D}}_8\)}
\label{fig:notqitoRAAG}
\end{subfigure}
\caption{Two bunches of grapes: one whose \(2\)-braid group is quasi-isometric to a RAAG, one not.}
\end{figure}

We next provide one sufficient condition and two necessary conditions for \(\bbB_2(\graf)\) to be quasi-isometric to a RAAG \(\bbA_\grafl\). Note that since \(UD_2(\graf)\) is a special square complex, \cite[Theorem~1-3]{Hua(b)} together with \Cref{Ex:Salvetti complex} implies that \(\grafl\) must be a (possibly disconnected) \(3\)-cycle-free simple graph. 

The following proposition reduces the problem to the quasi-isometry type of \(\pi_1(UP_2(\graf))\).

\begin{proposition}\label{Prop:StartingPoint}
Let \(\graf=(\sfT,\loops)\). Then \(\bbB_2(\graf)\) is quasi-isometric to a RAAG if and only if either \(UP_2(\graf)=\varnothing\) or \(\pi_1(UP_2(\graf))\) is quasi-isometric to a RAAG.
Moreover, if \(\pi_1(UP_2(\graf))\) is quasi-isometric to a RAAG \(\bbA_\grafl\), then \(\grafl\) must be a connected, \(3\)-cycle-free simple graph with at least two vertices.
\end{proposition}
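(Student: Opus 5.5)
We reduce to normal bunches of grapes and then combine \Cref{Prop:free factor} with \Cref{PW} and the description of ends of RAAGs in \Cref{Ex:Salvetti complex}.

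\textbf{Reduction to normal or small graphs.} If \(\graf\) is not large, then by \Cref{Lem:WhyNormal}(1) \(\bbB_2(\graf)\) is free and \(UP_2(\graf)=\varnothing\) by \Cref{Lem:UP2empty}. A free group is a RAAG (on a discrete graph), so both sides of the equivalence hold. If \(\graf\) is large, we remove leaves iteratively to get a normal bunch of grapes \(\graf'\). By \Cref{corollary:free factor} (see \Cref{Lem:WhyNormal}), \(\bbB_2(\graf)\isom\bbB_2(\graf')\ast\bbF_M\). Leaf edges never lie in a leafless subgraph, so \(UP_2(\graf)=UP_2(\graf')\) by \Cref{corollary:maximal}. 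Then \Cref{Prop:free factor} gives \(\bbB_2(\graf)\isom\pi_1(UP_2(\graf))\ast\bbF_{N}\) with \(N\ge\loops(\sfT')\ge 2\). Write \(\bbG=\pi_1(UP_2(\graf))\); by \Cref{theorem:structureofI}, \(\bbG\) is one-ended.

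\textbf{The equivalence.} Suppose \(\bbG\qisom\bbA_\grafl\). The first part of \Cref{PW} gives \(\bbG\ast\bbF_N\qisom\bbA_\grafl\ast\bbZ=\bbA_{\grafl\sqcup\{\text{pt}\}}\), which is a RAAG. Conversely, suppose \(\bbG\ast\bbF_N\qisom\bbA_{\grafl'}\). Since \(\bbG\) is one-ended and \(N\ge1\), the group \(\bbG\ast\bbF_N\) has infinitely many ends, so \(\grafl'\) is disconnected with at least two vertices. Thus \(\bbA_{\grafl'}\isom\bbA_{\grafl_1}\ast\cdots\ast\bbA_{\grafl_k}\ast\bbF_r\), where the \(\grafl_i\) are the components with at least two vertices.

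This is the main obstacle. We must show that exactly one one-ended factor occurs, or at least that all one-ended factors are mutually quasi-isometric, so that \(\bbA_{\grafl'}\qisom\bbA_{\grafl_1}\ast\bbZ\). Then the converse direction of \Cref{PW} would give \(\bbG\qisom\bbA_{\grafl_1}\).

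We use Papasoglu--Whyte's classification: the quasi-isometry type of a free product of infinitely-ended groups is determined by the set of quasi-isometry types of its one-ended factors. The group \(\bbG\ast\bbF_N\) has exactly one such type, namely that of \(\bbG\), so all \(\bbA_{\grafl_i}\) are quasi-isometric to \(\bbG\) and at least one exists. Alternatively, we can argue with intersection complexes. By \Cref{theorem:IsobetInt}, the quasi-isometry induces an isomorphism of intersection complexes. The complex \(\cI(\bar{UD_2(\graf)})\) is a disjoint union of copies of the connected complex \(\cI(\bar{UP_2(\graf)})\) (\Cref{theorem:structureofI}, \Cref{lem:UnionofSPS}). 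Each copy is carried within bounded distance of a single one-ended factor coset, as in \Cref{Thm:SubclassdefiningQII}, which again yields \(\bbG\qisom\bbA_{\grafl_i}\).

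\textbf{The ``moreover'' part.} If \(\bbG\qisom\bbA_\grafl\), then \(\bbA_\grafl\) is one-ended, so by \Cref{Ex:Salvetti complex}(1) \(\grafl\) is connected with at least two vertices. By \cite[Theorem~1-3]{Hua(b)}, as recalled just before the statement, \(\grafl\) is \(3\)-cycle-free: otherwise \(\bbA_\grafl\) would contain quasi-flats of dimension \(3\), which \(\bar{UP_2(\graf)}\subset\bar{UD_2(\graf)}\) cannot contain since it is \(2\)-dimensional. Here we use \Cref{Prop:free factor} to know that \(\bar{UP_2(\graf)}\) embeds isometrically in \(\bar{UD_2(\graf)}\).
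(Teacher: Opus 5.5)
Your proof is correct. It reaches the converse direction by a different route from the paper's.

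\textbf{The paper's route.} The paper restricts a quasi-isometry \(\phi:\bar{UD_2(\graf)}\to X_\grafl\) to an isometrically embedded copy of \(\bar{UP_2(\graf)}\); this is where the local convexity from \Cref{Prop:free factor} is used. It then observes that the image lies coarsely in a single copy of \(X_{\grafl'}\) for some component \(\grafl'\) of \(\grafl\). Finally, it upgrades this quasi-isometric embedding to a quasi-isometry using \Cref{MaxtoMax} and the fact that \(\cI(\bar{UP_2(\graf)})\) is an entire component of \(\cI(\bar{UD_2(\graf)})\).

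\textbf{Your route.} You apply the full Papasoglu--Whyte classification \cite[Theorem~0.4]{PW02}: an infinitely-ended group's quasi-isometry type is determined by the set of quasi-isometry types of its one-ended vertex groups. You compare \(\bbG\ast\bbF_N\) with the splitting \(\bbA_{\grafl'}\isom\bbA_{\grafl_1}\ast\cdots\ast\bbA_{\grafl_k}\ast\bbF_r\). Equality of the two sets gives \(k\ge 1\) and \(\bbG\qisom\bbA_{\grafl_i}\) for every \(i\). The paper's \Cref{PW} records only a weaker consequence of that theorem, which does not cover a right-hand side with several one-ended factors. Citing the stronger form directly is legitimate; the paper itself draws its \Cref{PW} from the same result.

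\textbf{Comparison.} Your argument is shorter and purely coarse-algebraic. It needs only the free splitting \(\bbB_2(\graf)\isom\bbG\ast\bbF_N\) and the one-endedness of \(\bbG\), with no local convexity and no intersection complexes. It also shows slightly more: every one-ended factor of the RAAG is quasi-isometric to \(\bbG\). The paper's argument stays inside the intersection-complex framework and identifies which copy of \(X_{\grafl'}\) carries \(\bar{UP_2(\graf)}\). Because it relies on local convexity rather than a free splitting, it is the version that adapts to the \(\graphs_{(3)}\) part of \Cref{Thm:SubclassdefiningQII}.

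\textbf{Moreover part.} Your treatment matches the paper's (ends of RAAGs plus Huang). The appeal to \Cref{Prop:free factor} there is unnecessary, since \(\bar{UP_2(\graf)}\) is itself a \(2\)-dimensional \(\CAT(0)\) square complex.

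\textbf{Minor slip.} Combining the two splittings gives \(\bbF_{N+M}\) rather than \(\bbF_N\); this is harmless, since you only need a free factor of positive rank.
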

\begin{proof}
If \(\graf\) is not large, or equivalently \(UP_2(\graf)=\varnothing\), then \(\bbB_2(\graf)\) is quasi-isometric to a free group, (and thus to a RAAG) by \Cref{Lem:WhyNormal}. We may therefore assume that \(\graf\) is large, in which case \Cref{corollary:free factor,Prop:free factor} yield the isomorphisms 
\begin{equation}\label{Eq:isomorphisms}
\bbB_2(\graf)\,\cong\,\bbB_2(\graf')\ast\bbF\,\cong\,\pi_1(UP_2(\graf'))\ast\bbF',    
\end{equation}
where \(\graf'\) is the normal bunch of grapes obtained from \(\graf\) by removing all leaves and \(\bbF,\bbF'\) are free groups (with \(\bbF\) possibly trivial only when \(\graf=\graf'\)). Note that, by definition, \(UP_2(\graf)=UP_2(\graf')\), and by \Cref{theorem:structureofI}, \(\pi_1(UP_2(\graf))\) is one-ended.

If \(\pi_1(UP_2(\graf))\) is quasi-isometric to a RAAG, then \Cref{PW} implies that \(\bbB_2(\graf)\) is quasi-isometric to a RAAG. In this case, \cite[Theorem~1-3]{Hua(b)} together with \Cref{Ex:Salvetti complex} implies that the defining graph of the RAAG must be a connected, \(3\)-cycle-free simple graph with at least two vertices.

Conversely, suppose that \(\bbB_2(\graf)\) is quasi-isometric to a RAAG \(\bbA_{\grafl}\), i.e., there exists a quasi-isometry \(\phi:\bar{UD_2(\graf)}\to X_\grafl\).  
Since \(UP_2(\graf)\) is locally convex in \(UD_2(\graf)\) by \Cref{Prop:free factor}, the restriction of \(\phi\) to \(\bar{UP_2(\graf)}\) (seen as a subcomplex of \(\bar{UD_2(\graf)}\)) yields a quasi-isometric embedding into a copy of \(X_{\grafl'}\) in \(X_{\grafl}\) for a component \(\grafl'\) of \(\grafl\). Since \(\cI(\bar{UP_2(\graf)})\) is a component of \(\cI(\bar{UD_2(\graf)})\), \Cref{MaxtoMax} implies that this embedding is in fact a quasi-isometry. Therefore, \(\pi_1(UP_2(\graf))\) is quasi-isometric to a RAAG.
\end{proof}

\begin{theorem}[Sufficient condition]\label{theorem:isomorphictoRAAG}
Let \(\graf=(\sfT,\loops)\) be a bunch of grapes such that there exists a path substem \(\sfP\subset\sfT\) containing all vertices \(\sfv\) with \(\loops(\sfv)\ge 1\). Then \(\bbB_2(\graf)\) is isomorphic to a RAAG.
\end{theorem}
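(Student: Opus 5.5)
Here is the plan I will follow.

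The first step is to reduce to the normal case. If \(\graf\) is not large, then \(\bbB_2(\graf)\) is free by \Cref{Lem:WhyNormal}, hence a RAAG. Otherwise, by \Cref{corollary:free factor}, removing leaves only adds free factors, and a free product of a RAAG with a free group is again a RAAG (take the disjoint union of the defining graph with isolated vertices). By \Cref{Prop:free factor} we have \(\bbB_2(\graf)\isom\pi_1(UP_2(\graf'))\ast\bbF_N\), where \(\graf'\) is the normal bunch of grapes obtained by pruning. So it suffices to show that \(\pi_1(UP_2(\graf'))\) is isomorphic to a RAAG. Pruning leaves with \(\loops=0\) repeatedly turns the stem into the minimal subtree containing all vertices with grapes. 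By hypothesis this subtree lies in the path \(\sfP\), so the stem of \(\graf'\) is a path \([\sfv_0,\dots,\sfv_{m}]\) with \(\loops(\sfv_0),\loops(\sfv_m)\ge1\). One caveat: after pruning, a bivalent vertex of the path may have \(\loops=0\), so the minimality assumption can fail. This is harmless, since \(UP_2\) is invariant under subdivision by \Cref{theorem:subdivision and UP2}. Smoothing such vertices, we may assume every interior vertex of the path carries a grape.

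Now compute \(UP_2(\graf')\) for a path stem with twigs \(\sft_1,\dots,\sft_m\). By \Cref{Corollary:TwigCorrespondence} and \Cref{Lem:RIconnected}, the maximal product subcomplexes are \(M_i=\graf_{\le i-1}\itimes\graf_{\ge i}\), where \(\graf_{\le j}\) is the bunch over \([\sfv_0,\dots,\sfv_j]\). Every intersection is \(K(\sfP)=\graf_{\le i-1}\itimes\graf_{\ge j}\). I plan to build \(UP_2(\graf')\) inductively in \(i\), gluing \(M_{i+1}\) to \(M_1\cup\dots\cup M_i\) along \(\left(M_1\cup\cdots\cup M_i\right)\cap M_{i+1}=\graf_{\le i-1}\itimes\graf_{\ge i+1}\), using the nesting. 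Each piece is a product of graphs whose \(\pi_1\) is \(\bbF\times\bbF\), with the free factors \(\pi_1(\graf_{\le j})\) free on the grapes. At the group level, \(\pi_1(UP_2)\) is then an iterated amalgam of direct products of free groups along subproducts \(\pi_1(\graf_{\le i-1})\times\pi_1(\graf_{\ge i+1})\), with \(\pi_1(\graf_{\le i})=\pi_1(\graf_{\le i-1})\ast\bbF_{\loops(\sfv_i)}\) and similarly for \(\graf_{\ge i}\).

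Next I choose explicit generators. Take one generator \(g_C\) for each grape \(C\), namely the loop of one particle around \(C\) with the other particle parked in a far component. Two such generators commute in \(M_i\) exactly when their grapes lie on opposite sides of twig \(\sft_i\), that is, when they sit at different vertices of the path. The candidate RAAG is therefore \(\bbA_\grafl\), where \(\grafl\) is the complete multipartite graph with parts indexed by the path vertices \(\sfv_j\) and the part at \(\sfv_j\) of size \(\loops(\sfv_j)\). This is consistent with the \(\bbF\times\bbF\) pieces. I then verify the isomorphism by checking that the iterated amalgam presentation coincides with the standard presentation of \(\bbA_\grafl\). Equivalently, I would show \(UP_2(\graf')\) is homotopy equivalent to a subcomplex of the Salvetti complex: after collapsing the trees of the stem, each \(M_i\) becomes \(S_{\grafl_{<i}\ast\grafl_{\ge i}}\), and the gluings are along sub-Salvetti complexes.

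The main obstacle is the basepoint and tree bookkeeping. The subcomplexes \(M_i\) are glued along products containing stem segments, and one must ensure that the chosen loops \(g_C\) in different \(M_i\) are homotopic within the intersections, so that the amalgamation is really over the standard parabolic subgroups. A potential trap is \(\grafl\) having a triangle: three vertices each with grapes give pairwise commuting generators. However, \(UD_2\) is \(2\)-dimensional and three grapes at distinct vertices are not simultaneously separable. So my guess at \(\grafl\) is wrong, and the commutation must be refined: \(g_C\) and \(g_{C'}\) for grapes at \(\sfv_j,\sfv_k\) commute only when the corresponding elements are chosen compatibly. The right choice is probably a generating set adapted to the nested structure, for instance the elements \(\pi_1(\graf_{\le j})\) versus \(\pi_1(\graf_{\ge j+1})\) using different "parking" positions, which yields a triangle-free (bipartite-like) \(\grafl\). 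I would first work out \(m=2\) and \(m=3\) by hand from the amalgam to pin down \(\grafl\), and then induct on \(m\).
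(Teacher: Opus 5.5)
Your reduction matches the paper. You prune to a normal bunch of grapes with path stem \([\sfv_0,\dots,\sfv_m]\), smooth away grapeless bivalent vertices, and use \(\bbB_2(\graf)\isom\pi_1(UP_2(\graf'))\ast\bbF_N\). Your description of \(UP_2(\graf')\) also matches: it is the iterated union of the \(M_i=\graf_{\le i-1}\itimes\graf_{\ge i}\), each new piece glued along \(\graf_{\le i-1}\itimes\graf_{\ge i+1}\). But the proposal stops exactly where the proof has to happen. You never name the RAAG, and you never check that the successive amalgams are amalgams of RAAGs along standard parabolic subgroups. You also note yourself that your only candidate (one generator per grape, complete multipartite \(\grafl\)) is wrong. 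As written, nothing shows that \(\pi_1(UP_2(\graf'))\) is a RAAG.

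The missing idea is that a grape \(C\) at an interior vertex \(\sfv_p\) contributes two generators, not one:
\begin{itemize}
\item \(a_C\): one particle loops around \(C\) while the other is parked in \(\graf_{\ge p+1}\) (available for \(p\le m-1\));
\item \(b_C\): the same loop with the other particle parked in \(\graf_{\le p-1}\) (available for \(p\ge 1\)).
\end{itemize}
These are different elements of \(\pi_1(UP_2(\graf'))\), and conflating them is what produced your spurious triangle.

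With this choice, \(\pi_1(M_i)\) is the free group on \(\{a_C: p\le i-1\}\) times the free group on \(\{b_{C'}: p'\ge i\}\), i.e.\ a RAAG on a complete bipartite graph. The gluing subgroup \(\pi_1(\graf_{\le i-1}\itimes\graf_{\ge i+1})\) is the parabolic subgroup on \(\{a_C: p\le i-1\}\cup\{b_{C'}: p'\ge i+1\}\). This set spans a complete bipartite induced subgraph both in the graph built so far and in the graph of \(M_{i+1}\). Moreover, there are no edges between the new vertices \(\{a_C: p=i\}\) and the old vertices \(\{b_{C'}: 1\le p'\le i\}\). So each amalgam is the RAAG of the union graph.

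By induction, \(\pi_1(UP_2(\graf'))\isom\bbA_\grafl\), where \(\grafl\) is bipartite with \(a_C\sim b_{C'}\) if and only if \(C\) lies strictly to the left of \(C'\). For grapes at \(\sfv_0,\sfv_1,\sfv_2\), this \(\grafl\) is a path on four vertices, which is triangle-free. You must still choose parking positions consistently, so that the same \(a_C\) or \(b_C\) is used in every \(M_i\) containing it; this works because all the intersections are connected products. This is exactly the paper's argument, with its generating sets \(A_i\) and \(B_i\).
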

\begin{proof}
If \(\len(\sfP)\le 1\), then \(\graf\) is not large; by \Cref{scrg}, \(\bbB_2(\graf)\) is a free group and therefore a RAAG.
Otherwise, by \Cref{Eq:isomorphisms}, it suffices to consider the case \(\sfT=\sfP=[\sfv_0,\dots,\sfv_n]\) with \(n\ge 1\) and \(\loops(\sfv_i)\ge1\) for all \(i\).

For each \(0\le i\le j\le n\), we denote the path substem \([\sfv_i,\dots,\sfv_j]\) simply by \([i,j]\). By \Cref{Corollary:TwigCorrespondence}, there are \(n\) maximal product subcomplexes of \(UP_2(\graf)\):
\(M_i=M(\sft_i)=\graf_{[0,i]}\itimes \graf_{[i+1,n]}\)
for \(\sft_i=[\sfv_i,\sfv_{i+1}]\) and \(0\le i<n\).
Let
\begin{align*}
A_i&:=\{a_{p,q}:0\le p\le i,\, 1\le q\le \loops(\sfv_p)\}&
&\text{and}&
B_i&:=\{b_{p',q'}:i+1\le p'\le n,\, 1\le q'\le \loops(\sfv_{p'})\}.
\end{align*}
Then \(A_0\subsetneq A_1\subsetneq\cdots\subsetneq A_{n-1}\), \(B_0\supsetneq B_1\supsetneq\cdots\supsetneq B_n\), and 
\[\pi_1(M_i)=\langle A_i\cup B_i:ab=ba\quad\forall a\in A_i,\, b\in B_i\rangle
\isom \bbA_{\grafl_i},\]
where $\grafl_i$ is the complete bipartite graph \(\sfK_{\loops([0,i]),\loops([i+1,n])}\) with vertex set $A_i\cup B_i$.
We regard $\grafl_i$ as a subgraph of \(\grafl=\sfK_{\loops([0,n-1]),\loops([1,n])}\) whose vertex set is $A_{n-1}\cup B_0$.

Now, set \(U_i=M_0\cup\dots\cup M_i\).
Since \(\pi_1(U_0)=\pi_1(M_0)\) is a RAAG, we argue by induction on $i$.
For each $i$, 
\[\pi_1(U_i)=\left\langle A_i\cup B_0 : ab=ba\quad \forall a\in A_j\subset A_i,\, b\in B_j\subset B_0,\, 0\le j\le i \right\rangle \isom \bbA_{\grafl_0\cup\dots\cup\grafl_i}\]
where \(\grafl_0\cup\dots\cup\grafl_i\) is a subgraph of $\grafl$.
Since $U_{n-1}=UP_2(\graf)$, the proof is complete.
\end{proof}

For each vertex \(\bar x\in X_\grafl\), let \(\cI_{\bar x}(X_\grafl)\) denote the subcomplex of \(\cI(X_\grafl)\) spanned by vertices corresponding to maximal product subcomplexes of \(X_\grafl\) containing \(\bar x\); we call it the \emph{local intersection complex} at \(\bar x\). It is worth noticing that \(\cI_{\bar x}(X_\grafl)\) and \(\cI_{\bar y}(X_\grafl)\) coincide if and only if any maximal product subcomplex of \(X_\grafl\) containing \(\bar x\) also contains \(\bar y\) and \textit{vice versa}.

\begin{lemma}\label{lem:CopiesofR}
The local intersection complex \(\cI_{\bar x}(X_\grafl)\) is isomorphic to \(\cI(S_\grafl)\), and \(\cI(X_\grafl)\) is the union of \(\cI_{\bar x}(X_\grafl)\).
\end{lemma}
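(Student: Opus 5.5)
The plan is to work directly with the cubical structure of the Salvetti complex \(S_\grafl\), which has a single vertex \(x\), and of its universal cover \(X_\grafl\). First I will classify the maximal product subcomplexes of \(S_\grafl\). Since \(\grafl\) may be assumed \(3\)-cycle-free, \(S_\grafl\) is a square complex. A standard product subcomplex of \(S_\grafl\) has the form \(S_{\grafl_1}\itimes S_{\grafl_2}\), where \(\grafl_1,\grafl_2\) are disjoint vertex sets with every vertex of \(\grafl_1\) adjacent to every vertex of \(\grafl_2\) and each \(|\grafl_i|\ge 1\). Here \(S_{\grafl_i}\), a rose, plays the role of the leafless factor graph. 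Maximal ones therefore correspond to maximal complete bipartite subgraphs (joins) of \(\grafl\), and their intersections are again of this form. Consequently \(\cI(S_\grafl)\) is described combinatorially by maximal joins in \(\grafl\) and their join-intersections, with labels being products of roses.

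Next, fix a vertex \(\bar x\in X_\grafl\). Every maximal product subcomplex \(\bar M\) of \(X_\grafl\) is a \(p\)-lift of some maximal \(M=S_{\grafl_1}\itimes S_{\grafl_2}\subset S_\grafl\), by \Cref{ProjofPS}. Because \(S_\grafl\) has one vertex and \(M\) is locally convex (it is the Salvetti complex of a full subgraph-join), each such \(M\) has exactly one \(p\)-lift through \(\bar x\), namely the copy of \(X_{\grafl_1\ast\grafl_2}\) based at \(\bar x\). This gives a bijection between vertices of \(\cI_{\bar x}(X_\grafl)\) and vertices of \(\cI(S_\grafl)\), which will be the restriction of the canonical quotient map \(\rho\) of \Cref{Thm:TPBCM}.

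I then check that this bijection respects simplices and labels. If lifts \(\bar M_0,\dots,\bar M_n\) through \(\bar x\) intersect, their intersection contains \(\bar x\). Each component \(\bar K_j\) of a maximal standard product subcomplex inside this intersection projects to a component \(K_j\) of \(\bigcap M_i\). Conversely, each \(K_j\) has a unique lift through \(\bar x\), and since all \(\bar M_i\) pass through \(\bar x\) and are convex, that lift lies in \(\bigcap\bar M_i\). In the Salvetti setting the intersection \(\bigcap M_i\) is itself a single Salvetti subcomplex \(S_{\grafl'_1}\itimes S_{\grafl'_2}\) through \(x\), so there is no multiplicity issue. The labels coincide because the pull-back of a rose product is the product of the corresponding trees. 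Quasi-isometries of the labels are identities, so \(\rho|_{\cI_{\bar x}}\) is an isomorphism in the sense of \Cref{Def:Morphism}.

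Finally, \(\cI(X_\grafl)=\bigcup_{\bar x}\cI_{\bar x}(X_\grafl)\). Every simplex corresponds to a nonempty intersection \(\bigcap\bar M_i\), and any vertex \(\bar x\) in that intersection puts the whole simplex in \(\cI_{\bar x}\). The main obstacle is the middle step: I must justify that the intersection of lifts through a common vertex is exactly one lift of \(\bigcap M_i\), rather than some smaller or disconnected set. I would handle this using convexity of each \(\bar M_i\) in the CAT(0) cube complex \(X_\grafl\), together with the explicit hyperplane description: \(\bar M_i\) is the set of vertices reachable from \(\bar x\) by words in the generators from \(\grafl_1^i\cup\grafl_2^i\). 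The intersection is then the set reachable using the common generators, by normal forms in RAAGs.
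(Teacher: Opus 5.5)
Your proposal is correct and follows the paper's argument. The paper notes that \(\cI(X_\grafl)\) is covered by the local complexes \(\cI_{\bar x}(X_\grafl)\), since any vertex of a nonempty intersection works, and that because \(S_\grafl\) has a single vertex, the restriction of the canonical quotient map \(\rho\) to \(\cI_{\bar x}(X_\grafl)\) is an isomorphism; your further checks (unique lift through \(\bar x\), \(\bbA_{V_1}\cap\bbA_{V_2}=\bbA_{V_1\cap V_2}\) for intersections of lifts, and no multiple simplices because triangle-freeness makes intersections of maximal joins again joins) just make explicit what the paper's two-line proof leaves implicit.
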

\begin{proof}
By definition of \(\cI_{\bar x}(X_\grafl)\), it is obvious that \(\cI(X_\grafl)\) is covered by \(\cI_{\bar x}(X_\grafl)\)'s for all \(\bar x\in V(X_\grafl)\).
Since \(S_\grafl\) has one vertex, the restriction of the canonical quotient map \(\rho:\cI(X_\grafl)\to\cI(S_\grafl)\) to \(\cI_{\bar x}(X_\grafl)\) is an isomorphism.
\end{proof}


\begin{theorem}[Necessary condition I]\label{theorem:NotQItoRAAG}
Let \(\graf=(\sfT,\loops)\in\grapegraph\) be such that an affine Dynkin diagram \(\tilde\sfD_n\) (\(n\ge 5\)) leaf-respectingly embeds into \(\sfT\), and suppose that the image of each leaf of \(\tilde\sfD_n\) has at least one grape.
Then \(\bbB_2(\graf)\) is not quasi-isometric to any RAAG.
\end{theorem}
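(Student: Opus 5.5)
The plan is to argue by contradiction, transporting the periodic configuration produced by the embedded \(\tilde\sfD_n\) across a quasi-isometry into the universal cover of a Salvetti complex, where finiteness forces it to close up.

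First, by \Cref{Prop:StartingPoint}, we may replace \(\graf\) by the normal bunch of grapes \(\graf'\) obtained by deleting leaves; the hypothesis that each leaf of \(\tilde\sfD_n\) carries a grape ensures the embedding survives this replacement. So assume \(\graf\in\grapegraph_\norm\). Suppose, for contradiction, that \(\pi_1(UP_2(\graf))\) is quasi-isometric to a RAAG \(\bbA_\grafl\), where \(\grafl\) is connected and \(3\)-cycle-free. By \Cref{theorem:IsobetInt} we obtain an isomorphism of intersection complexes
\[
\Phi:\cI(\bar{UP_2(\graf)})\to\cI(X_\grafl).
\]
Since \(\Phi\) is a morphism in the sense of \Cref{Def:Morphism}, it preserves maximal simplices, their dimensions, and intersection patterns. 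Hence \(\Phi\) carries sequences of edge-simplex type to sequences of edge-simplex type (cf.\ \Cref{Rmk:TypeofSequence}).

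Next, the leaf-respecting copy of \(\tilde\sfD_n\) gives a closed leaf sequence \(\sfw^1_\bullet\) satisfying the hypothesis of \Cref{proposition:leaf sequence}. As explained before \Cref{proposition:edge-simplex type}, this lifts to a bi-infinite sequence \(\bar\triangle_\bullet\) in \(\cI(\bar{UP_2(\graf)})\) of edge-simplex type, with \(\rho(\bar\triangle_{4m+i})=\triangle_i\). Its image \(\Phi(\bar\triangle_\bullet)\) is then a bi-infinite edge-simplex-type sequence in \(\cI(X_\grafl)\).

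The heart of the argument is to show that \(\Phi(\bar\triangle_\bullet)\), or a suitable replacement with the same combinatorics, has only finitely many distinct terms. The first approach is to use the cubical geometry of \(X_\grafl\). Consecutive simplices share a vertex, so the corresponding families of maximal product subcomplexes pairwise intersect. By the Helly property, as in \Cref{lem:Intofmaximal}, each simplex corresponds to a family with a common vertex \(\bar x\). The aim is to show that an edge-simplex-type chain can be chosen, or moved by \(\pi_1(S_\grafl)\)-translations, to lie inside a single local intersection complex \(\cI_{\bar x}(X_\grafl)\). This should use the fact that a \(1\)-dimensional maximal simplex in \(\cI(X_\grafl)\) is an intersection of exactly two maximal product subcomplexes, whose common standard product subcomplex meets both neighbours. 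By \Cref{lem:CopiesofR}, \(\cI_{\bar x}(X_\grafl)\isom\cI(S_\grafl)\) is finite, so the chain must repeat a simplex.

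With repetition in hand, the contradiction follows by a pigeonhole argument. Choose indices \(j<k\) with \(k\equiv j\pmod 4\) and \(\Phi(\bar\triangle_j)=\Phi(\bar\triangle_k)\); pigeonhole over residues mod \(4\) and the finitely many simplices makes this possible. Then \(\bar\triangle_j=\bar\triangle_k\) because \(\Phi\) is injective. However, \((\rho(\bar\triangle_j),\dots,\rho(\bar\triangle_k))\) is a closed sequence of edge-simplex type, so \Cref{proposition:edge-simplex type} gives \(\bar\triangle_j\neq\bar\triangle_k\), a contradiction.

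The main obstacle is the localisation step: showing that an edge-simplex-type chain in \(\cI(X_\grafl)\) cannot escape to infinity, or else producing from it a closed chain. If direct localisation fails, I would instead exploit the \(\bbA_\grafl\)-action. Consider the pairs (simplex, next simplex) modulo the group action, of which there are finitely many because \(\cI(S_\grafl)\) is finite. Then try to show that the periodic structure forced on \(\Phi(\bar\triangle_\bullet)\) in \(X_\grafl\) is incompatible with the non-closing given by \Cref{proposition:edge-simplex type}. This would follow the approach of \cite[Corollary~5.18]{Oh22}.
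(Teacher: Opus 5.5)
Your set-up (reducing to \(\graf\in\grapegraph_\norm\), obtaining \(\Phi\), building the bi-infinite edge-simplex chain \(\bar\triangle_\bullet\)) matches the paper. The gap is the step you call the heart of the argument: it is left open, and in the form your final pigeonhole needs, it is false. Take \(j<k\) with \(j\equiv k\pmod 4\). The segment \((\rho(\bar\triangle_j),\dots,\rho(\bar\triangle_k))\) is the loop traversed \((k-j)/4\) times, so \Cref{proposition:edge-simplex type} gives \(\bar\triangle_j\neq\bar\triangle_k\) unconditionally; this is exactly the fact you invoke in your last step. Since \(\Phi\) is a bijection, \(\Phi(\bar\triangle_\bullet)\) has infinitely many distinct terms. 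Consequently no localisation, by Helly, by translations or otherwise, can put \(\Phi(\bar\triangle_\bullet)\) itself inside a finite \(\cI_{\bar x}(X_\grafl)\), and indices with \(\Phi(\bar\triangle_j)=\Phi(\bar\triangle_k)\) never exist. Two further points:
\begin{itemize}
\item The Helly property only gives a common vertex for the family attached to a single simplex, which is already built into the definition of a simplex; it says nothing about a chain.
\item Your fallback via periodicity fails too. \(\Phi\) comes from a quasi-isometry and is not equivariant, so nothing periodic is forced on \(\Phi(\bar\triangle_\bullet)\).
\end{itemize}

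The missing idea is to \emph{replace} the chain rather than localise it:
\begin{enumerate}
\item Push \(\Phi(\bar\triangle_\bullet)\) down by \(\rho_\grafl\) to the finite complex \(\cI(S_\grafl)\). Edge-simplex type survives because \(\rho_\grafl\) is injective on simplices and preserves maximal simplices.
\item Use finiteness to extract a closed sub-chain \((\triangle'_{4n},\dots,\triangle'_{4m})\). Pigeonhole on the consecutive pairs \((\triangle'_{4i},\triangle'_{4i+1})\), so that the triple-intersection condition also holds at the junction.
\item Lift this closed chain back into a single local intersection complex. This works because \(S_\grafl\) has one vertex, so \(\rho_\grafl\) restricted to \(\cI_{\bar x}(X_\grafl)\) is an isomorphism onto \(\cI(S_\grafl)\) (\Cref{lem:CopiesofR}). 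The result is a closed edge-simplex chain in \(\cI(X_\grafl)\).
\item Its image under \(\Phi^{-1}\) is a closed edge-simplex chain in \(\cI(\bar{UP_2(\graf)})\). This is a new chain, not a segment of \(\bar\triangle_\bullet\), and it is to this chain that \Cref{proposition:edge-simplex type} must be applied. Its \(\rho\)-image is automatically closed and of edge-simplex type.
\end{enumerate}
So the contradiction comes from an isomorphism-invariant property, not from showing that the transported chain itself repeats. \(\cI(\bar{UP_2(\graf)})\) contains a bi-infinite edge-simplex chain but no closed one, whereas in \(\cI(X_\grafl)\) any bi-infinite one produces a closed one.
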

\begin{proof}
Assume, for contradiction, that \(\bbB_2(\graf)\) is quasi-isometric to a RAAG. By \Cref{Prop:StartingPoint}, we may assume that \(\graf\) is normal and there exists a quasi-isometry \(\phi:\pi_1(UP_2(\graf))\to\bbA_\grafl\) for a connected, \(3\)-cycle-free simple graph \(\graf\) with at least two vertices. By \Cref{Thm:TPBCM}, there is an induced isomorphism \(\cI(\phi):\cI(\bar{UP_2(\graf)})\to\cI(X_\grafl)\).

Regard \(\tilde\sfD_n\) as a leaf-respecting embedded subgraph of \(\sfT\), and label its four leaves by \(\sfw_0,\sfw_1,\sfw_2,\sfw_3\) such that the paths \(\sfP(\sfw_0,\sfw_1)\) and \(\sfP(\sfw_2,\sfw_3)\) each have length \(2\). 
For each \(i\), set \(\triangle_i=\triangle_{\sfP(\sfw_i,\sfw_{i+1})}\subset\cI(UP_2(\graf))\) with indices taken modulo \(4\).
Consider the leaf sequence \(\sfw_{\bullet}=(\sfw_0,\sfw_1,\sfw_2,\sfw_3, \sfw_4=\sfw_0)\). By \Cref{proposition:leaf sequence}, the sequence \(\triangle(\sfw_{\bullet})=(\triangle_0,\dots,\triangle_3)\) forms a homotopically nontrivial loop in \(\cI(UP_2(\graf))\).
As explained after \Cref{Rmk:TypeofSequence}, this sequence lifts to a bi-infinite sequence of maximal simplices in \(\cI(\bar{UP_2(\graf)})\)
\[\bar\triangle_\bullet=(\dots,\bar\triangle_{-1},\,\bar\triangle_0,\,\bar\triangle_1,\dots)\] such that \(\rho(\bar\triangle_{4n+i})=\triangle_i\) for all \(n\in\bbZ\), \(0\le i\le 3\), and \(\bar\triangle_\bullet\) is of edge-simplex type.

Applying \(\cI(\phi)\) yields a bi-infinite sequence of maximal simplices in \(\cI(X_\grafl)\)
\[
\bar\triangle_\bullet'=(\dots,\bar\triangle_{-1}',\,\bar\triangle_0',\,\bar\triangle_1',\dots)=\cI(\phi)(\bar\triangle_\bullet),
\]
and composing with \(\rho_\grafl:\cI(X_\grafl)\to\cI(S_\grafl)\) gives a bi-infinite sequence of maximal simplices in \(\cI(S_\grafl)\) 
\[\triangle_\bullet'=(\dots,\triangle_{-1}',\,\triangle_0',\,\triangle_1',\dots).\]
Since \(\cI(\phi)\) is an isomorphism and \(\rho_{\grafl}\) is a combinatorial map, both sequences \(\bar\triangle'_\bullet\) and \(\triangle'_\bullet\) are of edge-simplex type by \Cref{Rmk:TypeofSequence}.
Since \(\cI(S_\grafl)\) is a finite simplicial complex by \cite[Proposition~4.3]{Oh22}, there exist integers \(n<m\) such that the subsequence \((\triangle'_{4n},\dots,\triangle'_{4m})\) forms a closed sequence of edge-simplex type.
By \Cref{lem:CopiesofR}, this subsequence lifts to a sequence \((\bar\triangle_{4n}'',\dots,\bar\triangle_{4m}'')\) of maximal simplices in \(\cI(X_\grafl)\) of edge-simplex type such that \(\bar\triangle_{4n}''=\bar\triangle_{4m}'\) and \(\rho_\grafl(\bar\triangle_i'')=\rho_\grafl(\bar\triangle_i')=\triangle_i'\) for all \(4n\le i\le 4m\).

Applying \(\cI(\phi)^{-1}\) again yields a closed sequence of maximal simplices in \(\cI(\bar{UP_2(\graf)})\)
\[
\cI(\phi)^{-1}(\bar\triangle_{4n}'',\dots,\bar\triangle_{4m}'')=
\left(\cI(\phi)^{-1}(\bar\triangle_{4n}''),\dots,\cI(\phi)^{-1}(\bar\triangle_{4m}'')\right),
\]
still of edge-simplex type. However, this is impossible by \Cref{proposition:edge-simplex type}. Therefore, no such quasi-isometry can exist, and \(\bbB_2(\graf)\) is not quasi-isometric to any RAAG.
\end{proof}

\begin{theorem}[Necessary condition II]\label{theorem:necessary condition 2}
Let \(\graf=(\sfT,\loops)\in\grapegraph\) be such that a tripod of type \(\sfT_{a,b,c}\) for some \(1\le a<b<c\) leaf-respectingly embeds into \(\sfT\), and suppose that the image of each leaf of \(\tilde\sfD_n\) has at least one grape.
Then \(\bbB_2(\graf)\) is not quasi-isometric to any RAAG.
\end{theorem}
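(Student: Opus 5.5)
The plan is to mirror the proof of \Cref{theorem:NotQItoRAAG} almost verbatim. The sequence of edge-simplex type is replaced by a sequence of type \((a,b,c)\), and \Cref{proposition:edge-simplex type} is replaced by \Cref{proposition:type (a_b_c)}. (The hypothesis should be read with ``each leaf of \(\sfT_{a,b,c}\)'' in place of \(\tilde\sfD_n\).)

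\emph{Reduction.} Suppose, for contradiction, that \(\bbB_2(\graf)\) is quasi-isometric to a RAAG. Removing leaves of \(\graf\) without grapes does not destroy the leaf-respecting tripod, since its leaves carry grapes. So by \Cref{Prop:StartingPoint} we may assume \(\graf\) is normal and that there is a quasi-isometry \(\bar{UP_2(\graf)}\to X_\grafl\) for a connected, \(3\)-cycle-free graph \(\grafl\). By \Cref{theorem:IsobetInt} this gives an isomorphism \(\cI(\phi)\colon\cI(\bar{UP_2(\graf)})\to\cI(X_\grafl)\).

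\emph{Choice of tripod.} Since \(\tripod(\sfT)\) is nonempty, replace the given tripod by the minimal element \((a,b,c)\) of \(\tripod(\sfT)\) in the linear order defined before \Cref{proposition:type (a_b_c)}. Label its leaves \(\sfw'_0,\sfw'_1,\sfw'_2\) so that the consecutive paths \(\sfP(\sfw'_i,\sfw'_{i+1})\) have lengths \(a+b\), \(b+c\), \(c+a\). The closed leaf sequence \(\sfw^2_\bullet=(\sfw'_0,\sfw'_1,\sfw'_2,\sfw'_0)\) satisfies the triple-intersection condition, since the three paths share only the trivalent vertex and no twig. Hence, by \Cref{proposition:leaf sequence}, it gives a homotopically nontrivial loop in \(\cI(UP_2(\graf))\). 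Because \(\cI(\bar{UP_2(\graf)})\) is the development (\Cref{theorem:structureofI}), this loop lifts to a bi-infinite sequence \(\bar\triangle_\bullet\) of maximal simplices of type \((a,b,c)\) with \(\rho(\bar\triangle_{3k+i})=\triangle_i\).

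\emph{Transfer to the RAAG and back.} Push \(\bar\triangle_\bullet\) forward by \(\cI(\phi)\) and then by \(\rho_\grafl\colon\cI(X_\grafl)\to\cI(S_\grafl)\). By \Cref{Rmk:TypeofSequence}, both images remain of type \((a,b,c)\); this uses that isomorphisms preserve dimensions and the covering condition ``every vertex of \(\triangle_{i+1}\) lies in \(\triangle_i\cup\triangle_{i+2}\)'', and that \(\rho_\grafl\) is combinatorial and preserves maximal simplices. Since \(\cI(S_\grafl)\) is finite, pigeonhole on indices \(\equiv0\pmod 3\) gives a closed subsequence \((\triangle'_{3n},\dots,\triangle'_{3m})\). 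Since \(S_\grafl\) has one vertex, \Cref{lem:CopiesofR} lets this closed sequence be lifted inside a single local intersection complex \(\cI_{\bar x}(X_\grafl)\) to a closed sequence of type \((a,b,c)\) in \(\cI(X_\grafl)\). Pulling back by \(\cI(\phi)^{-1}\) gives a closed sequence of type \((a,b,c)\) in \(\cI(\bar{UP_2(\graf)})\) whose \(\rho\)-image is also closed of type \((a,b,c)\). This contradicts \Cref{proposition:type (a_b_c)}, which is where the minimality of \((a,b,c)\) is essential.

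\emph{Main obstacle.} The delicate point is checking that ``type \((a,b,c)\)'' survives \(\rho_\grafl\) and \(\cI(\phi)^{-1}\). For dimensions and the covering condition this is fine. The intersection inequalities, however, are only lower bounds (\(\dim(\triangle_i\cap\triangle_{i+1})\ge\cdots\)). They are preserved under combinatorial maps because images of intersections lie in intersections of images. Under \(\cI(\phi)^{-1}\) they are preserved because \(\cI(\phi)\) is an isomorphism and the lifted sequence in \(\cI_{\bar x}(X_\grafl)\) is isomorphic to its image. I expect this bookkeeping, together with confirming that the pigeonhole closure respects the mod-\(3\) phase, to be the only place needing care. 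The exact-dimension rigidity itself is already handled inside \Cref{proposition:type (a_b_c)}.
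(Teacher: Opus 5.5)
Your proposal is correct and follows the paper's proof step for step: reduction via \Cref{Prop:StartingPoint}, the minimal element of \(\tripod(\sfT)\), lifting the tripod leaf-loop to a bi-infinite sequence of type \((a,b,c)\), transfer through \(\cI(\phi)\) and \(\rho_\grafl\), closing up by finiteness of \(\cI(S_\grafl)\), lifting back via \Cref{lem:CopiesofR}, and contradicting \Cref{proposition:type (a_b_c)}; you also rightly read the stray \(\tilde\sfD_n\) as \(\sfT_{a,b,c}\). The one claim you add beyond the paper, that pruning grapeless leaves keeps the tripod intact, needs a qualifier: after pruning one must pass to the minimal simplicial model, and smoothing a tripod vertex whose side branches carried no grapes shortens its branch and can break \(a<b<c\) (for instance, \(\sfT_{1,2,3}\) with a grapeless pendant edge at an internal vertex of the branch of length \(3\) normalizes to the stem \(\sfT_{1,2,2}\)), so the tripod must really be taken in the normalized stem, as in the introduction's version for \(\graf\in\grapegraph_\norm\)---a point the paper's proof also passes over.
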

\begin{proof}
Suppose, for contradiction, that \(\bbB_2(\graf)\) is quasi-isometric to a RAAG.
Then, as in the proof of \Cref{theorem:NotQItoRAAG}, there exists a quasi-isometry \(\phi:\pi_1(UP_2(\graf))\to\bbA_\grafl\) with the induced isomorphism \(\cI(\phi):\cI(\bar{UP_2(\graf)})\to\cI(X_\grafl)\).

Without loss of generality, assume that \((a,b,c)\) is the minimal element in \(\tripod(\sfT)\) and regard \(\sfT_{a,b,c}\) as a leaf-respecting embedded subgraph of \(\sfT\) with leaves \(\sfw_0,\sfw_1,\sfw_2\).
For each \(i\), let \(\triangle_i=\triangle_{\sfP(\sfw_i,\sfw_{i+1})}\) and assume (after relabeling if necessary) that \(\dim\triangle_0=a+b\), \(\dim\triangle_1=b+c\), and \(\dim\triangle_2=c+a\).

Consider the leaf sequence \(\sfw_{\bullet}=(\sfw_0,\sfw_1,\sfw_2,\sfw_3=\sfw_0)\) and its induced sequence of maximal simplices in \(\cI(UP_2(\graf))\) \(\triangle(\sfw_{\bullet})=(\triangle_0,\triangle_1,\triangle_2)\). As in the proof of \Cref{theorem:NotQItoRAAG}, we then take a bi-infinite sequence of maximal simplices in \(\cI(\bar{UP_2(\graf)})\) \[\bar\triangle_\bullet=(\dots,\bar\triangle_{-1},\,\bar\triangle_0,\,\bar\triangle_1,\dots)\] such that \(\rho(\bar\triangle_{3n+i})=\triangle_i\) for all \(n\in\bbZ\), \(0\le i\le 2\), and \(\bar\triangle_\bullet\) is of type \((a,b,c)\). 

Apply \(\cI(\phi)\) to obtain \(\bar\triangle'_\bullet=\cI(\phi)(\bar\triangle_\bullet)\) and \(\triangle'_\bullet=\rho_\grafl(\bar\triangle'_\bullet)\).
Since \(\cI(S_\grafl)\) is finite, \(\triangle'_\bullet\) contains a closed subsequence of maximal simplices in \(\cI(S_\grafl)\)
\[(\triangle_{3n}',\dots,\triangle_{3m-1}',\,\triangle_{3m}')\]
for some \(n<m\in\bbZ\) of type \((a,b,c)\).
By \Cref{lem:CopiesofR}, this subsequence lifts to a sequence \((\bar\triangle_{3n}'',\dots,\bar\triangle_{3m-1}'',\bar\triangle_{3m}'')\) of maximal simplices in \(\cI(X_\grafl)\) such that \(\bar\triangle_{3n}''=\bar\triangle_{3m}'\) and \(\rho_\grafl(\bar\triangle_i'')=\rho_\grafl(\bar\triangle_i')=\triangle_i'\) for all \(3n\le i\le 3m\).
Moreover, this lifted sequence is still of type \((a,b,c)\) by \Cref{Rmk:TypeofSequence}.

Applying \(\cI(\phi)^{-1}\) yields a closed finite sequence of maximal simplices in \(\cI(\overline{UP_2(\graf)})\) of type \((a,b,c)\). This contradicts \Cref{proposition:type (a_b_c)}, which forbids a closed sequence of type \((a,b,c)\) in \(\cI(\overline{UP_2(\graf)})\) for the minimal \((a,b,c)\). Therefore, no such quasi-isometry can exist, and \(\bbB_2(\graf)\) is not quasi-isometric to any RAAG.
\end{proof}

\begin{question}
Let $\graf$ be a bunch of grapes. Is the property that $\bbB_2(\graf)$ is quasi-isometric to a RAAG decidable from the combinatorics of $\graf$?
\end{question}

In a forthcoming paper, we will propose generalizations of the sufficient and necessary conditions established above.

\subsection{Hyperbolicity relative to non–graph braid groups}
Not only hyperbolicity, but also toral relative hyperbolicity of graph braid groups was completely characterized by Genevois in \cite{Gen21GBG}. Beyond this toral setting, however, the general problem of determining when a graph braid group is relatively hyperbolic remains largely open. Aside from a specific class of graphs whose \(2\)-braid groups are hyperbolic relative to \(2\)-braid groups over certain subgraphs (see \cite[Theorem~4.30]{Gen21GBG} and the examples following it), little is currently known.

A different phenomenon was later exhibited by Berlyne \cite[Theorem~4.4]{Ber23}, who constructed a graph \(\graf\) such that \(\bbB_2(\graf)\) is hyperbolic relative to a thick proper subgroup which is not contained in any graph braid group of the form \(\bbB_k(\grafl)\) with \(k\le 2\) and \(\grafl\subset\graf\). 
His example is obtained from \Cref{figure:non quasi-isometry} by removing the grape attached to \(\sfv_3\) and the edge \(\sfe_3\). As observed in \Cref{example:not quasiisometry}, this graph belongs to \(\graphs_{(2)}\).

Our analysis of graph \(2\)-braid groups shows that Berlyne’s phenomenon is far from isolated. In fact, we construct infinitely many graphs whose \(2\)-braid groups exhibit the same type of relative hyperbolicity behavior, thereby providing a broad generalization of Berlyne’s example.

We begin with the following immediate observation.

\begin{proposition}\label{Prop:Relativehyerbolicity}
Let \(\graf\) be a graph in \(\graphs_{(2)}\). Then the graph \(2\)-braid group \(\bbB_2(\graf)\) is hyperbolic relative to \(\pi_1(UP_2(\graf))\).

If, in addition, \(UD_2(\graf)\) satisfies the standard intersection property, then \(\bbB_2(\graf)\) is hyperbolic relative to the thick subgroup \(\pi_1(UP_2(\graf))\).
\end{proposition}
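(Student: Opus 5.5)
The plan is to use the free product decomposition that defines \(\graphs_{(2)}\), together with the standard fact that a free product is hyperbolic relative to its factors.

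Since \(\graf\in\graphs_{(2)}\), the map \(\m\iota_*:\pi_1(UP_2(\graf))\to\bbB_2(\graf)\) is injective. Moreover,
\[\bbB_2(\graf)\isom\image(\m\iota_*)\ast\bbF_N\]
for some \(N\ge0\), so we may identify \(\pi_1(UP_2(\graf))\) with a free factor \(H\).

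First step: a free product \(A\ast B\) of finitely generated groups is hyperbolic relative to \(\{A,B\}\). This follows from Bowditch's fine-graph criterion applied to the Bass--Serre tree of the splitting, whose edge stabilizers are trivial. The peripheral subgroup \(\bbF_N\) is itself hyperbolic, and hyperbolic peripheral subgroups may be dropped from a peripheral structure (Osin, or Drutu--Sapir). Hence \(\bbB_2(\graf)\) is hyperbolic relative to \(H\) alone. The degenerate case \(N=0\), where \(H\) is the whole group, is the trivial relatively hyperbolic structure. This gives the first assertion.

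Second step: assume \(UD_2(\graf)\) satisfies the standard intersection property. We show \(H\) is thick using \Cref{Lem:UP_2One-ended}. Its hypotheses are:
\begin{itemize}
\item \(UP_2(\graf)\) is connected, which holds by \Cref{Lem:UP_2Special};
\item \(UP_2(\graf)\) has the embedded product property, which holds by \Cref{corollary:maximal};
\item \(UP_2(\graf)\) has the standard intersection property, which is assumed.
\end{itemize}
The lemma then says that \(\cI(\bar{UP_2(\graf)})\) is connected. By \Cref{lem:One-endedness,Rmk:thickness}, \(\bar{UP_2(\graf)}\) is thick of order at most \(1\): it is a chain-connected union of products of two unbounded trees, with consecutive pieces meeting in infinite-diameter standard product subcomplexes. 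Thickness is a quasi-isometry invariant of the group, so \(\pi_1(UP_2(\graf))\) is thick.

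Main obstacle: the thickness argument needs \(UP_2(\graf)\) to be nonempty. This holds because \(\graf\in\graphs_{(0)}\) contains two disjoint cycles, by \Cref{Lem:UP2empty}. It also needs the network of maximal flats and products to be uniformly coarsely connected, which follows from cocompactness of the \(\pi_1\)-action on \(\bar{UP_2(\graf)}\) together with connectivity of the intersection complex.
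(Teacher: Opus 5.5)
Your proposal is correct and follows essentially the same route as the paper. The paper derives the first assertion directly from the free-factor decomposition \(\bbB_2(\graf)\isom\pi_1(UP_2(\graf))\ast\bbF_N\) defining \(\graphs_{(2)}\), and obtains thickness from \Cref{Lem:UP_2One-ended} (via \Cref{lem:One-endedness,Rmk:thickness}); you spell out the standard relative-hyperbolicity facts about free products that the paper leaves implicit.
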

\begin{proof}
The first statement directly follows from the definition of \(\graphs_{(2)}\) together with the definition of relative hyperbolicity.
The second statement follows from \Cref{Lem:UP_2One-ended}.
\end{proof}

Two qualitatively different situations may arise. Either \(UP_2(\graf)=UD_2(\graf)\), that is, \(\graf\in\graphs_{(5)}\), in which case the relative hyperbolicity becomes trivial; or there exists \(N>0\) such that \(\bbB_2(\graf)\cong \pi_1(UP_2(\graf))\ast \bbF_N\), while \(\pi_1(UP_2(\graf))\) itself admits a nontrivial free product decomposition. In the latter case, it is not known in general whether there exists another graph \(\grafl\) such that \(\pi_1(UP_2(\graf))\cong\bbB_2(\grafl)\).

Our primary source of examples inside \(\graphs_{(2)}\) is the family of bunches of grapes, whose combinatorial structure makes them particularly well suited to our analysis of graph \(2\)-braid groups. Moreover, as noted in the proof of \Cref{proposition:twig_maximally standard}, every subgraph of a bunch of grapes is again a bunch of grapes, so this class is stable under passing to subgraphs. The following theorem shows that large bunches of grapes provide a broad generalization of Berlyne’s example while avoiding the two situations described above.


\begin{theorem}\label{Corollary:Thickness}
Let \(\graf\in\grapegraph_{\larg}\). Then the graph \(2\)-braid group \(\bbB_2(\graf)\) is hyperbolic relative to a thick, proper subgroup which is not isomorphic to any graph braid group of the form \(\bbB_k(\grafl)\), where \(k\le 2\) and \(\grafl\subset\graf\).
\end{theorem}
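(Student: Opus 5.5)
We reduce to the normal case and then take \(H=\pi_1(UP_2(\graf))\). Let \(\graf'\in\grapegraph_\norm\) be obtained from \(\graf\in\grapegraph_\larg\) by removing all leaves (\Cref{Lem:WhyNormal}). Then \(UP_2(\graf)=UP_2(\graf')\), and by \Cref{corollary:free factor,Prop:free factor} we have
\[
\bbB_2(\graf)\cong\pi_1(UP_2(\graf'))\ast\bbF_{N'},\qquad N'\ge N\ge\loops(\sfT')\ge 2,
\]
where the last inequality holds because \(\graf'\) is large. Set \(H=\pi_1(UP_2(\graf'))\). A group is hyperbolic relative to a free factor whose complementary factor is free. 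By \Cref{theorem:structureofI}, \(H\) is thick, hence one-ended. It is proper in \(\bbB_2(\graf)\) because \(N'\ge2>0\). Equivalently, we may invoke \Cref{Prop:Relativehyerbolicity} together with \Cref{Lem:RIconnected}, after noting that leaf removal only adds free factors. This settles everything except the non-isomorphism claim.

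For the non-isomorphism, suppose \(H\cong\bbB_k(\grafl)\) with \(k\le2\) and \(\grafl\subset\graf\). By \Cref{proposition:twig_maximally standard}, every subgraph of a bunch of grapes is a bunch of grapes, so \(\grafl\in\grapegraph\). If \(k=1\), then \(\bbB_1(\grafl)=\pi_1(\grafl)\) is free, which contradicts one-endedness. If \(k=2\) and \(\grafl\) is not large, then \(\bbB_2(\grafl)\) is free (\Cref{Lem:WhyNormal}), again a contradiction. If \(\grafl\) is large, the same reduction gives \(\bbB_2(\grafl)\cong\pi_1(UP_2(\grafl''))\ast\bbF_{M}\) with \(M\ge\loops(\sfT_{\grafl''})\ge2\), where \(\grafl''\) is the normalization of \(\grafl\). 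This is a nontrivial free product, hence not one-ended. Since \(H\) is one-ended (indeed \(H\) is torsion-free and freely indecomposable, and is not \(\bbZ\)), we get a contradiction. One could alternatively use Kurosh or count ends, but one-endedness of \(H\) against infinitely many ends of \(\bbB_2(\grafl)\) suffices.

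The main obstacle is to verify that \(N\ge1\) holds for every large (possibly non-normal) \(\graf\) and every large subgraph \(\grafl\), so that both free factors really are nontrivial. The remark after \Cref{Prop:free factor} gives \(N\ge\loops(\sfT)\), and largeness forces \(\loops(\sfT)\ge2\), so this holds in the normal case. For non-normal \(\grafl\), \Cref{corollary:free factor} only adds further free factors, so we pass to the normalization \(\grafl''\) and apply the bound there. We must also check that the minimal-simplicial-model assumption is harmless for subgraphs: by \Cref{theorem:subdivision and UP2} and \Cref{Abrams}, smoothing bivalent vertices without grapes does not change \(\bbB_2\), so we may smooth \(\grafl\) before applying \Cref{Prop:free factor}.
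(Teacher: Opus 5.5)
Your proposal is correct and follows the paper's own argument. You take the thick, one-ended factor \(\pi_1(UP_2(\graf))\) from the free-product decomposition of \(\bbB_2(\graf)\), rule out \(k=1\) by one-endedness, and rule out \(k=2\) because \(\grafl\) must be large, so \(\bbB_2(\grafl)\) splits as a nontrivial free product. Your extra checks fill in details the paper leaves implicit: that the complementary free factor has rank at least \(2\) (so the subgroup is proper), and that a subgraph may be smoothed to a minimal model without changing \(\bbB_2\).
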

\begin{proof}
By \Cref{Eq:isomorphisms,theorem:structureofI}, it suffices to show that \(\pi_1(UP_2(\graf))\) is not isomorphic to any graph braid group of the form \(\bbB_k(\grafl)\) with \(k\le 2\) and \(\grafl\subset\graf\).
Since \(\grafl\) is a subgraph of a bunch of grapes, it is itself a bunch of grapes. 
By \Cref{theorem:structureofI}, \(\pi_1(UP_2(\graf))\) is one-ended, so \(k\neq 1\). Moreover, \Cref{Lem:WhyNormal} implies that \(\grafl\) must be large; hence, by \Cref{Prop:free factor}, \(\bbB_2(\grafl)\) is not one-ended. Therefore no such subgraph \(\grafl\) exists, and the conclusion follows.
\end{proof}

We remark that Berlyne’s example \(\graf'\) is homotopy equivalent to a bunch of grapes, although it is not itself a bunch of grapes. Nevertheless, \(UD_2(\graf')\) satisfies the standard intersection property and our results suggest that the phenomenon observed above may hold in considerably greater generality. This leads us to the following conjecture.

\begin{conjecture}
Let \(\graf\in\graphs_{(2)}\), and suppose that \(UD_2(\graf)\) satisfies the standard intersection property. Then \(\bbB_2(\graf)\) is hyperbolic relative to a thick, proper subgroup which is not isomorphic to any graph braid group of the form \(\bbB_k(\grafl)\).
\end{conjecture}

\bibliographystyle{alpha} 
\bibliography{Ref.bib}

\newcommand{\etalchar}[1]{$^{#1}$}
\begin{thebibliography}{ADG{\etalchar{+}}25}

\bibitem[Abr00]{Abrams00}
Aaron~D. Abrams.
\newblock {Configuration spaces and braid groups of graphs}.
\newblock {\em Ph.D thesis, University of California, Berkeley}, 2000.

\bibitem[ADG{\etalchar{+}}25]{LSUTeam25}
Benjamin Appiah, Pallavi Dani, Wayne Ge, Christopher Hudson, Saumya Jain,
  Matthew Lemoine, Jake Murphy, Justin Murray, Adithyan Pandikkadan, Kevin
  Schreve, and Huong Vo.
\newblock The algebraic structure of hyperbolic graph braid groups.
\newblock {\em International Journal of Algebra and Computation},
  35(03):329--342, 2025.

\bibitem[AK22]{AK2022}
Byung~Hee An and Ben Knudsen.
\newblock On the second homology of planar graph braid groups.
\newblock {\em J. Topol.}, 15(2):666--691, 2022.

\bibitem[BD14]{BD14}
Jason Behrstock and Cornelia Druţu.
\newblock {Divergence, thick groups, and short conjugators}.
\newblock {\em Illinois Journal of Mathematics}, 58(4):939 -- 980, 2014.

\bibitem[BDM08]{BDM}
Jason Behrstock, Cornelia Druţu, and Lee Mosher.
\newblock Thick metric spaces, relative hyperbolicity, and quasi-isometric
  rigidity.
\newblock {\em Mathematische Annalen}, 344(543), 2008.

\bibitem[Ber23]{Ber23}
Daniel Berlyne.
\newblock Graph of groups decompositions of graph braid groups.
\newblock {\em International Journal of Algebra and Computation},
  33(8):1531--1569, 2023.

\bibitem[BH99]{BH}
Martin Bridson and Andr\'e Haefliger.
\newblock {\em Metric Spaces of Non-Positive Curvature}.
\newblock Grundlehren der mathematischen Wissenschaften 319, Springer, 1999.

\bibitem[BK05]{BK05}
Mario Bonk and Bruce Kleiner.
\newblock Quasi-hyperbolic planes in hyperbolic groups.
\newblock {\em Proceedings of the American Mathematical Society},
  133(9):2491--2494, 2005.

\bibitem[BKS08]{BKS(a)}
Mladen Bestvina, Bruce Kleiner, and Michah Sageev.
\newblock {The asymptotic geometry of right-angled Artin groups, I}.
\newblock {\em Geometry \& Topology}, 12(3):1653–1699, 2008.

\bibitem[BR84]{BS84}
Gilbert Baumslag and James~E. Roseblade.
\newblock Subgroups of direct products of free groups.
\newblock {\em Journal of the London Mathematical Society}, s2-30(1):44--52,
  1984.

\bibitem[BS08]{BS08}
Sergei Buyalo and Viktor Schroeder.
\newblock {Hyperbolic dimension of metric spaces}.
\newblock {\em St. Petersburg Math. J.}, 19(1):67--76, 2008.

\bibitem[CD14]{CD14}
Francis Connolly and Margaret Doig.
\newblock {On braid groups and right-angled Artin groups}.
\newblock {\em Geometriae Dedicata}, 172:179--190, 2014.

\bibitem[Cha07]{CH}
Ruth Charney.
\newblock {An introduction to right-angled Artin groups}.
\newblock {\em Geometriae Dedicata}, 125(1):141 -- 158, 2007.

\bibitem[CW04]{CW}
John Crisp and Bert Wiest.
\newblock {Embeddings of graph braid and surface groups in right-angled Artin
  groups and braid groups}.
\newblock {\em Algebraic \& Geometric Topology}, 4:439 -- 472, 2004.

\bibitem[DK18]{DK18}
Cornelia Dru{\c{t}}u and Michael Kapovich.
\newblock {\em Geometric Group Theory}, volume~63 of {\em American Mathematical
  Society Colloquium Publications}.
\newblock American Mathematical Society, Providence, RI, 2018.
\newblock With an appendix by Bogdan Nica.

\bibitem[Fer12]{Fer12}
Praphat~Xavier Fernandes.
\newblock Quasi-isometric properties of graph braid groups.
\newblock {\em Ph.D. thesis, Emory Univ.}, 2012.

\bibitem[FS05]{FS05}
Daniel Farley and Lucas Sabalka.
\newblock {Discrete Morse theory and graph braid groups}.
\newblock {\em Algebraic \& Geometric Topology}, 5(3):1075--1109, 2005.

\bibitem[FS08]{FS08}
Daniel Farley and Lucas Sabalka.
\newblock On the cohomology rings of tree braid groups.
\newblock {\em Journal of Pure and Applied Algebra}, 212(1):53--71, 2008.

\bibitem[Gen21a]{Gen21}
Anthony Genevois.
\newblock Algebraic characterisation of relatively hyperbolic special groups.
\newblock {\em Israel Journal of Mathematics}, 241(1):301--341, 2021.

\bibitem[Gen21b]{Gen21GBG}
Anthony Genevois.
\newblock Negative curvature in graph braid groups.
\newblock {\em International Journal of Algebra and Computation},
  31(01):81--116, 2021.

\bibitem[Ghr01]{Ghrist01}
Robert Ghrist.
\newblock Configuration spaces and braid groups on graphs in robotics.
\newblock In {\em Knots, braids, and mapping class groups---papers dedicated to
  Joan S. Birman}, volume~24 of {\em AMS/IP Studies in Advanced Mathematics},
  pages 29--40. American Mathematical Society, Providence, RI, 2001.
\newblock MR1873106.

\bibitem[Gro87]{Grom}
Misha Gromov.
\newblock Hyperbolic groups.
\newblock In Gersten S.M., editor, {\em Essays in Group Theory}. Mathematical
  Sciences Research Institute Publications, Springer, 1987.

\bibitem[Hua17]{Hua(b)}
Jingyin Huang.
\newblock Top dimensional quasiflats in {CAT(0)} cube complexes.
\newblock {\em Geometry \& Topology}, 21(4):2281--2352, 2017.

\bibitem[HW08]{HW08}
Fr\'ed\'eric Haglund and Daniel~T. Wise.
\newblock Special cube complexes.
\newblock {\em Geometric and Functional Analysis}, 17(5):1551--1620, 2008.

\bibitem[KKP12]{KKP12}
Jee~Hyoun Kim, Ki~Hyoung Ko, and Hyo~Won Park.
\newblock Graph braid groups and right-angled {A}rtin groups.
\newblock {\em Transactions of the American Mathematical Society},
  364(1):309--360, 2012.

\bibitem[KLP16]{KLP16}
Ki~Hyoung Ko, Joon~Hyun La, and Hyo~Won Park.
\newblock {Graph 4-braid groups and Massey products}.
\newblock {\em Topology and its Applications}, 197(1):133--153, 2016.

\bibitem[KP12]{KP12}
Ki~Hyoung Ko and Hyo~Won Park.
\newblock Characteristics of graph braid groups.
\newblock {\em Discrete and Computational Geometry}, 48(4):915--963, 2012.

\bibitem[Lea13]{Lea}
Ian~J. Leary.
\newblock {A metric Kan–Thurston theorem}.
\newblock {\em Journal of Topology}, 6(1):251--284, 2013.

\bibitem[Lov65]{Lovasz}
L\'{a}szl\'{o} Lov\'{a}sz.
\newblock {On graphs not containing independent circuits}.
\newblock {\em Mat. Lapok}, 16:289--299, 1965.

\bibitem[Oh22]{Oh22}
Sangrok Oh.
\newblock Quasi-isometry invariants of weakly special square complexes.
\newblock {\em Topology and its Applications}, 307, 2022.

\bibitem[Oh23]{OhCorri}
Sangrok Oh.
\newblock {Corrigendum to “Quasi-isometry invariants of weakly special square
  complexes” [Topol. Appl. 307 (2022) 107945]}.
\newblock {\em Topology and its Applications}, 337:108646, 2023.

\bibitem[PS14]{PS12}
Paul Prue and Travis Scrimshaw.
\newblock Abrams's stable equivalence for graph braid groups.
\newblock {\em Topology and its Applications}, 178:136--145, 2014.

\bibitem[PW02]{PW02}
Panos Papazoglu and Kevin Whyte.
\newblock Quasi-isometries between groups with infinitely many ends.
\newblock {\em Commentarii Mathematici Helvetici}, 77(1):133--144, 2002.

\bibitem[Sab07]{Sab07}
Lucas Sabalka.
\newblock Embedding right-angled {A}rtin groups into graph braid groups.
\newblock {\em Geometriae Dedicata}, 124(1):191 -- 198, 2007.

\bibitem[Wis12]{Wis12}
Daniel~T. Wise.
\newblock {\em From Riches to RAAGs: 3-Manifolds, Right-Angled Artin Groups,
  and Cubical Geometry}, volume 117 of {\em CBMS Regional Conference Series in
  Mathematics}.
\newblock American Mathematical Society, Providence, RI, 2012.

\end{thebibliography}
\end{document}